\newcommand{\shiftdown}[1]{\smash{\raisebox{-.5\normalbaselineskip}{#1}}}
\newcommand{\BoHo}{H^{\BM}}
\newcommand{\Sh}[1]{\mathcal{#1}}
\newcolumntype{C}{>{\collectcell\shiftdown}c<{\endcollectcell}}
\DeclareSymbolFont{extraup}{U}{zavm}{m}{n}
\DeclareMathSymbol{\varheart}{\mathalpha}{extraup}{86}
\DeclareMathSymbol{\vardiamond}{\mathalpha}{extraup}{87}
\DeclareMathOperator\GL{GL}
\DeclareMathOperator\Char{Char}
\DeclareMathOperator\weight{weight}
\DeclareMathOperator\C{\mathbb C}
\DeclareMathOperator{\Cs}{\mathbb{C}^*}
\DeclareMathOperator\N{\mathbb N}
\DeclareMathOperator\h{\hbar}
\DeclareMathOperator\Hom{Hom}
\DeclareMathOperator\End{End}
\DeclareMathOperator\Fl{Fl}
\DeclareMathOperator\Chamb{\mathfrak{C}}
\DeclareMathOperator\Stab{Stab}
\DeclareMathOperator\uStab{\underline{Stab}}
\DeclareMathOperator\Rep{Rep}
\DeclareMathOperator\ch{ch}
\DeclareMathOperator\gl{\mathfrak{gl}}
\DeclareMathOperator\dd{\mathbf{d}}
\DeclareMathOperator\ww{\mathbf{w}}
\DeclareMathOperator\fg{\mathfrak{g}}
\DeclareMathOperator\FM{\mathfrak{M}}
\DeclareMathOperator\FN{\mathfrak{N}}
\DeclareMathOperator\BM{BM}
\DeclareMathOperator\codim{codim}
\DeclareMathOperator\mC{\mC}
\DeclareMathOperator\Pic{Pic}
\DeclareMathOperator{\Base}{\mathscr{B}}
\DeclareMathOperator{\rk}{rk}
\DeclareMathOperator{\liet}{\mathfrak{t}}
\DeclareMathOperator{\lien}{\mathfrak{n}}
\DeclareMathOperator{\lieg}{\mathfrak{g}}
\DeclareMathOperator{\vmult}{\mathsf{m}}
\DeclareMathOperator{\Ell}{Ell}
\DeclareMathOperator{\ext}{ext}
\DeclareMathOperator\Kthy{\mathfrak{K}}
\newcommand{\ul}{\underline}
\newcommand{\hata}{\hat a}
\newcommand{\wt}{\widetilde}
\newcommand{\ol}{\overline}
\newcommand{\ag}[2]{\hat a \left(\frac{#1}{#2}\right)}
\DeclareMathOperator{\Ht}{\mathcal{H}}
\def\mC{\mathfrak C}
\def\W{W}
\def\Wt{\widetilde{W}}
\DeclareMathOperator\Shuffle{Shuffle}
\def\One{\mathbf{1}}
\def\eu{eu}
\def\Yt{\mathcal Y}
\def\R{\mathbb{R}}
\def\Z{\mathbb{Z}}
\def\th{\vartheta}
\def\thh{\th\!}
\def\r{r}
\def\c{c}
\DeclareMathOperator\BCT{BCT}
\DeclareMathOperator{\Pe}{\mathbb{P}}
\newcommand\bs{{\color{blue} \char`\\}}
\newcommand\fs{{\color{red}/}}
\def\ttt#1{\text{\tt #1}}
\DeclareMathOperator\Zb{\mathcal Z}
\newcommand{\Ab}{\mathcal A}
\newcommand{\Xb}{\mathcal X}
\def\NS5{N\!S5}
\def\Ch{X}
\def\DD{\mathcal D}
\def\ch{ch}
\def\id{id}
\DeclareMathOperator\MM{\mathbb{M}}
\DeclareMathOperator\NN{\mathbb{N}}
\DeclareMathOperator\Attr{Att}
\DeclareMathOperator\At{\mathbb A}
\DeclareMathOperator\Tt{\mathbb T}
\newcommand{\Att}[1]{\Attr_{\mathfrak{#1}}}
\def\Cs{\C^{\times}}
\DeclareMathOperator\TsP{T^*\!\Pe}
\DeclareMathOperator\TP{T\!\Pe}
\DeclareMathOperator\TsGr{T^*Gr}
\DeclareMathOperator\pt{\ast}
\DeclareMathOperator{\Chern}{ch}
\DeclareMathOperator{\Corr}{\mathfrak{U}}
\DeclareMathOperator{\Spec}{Spec}
\DeclareMathOperator{\chern}{ch}
\def\X{X}
\def\NS5{\text{NS5}}
\def\D5{\text{D5}}
\def\no{}
\renewcommand{\oast}{\mathbin{\mathpalette\make@circled\ast}}
\newcommand{\make@circled}[2]{%
  \ooalign{$\m@th#1\smallbigcirc{#1}$\cr\hidewidth$\m@th#1#2$\hidewidth\cr}%
}
\newcommand{\smallbigcirc}[1]{%
  \vcenter{\hbox{\scalebox{0.77778}{$\m@th#1\bigcirc$}}}%
}
\newtheorem{fact}{Fact}[section]
\newtheorem{lemma}[fact]{Lemma}
\newtheorem{theorem}[fact]{Theorem}
\newtheorem{definition}[fact]{Definition}
\newtheorem{eexample}[fact]{Example}
\newtheorem{ccounterexample}[fact]{Counterexample}
\newtheorem{rremark}[fact]{Remark}
\newenvironment{remark}{\begin{rremark} \rm}{\end{rremark}}
\newenvironment{example}{\begin{eexample} \rm}{\end{eexample}}
\newtheorem{proposition}[fact]{Proposition}
\newtheorem{corollary}[fact]{Corollary}
\title[]{Hall algebra multiplication for \\stable envelopes on bow varieties}
\author{T. M. Botta$^\diamond$, R. Rim\'anyi$^\star$}
\email{tommaso.botta@columbia.edu}
\email{rimanyi@email.unc.edu}
\address{$^\diamond$ Department of Mathematics, Columbia University, New York, USA \\
$^\star$ Department of Mathematics, University of North Carolina at Chapel Hill, USA}
\begin{document}
\onehalfspacing

\begin{abstract} 
Elliptic stable envelopes are fundamental components in the geometric realization of quantum group representations. We present a formula for elliptic stable envelopes on type~A Cherkis bow varieties, as a product of simple basic objects in an elliptic cohomology Hall algebra. Combined with the 3d mirror symmetry property of elliptic stable envelopes, our result implies theta function identities for any pair of 01-matrices sharing the same row and column sums.
\end{abstract}

\maketitle

\setcounter{tocdepth}{1} 
\tableofcontents

\section{Introduction}

For certain holomorphic symplectic manifolds $X$ equipped with a torus action, Aganagic and Okounkov introduced the notion of {\em elliptic stable envelopes} \cite{aganagic2016elliptic, okounkov2020inductiveI}. These elliptic stable envelopes form a collection of elliptic cohomology classes $\Stab(f)$ associated with the torus-fixed points $f$ of $X$.

Stable envelopes have proven to be extremely useful in geometric representation theory and enumerative geometry. In essence, they geometrize quantum group actions \cite{maulik2012quantum, aganagic2016elliptic,Felder2018}, and their substitutions give rise to the monodromy matrix of physically relevant qKZ difference equations, which govern the curve-counting vertex functions \cite{Okounkov_lectures, aganagic2016elliptic, Aganagic:2017gsx}. Furthermore, the theory of stable envelopes overlaps with that of $\h$-deformed characteristic classes of singularities \cite{RW_elliptic}.

In this paper, we present explicit formulas for elliptic stable envelopes for a broad class of ambient spaces $X$, known as Cherkis bow varieties of type A. Before discussing bow varieties or the nature of our formulas, let us discuss our motivation.

Our primary focus is the role of elliptic stable envelopes in 3d mirror symmetry. Certain three-dimensional quantum field theories with $N=4$ supersymmetries naturally appear in pairs, suggesting that their mathematical incarnations should also be related \cite{WebsterYoo}. These conjectured relationships often involve abstract structures, such as 2-categories. However, stable envelopes are an exception: as cohomology classes, they are {\em formulas}. Consequently, their 3d mirror symmetry property---established in \cite{BR23} building on earlier works \cite{Rimanyi_2019full, RSVZ_Gr3d, RRAW_Langlands}---is an explicit identity between formulas.

More precisely, 3d mirror symmetry for elliptic stable envelopes on bow varieties has the following form. Let $X$ and $X^!$ be mirror dual bow varieties, and let $f,g$ be torus fixed points on $X$ with corresponding torus fixed points $f^!, g^!$ on $X^!$. Then 
\begin{equation}\label{eq:mirror_in_intro}
\frac{\Stab(f)|_g}{\Stab(g)|_g} \leftrightarrow \pm \frac{\Stab(g^!)|_{f^!}}{\Stab(f^!)|_{f^!}}
\end{equation}
where $\leftrightarrow$ indicates equality after an exchange of variables, specifically $a\leftrightarrow z$, $\h\leftrightarrow \h^{-1}$.  
Another, related, manifestation of 3d mirror symmetry relates the equivariant K-theoretic counts of quasimaps \cite{ciocan_quasimaps, Okounkov_lectures} from $\mathbb{P}^1$ to mirror dual bow varieties $X$ and $X^!$. Explicitly, it states that suitably normalized generating functions $V$ and $V^!$ of the curve counts satisfy
\begin{equation}
    \label{eq: mirror_vertex_in_intro}
    V^!|_{f^!} \leftrightarrow \pm\frac{\Stab(f)|_g}{\Stab(g)|_g} V|_g.
\end{equation}

The proof of \eqref{eq:mirror_in_intro} in \cite{BR23} and \eqref{eq: mirror_vertex_in_intro} in \cite{BDBowVertex} (in preparation) are based on representation-theoretic geometry rather than a direct comparison of explicit formulas. Therefore, if explicit formulas for stable envelopes were available, equations \eqref{eq:mirror_in_intro} and \eqref{eq: mirror_vertex_in_intro} would translate into concrete identities among these formulas.

Our main result, Theorem~\ref{thm:main}, provides an explicit formula for elliptic stable envelopes on bow varieties. This formula is expressed as a sum of products of certain elliptic (theta-) functions and their inverses. Therefore
the 3d mirror symmetry relation \eqref{eq:mirror_in_intro} manifests itself as an identity among theta functions, for any pair of fixed points $f, g$.  

The combinatorial structures encoding fixed points on bow varieties are 01-matrices with prescribed row and column sums. Consequently, equation \eqref{eq:mirror_in_intro} generates a wealth of theta function identities—one for each pair of 01-matrices with the same row and column sums.

The prototype identity for theta functions is the renowed Fay trisecant identity, see \eqref{eq:fay} below. After denominators are cleared, this identity consists of three terms, each expressed as a product of four $\th$ factors. The sizes of the identities we obtain for small bow varieties are illustrated in Figure~\ref{fig:coordinates_intro}; see \S~\ref{sec:mirror} for more details.

\begin{figure}
\[
\begin{tikzpicture}[scale=.4]
   \tkzInit[xmax=17,ymax=9,xmin=0,ymin=0]
   \tkzGrid
   \draw[thick,->] (0,0) to (17.5,0);
      \node at (10,-1.5)  {number of $\th$-factors in each term};
   \draw[thick,->] (0,0) to (0,9.5);
      \node[rotate=90] at (-1.7,5.3) {number of terms};
   \node at (4,3) {$\bullet$};
   \node at (6,4) {$\bullet$};
   \node at (7,4) {$\bullet$};
   \node at (8,4) {$\bullet$};
   \node at (8,5) {$\bullet$};
   \node at (9,5) {$\bullet$};
   \node at (9,6) {$\bullet$};
   \node at (10,5) {$\bullet$};
   \node at (10,6) {$\bullet$};
   \node at (11,6) {$\bullet$};
   \node at (11,7) {$\bullet$};
   \node at (11,8) {$\bullet$};
   \node at (12,7) {$\bullet$};
   \node at (12,8) {$\bullet$};
   \node at (13,7) {$\bullet$};
   \node at (13,8) {$\bullet$};
   \node at (14,8) {$\bullet$};
      \node at (13,9) {$\bullet$};
      \node at (14,9) {$\bullet$};
      \node at (15,9) {$\bullet$};
   \node at( 4,-.5) {$4$}; \node at( 8,-.5) {$8$}; \node at(12,-.5) {$12$}; \node at(16,-.5) {$16$};
   \node at (-.5,2) {$2$}; \node at (-.5,4) {$4$}; \node at (-.5,6) {$6$}; \node at (-.5,8) {$8$};
\draw[thick, red,->] (19,3.7) to [out=-150,in=-20] (4.2,2.8);
\draw (23,4) node [fill=yellow!25] {Fay's trisecant identity};
   \end{tikzpicture}
\]
\caption{The sizes of $\th$ function identities obtained from 3d mirror symmetry for elliptic stable envelopes on `small' bow varieties, cf. \S~\ref{sec:mirror}.} 
\label{fig:coordinates_intro}
\end{figure}
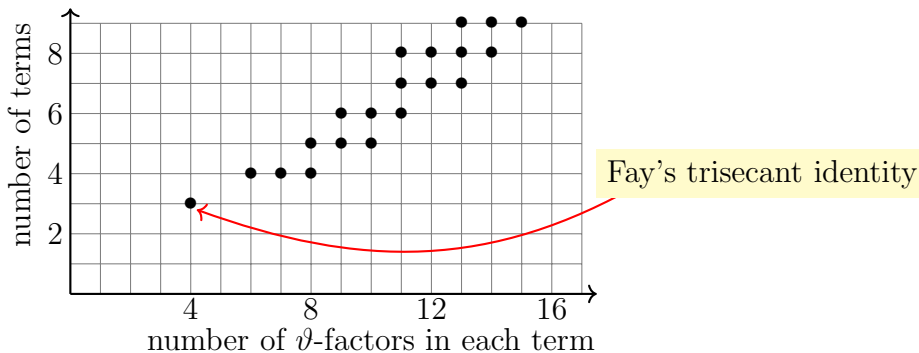

In addition to the fact that bow varieties are closed under 3d mirror symmetry, another key feature of these spaces underpins our main theorem: the computation of their stable envelopes as ``shuffles'' of {\em very simple functions}.

To appreciate this fact, recall that among type A bow varieties, one finds type A Nakajima quiver varieties---though this smaller set is not closed under 3d mirror symmetry. Stable envelope formulas for quiver varieties are already known \cite{smirnov2018elliptic, Dinkins}, and they involve ``shuffling'' some basic formulas. Each of these basic formulas corresponds to a single vertex of the quiver, with a partition assigned to it. As a result, even the basic formulas, from which the recursion begins, exhibit a rich internal structure that reflects the combinatorics of the associated partition.

In contrast, the combinatorial codes for fixed points on bow varieties are tie diagrams that are pictures like this
\begin{equation} \label{eq:tiediagram in Intro}
  \begin{tikzpicture}[scale=.3,baseline=12]
\begin{scope}[yshift=0cm]
\draw [thick,red] (0.5,0) --(1.5,2); 
\draw[thick] (1,1)--(2.5,1)  -- (19,1);
\draw [thick,red](3.5,0) --(4.5,2);  
\draw [thick](4.5,1)--(5.5,1) -- (6.5,1);
\draw [thick,red](6.5,0) -- (7.5,2);  
\draw [thick](7.5,1) --(8.5,1) -- (9.5,1); 
\draw[thick,red] (9.5,0) -- (10.5,2);  
\draw[thick] (10.5,1) --(11.5,1)  -- (12.5,1); 
\draw [thick,blue](13.5,0) -- (12.5,2);   
\draw [thick](13.5,1) --(14.5,1)  -- (15.5,1);
\draw[thick,blue] (16.5,0) -- (15.5,2);  
\draw [thick](16.5,1) --(17.5,1)  -- (18.5,1);  
\draw [thick,blue](19.5,0) -- (18.5,2);  
\draw [dashed, black](1.5,2.2) to [out=45,in=135] (12.5,2.2);
\draw [dashed, black](4.5,2.2) to [out=35,in=145] (15.5,2.2);
\draw [dashed, black](4.5,2.2) to [out=45,in=135] (18.5,2.2);
\draw [dashed, black](7.5,2.2) to [out=25,in=155] (12.5,2.2);
\draw [dashed, black](10.5,2.2) to [out=25,in=155] (15.5,2.2);
\end{scope}
\end{tikzpicture}.
\end{equation}
 Our formula for the corresponding stable envelope is constructed by ``shuffling'' basic formulas associated with the ties (depicted by the dotted lines in the diagram). The basic formula for a single tie is simply a product of theta functions (or, in the case of cohomology, it reduces to an $\h$-power); see \S~\ref{sec:one tie functions} for details. This means that the building blocks of our formulas are trivial, and the {\em entire} structural complexity of the stable envelopes is encoded in the combinatorics of the tie diagram. Ties are more basic building blocks in the combinatorics of bow and quiver varieties than partitions: if partitions are molecules, ties are the atoms. (For more discussion on the relations between partitions and ties, see \cite[Figs.~8,9]{rimanyi2020bow}.)

\smallskip


From the geometric perspective, the ``shuffling" operation corresponds to the multiplication in a suitably framed version of a type A (elliptic) cohomological Hall algebra. The connection between such a multiplication and the stable envelopes of quiver varieties was developed in \cite{botta2021shuffle, botta2023framed, BD}. Hence, a byproduct of the present work is the extension of this framework to bow varieties. This is achieved by linking the stable envelopes of a general bow variety to those of a partial flag variety, where the analogous structure is known to exist. In the language of representation theory, this approach corresponds to relating the R-matrices\footnote{R-matrices arise as transition matrices between stable envelopes associated with different chambers, i.e., inequivalent generic $\Cs$-actions} of the fundamental representations of $\mathfrak{sl}_n$ to those of the vector representation, via the so-called fusion procedure. On the Hall algebra side, this fusion procedure manifests as a modification of the torus weights assigned to the framing edges of the associated quiver.

\medskip

After introducing the necessary geometry and algebra, we state our formula for stable envelopes in \S~\ref{sec:shuffle_formula_for Stab}, and prove it in \S~\ref{sec:proof}. For completeness, we develop the theory and formulas not only for elliptic, but also for K theoretic and cohomological stable envelopes. In \S~\ref{sec:mirror}, we derive the theta function identities that arise from our main theorem and 3d mirror symmetry.

\bigskip

\noindent{\bf Acknowledgments.} 
The first author was supported by grant P500PT-222219 of the Swiss National Science Foundation (SNSF) and a Postdoctoral Fellowship at Columbia University. The second author was supported by NSF grant 2200867. We are grateful to H. Dinkins and S. Jindal for useful discussions on 3d mirror symmetry of bow varieties.

\section{Bow varieties}

\subsection{Separated brane diagrams}
\label{sec:brane diagrams}

Vectors of non-negative integers of the types
\[
\begin{array}{lcrcl}
d^+&= &     & (d_0, & d_1,\ \ d_2,\ \ d_3, \ldots)\\
d^-&=  & (\ldots, d_{-3},\ \ d_{-2},\ \ d_{-1}, & d_0) & \\
d  &=&   (\ldots, d_{-3},\ \ d_{-2},\ \ d_{-1}, & d_0, & d_1,\ \ d_2,\ \ d_3, \ldots),
\end{array}
\]
will be called `right', `left', and `full' dimension vectors, if  
\[
d_{i-1} \leq d_{i} \ \text{for } i\leq 0, \qquad
d_{i} \geq d_{i+1} \ \text{for } i\geq 0,
\]
as well as $d_{i}=d_{-i}=0$ for $i\gg 0$. In essence, both left and right dimensions vectors are partitions, and a full dimension vector is the two partitions glued in the middle. The differences $c_i=d_{i-1}-d_{i}$ for $i\geq 1$ are called D5 charges, and the differences $r_i=d_{-i+1}-d_{-i}$ for $i\geq 1$ are called NS5 charges ($ch$).

We can also encode a full dimension vector by a combinatorial object called  (type A, separated) brane diagram. For example, the brane diagram   
\begin{equation}
    \label{eq: example brane diagram}
    \DD=
\begin{tikzpicture}[baseline=7,scale=.4]
\begin{scope}[yshift=0cm]
\draw [thick,red] (0.5,0) --(1.5,2); 
\draw[thick] (1,1)--(2.5,1) node [above] {$1$} -- (19,1);
\draw [thick,red](3.5,0) --(4.5,2);  
\draw [thick](4.5,1)--(5.5,1) node [above] {$3$} -- (6.5,1);
\draw [thick,red](6.5,0) -- (7.5,2);  
\draw [thick](7.5,1) --(8.5,1) node [above] {$4$} -- (9.5,1); 
\draw[thick,red] (9.5,0) -- (10.5,2);  
\draw[thick] (10.5,1) --(11.5,1) node [above] {$5$} -- (12.5,1); 
\draw [thick,blue](13.5,0) -- (12.5,2);   
\draw [thick](13.5,1) --(14.5,1) node [above] {$3$} -- (15.5,1);
\draw[thick,blue] (16.5,0) -- (15.5,2);  
\draw [thick](16.5,1) --(17.5,1) node [above] {$1$} -- (18.5,1);  
\draw [thick,blue](19.5,0) -- (18.5,2);  
\node at (.5,-1) {$\Zb_4$};
\node at (3.5,-1) {$\Zb_3$};
\node at (6.5,-1) {$\Zb_2$};
\node at (9.5,-1) {$\Zb_1$};
\node at (13.5,-1) {$\Ab_1$};
\node at (16.5,-1) {$\Ab_2$};
\node at (19.5,-1) {$\Ab_3$};
\end{scope}
\end{tikzpicture}
\end{equation}
is associated to the dimension vector $(\ldots,0,1,3,4,5,3,1,0,\ldots)$ (with the necessary $d_0=5$). The red forward-leaning lines are called NS5 branes, denoted by $\Zb$. The blue backward-leaning lines are called D5 branes, denoted by $\Ab$. The black lines between 5-branes are called D3 branes, denoted by $\Xb$ (or by $\ldots, \Xb_{-2}, \Xb_{-1}, \Xb_{0}, \Xb_{1}, \Xb_{2}, \ldots$), and the associated $d_i$ is also called the multiplicity of that D3 brane. The charge $r_i$ is now associated to the NS5 brane $\Zb_i$, and the charge $c_i$ is associated to the D5 brane $\Ab_i$. For example $ch(\Zb_2)=1$, $ch(\Ab_2)=2$.
For brevity, we can  use the more compact notation
$
\DD=\ttt{\fs $1$\fs $3$\fs $4$\fs $5$\bs $3$\bs $1$\bs }.
$

The combinatorial objects `dimension vector $d$', `pair of charge vectors $r,c$', and `brane diagram $\DD$' all encode the same information and will be used interchangeably. In what follows, we will associate a variety $\Ch$ with these combinatorial structures, which can therefore be referred to by any of these names. For example,
\begin{multline*}
\Ch(1,3,4,d_0=5,3,1)
=
\Ch(d^-=(5,4,3,1), d^+=(5,3,1))
\\
=
\Ch(r=(1,1,2,1),c=(2,2,1))
=
\Ch(\DD).
\end{multline*}
Note the convention of suppressing unnecessary 0's, as well as numbering $\Zb_i$'s and listing the components of $d^-$ and $r$ {\em from right to left}.

\smallskip

\noindent{\em Assumption 1.} In the whole paper we will assume that the charges of fivebranes, except those located between two 0-multiplicity D3 branes, are positive. This assumption is not restrictive, as it is demonstrated in \cite[\S2.8]{BR23} that any bow variety initially violating this condition can also be presented as a bow variety satisfying it.

\subsection{Bow varieties}
\label{sec:def of bow variety}
In this section we recall the quiver description of bow varieties introduced by Nakajima and Takayama in \cite{Nakajima_Takayama}. We refer to [loc. cit.] for more details. Let $\DD$ be a brane diagram. To a D3 brane $\Xb$ we associate a complex vector space $W_{\Xb}$ of dimension $d_{\chi}$. To a D5 brane $\Ab$ we associate a one-dimensional space $\C_{\Ab}$ with the standard $\GL(\C_{\Ab})$ action and the ``three-way part''
\begin{align}
\begin{split}
\label{eq: three way part}
\MM_{\Ab}= &\Hom(W_{{\Ab}^+},W_{{\Ab}^-}) \oplus
\h\Hom(W_{{\Ab}^+},\C_{\Ab}) \oplus \Hom(\C_{\Ab},W_{{\Ab}^-}) \\
&  \oplus\h\End(W_{{\Ab}^-}) \oplus \h\End(W_{{\Ab}^+}),
\end{split}
\end{align} 
with elements denoted $(A_{\Ab}, b_{\Ab}, a_{\Ab}, B_{\Ab}, B'_{\Ab})$, and $\NN_{\Ab}=\h\Hom(W_{{\Ab}^+},W_{{\Ab}^-})$.  Here, $\hbar$ indicates the action of an additional torus $\Cs_{\hbar}$.  To an NS5 brane $\Zb$ we associate the ``two-way part''
\begin{equation}
    \label{eq: two way part}
    \MM_{\Zb}= \h\Hom(W_{{\Zb}^+},W_{{\Zb}^-}) \oplus
\Hom(W_{{\Zb}^-},W_{{\Zb}^+}),
\end{equation}
whose elements will  be denoted by $(C_{\Zb}, D_{\Zb})$. To a D3 brane $\Xb$ we associate $\NN_{\Xb}=\h\End(W_{\Xb})$. In these formulas, the $\h$ factor means an action of an extra $\Cs$ factor called $\Cs_{\h}$.
Let 
\[
\MM=\bigoplus_{\Ab} \MM_{\Ab} \oplus \bigoplus_{\Zb} \MM_{\Zb}, \qquad \NN=\bigoplus_{\Ab} \NN_{\Ab} \oplus \bigoplus_{\Xb} \NN_{\Xb}.
\]
We define a map $\mu:\MM \to \NN$ componentwise as follows.
\begin{itemize}
\item 
The $\NN_{\Ab}$-component of $\mu$ is 
$B_{\Ab} A_{\Ab} -A_{\Ab} B'_{\Ab}+a_{\Ab} b_{\Ab}$.
\item
The $\NN_{\Xb}$-components of $\mu$ depend on the diagram:
\begin{itemize}
\item[\ttt{\bs -\bs}] If $\Xb$ is in between two D5 branes then it is $B'_{\Xb^-}-B_{\Xb^+}$. 
\item[\ttt{{\fs}-{\fs}}] If $\Xb$ is in between two NS5 branes then it is $C_{\Xb^+}D_{\Xb^+}$ $-D_{\Xb^-}C_{\Xb^-}$.
\item[\ttt{{\fs}-\bs}] If $\Xb^-$ is an NS5 brane and $\Xb^+$ is a D5 brane then it is $-D_{\Xb^-}C_{\Xb^-}$ $-B_{\Xb^+}$.
\item[\ttt{\bs -{\fs}}] If $\Xb^-$ is a  D5 brane and $\Xb^-$ is an NS5 brane then it is $C_{\Xb^+}D_{\Xb^+}+B'_{\Xb^-}$. (This case does not happen for our separated brane diagrams.)
\end{itemize}
\end{itemize}
consist of points of $\mu^{-1}(0)\subset \MM$ for which the stability conditions  
\begin{itemize}
\item[(S1)]  if $S\leq W_{{\Ab}^+}$ is a subspace with $B'_{\Ab}(S)\subset S$, $A_{\Ab}(S)=0$, $b_{\Ab}(S)=0$ then $S=0$,
\item[(S2)]  if $T \leq W_{{\Ab}^-}$ is a subspace with $B_{\Ab}(T)\subset T$, $Im(A_{\Ab})+Im(a_{\Ab})\subset T$ then $T=W_{\Ab^-}$
\end{itemize}
hold. As shown by Takayama \cite[Prop.~2.9]{takayama_2016}, the open subset $\widetilde{\mathcal M}\subset \mu^{-1}(0)$ is affine. Let $G=\prod_{\Xb} \GL(W_\Xb)$ and consider the character 
\begin{equation}\label{eq:character}
\chi: G\ \to \C^{\times}, 
\qquad\qquad
(g_\Xb)_{\Xb} \mapsto \prod_{\Xb'} \det(g_{\Xb'}),
\end{equation}
where the product runs for D3 branes $\Xb'$ such that $(\Xb')^-$ is an NS5 brane (in picture \ttt{{\fs}X'}). The Bow variety $\Ch(\DD)$ is defined as the GIT quotient
\[
\Ch(\DD):=\widetilde{\mathcal{M}}\sslash^{\chi} G.
\]

Let $G$ act on $\widetilde{\mathcal M} \times \C$ by $g.(m,x)=(gm,\chi^{-1}(g)x)$. We say that $m\in \widetilde{\mathcal M}$ is semistable (notation $m\in \widetilde{\mathcal M}^{ss}$) if the closure of the orbit $G(m, x)$ in $\widetilde{\mathcal{M}}\times \C$ does not intersect with $\widetilde{\mathcal{M}}\times\{0\}$ for any (every) $x\neq 0$. We say that $m$ is stable (notation $m\in \widetilde{\mathcal M}^{s}$) if $G(m,x)$ is closed and the stabilizer of $(m,x)$ is finite for $x\neq 0$. Although a priori different, semistability and stability coincide for the chosen character $\chi$. This fact can be proved by transposing Nakajima's original argument \cite[\S~3]{Nakajimaquiver} in the setting of bow varieties. Since the stabilizers are trivial \cite[Lemma 2.10]{Nakajima_Takayama}, the GIT quotient map $\widetilde{\mathcal{M}}^{ss}\to \Ch(\DD)$ is a $G$-torsor. Consequently, $\Ch(\DD)$  can be identified with the orbit space $\widetilde{\mathcal{M}}^{ss}/G=\widetilde{\mathcal{M}}^{s}/G$. In particular, $\Ch(\DD)$ is smooth. 

\subsection{Affinization and handsaw variety} \label{sec:Affinization}

Although the stability conditions (S1) and (S2) are open, the variety $\widetilde{\mathcal M}$ is affine  \cite[\S~2]{takayama_2016}. As a consequence, bow varieties come with a projective morphism
\[
\pi:X(\DD)\to X_0(\DD):=\text{Spec}(\C[\widetilde{\mathcal M}]^{G})
\]
to an affine variety. By neglecting the two-way parts of the diagram, we get a map $  \mu_{HS}:(\oplus_{\Ab} \MM_{\Ab})\to (\oplus_{\Ab} \NN_{\Ab}) \oplus (\oplus_{\Xb} \NN_{\Xb}) $.
The subscript refers to the shape of the associated quiver, which resembles a handsaw. Set $G_{HS}=\prod_{\Ab}\GL(W_{\Ab_-})$. Let $\widetilde{ \mathcal{M}}_{HS}$ denote the subvariety of $\mu_{HS}^{-1}(0)$ satisfying the conditions (S1) and (S2). It is also affine and its quotient 
\[
HS(\DD):=\text{Spec}(\C[\widetilde{\mathcal{M}}_{HS}]^{G_{HS}})
\]
is called a handsaw variety. As shown in \cite[Prop. 2.9 and Cor. 2.21]{takayama_2016}, the $G_{\DD}$ action on $\widetilde{\mathcal M}$ is free and all its orbits are closed, so $HS(\DD)$ is just the orbit space $\widetilde{\mathcal{M}}_{HS}/G_{HS}$. The natural projection $\widetilde{\mathcal{M}}\to \widetilde{\mathcal{M}}_{HS}$ descends to an affine morphism $\rho: X_0(\DD)\to HS(\DD)$. Altogether, we have morphisms 
\begin{equation}
    \label{maps from bow to affine bow and handsaw}
    X(\DD)\xrightarrow[]{\pi} X_0(\DD) \xrightarrow[]{\rho} HS(\DD).
\end{equation}

\subsection{Bow varieties and partial flag varieties}
\label{subsec: bow = partial flag} 
Let $m$ be the numer of NS5 branes in the diagram $\DD$. If now we neglect the D5 part of the brane diagram, we get a map $\mu_{A}:(\oplus_{\Zb} \MM_{\Zb})\to \bigoplus_{\Zb} \NN_{\Zb_-}$. The subscript now refers to the shape of the associated quiver, which is of type $A$. Set $G_{A}=\prod_{i=1}^m \GL(W_{\Zb_-})$. 
Set $\dd=(0=d_{-m},d_{-m+1}, \dots, d_{-2}, d_{-1})\in \NN^{m}$ and $\ww =(0,\dots , 0, d_{0})\in \NN^{m}$. We have an isomorphism
\begin{equation}
    \label{eq: quiver rep and ND5 bow}\oplus_{\Zb} \MM_{\Zb}=T^*\Rep_{A_{m}}(\dd, \ww),
\end{equation}
where $\Rep_{A_{m}}(\dd, \ww)$ is the space of representations of the quiver $A_{m}$ with dimension $\dd$ and framing $\ww$. Moreover, this isomorphism is $G_A$-equivariant, and identifies $\mu_A$ with the moment map for the action of $G_A$ on $T^*\Rep_{A_m}(\dd, \ww)$.  Notice that $G_A=\prod_{i=1}^m\GL(d_{-i})$ only acts on the vertices of the principal quiver and not on the framing.

We restrict the stability condition $\chi:G\to \Cs$ to $G_{A}\subset G$ and consider the symplectic quotient 
\[
\mu_A^{-1}(0)\sslash^{\chi} G_A
\]
By \eqref{eq: quiver rep and ND5 bow} and the comparison of the moment maps, it follows that we have a canonical isomorphism
\[
\mu_A^{-1}(0)\sslash^{\chi} G_A \cong X_{A_{m}}(\dd, \ww),
\]
where $\mathcal{M}(\dd, \ww)$ is a Nakajima quiver variety. More precisely, because of our specific choice of stability condition and dimension vectors, we have $X_{A_{m}}(\dd, \ww)\cong T^*\Fl(\dd, d_0)$, where $\Fl(\dd, d_0)$ is the variety parametrizing flags of quotients 
\[
0={\C}^{d_{-m}}\twoheadleftarrow \C^{d_{-1}} \twoheadleftarrow\dots \twoheadleftarrow \C^{d_{-1}} \twoheadleftarrow \C^{d_{0}}
\]
\begin{example}
    Consider the bow variety $\DD=\ttt{\fs $1$\fs $3$\fs $4$\fs $5$\fs $7$\bs $4$\bs $2$\bs }$.
Then $\oplus_{\Zb} \MM_{\Zb}$ is the space of framed representation of the quiver $A_5$

\begin{center}
\begin{tikzpicture}
    \node[circle, draw] (0) at (0,0) {$0$};
    \node[circle, draw] (1) at (2,0) {$1$};
    \node[circle, draw] (2) at (4,0) {$3$};
    \node[circle, draw] (3) at (6,0) {$4$};
    \node[circle, draw] (4) at (8,0) {$5$};
    \node[rectangle, draw] (5) at (10,0) {$7$};

    \draw[->] ([yshift=-0.1cm] 0.east) -- ([yshift=-0.1cm] 1.west);
    \draw[->] ([yshift=-0.1cm] 1.east) -- ([yshift=-0.1cm] 2.west);
    \draw[->] ([yshift=-0.1cm] 2.east) -- ([yshift=-0.1cm] 3.west);
    \draw[->] ([yshift=-0.1cm] 3.east) -- ([yshift=-0.1cm] 4.west);
    \draw[->] ([yshift=-0.1cm] 4.east) -- ([yshift=-0.1cm] 5.west);

    \draw[->] ([yshift=0.1cm] 1.west) -- ([yshift=0.1cm] 0.east);
    \draw[->] ([yshift=0.1cm] 2.west) -- ([yshift=0.1cm] 1.east);
    \draw[->] ([yshift=0.1cm] 3.west) -- ([yshift=0.1cm] 2.east);
    \draw[->] ([yshift=0.1cm] 4.west) -- ([yshift=0.1cm] 3.east);
    \draw[->] ([yshift=0.1cm] 5.west) -- ([yshift=0.1cm] 4.east);
\end{tikzpicture}.
\end{center}
Here, we follow the common practice of denoting the framing vertices with a square. Circles denote the vertices of the principal quiver $A_5$. 
\end{example}

\begin{proposition}[{\cite{BR23,ji2024bow}}]
\label{prop: charge one bows}
    Assume that all the D5 charges $c_i$ are equal to one. 
\begin{itemize}
    \item There is an isomorphism $T^*\Fl(\dd, d_0)\cong  X(\DD)$.
    \item $ HS(\DD)$ is isomorphic to $\gl(d_0)$.
    \item The map $T^*\Fl(\dd, d_0)\cong  X(\DD) \to HS(\DD)\cong \gl(d_0)$ is the moment map for the natural action of $\GL(d_0)$ on $T^*\Fl(\dd, d_0)$.
\end{itemize} 
\end{proposition}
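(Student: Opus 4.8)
The plan is to carry out the GIT quotient defining $X(\DD)$ in stages, isolating the contribution of the D5 branes, which under the charge-one hypothesis form a complete staircase. Indeed, $c_i=1$ for all $i$ means exactly that $d^+=(d_0,d_0-1,\dots,1,0)$, so there are $d_0$ D5 branes $\Ab_1,\dots,\Ab_{d_0}$ and the D5-side D3 multiplicities are $d_0>d_0-1>\dots>0$. Write $V_i=W_{\Xb_i}$ for these spaces, so $\dim V_i=d_0-i$ and $V_0=\C^{d_0}$. The gauge group then splits as $G=G_A\times G_{HS}$, where $G_A=\prod_{i=1}^m\GL(d_{-i})$ acts only on the NS5-side D3 spaces and $G_{HS}=\prod_{i=0}^{d_0-1}\GL(V_i)$ acts only on the D5-side ones; crucially $\GL(V_0)$ lies in $G_{HS}$, while $V_0$ is simultaneously the framing of the type-$A$ quiver of \S\ref{subsec: bow = partial flag}. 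I will deduce all three bullets from one structural statement about the staircase.

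\textbf{Crux.} Let $\FM_{\mathrm{D5}}$ denote the space of D5-side three-way data $(A_{\Ab_i},a_{\Ab_i},b_{\Ab_i},B_{\Ab_i},B'_{\Ab_i})$ cut out by the $\NN_\Ab$- and the internal $\NN_\Xb$-components of $\mu$ and by the stability conditions (S1)--(S2). Then, on the stable locus, the quotient of $\FM_{\mathrm{D5}}$ by $\prod_{i=1}^{d_0-1}\GL(V_i)$ (all D5-side factors \emph{except} $\GL(V_0)$) is $\GL(V_0)$-equivariantly isomorphic to $\GL(V_0)\times\gl(d_0)$, with $\GL(V_0)$ acting by left translation on the first factor and by conjugation on the second, in such a way that the recorded endomorphism $B_{\Ab_1}\in\End(V_0)$ is the $\gl(d_0)$-factor.

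I would prove the Crux by induction on $d_0$, peeling off the leftmost D5 brane $\Ab_1$ (with $W_{\Ab_1^-}=V_0$, $W_{\Ab_1^+}=V_1$). Since $G_{HS}$ acts freely with closed orbits, it suffices to produce a $\GL(V_0)$-equivariant slice. The expected mechanism is that stability, propagated along the staircase, makes each pair $(A_{\Ab_i},a_{\Ab_i})\colon V_i\oplus\C\to V_{i-1}$ an isomorphism (the charge-one gap being filled by $a_{\Ab_i}$); these isomorphisms absorb $\prod_{i\ge1}\GL(V_i)$ into the residual $\GL(V_0)$ and identify $V_0\cong\C^{d_0}$, furnishing the free factor $\GL(V_0)$. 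The relations $B_{\Ab_i}A_{\Ab_i}-A_{\Ab_i}B'_{\Ab_i}+a_{\Ab_i}b_{\Ab_i}=0$ and the internal gluings $B'_{\Ab_i}=B_{\Ab_{i+1}}$ then express $B_{\Ab_1}$ recursively through the $b_{\Ab_i}$ and sweep out $\gl(d_0)$ bijectively. Checking that stability genuinely forces these maps to be isomorphisms on the whole stable locus---so that the two group factors decouple exactly as stated---is the main obstacle.

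Granting the Crux, the three assertions follow cleanly. For the handsaw, $HS(\DD)=\FM_{\mathrm{D5}}\sslash G_{HS}=\big(\GL(V_0)\times\gl(d_0)\big)\sslash\GL(V_0)=\gl(d_0)$, since $\GL(V_0)$ acts freely on the first factor; this is the second bullet. For the bow variety I quotient first by $G_A$: by \S\ref{subsec: bow = partial flag} the NS5 data modulo $G_A$ is $T^*\Fl(\dd,d_0)$, carrying the residual Hamiltonian $\GL(V_0)$-action whose moment map is $\mu_0=-D_{\Zb_1}C_{\Zb_1}$. The only surviving coupling between the two sides is the moment-map component of $\mu$ at the junction brane $\Xb_0$ (of type \ttt{{\fs}-\bs}), namely $B_{\Ab_1}=-D_{\Zb_1}C_{\Zb_1}=\mu_0$. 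Presenting the D5 data as $\GL(V_0)\times\gl(d_0)$ via the Crux and gauge-fixing the free $\GL(V_0)$-factor, this equation reads $X=\mu_0(p)$ for $(p,X)\in T^*\Fl(\dd,d_0)\times\gl(d_0)$ and therefore merely eliminates $X$. Hence $X(\DD)\cong T^*\Fl(\dd,d_0)$, which is the first bullet; and under this identification the composite $X(\DD)\xrightarrow{\ \pi\ }X_0(\DD)\xrightarrow{\ \rho\ }HS(\DD)=\gl(d_0)$, which by construction records $B_{\Ab_1}$, is precisely $p\mapsto\mu_0(p)$, the moment map for the natural $\GL(d_0)$-action on $T^*\Fl(\dd,d_0)$. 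This establishes the third bullet.
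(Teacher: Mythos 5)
The paper itself gives no proof of this proposition: it is imported wholesale from \cite{BR23,ji2024bow}, so there is nothing internal to compare against line by line. That said, your architecture is essentially the one used in those sources and implicitly by this paper: the presentation of an arbitrary bow variety as $T^*\Fl(N)\times_{\gl(N)}HS$ (invoked verbatim in the proof of Lemma~\ref{lemma: D5 res bundles pullback}) is exactly your staged quotient, and the charge-one case is then $T^*\Fl(\dd,d_0)\times_{\gl(d_0)}\gl(d_0)\cong T^*\Fl(\dd,d_0)$, with the fiber-product equation at the junction brane $\Xb_0$ (of type \ttt{{\fs}-\bs}, giving $B_{\Ab_1}=-D_{\Zb_1}C_{\Zb_1}$) matching your elimination of the $\gl(d_0)$-factor. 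Your Crux, including the identification of the residual data with a frame of $V_0$ (the $\GL(V_0)$-torsor factor) together with the single matrix $B_{\Ab_1}$, is correct, and all three bullets do follow from it as you say.

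The one genuine issue is the step you yourself flag, and it deserves two corrections. First, as stated it is false that \emph{stability alone} forces $(A_{\Ab_i},a_{\Ab_i})\colon V_i\oplus\C\to V_{i-1}$ to be an isomorphism: e.g.\ $A=0$, $b\neq 0$, $a\neq 0$ with $a,Ba$ spanning satisfies (S1)--(S2) but has rank one; the moment map equation is indispensable. Second, the lemma is true on $\mu^{-1}(0)$ with (S1)--(S2), and the proof is short and local to each triangle, so the gap is fillable: drop subscripts and write $A\colon V_2\to V_1$ with $\dim V_1=\dim V_2+1$ and $BA-AB'+ab=0$. If $a\in \mathrm{Im}(A)$, then $B\,\mathrm{Im}(A)=\mathrm{Im}(AB'-ab)\subseteq \mathrm{Im}(A)+\C a=\mathrm{Im}(A)$, so $\mathrm{Im}(A)$ is a $B$-invariant subspace containing $\mathrm{Im}(A)+\mathrm{Im}(a)$; by (S2) it equals $V_1$, impossible since $\dim V_2<\dim V_1$. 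Hence $a\notin\mathrm{Im}(A)$. Next, if some $v\in\ker A$ had $b(v)\neq 0$, then $AB'v=BAv+a\,b(v)$ would give $a\in\mathrm{Im}(A)$, just excluded; so $\ker A\subseteq\ker b$, and then $AB'v=BAv+a\,b(v)=0$ for $v\in\ker A$ shows $\ker A$ is $B'$-invariant and contained in $\ker A\cap\ker b$, whence (S1) forces $\ker A=0$. Injectivity of $A$ plus $a\notin\mathrm{Im}(A)$ makes $(A,a)$ injective, hence an isomorphism by dimension count. With this lemma, your gauge-fixing goes through exactly as sketched: setting $(A_i,a_i)$ to the standard inclusion and basis vector, the relations $B_{\Ab_i}|_{V_i}=B_{\Ab_{i+1}}-e_i b_i$ recursively encode all remaining data in $B_{\Ab_1}\in\End(V_0)$, stability is automatic in this gauge (the $A_i$ are injective, so (S1) is vacuous, and $\mathrm{Im}A_i+\mathrm{Im}a_i=V_{i-1}$ makes (S2) vacuous), and the only remaining bookkeeping is that the GIT character $\chi$ restricted to the free $\GL(V_0)$-torsor factor imposes no condition, so semistability reduces to the quiver stability on the NS5 side.
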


In \S~\ref{subsec: D5 res} we will show that, for arbitrary bow varieties, we have a regular embedding $X(\DD)\hookrightarrow T^*\Fl(\dd, d_0)$.
\subsection{Torus action}
\label{sec: torus action}
Let $n$ be the number of D5 branes in $\DD$. Beyond the complex structure, the bow variety $\Ch(\DD)$ carries a symplectic structure\footnote{We do not explicitly make use of the symplectic structure in this article.} as well as the action of a $n+1$ dimensional torus. The torus $\Tt$ acting on $\Ch(\DD)$ can be decomposed as $\Tt=\At\times \C_{\h}^\times$, where $\At=\prod_{i=1}^n \GL(\C_{\Ab_i})$ is a rank $n$ torus rescaling the spaces $\C_{\Ab_i}$ in $\MM_{\Ab_i}$, cf. \eqref{eq: three way part}, and $\C_{\h}^\times$ rescales the arrows of \eqref{eq: three way part} and \eqref{eq: two way part} as prescribed by the weight $\hbar$. Since both actions preserve the zero locus $\mu^{-1}(0)$, they descend to the GIT quotient $\Ch(\DD)$.

\subsection{Torus fixed points}
\label{sec:fixed points}
Let $\DD$ be a brane diagram. A {\em tie diagram} of $\DD$ is a set of pairs $(\Zb,\Ab)$ where $\Zb$ is an NS5 brane and $\Ab$ is a D5 brane. We indicate such a pair by a curve (`tie') connecting the two fivebranes. We also require that {\em each D3 brane is covered by as many ties as its multiplicity}. Two of the 12 tie diagrams of the brane diagram \eqref{eq: example brane diagram} are below, and another one is \eqref{eq:tiediagram in Intro}.
\begin{equation}
   \label{eq:tiediagram}
\begin{tikzpicture}[baseline=7,scale=.35]
\begin{scope}[yshift=0cm]
\draw [thick,red] (0.5,0) --(1.5,2); 
\draw[thick] (1,1)--(2.5,1) node [below] {$1$} -- (19,1);
\draw [thick,red](3.5,0) --(4.5,2);  
\draw [thick](4.5,1)--(5.5,1) node [below] {$3$} -- (6.5,1);
\draw [thick,red](6.5,0) -- (7.5,2);  
\draw [thick](7.5,1) --(8.5,1) node [below] {$4$} -- (9.5,1); 
\draw[thick,red] (9.5,0) -- (10.5,2);  
\draw[thick] (10.5,1) --(11.5,1) node [below] {$5$} -- (12.5,1); 
\draw [thick,blue](13.5,0) -- (12.5,2);   
\draw [thick](13.5,1) --(14.5,1) node [below] {$3$} -- (15.5,1);
\draw[thick,blue] (16.5,0) -- (15.5,2);  
\draw [thick](16.5,1) --(17.5,1) node [below] {$1$} -- (18.5,1);  
\draw [thick,blue](19.5,0) -- (18.5,2);  
\node at (.5,-1) {$\Zb_4$};
\node at (3.5,-1) {$\Zb_3$};
\node at (6.5,-1) {$\Zb_2$};
\node at (9.5,-1) {$\Zb_1$};
\node at (13.5,-1) {$\Ab_1$};
\node at (16.5,-1) {$\Ab_2$};
\node at (19.5,-1) {$\Ab_3$,};
\draw [dashed, black](7.5,2.2) to [out=25,in=155] (12.5,2.2);
\draw [dashed, black](10.5,2.2) to [out=25,in=155] (15.5,2.2);
\draw [dashed, black](4.5,2.2) to [out=35,in=145] (12.5,2.2);
\draw [dashed, black](4.5,2.2) to [out=45,in=135] (18.5,2.2);
\draw [dashed, black](1.5,2.2) to [out=45,in=135] (15.5,2.2);
\end{scope}
\end{tikzpicture}
\quad
\begin{tikzpicture}[baseline=7,scale=.35]
\begin{scope}[yshift=0cm]
\draw [thick,red] (0.5,0) --(1.5,2); 
\draw[thick] (1,1)--(2.5,1) node [below] {$1$} -- (19,1);
\draw [thick,red](3.5,0) --(4.5,2);  
\draw [thick](4.5,1)--(5.5,1) node [below] {$3$} -- (6.5,1);
\draw [thick,red](6.5,0) -- (7.5,2);  
\draw [thick](7.5,1) --(8.5,1) node [below] {$4$} -- (9.5,1); 
\draw[thick,red] (9.5,0) -- (10.5,2);  
\draw[thick] (10.5,1) --(11.5,1) node [below] {$5$} -- (12.5,1); 
\draw [thick,blue](13.5,0) -- (12.5,2);   
\draw [thick](13.5,1) --(14.5,1) node [below] {$3$} -- (15.5,1);
\draw[thick,blue] (16.5,0) -- (15.5,2);  
\draw [thick](16.5,1) --(17.5,1) node [below] {$1$} -- (18.5,1);  
\draw [thick,blue](19.5,0) -- (18.5,2);  
\node at (.5,-1) {$\Zb_4$};
\node at (3.5,-1) {$\Zb_3$};
\node at (6.5,-1) {$\Zb_2$};
\node at (9.5,-1) {$\Zb_1$};
\node at (13.5,-1) {$\Ab_1$};
\node at (16.5,-1) {$\Ab_2$};
\node at (19.5,-1) {$\Ab_3$.};
\draw [dashed, black](7.5,2.2) to [out=25,in=155] (18.5,2.2);
\draw [dashed, black](10.5,2.2) to [out=25,in=155] (12.5,2.2);
\draw [dashed, black](4.5,2.2) to [out=35,in=145] (12.5,2.2);
\draw [dashed, black](4.5,2.2) to [out=45,in=135] (15.5,2.2);
\draw [dashed, black](1.5,2.2) to [out=45,in=135] (15.5,2.2);
\end{scope}
\end{tikzpicture}
\end{equation}
The number of ties adjacent to a fivebrane is necessarily its charge. 

We can encode tie diagrams with 01-matrices of size $m\times n$, called binary contigency tables (BCTs), that indicate which fivebranes are tied. The BCTs of the two tie diagrams above are
\[
\begin{tikzpicture}
        \node at (0,0) {$\Zb_4$};
    \node at (.75,0) {$0$};
    \node at (1.5,0) {$1$};
    \node at (2.25,0) {$0$};
        \node at (0,0.75) {$\Zb_3$};
    \node at (.75,0.75) {$1$};
    \node at (1.5,0.75) {$0$};
    \node at (2.25,0.75) {$1$};
        \node at (0,1.5) {$\Zb_2$};
    \node at (.75,1.5) {$1$};
    \node at (1.5,1.5) {$0$};
    \node at (2.25,1.5) {$0$};
        \node at (0,2.25) {$\Zb_1$};
    \node at (.75,2.25) {$0$};
    \node at (1.5,2.25) {$1$};
    \node at (2.25,2.25) {$0$};
    \node at (.75,3) {$\Ab_1$};
    \node at (1.5,3) {$\Ab_2$};
    \node at (2.25,3) {$\Ab_3$};
    \draw[thick] (0.5,-.3) -- (2.5,-.3) -- (2.5,2.5) -- (0.5,2.5) -- (0.5,-.3);
\end{tikzpicture},
\qquad\qquad
\begin{tikzpicture}
        \node at (0,0) {$\Zb_4$};
    \node at (.75,0) {$0$};
    \node at (1.5,0) {$1$};
    \node at (2.25,0) {$0$};
        \node at (0,0.75) {$\Zb_3$};
    \node at (.75,0.75) {$1$};
    \node at (1.5,0.75) {$1$};
    \node at (2.25,0.75) {$0$};
        \node at (0,1.5) {$\Zb_2$};
    \node at (.75,1.5) {$0$};
    \node at (1.5,1.5) {$0$};
    \node at (2.25,1.5) {$1$};
        \node at (0,2.25) {$\Zb_1$};
    \node at (.75,2.25) {$1$};
    \node at (1.5,2.25) {$0$};
    \node at (2.25,2.25) {$0$}; 
    \node at (.75,3) {$\Ab_1$};
    \node at (1.5,3) {$\Ab_2$};
    \node at (2.25,3) {$\Ab_3$};
    \draw[thick] (0.5,-.3) -- (2.5,-.3) -- (2.5,2.5) -- (0.5,2.5) -- (0.5,-.3);
\end{tikzpicture}.
\]
The row and column sums of the BCTs are exactly the charges of the fivebranes. Let $\BCT(r,c)$ denote the collection of BCTs with row and column sums $r$ and $c$.

Our notation for a tie connecting $\Zb_k$ with $\Ab_l$ will be $\One_{kl}$, and we write $\cup$ for their unions. Hence the tie diagram above on the left will be denoted $\One_{12}\cup \One_{21}\cup\One_{31}\cup\One_{33}\cup\One_{42}$. The geometric significance of tie diagrams is the following theorem.

\begin{theorem}
\cite{rimanyi2020bow, BR23}
\label{thm:torus fixed points}
The natural inclusion $X(\DD)^{\Tt}\hookrightarrow X(\DD)^{\At}$ is the identity map. The set $X(\DD)^{\Tt}$ (equivalently, $X(\DD)^{\At}$) is finite. It is in bijection with the set of tie diagrams of $\DD$. It is also in bijection with $\BCT(r,c)$ where $r$ and $c$ are the charge vectors of $\DD$.
\end{theorem}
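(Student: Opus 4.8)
The plan is to describe the fixed points of any subtorus $S\subseteq\Tt$ as isomorphism classes of $S$-equivariant bow representations, and then to run a weight-space analysis controlled by the stability conditions (S1), (S2) and the moment-map equations $\mu=0$. Since the GIT quotient map $\widetilde{\mathcal M}^{s}\to X(\DD)$ is a $G$-torsor and $G$ acts freely, a point $[m]\in X(\DD)$ is $S$-fixed if and only if its $S$-orbit lies inside the $G$-orbit of $m$, i.e. if and only if there is a (necessarily unique, hence homomorphic) map $\phi\colon S\to G$ with $s\cdot m=\phi(s)\,m$ for all $s$. Such a $\phi$ is the same datum as a grading of the spaces $\{W_\Xb\}$ by characters of $S$ for which every structure map $A,B,B',a,b,C,D$ is $S$-equivariant, the prescribed weights being $u_l$ on each framing $\C_{\Ab_l}$ and a shift by $\hbar$ on the $\hbar$-marked arrows. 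I would set this identification up once and use it for both $S=\At$ and $S=\Tt$.

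For $S=\Tt$ I would decompose $W_\Xb=\bigoplus_w W_\Xb[w]$ over $\Tt$-weights $w\in\widehat{\At}\oplus\Z\hbar$. Untwisted maps preserve $w$; the $\hbar$-twisted maps $B,B',C,b$ preserve the $\At$-component and raise the $\hbar$-component by one; and the only sources of nonzero weight are the framings $a_{\Ab_l}$, each injecting $(u_l,0)$. Reading the diagram from its empty right-hand end and using that (S2) forces $W_{\Ab^-}$ to be the smallest $B_\Ab$-invariant subspace containing $\mathrm{Im}(A_\Ab)+\mathrm{Im}(a_\Ab)$, one shows by induction that every weight occurring in $W_\Xb$ has the form $(u_l,k\hbar)$ for a D5 brane $\Ab_l$ to the right of $\Xb$ with $0\le k<c_l$, and that each such weight space is at most one-dimensional. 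In particular $X(\DD)^{\Tt}$ is finite and every fixed representation is a direct sum of one-dimensional weight lines.

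Next I would extract the combinatorics. Each weight line of weight $(u_l,k\hbar)$ is the image $B_{\Ab_l}^{k}a_{\Ab_l}(\C_{\Ab_l})$, so the $c_l$ lines born at $\Ab_l$ are exactly the $c_l$ ties meeting $\Ab_l$; such a line propagates leftward along the D3 branes until the two-way maps $C,D$ can no longer carry it, and the NS5 brane $\Zb_k$ at which it stops is its tie-partner. Recording the pair $(\Zb_k,\Ab_l)$ for every line produces a tie diagram, and the requirement that the lines lying over a D3 brane $\Xb$ span $W_\Xb$ is precisely the condition that $\Xb$ be covered by $d_{\chi}=\dim W_\Xb$ ties. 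Conversely, from any tie diagram one builds the explicit coordinate representation $\bigoplus\One_{kl}$, which I would verify satisfies $\mu=0$ and (S1), (S2) and is rigid, so that distinct tie diagrams give non-isomorphic data; this yields the bijection $X(\DD)^{\Tt}\leftrightarrow\{\text{tie diagrams}\}$. Finally, a tie diagram is by definition a subset of $\{\Zb_k\}\times\{\Ab_l\}$, i.e. an $m\times n$ matrix of $0$'s and $1$'s whose row and column sums count the ties at each fivebrane, namely the charges $r$ and $c$; this gives the bijection with $\BCT(r,c)$.

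It remains to prove $X(\DD)^{\At}=X(\DD)^{\Tt}$. The inclusion $X(\DD)^{\Tt}\subseteq X(\DD)^{\At}$ is automatic. For the converse, since $\At$ and $\Cs_{\h}$ commute, $\Cs_{\h}$ preserves $X(\DD)^{\At}$; as $\Cs_{\h}\cong\Cs$ is connected it acts trivially on any finite set, so once $X(\DD)^{\At}$ is known to be finite it is automatically $\Cs_{\h}$-fixed and $X(\DD)^{\At}=X(\DD)^{\At}\cap X(\DD)^{\Cs_{\h}}=X(\DD)^{\Tt}$. The crux, and the step I expect to be the real work, is therefore the finiteness of $X(\DD)^{\At}$: under $\At$ alone the $\hbar$-grading is invisible, so a D5 brane of charge $c_l>1$ a priori allows the weight-$u_l$ space to be a higher-dimensional module carrying continuous moduli, for instance the Jordan type of $B_{\Ab_l}$. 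I would rule this out by showing that (S1), (S2) together with the $\Ab_l$-component $B_\Ab A_\Ab-A_\Ab B'_\Ab+a_\Ab b_\Ab=0$ of $\mu$ pin down each weight-$u_l$ block to the single rigid tower $a_{\Ab_l},B_{\Ab_l}a_{\Ab_l},\dots,B_{\Ab_l}^{c_l-1}a_{\Ab_l}$ with no free parameters; rigidity simultaneously yields finiteness and exhibits the canonical $\hbar$-grading on each block, re-proving $X(\DD)^{\At}=X(\DD)^{\Tt}$ directly. Alternatively, in the charge-one case one may invoke the identification $X(\DD)\cong T^*\Fl(\dd,d_0)$ of Proposition~\ref{prop: charge one bows}, where the fixed flags are manifestly finite and lie in the zero section, hence are $\Cs_{\h}$-fixed, and reduce the general case to it.
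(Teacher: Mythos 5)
The paper itself gives no proof of Theorem~\ref{thm:torus fixed points}: it is quoted from \cite{rimanyi2020bow} (for the classification by tie diagrams/BCTs) and \cite{BR23}. Measured against the argument in those references, your skeleton is the same one: freeness of the $G$-action on stable points turns a fixed point into a compensating homomorphism $\phi\colon S\to G$, hence an $S$-weight grading of the $W_{\Xb}$'s making all structure maps equivariant; an induction from the empty right end using (S2), (S1) and $\mu=0$ shows the weight spaces are multiplicity-one lines labelled by monomials $a_l\h^{-k}$; and the bookkeeping of where each line is born (a D5 brane) and dies (an NS5 brane) is exactly a tie diagram, equivalently a matrix in $\BCT(r,c)$. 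This matches the ``butterfly'' analysis of \cite[\S~4]{rimanyi2020bow}, whose output is precisely the restriction combinatorics reproduced in Figure~\ref{fig:fixed point restriction}. One small slip in your $\Tt$-analysis: the bound $0\le k<c_l$ on the $\h$-exponent holds only over the D3 branes in the D5 region (cf.~\eqref{eq:tplus subs}); in the NS5 region the exponent grows by one at each NS5 crossing (Figure~\ref{fig:fixed point restriction} shows $a_2/\h^3$ while $c_2=2$), so the statement as written needs the crossing-shift built in. That is fixable and does not affect the structure.

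The genuine gap is the step you yourself flag as ``the real work'': finiteness of $X(\DD)^{\At}$, equivalently rigidity of each single-$a$-weight block when the $\h$-grading is unavailable. Your proposal says you ``would rule this out'' by showing (S1), (S2) and the component $B_{\Ab}A_{\Ab}-A_{\Ab}B'_{\Ab}+a_{\Ab}b_{\Ab}=0$ force the weight-$u_l$ block to be the rigid tower $a,Ba,\dots,B^{c_l-1}a$ --- but this lemma is stated, not proved, and note that the block is not just the tower at $\Ab_l$: it is an entire single-framing bow subrepresentation whose two-way data $C,D$ across the NS5 region must also be shown to carry no moduli (this is where the GIT stability for the $\chi$ of \eqref{eq:character}, which (S1)--(S2) alone do not encode, enters). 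This classification is exactly the multi-page content of \cite[\S~4]{rimanyi2020bow}, so as it stands your argument establishes the theorem only modulo its hardest ingredient. Your fallback route also does not work as stated: reducing to the charge-one case via Proposition~\ref{prop: charge one bows} and the resolution embedding of Theorem~\ref{theorem: embedding resolution D5 branes} fails to give finiteness directly, because $j$ is equivariant only along $\varphi\colon\Tt\to\wt\Tt$ and sends $f$ into a \emph{positive-dimensional} $\At$-fixed component $F\subset\wt X^{\At}$ (itself a bow variety, per Theorem~\ref{theorem: embedding resolution D5 branes}); finiteness of $(T^*\Fl)^{\wt\At}$ therefore does not bound $X(\DD)^{\At}$ without a further argument identifying $X^{\At}$ inside $j^{-1}(F^{\wt\At/\At})$-type loci.
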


\noindent{\em Assumption 2.} In the whole paper we assume that $\Ch(\DD)$ has at least one fixed point. Equivalently, that for the charge vectors $\r,\c$ the set $\BCT(\r,\c)$ is not empty. 

\subsection{Tautological bundles, tangent bundle, polarization}
\label{sec:tautological bundles}
The $W_{\Xb_i}$ spaces descend to rank $d_i=d_{\Xb_i}$ ($\Tt$ equivariant) bundles $\xi_i=\xi_{\Xb_i}$ over $\Ch(\DD)$, we call them the tautological bundles. It is a fact that the bundles corresponding to the right dimension vector $(d_0,d_1,\ldots)$ are topologically trivial (with a $\Tt$-action) \cite[\S~3.2]{BR23}. 

Define 
\begin{align} 
T_{\NS5}X(d^-)  = &
\bigoplus_{k\leq 0} 
\h\Hom(\xi_{k}, \xi_{k-1}) \oplus \Hom(\xi_{k-1}, \xi_{k})
\ominus
\left(
\bigoplus_{k<0}
(1+\h)\End(\xi_k)
\right),
\nonumber \\ 
T_{\D5}X(d^+)  = &
\bigoplus_{k> 0} 
\Hom(\C_{a_k}, \xi_{k-1}) \oplus
\h\Hom(\xi_k,\C_{a_k}) \oplus (1-\h)\Hom(\xi_{k},\xi_{k-1})
\nonumber
\\
&  \oplus 
\h\End(\xi_{k-1})\oplus \h\End(\xi_k)
\ominus
\left(
\bigoplus_{k\geq 0}
(1+\h)\End(\xi_k)
\right).
\label{eq: D5 part tagent}
\end{align}
The two expressions depend only on the left $d^-$, respectively, the right $d^+$, part of the dimension vector $d$ as we indicated in the notation. It follows from the construction that for the tangent space of the bow variety $X(d)$ we obtain 
\begin{equation}\label{eq:tangent bundle}
T\Ch(d)=T_{\NS5}\Ch(d^-)\oplus T_{\D5}\Ch(d^+)  
\in K_{\Tt}(\Ch(d)).
\end{equation}

For future purposes we define the `polarization' class  $\alpha=\alpha_d \in K_{\Tt}(\Ch(d))$ by
\begin{multline}
\label{class alpha for general bow variety}
\alpha=
\h\left( TX_{\h=0} \right)^\vee
= 
\\
\h \left( 
\bigoplus_{k\leq 0} \Hom( \xi_{k-1}, \xi_{k} ) 
\oplus
\bigoplus_{k>0} 
\Hom( \C_{a_k}, \xi_{k-1}) 
\oplus
\Hom( \xi_{k}, \xi_{k-1} )
\ominus 
\bigoplus_{k} \End( \xi_k)
\right)^\vee.
\end{multline}
Calculation shows that, up to (inessential) terms only depending on $\h$, $\alpha$ is a `symplectic half' of the tangent bundle:

\begin{proposition}\cite[\S4.4.2]{Shou}
    There exists a class $\beta\in K_{\Cs_{\h}}(\pt)$ such that $\alpha+\beta$ satisfies 
\begin{equation}
   \label{equation separated alpha is a polarization}
    TX=(\alpha+\beta) +\h (\alpha+\beta)^\vee.
\end{equation}
\end{proposition}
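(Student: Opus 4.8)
The plan is to turn \eqref{equation separated alpha is a polarization} into the computation of a single correction class. Since $\alpha=\h(TX_{\h=0})^\vee$, dualizing gives $\alpha^\vee=\h^{-1}TX_{\h=0}$, so $\h\alpha^\vee=TX_{\h=0}$ and
\[
\alpha+\h\alpha^\vee=TX_{\h=0}+\h(TX_{\h=0})^\vee .
\]
Because $(\alpha+\beta)+\h(\alpha+\beta)^\vee=(\alpha+\h\alpha^\vee)+(\beta+\h\beta^\vee)$, the proposition is equivalent to producing $\beta\in K_{\Cs_\h}(\pt)$ with
\[
\Delta:=TX-TX_{\h=0}-\h(TX_{\h=0})^\vee=\beta+\h\beta^\vee .
\]
So I would first compute $\Delta$ from \eqref{eq:tangent bundle}, \eqref{eq: D5 part tagent} and \eqref{class alpha for general bow variety}.

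The key structural observation is that the NS5 summand is already a clean polarization: writing $V_{\NS5}=\bigoplus_{k\le 0}\Hom(\xi_{k-1},\xi_k)\ominus\bigoplus_{k<0}\End(\xi_k)$, a term-by-term check gives $T_{\NS5}X=V_{\NS5}+\h V_{\NS5}^\vee$ with $V_{\NS5}=(T_{\NS5}X)_{\h=0}$. Hence all contributions to $\Delta$ coming from the NS5 part cancel; in particular so do all terms involving a left (and thus possibly nontrivial) tautological bundle $\xi_k$ with $k<0$, and $\Delta$ is supported entirely on the D5 part. A short bookkeeping then yields
\begin{multline*}
\Delta=\h\sum_{k>0}\big(\xi_k^\vee-\xi_{k-1}^\vee\big)\,\C_{a_k}
-\h\sum_{k>0}\big(\xi_k^\vee\xi_{k-1}+\xi_{k-1}^\vee\xi_k\big)\\
+\h\,\xi_0^\vee\xi_0+2\h\sum_{k\ge 1}\xi_k^\vee\xi_k ,
\end{multline*}
an expression involving only the right tautological bundles $\xi_k$ ($k\ge 0$) and the framing lines $\C_{a_k}$. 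By \cite[\S~3.2]{BR23} these right bundles are $\Tt$-equivariantly trivial, so each monomial of $\Delta$ is pulled back from a point and $\Delta\in K_{\Tt}(\pt)$.

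It remains to see that $\Delta$ has the form $\beta+\h\beta^\vee$ with $\beta$ independent of $\At$. The shape is automatic: the weight-$\h$ holomorphic symplectic form gives $TX=\h(TX)^\vee$, and applying $x\mapsto\h x^\vee$ to the definition of $\Delta$ yields $\Delta=\h\Delta^\vee$; since no character satisfies $\mu^2=\h$, the involution $\mu\mapsto\h\mu^{-1}$ on the character lattice of $\Tt$ is fixed-point free, so every $\h$-selfdual class in $K_{\Tt}(\pt)$ can be written as $\beta+\h\beta^\vee$ by choosing one weight from each $2$-element orbit. For the $\At$-independence I would substitute the explicit $K$-theory relation
\[
\xi_{k-1}=\xi_k+\C_{a_k}\,\big(1+\h^{-1}+\dots+\h^{-(c_k-1)}\big),
\qquad\text{i.e.}\qquad
\xi_k=\sum_{j>k}\C_{a_j}\,\big(1+\h^{-1}+\dots+\h^{-(c_j-1)}\big),
\]
which records the length-$c_k$ Jordan block of the weight-$\h$ endomorphism $B_{\Ab_k}$ generated by $a_{\Ab_k}(\C_{a_k})$ on the trivialized bundle $\xi_{k-1}$ (the length is $c_k$ by stability (S2)). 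Feeding these into the displayed $\Delta$, the off-diagonal monomials $a_i/a_j$ produced by the endomorphism terms $\h\,\xi_0^\vee\xi_0+2\h\sum\xi_k^\vee\xi_k$ must cancel against the framing terms $\h\sum(\xi_k^\vee-\xi_{k-1}^\vee)\C_{a_k}$, leaving a Laurent polynomial in $\h$ alone; this is the sought $\beta+\h\beta^\vee$, and one reads off $\beta$ (for instance $\beta=0$ when all $c_k=1$).

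The main obstacle is exactly this last cancellation of the $\At$-weights. I would organize it by grouping, for each D5 brane $\Ab_k$, its framing contribution with the $\C_{a_k}$-homogeneous part of the endomorphism contributions and telescoping in $k$. The delicate input that must be pinned down first is the precise direction of the $\h$-shift in the Jordan relation, i.e.\ the $\Tt$-weight by which $B_{\Ab_k}$ acts: the single-brane computation with $c_k=2$ shows that the opposite sign produces $\h+\h^2$, which is \emph{not} of the form $\beta+\h\beta^\vee$, whereas the shift written above yields the self-dual residue $1+\h$ forced by $\Delta=\h\Delta^\vee$.
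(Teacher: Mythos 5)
Your proposal is correct, and it is essentially the calculation that the paper itself omits (the paper offers no argument beyond ``calculation shows'' and the citation to \cite[\S4.4.2]{Shou}): reducing to $\Delta := TX - TX_{\h=0}-\h(TX_{\h=0})^\vee$, noting the NS5 part is term-by-term $\h$-self-dual, invoking the $\Tt$-equivariant triviality $\xi_k=\bigoplus_{j>k}\bigoplus_{i=0}^{c_j-1}a_j\h^{-i}$ for $k\geq 0$ (with your correctly identified direction of the $\h$-shift, matching \eqref{eq:tplus subs}), and checking self-duality under $x\mapsto \h x^\vee$ is exactly the intended verification; indeed your $\Delta$ collapses to $\sum_{k>0}\h S_k^\vee(S_k-1)$ with $S_k=1+\h^{-1}+\dots+\h^{-(c_k-1)}$, which is $\h$-self-dual and hence of the form $\beta+\h\beta^\vee$ because $s\mapsto 1-s$ has no integer fixed point. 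One cosmetic slip: the framing terms $\h\sum_{k>0}(\xi_k^\vee-\xi_{k-1}^\vee)\C_{a_k}$ are already free of the $a$-variables (each equals $-\h S_k^\vee$), so the off-diagonal monomials $a_i/a_j$ cancel not against them but between $\h\,\xi_0^\vee\xi_0+2\h\sum_{k\geq 1}\xi_k^\vee\xi_k$ and the cross terms $-\h\sum_{k>0}(\xi_k^\vee\xi_{k-1}+\xi_{k-1}^\vee\xi_k)$ --- a misattribution of bookkeeping that does not affect the outcome.
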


The following lemma will be useful later. 
\begin{lemma}
\label{lma: D5 part tangent is trivial for flags}
    Assume that $c=\ul 1$. Then $T_{\D5}X(d^+)=0$.
\end{lemma}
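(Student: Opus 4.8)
The plan is to prove the statement as an identity of virtual $\Tt$-equivariant classes in $K_{\Tt}(X(\DD))$, by first isolating a simple relation among the tautological bundles that holds precisely when every D5 charge equals one, and then substituting it into \eqref{eq: D5 part tagent} and collecting terms. The crucial input is the $K$-theory relation
\begin{equation*}
[\xi_{k-1}] = [\xi_k] + [\C_{a_k}] \qquad \text{in } K_{\Tt}(X(\DD)), \quad k\geq 1,
\end{equation*}
which geometrically says that crossing the charge-one D5 brane $\Ab_k$ enlarges the tautological bundle by exactly the ($\h$-free) framing line $\C_{a_k}$, via the maps $A_{\Ab_k}\in\Hom(\xi_k,\xi_{k-1})$ and $a_{\Ab_k}\in\Hom(\C_{a_k},\xi_{k-1})$, neither of which carries an $\h$-weight.

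To justify this relation I would use the fact recalled in \S\ref{sec:tautological bundles} that the right tautological bundles $\xi_0,\xi_1,\dots$ are topologically trivial with a $\Tt$-action; consequently each class $[\xi_k]$ is constant over $X(\DD)$ and may be read off at a single torus-fixed point. Since every D5 brane has charge one and is tied to exactly one NS5 brane (Theorem \ref{thm:torus fixed points}), and since $\xi_0$ is the framing $\bigoplus_{i=1}^n\C_{a_i}$, this forces $[\xi_k]=\sum_{j>k}[\C_{a_j}]$ with no $\h$-shift, whence $[\xi_{k-1}]-[\xi_k]=[\C_{a_k}]$. Alternatively one may invoke Proposition \ref{prop: charge one bows}: under $X(\DD)\cong T^*\Fl(\dd,d_0)$ the right bundles are precisely the trivial bundles obtained by peeling the lines $\C_{a_k}$ off the framing one at a time.

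Granting the relation, the computation is a telescoping cancellation. Substituting $\xi_{k-1}=\xi_k+\C_{a_k}$ into the $k$-th summand of \eqref{eq: D5 part tagent} and writing $\Hom,\End$ multiplicatively (so $\Hom(V,W)=V^\vee W$ and $\C_{a_k}^\vee\C_{a_k}=1$), every contribution organizes into the common factor $(1+\h)$ times $1 + \C_{a_k}^{\vee}\xi_k + \xi_k^{\vee}\C_{a_k} + \xi_k^{\vee}\xi_k$; that is,
\begin{equation*}
\Hom(\C_{a_k},\xi_{k-1}) + \h\Hom(\xi_k,\C_{a_k}) + (1-\h)\Hom(\xi_k,\xi_{k-1}) + \h\End(\xi_{k-1}) + \h\End(\xi_k) = (1+\h)\,\End(\xi_{k-1}),
\end{equation*}
using $(\C_{a_k}+\xi_k)^{\vee}(\C_{a_k}+\xi_k)=\xi_{k-1}^\vee\xi_{k-1}$. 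Summing over $k\geq 1$ and reindexing ($k-1\mapsto k$, the upper limit being immaterial because $\xi_k=0$ once $d_k=0$), the positive part of \eqref{eq: D5 part tagent} becomes $(1+\h)\bigoplus_{k\geq 0}\End(\xi_k)$, which cancels the subtracted term exactly, giving $T_{\D5}X(d^+)=0$.

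The main obstacle is the first step: pinning down $[\xi_{k-1}]=[\xi_k]+[\C_{a_k}]$ rigorously, i.e. verifying both the triviality of the right bundles and the \emph{absence} of an $\h$-twist on the extra line—everything downstream is purely formal. As a consistency check, \eqref{eq:tangent bundle} then reduces to $TX=T_{\NS5}X(d^-)$, matching the identification $X(\DD)\cong T^*\Fl(\dd,d_0)$ of Proposition \ref{prop: charge one bows}, whose tangent bundle indeed arises solely from the NS5/flag side.
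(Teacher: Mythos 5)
Your proof is correct and takes essentially the same approach as the paper's primary (sketched) argument: use the triviality of the right tautological bundles, i.e.\ \eqref{eq:tplus subs}, which for $c=\ul 1$ gives exactly your relation $\xi_{k-1}=\xi_k\oplus\C_{a_k}$ with no $\h$-twist, and cancel the characters explicitly---your telescoping identity summing to $(1+\h)\End(\xi_{k-1})$ just spells out the computation the paper leaves implicit. Your closing consistency check is precisely the paper's stated alternative proof, via Proposition \ref{prop: charge one bows} and \eqref{eq:tangent bundle}.
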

\begin{proof}
Since the bundles $\xi_i$ are trivial for $i\geq 0$, $T_{\D5}X(d^+)$ is a weighted sum of characters of the torus $\Tt$, which can be explicitly computed to prove the lemma. Alternatively, one can check that $T_{\NS5}X(d^-)$ matches the tangent space of the cotangent bundle of the flag variety $T^*\Fl(\dd,d_0)$, cf. \S~\ref{subsec: bow = partial flag}. 
But then Proposition \ref{prop: charge one bows} and \eqref{eq:tangent bundle} force $T_{\D5}X(d^+)=0$.
\end{proof}

\subsection{Examples}
The prototype bow variety is $\TsP^1=\Ch( \hbox{\ttt{\fs $1$\fs $2$\bs $1$\bs}} )$. Here $\xi_{-1}$ is the tautological line bundle over $\Pe^1=\Pe(\C^2)$, $\xi_0=\C^2=\C_{a_1}\oplus \C_{a_2}$ and $\xi_1=\C_{a_2}$ (cf. $\xi_{\geq 0}$ bundles are topologically trivial). The 2-torus $\At$ acts naturally on the defining $\C^2$, and $\C^{\times}_{\h}$ scales the fibers. 
Equation~\eqref{eq:tangent bundle} yields
\[
T\Ch= \left( \frac{a_1}{\xi_{-1}} + \frac{a_2}{\xi_{-1}} \right) + \h \left( \frac{\xi_{-1}}{a_1} + \frac{\xi_{-1}}{a_2} \right) - 1-\h,
\]
which is---using the well known formula $\TP^1=\Hom(\xi_{-1},\C_{a_1}\oplus \C_{a_2}-\xi_{-1})$---equivalent to the obvious $T(\TsP^1)=\TP^1 \oplus \h \TsP^1$. For the polarization we get $\alpha=a_1/\xi_{-1}+a_2/\xi_{-1}$.

More generally, the diagrams
\begin{multline*}
\ttt{\fs $1$\fs $n$\bs $n-1$\bs \ldots \bs $2$\bs $1$\bs}, \qquad
\ttt{\fs $k$\fs $n$\bs $n-1$\bs \ldots \bs $2$\bs $1$\bs}, 
\\
\ttt{\fs $1$\fs $2$\fs \ldots \fs $n-1$\fs $n$\bs $n-1$\bs \ldots \bs $2$\bs $1$\bs}, \qquad
\ttt{\fs $k_1$\fs $k_2$\fs \ldots \fs $k_m=n$\bs $n-1$\bs \ldots \bs $2$\bs $1$\bs}
\end{multline*}
correspond to $\TsP^{n-1}$, $\TsGr(k,n)$, the cotangent bundle of a full flag variety, and the cotangent bundle of a partial flag variety, respectively. 
The space $\tilde{A_2}=\Ch( \hbox{\ttt{\fs $1$\fs $2$\fs $3$\fs $4$\bs $2$\bs}} )$
is the resolution of the Kleinian singularity $\C^2/\Z_3$. 
The bow variety $\Ch(\DD)$ for $\DD$ in \eqref{eq: example brane diagram} is of dimension 6, and has 12 torus fixed points, cf. Example~\ref{ex:poset}.

\subsection{D5 brane resolutions}
\label{subsec: D5 res}
Fix a brane diagram $\DD$.
Let $\wt \DD$ be the brane diagram obtained by replacing a single D5 brane $\Ab_k$ of charge $c=c(\Ab_k)\geq 2$ in $\DD$ by a pair of consecutive D5 branes $\Ab'_k$ and $\Ab''_k$ of charges $c_k'=c(\Ab'_k)\geq 1$ and $c_k''=c(\Ab''_k)\geq 1$ such that $c_k=c'_k+c''_k$. In other words, the diagram $\wt \DD$ is obtained via the local surgery
\begin{center}
\begin{tikzpicture}[scale=.7]
\draw[thick] (-1,0) -- (1,0);
\draw[thick, blue] (0.2,-0.5) -- (-0.2,0.5);
\node at (-2.5,0.1) {$\DD=$};
\node at (8.1,0.1) {$=\wt \DD$};
\node[label={\small $c'_k+c''_k$}] at (-0.2,0.5) {};

\draw[stealth-stealth, thick] (2,0) -- (3,0);

\draw[thick] (4,0) -- (7,0);
\draw[thick, blue] (5.2,-0.5) -- (4.8,0.5);
\draw[thick, blue] (6.2,-0.5) -- (5.8,0.5);
\node[label={\small $c'_k$}] at (4.8,0.5) {};
\node[label={\small $c''_k$}] at (5.8,0.5) {};

\end{tikzpicture}
\end{center}

We call $\wt \DD$ a D5 resolution of the brane diagram $\DD$, and the branes $\Ab'_k$ and $\Ab''_k$ resolving branes. Let $\wt X$ and $X$ be the bow varieties associated to $\wt \DD$ and $\DD$. We say that $\wt X$ is a D5 resolution of the brane $\Ab_k$ in the bow variety $X$. Let $\Tt=\At\times \Cs_{\hbar}$ (resp. $\wt \Tt= \wt \At \times \Cs_{\hbar}$) be the torus acting on $X$ (resp. $\wt X$). We define a homomorphism $\varphi: \Tt\to \wt \Tt$ to be the identity on most components, except 
\begin{equation}
\label{group homomorphism A-resolution}
    \Cs_{\Ab_k} \times \Cs_{\hbar}  \to  \Cs_{\Ab'_k} \times  \Cs_{\Ab''_k} \times \Cs_{\hbar} \qquad (a,\hbar)\mapsto 
        (a\hbar^{-c''_k}, a, \hbar ) .
\end{equation}
\begin{theorem}[\cite{BR23}]
\label{theorem: embedding resolution D5 branes}
    There exists a regular closed embedding $j: X\hookrightarrow \wt X$. The map $j$ is equivariant along $\varphi:\Tt\to \wt \Tt$.  Moreover, for any fixed point $f\in \X(r,c)^{\At}$ the $\At$-fixed component $F\subseteq X(r,\tilde c )^{\At}$ such that $j(f)\in F$ is isomorphic to the bow variety with brane diagram
\[
\ttt{\fs $1$\fs $2$\fs $3$\fs $\dots$\fs $c'_k+c''_k$\bs $c''_k$\bs}.
\]
\end{theorem}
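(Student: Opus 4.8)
The plan is to construct $j$ explicitly on the level of GIT data and then upgrade it to a regular closed embedding by exploiting the smoothness of both bow varieties. Since $X$ and $\wt X$ are smooth (\S\ref{sec:def of bow variety}), any closed immersion between them is automatically a regular embedding, so the adjective ``regular'' is free once $j$ is shown to be a closed immersion. Thus I would concentrate on two things: producing $j$ as an equivariant morphism, and verifying that it is an injective immersion with closed image.

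The construction of $j$ is local at the brane $\Ab_k$: away from $\Ab_k$ the quiver data for $\wt\DD$ is literally the data for $\DD$, and the homomorphism $\varphi$ of \eqref{group homomorphism A-resolution} is the identity there. Writing $(A,b,a,B,B')$ for the three-way data of $\Ab_k$ in \eqref{eq: three way part}, with $A\colon W_{\Ab_k^+}\to W_{\Ab_k^-}$, I would introduce the intermediate D3 space $W'$ of dimension $d-c'_k$ as a canonical $B$-stable subspace of $W_{\Ab_k^-}$ of codimension $c'_k$ through which $A$ factors (for $c'_k=1$ essentially a $B$-stable hyperplane through $\mathrm{im}(A)$, and in general built by iterating single-charge steps, which is exactly the mechanism that will eventually realize $X(\DD)\hookrightarrow T^*\Fl(\dd,d_0)$). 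One then factors $A=A_{\Ab'_k}\circ A_{\Ab''_k}$ through $W'$, inherits the neighbouring endomorphisms $B_{\Ab'_k}=B$ and $B'_{\Ab''_k}=B'$, sets the two endomorphisms of $W'$ equal (as forced by the \ttt{\bs -\bs} component of $\mu_{\wt\DD}$), and assigns the auxiliary maps $b,a$ of the resolving branes the canonical (largely vanishing) values dictated by the weight shift: $\C_{\Ab''_k}$ inherits the weight $a$ of the old $\C_{\Ab_k}$, while $\C_{\Ab'_k}$ receives the shifted weight $a\h^{-c''_k}$. The crux of this step is the GIT descent: one must check that the resulting point satisfies $\mu_{\wt\DD}=0$ and the stability conditions (S1),(S2) for $\wt\DD$, deducing these from (S1),(S2) for $\DD$ by verifying that $W'$ is squeezed between $\mathrm{im}(A)+\mathrm{im}(a)$ and $W_{\Ab_k^-}$ with no room for a destabilizing subspace. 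The weight prescription above is precisely what makes $j$ intertwine the $\Tt$- and $\wt\Tt$-actions through $\varphi$, giving the asserted equivariance.

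For the closed-immersion property I would argue as follows. Injectivity on points is obtained by reconstructing $(A,b,a,B,B')$ from its image: the factorization $A=A_{\Ab'_k}A_{\Ab''_k}$ together with the inherited endomorphisms recovers the original data up to the $\GL(W')$ gauge, which is rigidified by the $\varphi$-weights. Injectivity of the differential $dj$ I would reduce to the fixed points: $X^\Tt$ is finite by Theorem~\ref{thm:torus fixed points}, and $X$ is covered by the attracting cells of a generic one-parameter subgroup $\Cs\subseteq\At$, so by lower semicontinuity of $\mathrm{rank}(dj)$ along $\Cs$-orbits and their closures it suffices to check that $dj$ is injective at each of the finitely many fixed points. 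Finally, the image of $j$ is cut out by the closed conditions built into the construction (the factorization of $A$ and the prescribed vanishing of the auxiliary maps), so it is closed; an injective immersion with closed image is a closed immersion, and smoothness of $X,\wt X$ promotes it to a regular embedding.

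For the fixed-point statement, note that via $\varphi$ the torus fixing $j(f)$ is $\varphi(\At)$, the rank-$n$ subtorus of $\wt\At$ that assigns equal weights to the two resolving branes; since it does not separately scale the new direction $\C_{\Ab'_k}$, the point $j(f)$ lies in a positive-dimensional component $F$ of $\wt X^{\varphi(\At)}$, on which the residual relative scaling together with $\Cs_\h$ still acts. By Theorem~\ref{thm:torus fixed points} a tie diagram $f$ of $\DD$ carries $c_k$ ties at $\Ab_k$, and along $F$ the only unfixed data is the distribution of these $c_k$ ties into $c'_k$ ties at $\Ab'_k$ and $c''_k$ at $\Ab''_k$, together with the internal $\h$-graded flag geometry of $W'$. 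Matching the induced quiver data on $F$ with the defining data of the local diagram identifies $F$ with the bow variety $\ttt{\fs $1$\fs $2$\fs $3$\fs $\dots$\fs $c'_k+c''_k$\bs $c''_k$\bs}$, whose $\binom{c_k}{c''_k}$ fixed points match exactly the tie distributions. I expect the main obstacle to be the GIT descent of the second paragraph---reconciling the combinatorics of the stability conditions (S1),(S2) with the weight shift $\varphi$ to guarantee that the explicitly constructed resolving data is stable for $\wt\DD$ and that $j$ is injective; the fixed-point identification is then a localized instance of the same analysis, and the remaining verifications are formal, following \cite{BR23}.
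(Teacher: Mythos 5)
The paper does not actually prove this theorem here---it is imported from \cite{BR23}---but the route used there is visible in the proof of Lemma~\ref{lemma: D5 res bundles pullback}: one presents an arbitrary bow variety as a fiber product $T^*\Fl(N)\times_{\gl(N)}HS$ with a handsaw variety, and $j$ is induced by a closed embedding of handsaw varieties $HS\hookrightarrow \wt{HS}$ which is the identity on the flag factor. Your proposal instead performs a direct surgery on the three-way part \eqref{eq: three way part} at $\Ab_k$, and its crux step fails as written. There is in general no \emph{canonical} $B_{\Ab_k}$-stable subspace $W'\subseteq W_{\Ab_k^-}$ of codimension $c'_k$ through which $A_{\Ab_k}$ factors: already for $c'_k=1$, a $B$-stable hyperplane containing $\mathrm{im}(A)$ is the kernel of a left eigenvector of the endomorphism induced by $B$ on the quotient by the $B$-span of $\mathrm{im}(A)$, and eigenvectors are neither unique nor algebraically varying in families, so no such choice can define a morphism (note also that the moment map equation $B_{\Ab}A_{\Ab}-A_{\Ab}B'_{\Ab}+a_{\Ab}b_{\Ab}=0$ only gives $B(\mathrm{im}\,A)\subseteq \mathrm{im}(A)+\mathrm{im}(a)$, so $B$-stability of subspaces through $\mathrm{im}(A)$ already entangles $a$). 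Equally problematic is the prescription that the auxiliary maps of the resolving branes take ``largely vanishing'' values: if $a_{\Ab'_k}=0$, stability (S2) for $\Ab'_k$ demands that $\mathrm{im}(A_{\Ab'_k})$ alone $B$-generates $W_{\Ab_k^-}$, which fails in the minimal example $X=\Ch(\ttt{\fs $1$\fs $2$\bs})$ (a point, where $W_{\Ab^+}=0$ forces $A=0$); there $j(f)$ must be a \emph{stable} point of $\wt X=\TsP^1$, so the resolved data is necessarily nontrivial and is built from the vectors $a_{\Ab},B_{\Ab}a_{\Ab},\dots$. The actual construction in \cite{BR23} is exactly such an explicit algebraic block-matrix assignment (equivalently, the handsaw morphism), requiring no spectral choices, with the twist $a\h^{-c''_k}$ of \eqref{group homomorphism A-resolution} bookkeeping the resulting grading.

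Two secondary points. First, your semicontinuity argument for injectivity of $dj$ rests on the false claim that the attracting sets of a generic $\Cs\subseteq\At$ cover $X$: bow varieties are non-compact, and already for $\TsP^1$ the $\At$-attracting sets miss most of the cotangent fibers. The argument is repairable---use a generic cocharacter of the full torus $\Tt$ with dominant $\h$-component, for which properness of $\pi:X\to X_0$ and the contraction of $X_0$ guarantee that every point has a limit, and then $\varphi$-equivariance of $j$ makes $\mathrm{rank}(dj)$ constant along orbits---but as stated it is a gap. Second, your final paragraph identifying the fixed component $F$ with $\Ch(\ttt{\fs $1$\fs $2$\fs $3$\fs $\dots$\fs $c'_k+c''_k$\bs $c''_k$\bs})$ essentially restates the claim (``matching the induced quiver data\dots identifies $F$'') rather than deriving it; a genuine proof must analyze the $\varphi(\At)$-fixed quiver data locally at the two resolving branes, which is where the stated diagram, with its $\binom{c'_k+c''_k}{c'_k}$ fixed points matching the resolutions of $f$, actually comes from.
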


\begin{definition}
    Let $f\in X^{\At}$ and let 
    $F\subset \wt{X}^{\At}$ 
    be the fixed component containing $j(f)$. The elements 
    $\tilde{f}\in F^{\wt{\At}/\At}$ 
    are called resolutions of $X$.
\end{definition}

Theorem \ref{theorem: embedding resolution D5 branes} implies that any fixed point $f$ admits exactly $\binom{c'_k+c_k''}{c'_k}$ resolutions. In the language of BCT tables, the resolutions of a fixed point $f$ represented by a BCT table $M$ are obtained by replacing the $k$-th column $M_{\bullet k}$ of $M$ with two adjacent columns that sum to $M_{\bullet k}$. It is then clear that there are exactly $\binom{c'_k+c_k''}{c'_k}$ resolutions.

\begin{lemma}
\label{lemma: D5 res bundles pullback}
    Let $\tilde{\xi_i}$ (resp. $\xi_i$) be the tautological bundles on $\wt X$ (resp. $X$). Then $j^*\tilde{ \xi_i}=\xi_i$.
\end{lemma}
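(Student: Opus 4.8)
The plan is to reduce the statement to the functoriality of the associated-bundle construction over GIT quotients, using that the surgery of \S\ref{subsec: D5 res} leaves the D3-brane data of $\DD$ untouched.

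First, I would unwind the definition of the tautological bundles. By \S\ref{sec:def of bow variety} the quotient map $p\colon \widetilde{\mathcal M}^{ss}\to X=\widetilde{\mathcal M}^{ss}/G$ is a $G$-torsor, and $\xi_i=\xi_{\Xb_i}$ is the vector bundle $\widetilde{\mathcal M}^{ss}\times_G W_{\Xb_i}$ associated to it and to the $G$-representation $W_{\Xb_i}$ on which only the factor $\GL(W_{\Xb_i})\subset G$ acts standardly and the others act trivially. Its $\Tt$-equivariant structure is induced by the $\Tt$-action on $\widetilde{\mathcal M}^{ss}$ alone, since $\Tt=\At\times\Cs_\h$ acts on the framing spaces $\C_{\Ab}$ and on the arrows of \eqref{eq: three way part}--\eqref{eq: two way part} but carries no intrinsic weight on the D3-space $W_{\Xb_i}$. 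The identical description holds on $\wt X$ for $\wt\xi_i=\widetilde{\mathcal M}^{ss}_{\wt X}\times_{\wt G}W_{\Xb_i}$, with $\wt G=\prod_{\wt\Xb}\GL(W_{\wt\Xb})$.

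Next I would invoke the construction of $j$ underlying Theorem~\ref{theorem: embedding resolution D5 branes} in \cite{BR23}. The surgery passing from $\DD$ to $\wt\DD$ inserts only the two resolving branes $\Ab'_k,\Ab''_k$ and one new D3 brane between them, while every D3 brane $\Xb_i$ of $\DD$ persists verbatim in $\wt\DD$; accordingly $\wt G=G\times\GL(W_{\Xb^{\mathrm{new}}})$ and the embedding $j$ is covered, on the (semi)stable loci, by a morphism $\tilde\jmath\colon\widetilde{\mathcal M}^{ss}\to\widetilde{\mathcal M}^{ss}_{\wt X}$ that is equivariant along $\varphi$ of \eqref{group homomorphism A-resolution} and along a group homomorphism $\Phi\colon G\to\wt G$ whose component in each old factor $\GL(W_{\Xb_i})$ is the identity. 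In particular $\tilde\jmath$ and $\Phi$ intertwine the two projections onto $\GL(W_{\Xb_i})$ and fix the fiber $W_{\Xb_i}$.

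Finally I would conclude formally. The pair $(\tilde\jmath,p)$ exhibits $\widetilde{\mathcal M}^{ss}$ as a reduction along $\Phi$ of the pulled-back torsor $j^*\widetilde{\mathcal M}^{ss}_{\wt X}$, so by functoriality of associated bundles $j^*\wt\xi_i=j^*\!\big(\widetilde{\mathcal M}^{ss}_{\wt X}\times_{\wt G}W_{\Xb_i}\big)\cong\widetilde{\mathcal M}^{ss}\times_G\Phi^*W_{\Xb_i}=\xi_i$, the last equality because $\Phi$ restricts to the standard projection onto $\GL(W_{\Xb_i})$. The $\Tt$-equivariant structures agree since $\tilde\jmath$ is $\varphi$-equivariant while $W_{\Xb_i}$ carries no torus weight. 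I expect the only genuine work to lie in the middle step: extracting from \cite{BR23} the explicit covering map $\tilde\jmath$ and verifying that it truly leaves the spaces $W_{\Xb_i}$ (equivalently, their $\GL$-torsors) unaltered. Once that is in place, the identity $j^*\wt\xi_i=\xi_i$ is purely formal, so the main obstacle is bookkeeping about the construction of $j$ rather than any substantial new computation.
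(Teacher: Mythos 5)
Your formal skeleton is sound: if a $\Phi$-equivariant map $\tilde\jmath\colon\widetilde{\mathcal M}^{ss}\to\widetilde{\mathcal M}^{ss}_{\wt X}$ covering $j$ exists, with $\Phi\colon G\to\wt G$ the identity on every old factor $\GL(W_{\Xb_i})$, then the torsor-reduction argument does yield $j^*\wt\xi_i=\xi_i$, equivariantly along $\varphi$. But you have correctly located, and then deferred, the entire content of the lemma into that middle step, and it is not the mere ``bookkeeping'' you expect to extract from \cite{BR23}. That reference does not construct $j$ as an equivariant map of prequotient linear data: it constructs it through the presentation $\wt X=T^*\Fl(N)\times_{\gl(N)}\wt{HS}$, i.e.\ one level down, where the D5-side data has already been collapsed to the affine handsaw quotient. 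Lifting the handsaw morphism $HS\to\wt{HS}$ to a genuinely $\Phi$-equivariant map of semistable loci requires, among other things, a global consistent identification of the newly created brane's space $W_{\Xb^{\mathrm{new}}}$ (which is built pointwise from the data) with a fixed $\C^{d_{\mathrm{new}}}$, plus a check that semistability is preserved; a priori this only works locally, and nothing in the cited construction hands it to you. So as written your proof is conditional on a stronger statement than the literature provides.

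The paper's own proof is arranged precisely to avoid this. It uses two facts: (a) every tautological bundle is pulled back along the projection $T^*\Fl(N)\times_{\gl(N)}HS\to T^*\Fl(N)$ — the bundles $\xi_k$ with $k\le 0$ come from the flag factor, while those with $k>0$ (including the bundle of the new D3 brane, which sits on the D5 side) are topologically trivial; and (b) $j$ is induced by a morphism $T^*\Fl(N)\times HS\to T^*\Fl(N)\times\wt{HS}$ that is the \emph{identity on the flag factor}. Then $j^*\wt\xi_i=\xi_i$ is immediate, with no torsor lift and no tracking of the extra $\GL(W_{\Xb^{\mathrm{new}}})$ factor. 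To repair your argument along your own lines, you would either have to prove the existence of $\tilde\jmath$ and $\Phi$ (genuinely new work), or notice that your reduction only needs equivariance over the factor where the tautological bundles actually live — which is exactly the paper's shortcut.
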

\begin{proof}
    The proof follows the explicit description of the map $j$ as in \cite{BR23}. In a little more detail, an arbitrary bow variety is of the form $\wt X= T^*\Fl(N)\times_{\gl(N)} {HS}$ where $\Fl(N)$ is a partial flag variety in the ambient space $\mathbb{C}^N$. Moreover, the tautological bundles are pulled back from the projection $T^*\Fl(N) \times_{\gl(N)} HS\to T^*\Fl(N)$. Therefore, the statement follows from the fact that the embedding $j: X\to \wt X$ is induced by a morphism 
    \[
    T^*\Fl(N)\times HS\to T^*\Fl(N)\times  \wt{HS}
    \]
    which is the identity on the first factor.
\end{proof}

The next lemma relates the fixed point restrictions of $\tilde \xi_i$ to those of $\xi_i$.

\begin{lemma}
\label{restrction Chern roots in A resolution}
    Let $\mathcal \xi$ be a tautological bundle of $\widetilde X$. Then $j^* \xi\Big|_f= \xi\Big|_{\tilde f_\sharp}$ as representations of $\Tt$. As a consequence, the diagram
    \[
    \begin{tikzcd}
        K_{ \widetilde \Tt}(\widetilde X)^{taut}\arrow[d, "j^*\circ \varphi^*"] \arrow[r] &K_{ \widetilde \Tt}(\tilde f_\sharp)\arrow[d, "\varphi^*"]\\
        K_{\Tt}(X)^{taut} \arrow[r] &K_{\Tt}(f)\\
    \end{tikzcd}
    \]
    associated with fixed point localization at $\tilde f_\sharp$ and $f$ and the change of group map $\varphi:  \Tt\to \widetilde \Tt$ is commutative. 
\end{lemma}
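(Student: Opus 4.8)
\emph{Overview.} I would prove the displayed equality of $\Tt$-representations first, and then deduce the commutativity of the square formally. Both horizontal arrows (fixed-point localization) and the change-of-group arrow $\varphi^*$ are ring homomorphisms, and the subring $K_{\wt\Tt}(\wt X)^{taut}$ is generated by the tautological bundles $\tilde\xi_i$ under the usual $K$-theoretic operations. Hence commutativity of the diagram on all of $K_{\wt\Tt}(\wt X)^{taut}$ reduces to commutativity on the generators $\tilde\xi_i$, which is exactly the assertion $j^*\xi|_f=\xi|_{\tilde f_\sharp}$. So the ``consequence'' is immediate from the first statement, and I would dispatch it in a single sentence by multiplicativity.

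\emph{Setup for the first statement.} By Lemma~\ref{lemma: D5 res bundles pullback} we have $j^*\tilde\xi_i=\xi_i$ as $\Tt$-equivariant bundles on $X$ (the $\Tt$-structure on the left induced by the $\varphi$-equivariance of $j$ from Theorem~\ref{theorem: embedding resolution D5 branes}). Restricting to $f$ and using functoriality of localization gives $j^*\xi|_f=\xi_i|_f=\tilde\xi_i|_{j(f)}$, read as a $\Tt$-representation through $\varphi$. Now $j(f)$ is fixed by the subtorus $H:=\varphi(\Tt)\subset\wt\Tt$, since $f$ is $\Tt$-fixed and $j$ is $\varphi$-equivariant. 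Because $\dim\wt\Tt=\dim\Tt+1$, I would pick a one-parameter subgroup $\sigma\subset\wt\Tt$ complementary to $H$ and set $\tilde f_\sharp:=\lim_{t\to 0}\sigma(t)\cdot j(f)$, the $\wt\Tt$-fixed resolution in $F$ selected by the corresponding chamber.

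\emph{Key step (a flow argument).} Since $H$ and $\sigma$ commute and $j(f)$ is $H$-fixed, the entire orbit curve $t\mapsto\sigma(t)\cdot j(f)$ is pointwise $H$-fixed; passing to closures it extends to a morphism $\phi:\mathbb{A}^1\to F$ with $\phi(1)=j(f)$ and $\phi(0)=\tilde f_\sharp$, along which $H$ acts trivially on the base. The pullback $\phi^*\tilde\xi_i$ is then an $H$-equivariant bundle over $\mathbb{A}^1$ with trivial $H$-action downstairs, hence $H$-equivariantly trivial; comparing its fibers over $1$ and $0$ yields $\tilde\xi_i|_{j(f)}\cong\tilde\xi_i|_{\tilde f_\sharp}$ as $H$-representations, i.e. as $\Tt$-representations via $\varphi$. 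Combined with the previous paragraph this is precisely $j^*\xi|_f=\varphi^*\big(\xi|_{\tilde f_\sharp}\big)$, proving the first statement and hence the lemma.

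\emph{Main obstacle and an alternative.} The delicate point is the existence of the limit $\tilde f_\sharp$ and its identification with the intended resolution: one must orient $\sigma$ (fix the chamber) so that $\lim_{t\to 0}\sigma(t)\cdot j(f)$ exists in $F$ and coincides with the combinatorially prescribed $\tilde f_\sharp$. This relies on $F$ being itself a bow variety (Theorem~\ref{theorem: embedding resolution D5 branes}), proper over its affinization, so that a generic cocharacter with positive $\h$-weight contracts it to its isolated fixed points. A purely combinatorial alternative would bypass the geometry entirely: write $\xi_i|_f$ and $\tilde\xi_i|_{\tilde f_\sharp}$ as explicit sums of torus characters read off from the respective BCTs, and verify termwise that the substitution $a'_k=a\h^{-c''_k},\ a''_k=a$ of \eqref{group homomorphism A-resolution} identifies them, the factor $\h^{-c''_k}$ being exactly what absorbs the extra $\h$-weights created when the new fivebrane crosses the relevant ties.
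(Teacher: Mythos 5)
Your proposal is correct and follows essentially the same route as the paper, which proves the lemma in exactly the two ways you describe: primarily by the combinatorics of fixed point restrictions from \cite[\S~4.4]{rimanyi2020bow} (your ``purely combinatorial alternative''), and alternatively by observing that $j(f)$ lies in the attracting set of $\tilde f_\sharp$ for a $\Cs_{\hbar}$-flow induced via $\varphi$ — precisely your flow argument. If anything, your formulation with a cocharacter $\sigma$ complementary to $H=\varphi(\Tt)$ is the more careful reading of the paper's one-line remark (the literal image of $\Cs_{\hbar}$ under $\varphi$ lands inside $\varphi(\Tt)$ and therefore fixes $j(f)$, so the attraction must be taken along a direction transverse to $\varphi(\Tt)$), and your $\mathbb{A}^1$-triviality step supplies the detail the paper leaves implicit.
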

\begin{proof}
    The statement follows from the combinatorics of the fixed point restrictions developed in \cite[\S~4.4]{rimanyi2020bow}, cf. \S~\ref{sec:fixed point restrictions}. Alternatively, it follows from the fact that $j(f)$ lies in the attracting set of $\tilde f_{\sharp}$ for the action of $\Cs_{\hbar}$ on $\wt X$ induced by the composition $\Cs_{\hbar}\hookrightarrow\Tt\xrightarrow{\varphi}\wt\Tt$.
\end{proof}

\subsection{Maximal resolutions}
\label{sec: max resolutions}
We say that the resolution of a D5 brane $\Ab$ is maximally resolved if it is replaced with $c(\Ab)$ consecutive D5 branes with D5 charges all equal to one, see
\[
\begin{tikzpicture}[baseline=0pt,scale=.35]
\draw[thick] (4,1)--(22,1) ; 
\draw[thick,red] (3.5,0)  -- (4.5,2) node[above]{$r_1$};
\draw[thick,red] (6.5,0)  -- (7.5,2) node[above]{$r_2$};
\draw[thick,red] (9.5,0) -- (10.5,2)  node[above]{$r_3$};
\draw[thick,red] (12.5,0) -- (13.5,2) node[above]{$r_4$};
\draw[thick,blue] (16.5,0) -- (15.5,2) node[above]{$c_1$};
\draw[thick,blue] (19.5,0) -- (18.5,2) node[above]{$c_2$};
\draw[thick,blue] (22.5,0) -- (21.5,2) node[above]{$c_3$};
\end{tikzpicture}
\qquad  
\begin{tikzpicture}[baseline=0pt,scale=.35]
\draw[thick] (4,1)--(23.15,1) ; 
\draw[thick,red] (3.5,0)  -- (4.5,2) node[above]{$r_1$};
\draw[thick,red] (6.5,0)  -- (7.5,2) node[above]{$r_2$};
\draw[thick,red] (9.5,0) -- (10.5,2)  node[above]{$r_3$};
\draw[thick,red] (12.5,0) -- (13.5,2) node[above]{$r_4$};
\draw[thick,blue] (16.7,0) --(15.7,2);
\draw[thick,blue] (17.0,0) -- (16.0,2);
\draw[thick,blue] (17.3,0) --(16.3,2);
\draw[thick,blue] (17.6,0) --(16.6,2); 
\draw[thick,blue] (19.7,0) --(18.7,2);
\draw[thick,blue] (20.0,0) -- (19.0,2);
\draw[thick,blue] (20.3,0) --(19.3,2);
\draw[thick,blue] (20.6,0) --(19.6,2);
\draw[thick,blue] (22.7,0) --(21.7,2);
\draw[thick,blue] (23.0,0) -- (22.0,2);
\draw[thick,blue] (23.3,0) --(22.3,2);
\draw[thick,blue] (23.6,0) --(22.6,2);
\node[blue] at (16.5,-0.65) {$\underbrace{}_{c_1}$};
\node[blue] at (20.0,-0.65) {$\underbrace{}_{c_2}$};
\node[blue] at (23.5,-0.65) {$\underbrace{}_{c_3}$};
\node[blue] at (15.5,2.6) {\tiny $1$}; \node[blue] at (15.8,2.6) {\tiny $1$}; \node[blue] at (16.1,2.6) {\tiny $1$}; \node[blue] at (16.4,2.6) {\tiny $1$};
\node[blue] at (18.5,2.6) {\tiny $1$}; \node[blue] at (18.8,2.6) {\tiny $1$}; \node[blue] at (19.1,2.6) {\tiny $1$}; \node[blue] at (19.4,2.6) {\tiny $1$};
\node[blue] at (21.5,2.6) {\tiny $1$}; \node[blue] at (21.8,2.6) {\tiny $1$}; \node[blue] at (22.1,2.6) {\tiny $1$}; \node[blue] at (22.4,2.6) {\tiny $1$};
\end{tikzpicture}.
\]
\begin{corollary}
\label{corollary any bow can be embedded in a flag variety} 
Any bow variety $X(\DD)$ can be embedded in the cotangent bundle of a partial flag variety. More precisely, $X(\DD)$ can be embedded in the bow variety $X(\wt \DD)$ whose brane diagram $\wt \DD$ is obtained from $\DD$ by maximally resolving all the D5 branes in $\DD$. 
\end{corollary}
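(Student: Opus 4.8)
The plan is to deduce the statement from Proposition~\ref{prop: charge one bows} by iterating Theorem~\ref{theorem: embedding resolution D5 branes}. First I would observe that the maximal resolution $\wt\DD$ of $\DD$ is obtained by a finite sequence of the \emph{elementary} D5 resolutions of \S\ref{subsec: D5 res}, each splitting one D5 brane of charge $c\geq 2$ into two branes whose charges sum to $c$. Concretely, fix an ordering of the D5 branes and, for each brane of charge $c\geq 2$, peel off a charge-$1$ brane repeatedly; since every step strictly decreases a charge that is $\geq 2$ while preserving the total, after finitely many steps every D5 brane has charge one. This produces a chain of brane diagrams $\DD=\DD_0,\DD_1,\dots,\DD_N=\wt\DD$ in which $\DD_i$ is an elementary D5 resolution of $\DD_{i-1}$.

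Next I would apply Theorem~\ref{theorem: embedding resolution D5 branes} to each elementary step. This yields, for each $i$, a regular closed embedding $j_i\colon X(\DD_{i-1})\hookrightarrow X(\DD_i)$, equivariant along the corresponding torus homomorphism $\varphi_i$ of \eqref{group homomorphism A-resolution}. Setting $j:=j_N\circ\cdots\circ j_1$ gives a morphism $X(\DD)\to X(\wt\DD)$, equivariant along $\varphi:=\varphi_N\circ\cdots\circ\varphi_1$. I would then verify that $j$ is itself a regular closed embedding: a composite of closed immersions is a closed immersion, and a composite of regular immersions is regular (the conormal sequence of the composite is the extension of the conormal sheaves of the factors). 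Here all intermediate $X(\DD_i)$ are smooth by the discussion in \S\ref{sec:def of bow variety}, so ``regular closed embedding'' behaves as expected throughout the chain.

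Finally, since every D5 charge of $\wt\DD$ equals one, Proposition~\ref{prop: charge one bows} identifies $X(\wt\DD)\cong T^*\Fl(\dd,d_0)$, the cotangent bundle of a partial flag variety. Crucially, the left dimension vector $d^-$ (and hence $\dd$ and $d_0$) is determined by the NS5 charges and is untouched by D5 resolutions, so the flag data for $\wt\DD$ agrees with that read off from $\DD$. Composing with $j$ gives the desired closed embedding $X(\DD)\hookrightarrow T^*\Fl(\dd,d_0)$.

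The step I expect to be the main obstacle is not any single ingredient in isolation but the bookkeeping that makes the iteration legitimate: one must check that resolving one D5 brane leaves the already-resolved branes (and the NS5 part) intact, which holds because each elementary resolution is a purely local surgery on the chosen brane, and that regularity genuinely survives the $N$-fold composition. The latter is standard once smoothness of each $X(\DD_i)$ is invoked, but it is the point where the argument could silently break if some intermediate bow variety failed to satisfy Assumption~2 or failed to be smooth; both are guaranteed here, so the composition goes through.
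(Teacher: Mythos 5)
Your proposal is correct and matches the paper's intended argument: the paper states this corollary as an immediate consequence of iterating the elementary D5 resolutions of Theorem~\ref{theorem: embedding resolution D5 branes} and then invoking Proposition~\ref{prop: charge one bows}, exactly as you do. Your extra bookkeeping (composites of regular closed immersions remain regular closed immersions on the smooth intermediate bow varieties, and the left dimension vector $d^-$ is untouched by D5 surgeries so the flag data is as claimed) is precisely what the paper leaves implicit.
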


\section{Cohomology, K-theory, Elliptic cohomology classes}

\subsection{Cohomology} Denote the Chern roots of the tautological bundle $\xi_k$ by $t_{k,i}$ for $i=1,\ldots,d_k$. Let $a_j$ be the first Chern class of the $j$-th $\C^\times$ factor of $\At$, and let $\h$ denote the first Chern class of $\C^\times_{\h}$. Polynomial expressions of $t_{ki}$ (symmetric in $i$ for the same $k$), $a_j$, $\h$ are then $\Tt$-equivariant cohomology classes on $\Ch(\DD)$.

\subsection{K theory} Similarly, Laurent polynomial expressions of the $t_{ki}$, $a_j$, $\h$ (symmetric in $t_{ki}$ for the same $k$) are $\Tt$-equivariant K theory classes on $\Ch(\DD)$. Here, each variables are interpreted as K-theoretic Chern roots, Chern classes.

\subsection{Elliptic cohomology classes} Elliptic cohomology is a scheme-valued co-variant theory, see~\cite{aganagic2016elliptic, botta2021shuffle, BR23}. We will be only interested in its $0$-th (covariant) functor
\[
\Ell_G(-):G\text{-spaces}\to \text{Schemes}_{\Ell_G},
\]
where
$E_G:=\Ell_G(\pt)$ is the elliptic cohomology of the one-point space. If $G$ is either a rank $n$ torus $T$ or $\GL(n)$, then $\Ell_T=\text{Cochar}(T)\otimes_\Z E \cong  E^{n}$ 
and $\Ell_{GL(n)}= E^{(n)}$, the symmetrized $r$-{th} Cartesian power of $E$.

In elliptic cohomology, Laurent polynomials are lifted to products of (odd) Jacobi theta functions 
\begin{equation}
    \label{eq: theta function}
    \th(x)=(x^{1/2}-x^{-1/2})\prod_{n>0} (1-q^nx)(1-q^nx^{-1}).
\end{equation}
Formally, theta functions emerge as follows. The Chern class of a rank $r$ vector bundle $V\to X$ is
 a morphism
 \[
c(V):E_G(X)\to E_{\GL(r)}(\pt)=E^{(r)},
\]
called elliptic characteristic class of $V$, see \cite[\S~1.8]{ginzburg1995elliptic} and \cite[\S~5]{Ganter_2014}. Its coordinates in the target are called elliptic Chern roots. Associated with $V$ is an invertible sheaf, the Thom sheaf of $V$
\[
\th(V):=c(V)^*(D),
\]
obtained by pulling back the effective divisor $D=\lbrace0\rbrace+E^{(r-1)}\subset E^{(r)}$. We denote its canonical global section by $\th(V)$. The connection of the section $\th(V)$ with Jacobi theta functions emerge as follows. Consider the elliptic Chern character map 
\[
\chern: \Spec(K_G(X))\to \Ell_G(X).
\]
We have a commutative diagram
\[
\begin{tikzcd}
    \Spec(K_G(X))\arrow[r] \arrow[d, "\chern"]& \Spec(K_{\GL(r)}(\pt))=(\Cs)^{(r)}\arrow[d, "\chern"]
    \\
    \Ell_G(X)\arrow[r, "c(V)"] & \Ell_{\GL(r)}(\pt)=E^{(r)}
\end{tikzcd}
\]
where the vertical right map is induced by the (complex-analytic) covering map $\chern: \Cs\to E=\Cs/q^{\Z}$. We choose a trivialization of $\chern^*\Sh{O}(0)$ that identifies its canonical section with \eqref{eq: theta function}. This trivialization combined with the diagram above induce trivializations for any line bundle of the form $\chern^*\th(V)$. As a result, the pullback of $\th(V)$ by $\chern$ is the function
\[
\prod_{i=1}^r\th(t_i)
\]
in the Chern roots $t_i$ of $V$. In the rest of the article, we will generally not distinguish between sections of $\th(V)$ and their pullbacks along the Chern character.

The Thom sheaf allows defining another important class of line bundles, which depends on an extra parameter $z$ living in a one-dimensional torus $\Cs_z$ acting trivially on $X$. Namely, for every $V$ as above,  we define 
\begin{equation*}
    \Sh{U}(V,z):=\th\left((V-1^{\oplus \rk(V)})(z-1)\right)\in \Pic{}{E_{G\times \Cs_z}(X)}.
\end{equation*}
It admits a canonical meromorphic section, which is the pullback of the canonical section
\begin{equation*}\label{eq:delta_function_def}
\prod_{i=1}^{r}\delta(t_i,z):=\prod_{i=1}^r \frac{\th(t_iz)}{\th(t_i)\th(z)}
\end{equation*}
of the line bundle $\bigotimes_{i=1}^r\th((t_i-1)(z-1))$ on $E_{\GL(r)\times \Cs_{\hbar}}(\pt)$.
Note that the latter plays a central role in $\h$-deformed characteristic classes in elliptic cohomology \cite{rimanyi2019hbardeformed}. A comprehensive list of properties of the line bundle $\Sh{U}$ can be found in \cite[Appendix 1]{okounkov2020inductiveI} and \cite[\S~4.4]{BR23}.

In this language, elliptic cohomology {\em classes} of $X(\DD)$ are sections of certain line bundles over the elliptic cohomology scheme of $\Ch(\DD)$. The coordinates of that scheme are naturally denoted by $t_{ki}, a_j, \h$, and a set of new, auxiliary variables $z_j$ ($j=1,\ldots,m$) associated with the NS5 branes. Hence various (elliptic) functions in these variables (symmetric in $t_{ki}$ for the same $k$) express elliptic cohomology classes on $\Ch(\DD)$.

\subsection{Notation} 

Singular cohomology, K-theory, and elliptic cohomology all come with natural notions of pullback and pushforward for suitable morphisms $f: X\to Y$, see \cite[\S4]{BR23}. We denote them by $f^{\oast}$ and $f_{\oast}$, respectively. Albeit slightly non-standard, our notation is motivated as follows. Elliptic cohomology associates to any equivariant morphisms $f:X\to Y$ a functorial morphism of schemes $\Ell_G(X)\to \Ell_G(Y)$ and hence adjoint functors 
\[
f^*: \text{Qcoh}(E_G(X))\to \text{Qcoh}(E_G(Y)),\qquad f_*: \text{Qcoh}(E_G(Y))\to \text{Qcoh}(E_G(X)).
\]
These are the usual pullback and pushforward functors for coherent sheaves, and should not be confused with $f^{\oast}$ and $f_{\oast}$, which are not functors, but morphisms in the categories $\text{Qcoh}(E_G(X))$ and $\text{Qcoh}(E_G(Y))$, respectively. 

To treat Euler classes of our three cohomology theories on the same ground, we will use the following notation. Consider a $G$-equivariant vector bundle on a variety $X$ with Chern roots $w_1,\dots, w_n$. For $*\in \lbrace H, K, E\rbrace$, we denote by $e^*(V)$ the corresponding Euler classes, given by 
\[
e^*(V)=\prod_{i=1}^n w_i\qquad e^*(V)=\prod_{i=1}^n\hata(w) \qquad e^E(V)=\prod_{i=1}^n\th(w)
\]
where $\hata(w)=(w^{1/2}-w^{-1/2})$ is the $\hat A$-genus. Our choice of using the $\hat A$-genus instead of the more standard $\Lambda^\bullet V^\vee=\prod_{i=1}^n (1-w^{-1})$ follows from the fact that $\hata(x)=\lim_{q\to 0}\th(x)$, so taking limits from elliptic cohomology to K-theory is more straightforward.

\subsection{Fixed point restriction maps}
\label{sec:fixed point restrictions}
Let $f\in \Ch(\DD)^{\Tt}$ be a torus fixed point. The inclusion $\{f\}\to \Ch(\DD)$ induces a {\em restriction map}
\[
\phi_f: H_{\Tt}^*(\Ch(\DD)) \to H_{\Tt}^*(\{f\}). 
\]
In effect this map is obtained by specializing the topological variables $t_{ki}$ to functions of $a_j$ and $\h$. The same holds in K theory and elliptic cohomology. The combinatorics of these specializations is described in \cite[\S{}4.6]{rimanyi2020bow} and is illustrated in Figure~\ref{fig:fixed point restriction}.

Namely, we decorate the ties over every D3 brane. The ones coming out of $\Ab_{j}$ will be decorated by $a_j$ times an $\h$-power. The rules are:
\begin{itemize}
    \item over the D3 brane adjacent to $\Ab_j$ the decorations are $a_j,a_j/h, \ldots,a_j/h^{c_j-1}$ (from longest tie to shortest tie), where $c_j$ is the charge of $\Ab_j$ (the number of ties connected to $\Ab_j$);
    \item on a tie if we move over a D5 brane, the decoration does not change;
    \item on a tie if we move over an NS5 brane to the left, the decoration is divided by $\h$.
\end{itemize}

\begin{theorem} \cite[\S4]{rimanyi2020bow} \label{thm:RestrictionMaps}
The map $\phi_f$ maps the variables $t_{ki}$ to the decorations of the ties above the D3 brane $\Xb_k$.    
\end{theorem}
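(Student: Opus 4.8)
The plan is to convert the statement into a weight computation on the fibers of the tautological bundles and then match those weights against the combinatorial decoration rules. Recall from \S~\ref{sec:def of bow variety} that $\Ch(\DD)=\widetilde{\mathcal M}^{ss}/G$ with $G=\prod_{\Xb}\GL(W_{\Xb})$ acting \emph{freely}. Hence a $\Tt$-fixed point $f$ is represented by a point $m\in\widetilde{\mathcal M}^{ss}$ whose $G$-orbit is $\Tt$-invariant, and freeness of the $G$-action produces a unique homomorphism $\rho_f\colon\Tt\to G$ satisfying $t\cdot m=\rho_f(t)\cdot m$ for all $t\in\Tt$. The fiber of $\xi_k$ over $f$ is $W_{\Xb_k}$, on which $\Tt$ acts through the $\Xb_k$-component of $\rho_f$, and the eigenvalues of this action are exactly $\phi_f(t_{k,1}),\dots,\phi_f(t_{k,d_k})$. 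The theorem is therefore equivalent to the assertion that the $\Tt$-weights of $W_{\Xb_k}$ at $f$ are the decorations of the ties covering $\Xb_k$.

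First I would fix, for the tie diagram attached to $f$ by Theorem~\ref{thm:torus fixed points}, an explicit $\Tt$-fixed representative $m$ and read off which of the structure maps in \eqref{eq: three way part} and \eqref{eq: two way part} are nonzero. The central structural claim is that, as a $\rho_f(\Tt)$-module, each $W_{\Xb_k}$ splits into a direct sum of one-dimensional ``tie lines'', one for every tie covering $\Xb_k$; the count is guaranteed by the defining property of tie diagrams, namely that a D3 brane of multiplicity $d_k$ is covered by exactly $d_k$ ties. Each tie line is generated by propagating the distinguished vector of $\C_{\Ab_j}$ through the nonzero maps of $m$: the map $a_{\Ab_j}\colon\C_{\Ab_j}\to W_{\Ab_j^-}$ seeds the line, the endomorphism $B_{\Ab_j}\in\h\End(W_{\Ab_j^-})$ spreads it into a $c_j$-dimensional subspace over the D3 brane adjacent to $\Ab_j$ (this is what stability condition (S2) forces), and the maps $A_{\Ab}$ (across D5 branes) and $C_{\Zb}$ (across NS5 branes) carry it leftward toward $\Zb_i$. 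The stability conditions (S1) and (S2) are precisely what pin down the nonzero maps and guarantee that this decomposition has the predicted multiplicities.

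It then remains to compute the $\Tt$-weight of each tie line, which is the content of the three decoration rules. The seed vector carries the $\At$-weight $a_j$, giving the initial decoration. Propagation through a structure map multiplies the weight by the power of $\h$ equal to the $\Cs_{\h}$-weight of that map as recorded in \eqref{eq: three way part} and \eqref{eq: two way part}: the $\h$-free maps $a_{\Ab_j}$ and $A_{\Ab}$ leave the weight unchanged, whereas the $\h$-weighted maps $B_{\Ab_j}$ and $C_{\Zb}$ shift it. Tracking these factors, with the sign dictated by the fixed-point equation $t\cdot m=\rho_f(t)\cdot m$, reproduces the three rules: the spread $a_j,a_j/\h,\dots,a_j/\h^{c_j-1}$ over the D3 brane adjacent to $\Ab_j$, invariance when moving across a D5 brane, and division by $\h$ when crossing an NS5 brane to the left.

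The main obstacle is the second step rather than the weight bookkeeping: one must verify that the $\Tt$-fixed representative $m$ can be chosen so that exactly the maps dictated by the tie diagram are nonzero, that $\mu(m)=0$ together with (S1) and (S2) hold simultaneously, and that the resulting $W_{\Xb_k}$ is genuinely the direct sum of the expected tie lines. Once this geometric input is in place---it is carried out combinatorially in \cite{rimanyi2020bow}---the weight calculation is routine. As an alternative route---using the attracting-set proof of Lemma~\ref{restrction Chern roots in A resolution} to avoid circularity---one can maximally resolve all D5 branes via Corollary~\ref{corollary any bow can be embedded in a flag variety}, where the ambient space becomes a cotangent partial flag variety (Proposition~\ref{prop: charge one bows}) with classical fixed-point restrictions, and then transport the answer back along the embedding $j$; the twist $\varphi$ of \eqref{group homomorphism A-resolution} introduces exactly the $\h$-powers appearing in the decoration rules.
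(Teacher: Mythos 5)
The paper does not prove Theorem~\ref{thm:RestrictionMaps} itself but quotes it from \cite[\S4]{rimanyi2020bow}, and your proposal reconstructs exactly the strategy of that source: represent the fixed point by explicit quiver data supported along the ties, so that each $W_{\Xb_k}$ splits into one-dimensional tie lines seeded by $a_{\Ab_j}$ and propagated by $B_{\Ab_j}$, $A_{\Ab}$, $C_{\Zb}$, and then read the $\Tt$-weights off the $\h$-gradings of these maps in \eqref{eq: three way part}--\eqref{eq: two way part}. Your account is correct and correctly isolates the one genuinely nontrivial step---constructing the fixed representative compatibly with $\mu=0$, (S1), (S2), including the longest-to-shortest matching of the ties at $\Ab_j$ with the weights $a_j,\dots,a_j\h^{-(c_j-1)}$---which is precisely the combinatorial content carried out in the cited reference.
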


The caveat is that in cohomology the decorations are read additively, while in K theory and elliptic cohomology the decorations are read multiplicatively. For example, for $f$ in Figure~\ref{fig:fixed point restriction} the cohomological $\phi_f$ maps the Chern roots of $\xi_{-1}$ as 
\[
t_{-11}\mapsto a_1-\h, \qquad
t_{-12}\mapsto a_2-\h, \qquad
t_{-13}\mapsto a_2-2\h, \qquad
t_{-14}\mapsto a_3-\h.
\]

\begin{remark}
    The Chern roots are not elements of the (respective) cohomologies, only their symmetric expressions. Hence, the order in which we substitute them in the $t_{ki}$ variables is irrelevant. However, later, we will agree on a specific order for substituting in the $t_{0i}$ variables, see Definition~\ref{def:610}. 
\end{remark}

For different fixed points $f$ on the same bow variety $\Ch(\DD)$ the maps $\phi_f$ are, of course, different. However, a crucial observation is that the difference is only for the $t_{ki}$ substitutions {\em with negative $k$}. For all the fixed points on the same $\Ch(\DD)$ variety the $t_{ki}$ substitutions for non-negative $k$ {\em are the same}. The topological reason for this is that the bundles $\xi_k$ with $k\geq 0$ are topologically trivial (with a $\Tt$ action), namely 
\begin{equation}\label{eq:tplus subs}
\xi_k = \bigoplus_{j>k} \bigoplus_{i=0}^{c_j-1} a_j\h^{-i} \qquad \text{ for $k\geq 0$,}
\end{equation}
see \cite[Prop.~3.4]{BR23}. This motivates the following convention:

\smallskip

\noindent{\bf Convention:} For $k > 0$, the variables $t_{ki}$ will be identified with the corresponding expressions $a_j \h^i$:  
\begin{equation}\label{eq: t positive ah}  
\{ t_{ki} : i = 1, \dots, d_i \}  
\leftrightarrow  
\{ a_j\h^{-i} : j > k, \, i = 0, \dots, c_j - 1 \}.  
\end{equation}  

At this stage, it may seem unclear why this identification is made only for $k>0$ and not for the variables $t_{0i}$. The reason lies in the geometric significance of the $\xi_0$ bundle, which plays a special role and influences our algebraic definitions accordingly. In particular, we will consider certain ($\Wt$-) functions that are not symmetric in the $t_{0i} $ variables. As a result, extending the identification in~\eqref{eq: t positive ah} to the case $k=0$ requires additional care—see Definition~\ref{def:610}.  

\begin{figure}
\begin{equation*}
  \begin{tikzpicture}[baseline=7,scale=.6]
\begin{scope}[yshift=0cm]
\draw [thick,red] (0.5,0) --(1.5,2); 
\draw[thick] (1,1)--(2.5,1) node [below] {$1$} -- (19,1);
\draw [thick,red](3.5,0) --(4.5,2);  
\draw [thick](4.5,1)--(5.5,1) node [below] {$3$} -- (6.5,1);
\draw [thick,red](6.5,0) -- (7.5,2);  
\draw [thick](7.5,1) --(8.5,1) node [below] {$4$} -- (9.5,1); 
\draw[thick,red] (9.5,0) -- (10.5,2);  
\draw[thick] (10.5,1) --(11.5,1) node [below] {$5$} -- (12.5,1); 
\draw [thick,blue](13.5,0) -- (12.5,2);   
\draw [thick](13.5,1) --(14.5,1) node [below] {$3$} -- (15.5,1);
\draw[thick,blue] (16.5,0) -- (15.5,2);  
\draw [thick](16.5,1) --(17.5,1) node [below] {$1$} -- (18.5,1);  
\draw [thick,blue](19.5,0) -- (18.5,2);  
\node at (.5,-1) {$\Zb_4$};
\node at (3.5,-1) {$\Zb_3$};
\node at (6.5,-1) {$\Zb_2$};
\node at (9.5,-1) {$\Zb_1$};
\node at (13.5,-1) {$\Ab_1$};
\node at (16.5,-1) {$\Ab_2$};
\node at (19.5,-1) {$\Ab_3$.};
\draw [dashed, black](7.5,2.2) to [out=70,in=200]  (10,6)  to [out=20,in=150] (16,4.4) to [out=-30,in=140] (18.5,2.2);
\draw [dashed, black](10.5,2.2) to [out=25,in=155] (12.5,2.2);
\draw [dashed, black](4.5,2.2) to [out=25,in=155] (12.5,2.2);
\draw [dashed, black](4.5,2.2) to [out=45,in=135] (15.5,2.2);
\draw [dashed, black](1.5,2.2) to [out=45,in=135] (15.5,2.2);
\node [blue] at (11.4,2) {$a_1\!/\!\h$};
\node [blue] at (11.4,3) {$a_1$};
\node [blue] at (11.4,5) {$a_2$};
\node [blue] at (11.4,4) {$a_2\!/\!\h$};
\node [blue] at (11.4,6) {$a_3$};
\node [blue] at (8.8,2.7) {$a_1\!/\!\h$};
\node [blue] at (9.6,5.3) {$a_2\!/\!\h$};
\node [blue] at (9.25,4) {$a_2\!/\!\h^2$};
\node [blue] at (9,6.1) {$a_3\!/\!\h$};
\node [blue] at (6.1,2.3) {$a_1\!/\!\h^2$};
\node [blue] at (6.6,5.3) {$a_2\!/\!\h^2$};
\node [blue] at (6.25,4) {$a_2\!/\!\h^3$};
\node [blue] at (3.2,4.1) {$a_2\!/\!\h^3$};
\node [blue] at (14.4,3.7) {$a_2$};
\node [blue] at (14.1,2.6) {$a_2\!/\!\h$};
\node [blue] at (14.5,5.9) {$a_3$};
\node [blue] at (17,3) {$a_3$};
\end{scope}
\end{tikzpicture}
\end{equation*}
\caption{The combinatorics of the restriction map to a torus fixed point.}
\label{fig:fixed point restriction}
\end{figure}
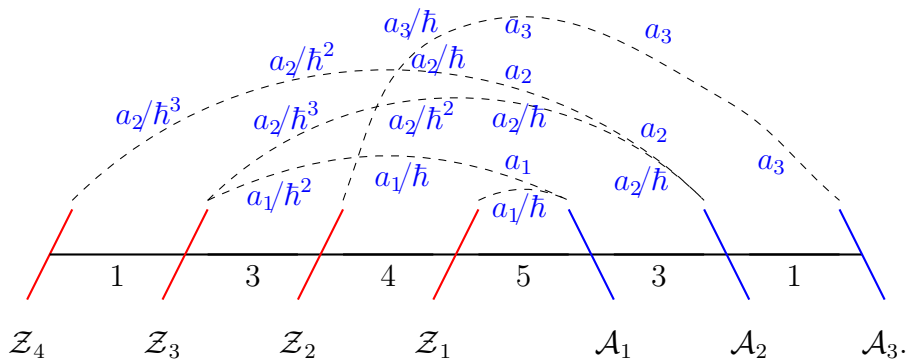

\section{Stable envelopes}

In this section we give a brief review of the three flavors of {\em stable envelopes}: cohomological, K-theoretic, and elliptic. 


\subsection{Preliminaries}
\label{subsec: preliminars}

Let $X$ be a smooth quasi-projective symplectic variety with an action of a torus $\Tt$ and let $A\subset \Tt$. 
The set of weights of the normal bundle $N_{X/X^{A}}$ of $X^{A}$ in $X$ is a finite subset and their dual hyperplanes cut the real Lie algebra 
\[
\text{Lie}(A)=\text{Cochar}(A) \otimes_\Z \R
\]
in finitely many chambers. A choice of a chamber ${\mC}$ allows us to distinguish between attractive, $A$-fixed, and repelling weights. Namely, we say that a weight $\chi\in \text{Char}(A)$ is attracting (resp. repelling) if $\lim_{t\to 0} \chi\circ \sigma (t)=0$ (resp. if $\lim_{t\to 0} \chi\circ \sigma (t)=\infty$) for any $\sigma\in \mC$. The trivial character is called the $A$-fixed character. Accordingly, the normal bundle $N_{X/X^{A}}\in K_{T}(X^{A})$ decomposes  
\begin{equation}
    \label{eq: decomposition normal bundle}
    N_{X/X^{A}}=N_{X/X^{A}}^+ \oplus N_{X/X^{A}}^-.
\end{equation}

Let us fix a chamber $\mC$. Given a subvariety $Y \subset X^{A}$, we define its attracting set $\Att{C}(Y)$ as: 
\[
\Att{C}(F)=\{x\in X\; |\; \lim_{t\to0} \sigma(t)\cdot x=f, \text{ for any $\sigma \in \mC$}\}\subset X.
\]
Notice that there is a canonical (generally non-continuous) map $\Att{C}(Y) \to Y$ obtained by taking the limit of the $\mC$-action induced by any $\sigma\in \mC$. Moreover, if $Y = F \subseteq X^{A}$ is a connected component, then $\Att{C}(F) \to F$ is an affine bundle of rank $\text{codim}_X(F)/2$.


Attracting sets $\Att{C}(F)$ are generally not closed in $X$ because their closures intersect other $\At$-fixed components. Since $X$ is quasi-projective Maulik and Okounkov \cite[\S{}3.2.3]{maulik2012quantum}  show that the choice of a chamber determines a partial ordering on the set of fixed components of $X^{\At}$ defined as the transitive closure of the relation
\[
\overline{\Att{C}(F)}\cap F'\neq 0 \implies F\geq F'.
\]

Consider the $T$-invariant subspaces
\begin{equation*}
    \label{equation spaces in the definition of stable envelopes}
    \Att{C}^{\leq}(F):= \bigcup_{F'\leq F} F\times \Att{C}(F'),
    \qquad 
    X^{\geq F}=X\setminus \bigcup_{F'< F} \Att{C}(F')
\end{equation*}
Notice that the inclusion $j: F\times \Att{C}(F)\hookrightarrow F\times X^{\geq F}$ is well defined and closed. 


\begin{example} \label{ex:poset}
    Assume that $X$ is bow variety and $A$ is the torus $\At$ defined in \S~\ref{sec: torus action}. According to Theorem \ref{thm:torus fixed points} the set $X^{\At}$ is finite. The tangent weights for the $\At$ action on $X$ are of the form $a_i/a_j$, with $0 \leq i,j \leq n$ \cite[\S~4]{rimanyi2020bow}. Hence, a permutation of the $a_i$ variables determines a chamber (but different permutations may give the same chamber). By slight abuse of notation we will write 
\[
\Chamb_{\sigma}=\lbrace a_{\sigma(1)}< \dots <a_{\sigma(n)}\rbrace
\]
for the chamber that a permutation $\sigma\in S_n$ determines. 

The partial order induced by $\Chamb_{\sigma}$ can be described combinatorially, in the language of BCT's, see \S~\ref{sec:fixed points}. In \cite{BFR23} this partial order is identified with the so-called combinatorial secondary Bruhat order on 01-matrices. For example, $\Ch(\DD)$ for the  brane diagram \eqref{eq: example brane diagram} is a variety of dimension 6, with 12 torus fixed points.  The Hasse diagram of the partial order on its fixed points (associated to $\Chamb_{id}$) is 
\[
\begin{tikzcd}
  &   &   & 8\ar[rrrd] &   &   &    & 12 \\
1\ar[rrd]\ar[r] & 3\ar[r]\ar[urr] & 4\ar[rr] &   & 5\ar[rd]\ar[r] & 9\ar[r] & 10 \ar[ru]\ar[rd] &  \\
  &   & 2\ar[ruu]\ar[rrr] &   &   & 6\ar[r]\ar[ru] & 7\ar[r]  & 11.
\end{tikzcd}
\]
The fixed points shown in \eqref{eq:tiediagram} are called 6 and 8 here, and \eqref{eq:tiediagram in Intro} is called 5 here.
\end{example}

\subsection{Cohomological Stable envelopes}

\begin{theorem}\cite[ Prop.3.5.1]{maulik2012quantum}
\label{thm: definition stab}
Let $\mC$ be a chamber for the action of $A$ on $X$. For every connected component $F\subseteq X^{A}$, there exists a unique $T$-invariant Lagrangian cycle 
\[
\Stab_{\Chamb}(F)\in \BoHo_{\Tt} (F\times X),
\]
proper over $X$, such that

\begin{enumerate}
    \item[(i)]  The restriction of $\Stab_{\Chamb}(F)$ on $F\times X^{\geq F}$ is equal to $\left[ \Att{C}(F)\right]$.
    \item[(ii)] The restriction of $\Stab_{\Chamb}(F)$ to $F\times X\setminus \Att{C}^\leq(F)$ is zero.
    \item[(iii)] For every $F'< F$, the restriction of $\Sh{L}_{\mC}$ to $F\times F'$ has $A$-degree less than $\codim(F')/2$.
\end{enumerate}
\end{theorem}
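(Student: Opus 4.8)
The plan is to establish uniqueness and existence separately, both organized as an induction along the Maulik--Okounkov partial order on the fixed components, and both driven by a single geometric input: the attracting map $\Att{C}(F')\to F'$ is an affine bundle of rank $\codim(F')/2$, so that the restriction of $[\Att{C}(F')]$ to the zero section $F'$ equals the equivariant Euler class $e(N^-_{F'})$ of the repelling part of the normal bundle, a class of pure $A$-degree $\codim(F')/2$.

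For uniqueness, suppose $\Stab_{\mC}(F)$ and $\Stab'_{\mC}(F)$ both satisfy (i)--(iii) and set $\gamma=\Stab_{\mC}(F)-\Stab'_{\mC}(F)$. By (ii) the cycle $\gamma$ is supported on $\Att{C}^{\leq}(F)$, and by (i) it restricts to zero on $F\times X^{\geq F}$. Assume $\gamma\neq 0$ and pick $F'<F$ maximal in the order among those with $\gamma|_{F\times F'}\neq 0$. Restricting to the open locus $F\times X^{\geq F'}$ removes all strictly smaller strata, while the restrictions to all strata strictly between $F'$ and $F$ vanish by maximality; hence there $\gamma$ is carried by the closed affine bundle $\Att{C}(F')$ and equals $\pi^{\oast}(c)\cdot[\Att{C}(F')]$ for a class $c$ pulled back from $F\times F'$. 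Restricting further to the zero section gives $\gamma|_{F\times F'}=c\cdot e(N^-_{F'})$, which is divisible by the Euler class and, since $e(N^-_{F'})$ is a non-zero-divisor of pure $A$-degree $\codim(F')/2$, therefore has $A$-degree $\geq \codim(F')/2$ whenever $c\neq 0$. This contradicts (iii), so $\gamma=0$.

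For existence I would argue by downward induction on the poset, assuming $\Stab_{\mC}(F')$ has been constructed for all $F'<F$. I start from the Lagrangian cycle $[\overline{\Att{C}(F)}]\in \BoHo_{\Tt}(F\times X)$, which is proper over $X$ and already satisfies (i) and (ii), its boundary lying in $\Att{C}^{\leq}(F)$. Its only defect is in (iii): its restriction to $F\times F'$ for $F'<F$ may carry $A$-degree $\geq\codim(F')/2$. I correct it by subtracting terms $\pi^{\oast}(c_{F'})\cdot \Stab_{\mC}(F')$, choosing the coefficient $c_{F'}$ to cancel the top $A$-degree part of the restriction. Each such term is again Lagrangian and proper over $X$, is supported inside $\Att{C}^{\leq}(F)$, and---because $\Stab_{\mC}(F')$ vanishes outside $\Att{C}^{\leq}(F')$---does not spoil (i) or (ii). Processing the $F'$ from larger to smaller makes the corrections non-interfering on leading terms, and the fact that $e(N^-_{F'})$ generates the relevant restriction as a module over $H^*_{\Tt}(F')$ in degree $\codim(F')/2$ is what permits pushing the $A$-degree strictly below $\codim(F')/2$.

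The main obstacle is the \emph{existence} step, and inside it the degree-reduction bookkeeping: one must verify that subtracting $\pi^{\oast}(c_{F'})\cdot \Stab_{\mC}(F')$ always lowers the $A$-degree of the restriction to strictly below $\codim(F')/2$, which hinges on the module structure of the restriction over $H^*_{\Tt}(F')$ together with the non-zero-divisor property of $e(N^-_{F'})$, and that the corrected object remains an honest proper Lagrangian cycle rather than a mere cohomology class. Checking that these corrections terminate and that the final class is simultaneously Lagrangian, proper over $X$, and compatible with (i)--(iii) is the crux; by contrast the uniqueness half is a clean descending induction once the Euler-class divisibility is in hand.
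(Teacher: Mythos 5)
The paper itself contains no proof of this statement---it is quoted directly from Maulik--Okounkov---so the comparison is with the original argument of \cite[\S 3.5]{maulik2012quantum}, which your proposal essentially reconstructs. Your uniqueness half is correct and is exactly the standard argument: the one sentence ``the restrictions to all strata strictly between $F'$ and $F$ vanish by maximality, hence $\gamma$ is carried by $\Att{C}(F')$'' compresses a descending excision induction (one peels the intermediate strata one at a time, at each step using that the $\Att{C}(F'')$-carried part restricts on $F''$ to $c''\, e(N^-_{F''})$ and that $e(N^-_{F''})$ is a non-zero-divisor of pure $A$-degree $\codim(F'')/2$), but the mechanism you invoke is the right one and does close the induction.

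The existence half has a genuine gap, and it sits precisely where you flag it. Your degree control---``the restriction of the defect at $F'$ equals $c\cdot e(N^-_{F'})$''---is established only for the component of the class carried by the affine bundle $\Att{C}(F')$. But on the open set $X^{\geq F'}$ the defect also contains $[\overline{\Att{C}(F)}]$ itself and the closures of the intermediate attracting sets, none of which is an affine bundle near $F'$, and your argument says nothing about the $A$-degree of \emph{their} restrictions to $F\times F'$. The missing ingredient, which is the real content of the Maulik--Okounkov proof, is the support-and-degree lemma: a $T$-invariant Lagrangian class supported on the full attracting set restricts to $F'$ with $A$-degree at most $\codim(F')/2$, with the part of exact degree $\codim(F')/2$ coming only from the $\Att{C}(F')$-carried component; this follows from a dimension count on the intersection of the invariant Lagrangian support with $F\times F'$, not from the affine-bundle computation. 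Once one has it, the bookkeeping you worry about evaporates: the offending part of the restriction is a single component of exact degree $\codim(F')/2$, degree counting forces its coefficient $c$ to have $A$-degree zero, and one subtraction per stratum suffices---no iteration and no ``non-interference of leading terms'' argument is needed. It also disposes of your last concern, since the correction $c\cdot \Stab_{\mC}(F')$ is then a class supported on the Lagrangian $\Att{C}^{\leq}(F')$, which is the sense in which ``Lagrangian cycle'' is meant. Without this lemma, your claim that the subtraction pushes the $A$-degree strictly below $\codim(F')/2$ is unjustified whenever the restriction of $[\overline{\Att{C}(F)}]$ at $F'$ has top-degree contributions not visibly divisible by $e(N^-_{F'})$.
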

In (iii), the $A$-degree of a class in $H^*_{\Tt}(X^{A})$ is defined as the cohomological degree in the equivariant variables associated with the torus $A$. This can be explicitly computed via a splitting
\begin{equation}
    \label{eq: cohomological A-degree}
    H^*_{\Tt}(X^{A})\cong H^*_{\Tt/A}(X^{A})\otimes H^*_{A}(\ast)
\end{equation}
by determining the cohomological degree in the second term of the tensor product\footnote{Although the isomorphism above is not canonical, different isomorphisms produce the same filtration by cohomological degree in the $A$-equivariant parameters.}.

Following Maulik and Okounkov, the (cohomological) stable envelope is defined as the $H_{\Tt}$-linear map
\[
\Stab_{\mC}: H^*_{\Tt}(X^{A})\to H^*_{\Tt}(X), \qquad \gamma\mapsto (p_2)_* (\Sh{L}_{\mC}\cap (p_1)^*\gamma)
\]
defined by the cycle $\Sh{L}_{\mC}$ via convolution. For details about convolution in Borel--Moore homology, see \cite[\S~2.7]{Chriss_book}

The next lemma is a key result for the application of stable envelopes in the geometric representation theory of Yangians. For us, it will also crucial to relate the geometry of stable envelopes to the combinatorics of shuffle algebras.  

\begin{lemma}[{\cite[Lem.3.6.1]{maulik2012quantum}}]
\label{lma:triangle lemma}
Let $\mathfrak{C}$ be a chamber for the action of $\At$ on $X$ and let $\mathfrak{C}'\subset \mathfrak{C}$ be a face, of some lower dimension. Let $\At'\subset \At$ be the torus whose Lie algebra is the span of $\mathfrak{C}'$ in the Lie algebra of $\At$ and let $\mathfrak{C}/\mathfrak{C}'$ be the projection of $\mathfrak{C}$ on the Lie algebra of $\At/\At'$. 
Then we have 
\[
\Stab_{\mathfrak{C}'}\circ\Stab_{\mathfrak{C}/\mathfrak{C}'}  =\Stab_{\mathfrak{C}}.
\]
\end{lemma}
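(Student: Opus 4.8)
The plan is to invoke the uniqueness in Theorem~\ref{thm: definition stab}: it suffices to exhibit the Lagrangian cycle underlying the composite map and to check that it satisfies the three defining conditions (i)--(iii) for the envelope $\Stab_{\mathfrak C}$ of the $\At$-action. By associativity of convolution in Borel--Moore homology \cite[\S2.7]{Chriss_book}, the map $\Stab_{\mathfrak C'}\circ\Stab_{\mathfrak C/\mathfrak C'}$ is the one defined by the convolution cycle $\Sh{L}:=\Sh{L}_{\mathfrak C'}\ast\Sh{L}_{\mathfrak C/\mathfrak C'}\in\BoHo_{\Tt}(X^{\At}\times X)$, where $\Sh{L}_{\mathfrak C/\mathfrak C'}$ is the envelope cycle for the $\At/\At'$-action on $X^{\At'}$ and $\Sh{L}_{\mathfrak C'}$ that for the $\At'$-action on $X$. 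The geometric input is a cocharacter adapted to the face: I would fix $\sigma'$ in the relative interior of $\mathfrak C'$ and a lift $\tau$ of an interior point of $\mathfrak C/\mathfrak C'$, so that $\sigma:=\sigma'+\epsilon\tau\in\mathfrak C$ and $X^{\sigma}=X^{\At}$ for all small $\epsilon>0$. For a tangent weight $w$ the sign of $\langle\sigma,w\rangle$ is then governed by $\langle\sigma',w\rangle$ whenever that is nonzero, and by $\langle\tau,w\rangle$ on the remaining directions, which are exactly those tangent to $X^{\At'}$.

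This weight dichotomy gives a two-step factorization of the attracting loci: a point flows to a component $F\subseteq X^{\At}$ under $\mathfrak C$ iff it flows under $\mathfrak C'$ to a component $Y\subseteq X^{\At'}$ that in turn flows to $F$ under $\mathfrak C/\mathfrak C'$, i.e. $\Attr_{\mathfrak C}(F)=\Attr_{\mathfrak C'}(\Attr_{\mathfrak C/\mathfrak C'}(F))$. Consequently the affine bundle $\Attr_{\mathfrak C}(F)\to F$ is the composite of the $\mathfrak C/\mathfrak C'$-attracting bundle over $X^{\At'}$ with the $\mathfrak C'$-attracting bundle, and the repelling normal bundles split, $N^-_{X/X^{\At}}=N^-_{X/X^{\At'}}\oplus N^-_{X^{\At'}/X^{\At}}$ along $F$, so that their codimensions and Euler classes multiply. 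The same dichotomy shows that the $\mathfrak C$-order on $X^{\At}$ refines the composite of the $\mathfrak C'$-order on components of $X^{\At'}$ and the $\mathfrak C/\mathfrak C'$-order; in particular $F'<F$ forces $\hat{F'}\le\hat F$ for the $X^{\At'}$-components $\hat F,\hat{F'}$ containing $F,F'$, with equality only if $F'<F$ already within $X^{\At'}$.

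For condition~(ii), the convolution cycle is supported in $\Attr^{\le}_{\mathfrak C'}(\operatorname{supp}\Sh{L}_{\mathfrak C/\mathfrak C'})$, and combining the support axioms of the two factors with the order compatibility above yields $\operatorname{supp}\Sh{L}(F)\subseteq\Attr^{\le}_{\mathfrak C}(F)$. For condition~(i), restricting to $F\times X^{\ge F}$ I would use that $\Sh{L}_{\mathfrak C/\mathfrak C'}(F)$ there equals $[\Attr_{\mathfrak C/\mathfrak C'}(F)]$ and, over the relevant $Y$, that $\Sh{L}_{\mathfrak C'}(Y)$ equals $[\Attr_{\mathfrak C'}(Y)]$; convolving and applying the affine-bundle factorization of the previous paragraph reproduces $[\Attr_{\mathfrak C}(F)]$. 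The diagonal block over $F\times F$ is then the product $e(N^-_{X/X^{\At'}}|_F)\,e(N^-_{X^{\At'}/X^{\At}}|_F)=e(N^-_{X/X^{\At}}|_F)$, which is exactly the normalization of $\Stab_{\mathfrak C}$.

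The main obstacle is condition~(iii), the degree bound. Under the splitting $H^*_{\At}(\ast)\cong H^*_{\At'}(\ast)\otimes H^*_{\At/\At'}(\ast)$ the $\At$-degree is additive, the $\At'$-part coming only from the $\Stab_{\mathfrak C'}$ factor and the $\At/\At'$-part only from the $\Stab_{\mathfrak C/\mathfrak C'}$ factor. For $F'<F$ I would bound the two contributions separately: the $\At'$-degree of the $\mathfrak C'$-factor by $\codim_X(\hat{F'})/2$ and the $\At/\At'$-degree of the $\mathfrak C/\mathfrak C'$-factor by $\codim_{X^{\At'}}(F')/2$, so that their sum is at most $\tfrac12(\codim_X(\hat{F'})+\codim_{X^{\At'}}(F'))=\codim_X(F')/2$ by additivity of codimension along $X^{\At}\subseteq X^{\At'}\subseteq X$. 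The delicate point, and the crux of the argument, is to upgrade this to a strict inequality whenever $F'<F$: by the order compatibility of the second paragraph at least one of the two factors is then off-diagonal (either $\hat{F'}<\hat F$, making the $\mathfrak C'$-factor strict, or $\hat{F'}=\hat F$ with $F'<F$ inside $X^{\At'}$, making the $\mathfrak C/\mathfrak C'$-factor strict), and the strict degree bound of that envelope propagates through the product. Establishing this matching carefully, together with (i) and (ii), verifies all three conditions, and uniqueness then forces $\Sh{L}=\Sh{L}_{\mathfrak C}$, proving $\Stab_{\mathfrak C'}\circ\Stab_{\mathfrak C/\mathfrak C'}=\Stab_{\mathfrak C}$.
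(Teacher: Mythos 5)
You should note first that the paper contains no proof of this lemma at all: it is quoted verbatim from Maulik--Okounkov \cite[Lem.~3.6.1]{maulik2012quantum}, so the comparison is with the original argument there. Your strategy is exactly that argument: appeal to the uniqueness in Theorem~\ref{thm: definition stab}, realize the composition as a convolution cycle, and verify the axioms via the weight dichotomy $\langle\sigma'+\epsilon\tau,w\rangle$, the two-step factorization $\Attr_{\mathfrak{C}}(F)=\Attr_{\mathfrak{C}'}(\Attr_{\mathfrak{C}/\mathfrak{C}'}(F))$, the splitting $N^-_{X/X^{\At}}=N^-_{X/X^{\At'}}\oplus N^-_{X^{\At'}/X^{\At}}$, and the compatibility of the partial orders. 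Your treatment of the support axiom (ii), the normalization (i) via multiplicativity of Euler classes of the repelling bundles, and the observation that strictness in (iii) comes from at least one factor being off-diagonal are all correct and match the standard proof.

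There is, however, a genuine imprecision at the crux you yourself identify, namely the degree bound (iii). Your claim that under $H^*_{\At}(\ast)\cong H^*_{\At'}(\ast)\otimes H^*_{\At/\At'}(\ast)$ ``the $\At'$-part comes only from the $\Stab_{\mathfrak{C}'}$ factor and the $\At/\At'$-part only from the $\Stab_{\mathfrak{C}/\mathfrak{C}'}$ factor'' is only half true. The true half is the first: since $\At'$ acts trivially on $X^{\At'}$, the tangent weights of $X^{\At'}$ have no $\liea'$-component, so by uniqueness $\Stab_{\mathfrak{C}/\mathfrak{C}'}$ is the base change of the $\Tt/\At'$-equivariant envelope and its restrictions carry zero $\liea'$-degree. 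The false half is the second: the weights of $N_{X/X^{\At'}}$ are general characters of the full torus $\At$ (e.g.\ a weight $a_1-a_2$ when $\liea'$ is spanned by $a_1+a_2$ has nonzero components in both summands), so the off-diagonal restrictions of the $\mathfrak{C}'$-envelope are polynomials in \emph{all} of $\liea$ and do contribute $\liea/\liea'$-degree. Consequently the monomial-wise identity $\deg_{\liea}=\deg_{\liea'}+\deg_{\liea/\liea'}$ does not reduce the estimate to the two cited axioms: a monomial of the product can pick up $\liea/\liea'$-degree from the $\mathfrak{C}'$-kernel that your additive bound $\tfrac12\codim_X(\hat F')+\tfrac12\codim_{X^{\At'}}(F')$ simply ignores. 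Closing this requires extra control on the full $\liea$-degree of the $\mathfrak{C}'$-kernel beyond its $\liea'$-degree axiom (this is the actual, famously terse, content of the Maulik--Okounkov argument), so as written the sentence ``so that their sum is at most $\codim_X(F')/2$'' is a gap rather than a deduction from the stated bounds.
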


\subsection{Cohomological Stable envelopes for bow varieties}
\label{sec: coh stab bow}
For bow varieties $\Ch(\DD)$ we choose $A=\At$, $\Chamb=\Chamb_\sigma$, and apply the definition of the last section. The resulting stable envelopes are called $\Stab^H_{\Chamb}(f)$.

For example, consider $\Ch=\Ch(\ttt{\fs 1\fs n\bs n-1\bs \ldots \bs 2\bs 1\bs})$, that is, $\TsP^{n-1}$. The torus $\Cs_{\h}$ rescales the cotangent direction and $\At$ is identified with the maximal torus of $\GL(n)$ acting naturally on $\C^n$ and hence on $\TsP^{n-1}$.

    Let $f_k=\One_{2k}\cup \bigcup_{l\not=k}\One_{1l} \in X^{\At}$, that is, the fixed point whose tie diagram has $n$ ties: one connecting $\Zb_2$ with $\Ab_k$, as the rest connect $\Zb_1$ with $\Ab_l$ for all $l\not=k$.
    
    Let $t=t_{-11}$; this is the Chern class of the tautological bundle over $\TsP^{n-1}$. We saw in \S~\ref{sec:fixed point restrictions} that the restriction map to $f_k$ is $t \mapsto a_k-\h$. Then it is routine to check that for the chamber $\Chamb=\Chamb_{id}$
	\begin{equation}
	    \label{eq; coh stab proj}
        \Stab_{\Chamb}(f_k) =\prod_{i=1}^{k-1} (a_i- t)\cdot 
	\prod_{i=k+1}^n  (t-a_i+\h)
	\end{equation}
    satisfies the axioms of Theorem \ref{thm: definition stab}.

\subsection{K-theoretic Stable envelopes}
The definition of stable envelopes in K theory mimics the cohomological one. However, the notion of degree of a class in $H_T^*(X^{\At})$ is replaced by a polytope condition. Namely, given a class $f(A) \in K_T(X^{A})=K_{\Tt/A}(X^{A})\otimes K_{A}(\pt)$, we define
\[
\deg_A(f)=\text{Newton Polytope}(f)\in \Char(A)\otimes_{\Z}\R,
\]
i.e. the convex hull of nonzero coefficients of $f(a)$.

In addition, we require the existence of a polarization, i.e. a class $\alpha\in K_{\Tt}(X)$ such that, after restriction to $K_A(X)$, satisfies
\[
(\alpha+\alpha^\vee)|_{F}=TX|_{F}
\]
for all connected components $F\subset X^{A}$.
\begin{theorem} \cite{okounkov2020inductiveI}
Let $\mC$ be a chamber for the action of $A$ on $X$ and let $s\in \Pic(X)\otimes_{\Z}\R$. For every connected component $F\subset X^{A}$, there exists a class
\[
\uStab^s_{\Chamb}(F)\in K_{\Tt} (F\times X)[\hbar^{1/2}],
\]
proper over $X$, such that the following conditions hold. 
    \begin{itemize}
    \item[(i)] The restriction of $\uStab_{\Chamb}(F)$ on $F\times X^{\geq F}$ is equal to 
    \[
     \left(\frac{\det(N_{X/F}^-)}{\det (\alpha)|_F}\right)^{1/2}\otimes \Sh{O}_{\Att{C}(F)}.
    \]
    \item[(ii)] The restriction of $\uStab_{\Chamb}(F)$ to $F\times X\setminus \Att{C}^\leq(F)$ is zero.
    \item[(iii)] For every $F'< F$, the restriction of $\uStab_{\Chamb}$ to $F\times F'$ satisfies
    \[
    \deg_A(\uStab_{\Chamb}(F)|_{F\times F'})\subset \deg_A\left(\Lambda^\bullet  (\alpha)^\vee
|_{F'}\right) + \weight_A (s|_{F'})- \weight_A (s|_{F}).
    \]
    \end{itemize}
Further, they are unique if
\begin{equation}
    \label{eq: genericity slope}
    \weight_A (s|_{F'})- \weight_A (s|_{F})\not\in \Char(A)\subset \Char(A)\otimes_{\Z}\R
\end{equation}
for every pair $F'< F$.
\end{theorem}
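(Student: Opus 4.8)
The plan is to prove uniqueness and existence separately, both by induction on the partial order $\leq$ on the $A$-fixed components determined by $\mC$ (see \S\ref{subsec: preliminars}); the genericity hypothesis \eqref{eq: genericity slope} will enter only in the uniqueness half. Throughout, I would read everything through the fixed-point restriction matrix $\bigl(\uStab^s_{\Chamb}(F)|_{F\times F'}\bigr)_{F,F'}$, whose entries are Laurent polynomials in the equivariant parameters.

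\emph{Uniqueness.} The three axioms organize this matrix into triangular form: condition (ii) forces $\uStab^s_{\Chamb}(F)|_{F\times F'}=0$ unless $F'\leq F$, and condition (i) pins the diagonal entry $\uStab^s_{\Chamb}(F)|_{F\times F}$ to the normalized Euler class $\bigl(\det(N_{X/F}^-)/\det(\alpha)|_F\bigr)^{1/2}\,e^K(N_{X/F}^-)$ of the repelling directions (the square-root twist being exactly what converts the structure-sheaf restriction $\Sh{O}_{\Att{C}(F)}|_F$ into the symmetric class $e^K$). Given two solutions, I would form their difference $D$, which then vanishes on $F\times X^{\geq F}$ and is supported on $\Att{C}^{\leq}(F)$. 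Arguing by downward induction on the poset, suppose $D|_{F\times F''}=0$ for all $F''$ above some level and pick a maximal $F'<F$ with $D|_{F\times F'}\neq 0$. Maximality together with the support condition (ii) confines $D$ near $F'$ to $\overline{\Att{C}(F')}$, so equivariant localization forces the Laurent polynomial $D|_{F\times F'}$ to be divisible by the K-theoretic Euler class $e^K(N_{X/F'}^-)$ of the directions transverse to the attracting set. On the other hand, condition (iii) confines its Newton polytope to the window $\deg_A\!\bigl(\Lambda^\bullet(\alpha)^\vee|_{F'}\bigr)+\weight_A(s|_{F'})-\weight_A(s|_F)$, whose extent along the repelling characters matches that of $e^K(N_{X/F'}^-)$. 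The strict shift guaranteed by \eqref{eq: genericity slope} leaves no room for a nonzero multiple of the Euler class inside this window, so $D|_{F\times F'}=0$, contradicting maximality; hence $D=0$.

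\emph{Existence.} I would build $\uStab^s_{\Chamb}(F)$ by Okounkov's inductive procedure, again descending the poset. One starts on the open set $X^{\geq F}$ with the class prescribed by (i): since the inclusion $F\times\Att{C}(F)\hookrightarrow F\times X^{\geq F}$ is closed (\S\ref{subsec: preliminars}), the twisted structure sheaf of $\Att{C}(F)$ pushes forward to a well-defined proper class there. One then extends it across all of $X$, the extension being ambiguous only up to classes supported on $\bigcup_{F'<F}\Att{C}(F')$. Using the envelopes $\uStab^s_{\Chamb}(F')$ with $F'<F$ already constructed, I would correct the extension by subtracting $K_{\Tt}$-linear combinations $\sum_{F'<F} m_{F'}\,\uStab^s_{\Chamb}(F')$ chosen to enforce the degree condition (iii) at each $F'$; genericity \eqref{eq: genericity slope} guarantees that the coefficients $m_{F'}$ are uniquely determined and that the process terminates with a class satisfying (i)--(iii). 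As an alternative route, one can invoke the K-theoretic analogue of the triangle Lemma~\ref{lma:triangle lemma} to factor $\mC$ into codimension-one walls; on each wall the relevant torus has rank one and the envelope is explicit (compare the $\TsP^{n-1}$ computation of \S\ref{sec: coh stab bow}), after which composition along the walls assembles the desired class.

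\emph{Main obstacle.} The crux is the off-diagonal degree condition (iii), together with the fact that in K-theory the ``degree'' is a full Newton polytope rather than the single integer of the cohomological theory. The delicate point, for both the uniqueness separation and the existence corrections, is to verify that the degree windows attached to adjacent strata abut precisely, so that the generic slope \eqref{eq: genericity slope} can separate them, all while correctly tracking the square-root normalization $\bigl(\det(N_{X/F}^-)/\det(\alpha)|_F\bigr)^{1/2}$ and the auxiliary $\hbar^{1/2}$. Establishing this tiling of Newton polytopes, which replaces the elementary degree count available in cohomology, is the technical heart of the argument.
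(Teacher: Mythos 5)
The paper offers no proof of this statement to compare against---it is quoted from \cite{okounkov2020inductiveI}---so your sketch must be judged against the argument of that reference. Your uniqueness half reconstructs it essentially correctly: the support axiom (ii) gives triangularity; at a maximal $F'$ where the difference $D$ of two solutions survives, the support of $D$ near $F'$ lies in $\Att{C}(F')$, so $D|_{F\times F'}$ is divisible by $e^K(N^-_{X/F'})$; and the window in (iii) is a translate of $\deg_A e^K(N^-_{X/F'})$ by the vector $\weight_A(s|_{F'})-\weight_A(s|_{F})$, which by \eqref{eq: genericity slope} is not a character, so no nonzero lattice multiple of the Euler class fits. But note that the ``tiling of Newton polytopes'' you defer as the technical heart is in fact immediate from the definition of a polarization: $(\alpha+\alpha^\vee)|_{F'}=TX|_{F'}$ in $K_A(F')$ forces the $A$-moving part of $\alpha|_{F'}$ to contain exactly one weight from each pair $\{w,-w\}$ of normal weights, and each pair contributes an interval of the same extent to the polytope regardless of the sign chosen, so $\deg_A\bigl(\Lambda^\bullet \alpha^\vee|_{F'}\bigr)$ is a lattice translate of $\deg_A e^K(N^-_{X/F'})$. (One bookkeeping slip: the diagonal entry is $\det(\alpha|_F)^{-1/2}\otimes e^K(N^-_{X/F})$, not $\bigl(\det(N^-_{X/F})/\det(\alpha)|_F\bigr)^{1/2} e^K(N^-_{X/F})$; your formula double-counts $\det(N^-_{X/F})^{1/2}$, which is already produced when the Koszul factor $\Lambda^\bullet(N^-_{X/F})^\vee$ is symmetrized.)

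The existence half, however, has genuine gaps. First, your correction procedure invokes \eqref{eq: genericity slope} to determine the coefficients $m_{F'}$, but the theorem asserts existence for \emph{every} $s$, with genericity needed only for uniqueness; for $s$ on a wall one must, e.g., borrow the envelope of an adjacent alcove and verify its degree polytope lies in the closed window, a step absent from your argument. Second, and more substantively, fixing the window at $F'$ amounts to splitting the off-diagonal restriction as (class inside the window) plus (multiple of $e^K(N^-_{X/F'})$): genericity gives \emph{uniqueness} of such a splitting but not its \emph{existence}, corrections at $F'$ disturb all restrictions at $F''<F'$, and nothing in your sketch keeps the support of the corrected extension proper---this is precisely the interpolation problem the reference has to solve, not a formality. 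Third, your alternative route via Lemma \ref{lma:triangle lemma} is closer to the actual proof, but the rank-one base case is not ``explicit'' for a general quasi-projective symplectic $X$ (the $\TsP^{n-1}$ computation is a very special instance): that base case is the main theorem of \cite{okounkov2020inductiveI}, proved there in elliptic cohomology, with the K-theoretic statement then extracted via the $q\to 0$ limit $z=q^{-s}$, exactly as this paper records in Proposition \ref{prop: limit ell to K stab}. In sum, your uniqueness argument is essentially complete once the polarization observation above is made, but existence---the substance of the citation---remains unestablished in your sketch.
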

We call the fractional line bundle $s\in \Pic(X)\otimes_{\Z}\R$ slope. A slope is generic for $X$ if it satisfies \eqref{eq: genericity slope}. Throughout this article we only consider generic slopes. 

To match the analogous formulas in singular and elliptic cohomology, it will be convenient to define a normalized K-theoretic stable envelope $\Stab^{K,s}_{\Chamb}(f)$ as follows: given a fixed point $F\in X^{\At}$, we require 
\begin{equation}
    \label{eq: normalized K theoretic stab}
    \det(\alpha)^{-1/2}\otimes \Stab^{K,s}_{\Chamb}(f)\otimes \det(\alpha_f)^{1/2}= \uStab^{K,s}_{\Chamb}(f),
\end{equation}
where $\alpha_f$ is the $A$-fixed part of $\alpha|_{F}$.

\subsection{K-theoretic stable envelopes for bow varieties} 
\label{sec: K stab bow}
For bow varieties $X=\Ch(\DD)$ we choose $A=\At$, $\Chamb=\Chamb_\sigma$, $s$ generic, the polarization $\alpha$ as in \eqref{class alpha for general bow variety}, and apply the definition of the last section. The resulting stable envelopes are called $\uStab^{K,s}_{\Chamb}(f)$.

For the example in \S~\ref{sec: coh stab bow}, the K-theoretic stable envelope 
for the standard chamber $\Chamb=\Chamb_{id}$ is 
\begin{align*}
    \uStab^{K,s}_{\Chamb}(f_k) =(-1)^{k-1}\frac{1}{(\sqrt{\hbar})^{k-1}} \prod_{i=1}^{k-1}\left(1-\left(\frac{t}{a_i}\right)^{-1}\right)\cdot 
	\left(\frac{t\hbar}{a_k}\right)^{\lfloor s\rfloor}
	\cdot 
	\prod_{i=k+1}^n \left( 1-\left(\frac{t\hbar }{a_i}\right)^{-1}\right)
\end{align*}
while its normalized version is 
\begin{align}
\label{eq: Kstab proj}
    \Stab^{K,s}_{\Chamb}(f_k) =\prod_{i=1}^{k-1}\ag{a_i}{t}\cdot 
	\left(\frac{t\hbar}{a_k}\right)^{\lfloor s\rfloor+1/2}
	\cdot 
	\prod_{i=k+1}^n \ag{t\h}{a_i}
\end{align}
where $\hat a(x)=(x^{1/2}-x^{-1/2})$, as before. Note that $\hat a(x)=\lim_{q\to 0} \vartheta(x)$. 

\subsection{Elliptic stable envelopes}
\label{sec: E stab}

Moving further from K-theory to elliptic cohomology, we get to the theory of elliptic stable envelopes. Although encoding the same geometric data, the elliptic stable envelope display two unique features:
\begin{itemize}

\item From a combinatorial point of view, the stable envelopes are transcendental functions in a number of variables. More precisely, they arise as products of theta functions $\th(t)$.
This feature reflects the fact that the stable envelopes are effectively sections of certain line bundles over products of elliptic curves. 

\item The stable envelopes depend on a set of additional parameters, called K\"ahler (or dynamical) parameters $z\in \Pic(X)\otimes_{\Z}\C$. The elliptic stable envelope depends continuously on these parameters, which in the K-theoretic limit recover the K-theoretic slope parameters $s\in \Pic(X)\otimes_{\Z}\R$.
More explicitly, one has 
\[
z=q^{-s}
\]
Formally, these parameters arise as equivariant parameters of an additional torus $Z=\Pic(X)\otimes_{\Z}\Cs$ acting trivially on $X$. 

\end{itemize}

By definition, the elliptic stable envelopes \cite{aganagic2016elliptic, okounkov2020inductiveI} of $X$ are distinguished sections of certain line bundles on $E_{T\times Z}(X^{A}\times X)$ depending on the choice of a chamber $\Chamb$ and of an attractive line bundle $\Sh{L}_X$. The latter is a line bundle on the scheme $E_T(X)$, which refines the choice of polarization in the K-theoretic setting. By definition, it satisfies
\begin{equation}
    \label{defining equation attractive line bundle}
    \deg_A(i_F^*\Sh{L}_X)=\deg_A(\th(N^-_{F/X})),
\end{equation}
for all connected components $F\subseteq X^{A}$. Here $i_F: F\hookrightarrow X$ is the inclusion. The notion of $A$-degree here is the elliptic analog of the cohomological and K-theoretic $A$-degree induced above. For the precise definition, we refer to \cite[\S~B.3]{okounkov2020inductiveI}. When $F$ is a fixed point, as in the case of bow varieties, then $i_F^*\Sh{L}_X$ is a line bundle on the abelian variety $E_{T\times Z}(\pt)$; its $A$-degree is then obtained by pullinkg back to $E_A(\pt)$ and considering its class in the Neron-Severi group $NS(E_A(\pt))\cong \Z^{\dim(A)}$. 

Set 
$ 
\Sh{L}_{A,F}=i_F^*\Sh{L}_X\otimes \th(-N_{X/F}^-).
$
The elliptic stable envelope of $F$ is a section of the sheaf
\begin{equation}
\label{sheaf definition of stable envelopes}
    (\Sh{L}_{A,F})^{\triangledown}\boxtimes \Sh{L}_X(\infty\Delta)
\end{equation}
on $E_{T\times Z}(F\times X)$. The notation $\infty\Delta$ indicates that the section of $(\Sh{L}_{A,F})^{\triangledown}\boxtimes \Sh{L}_X$ can have singular behavior on the resonant locus $\Delta\subset E_{\Tt/A}(\pt)$, see \cite[\S2.3]{okounkov2020inductiveI}. Equivalently, the stable envelopes are regular sections of the sheaf $\iota_{*}\iota^*\left((\Sh{L}_{A,F})^{\triangledown}\boxtimes \Sh{L}_X\right)$, where $\iota: E_{T\times Z/A}\setminus \Delta\to E_{T\times Z/A}$ is the inclusion.

\begin{theorem}\cite{okounkov2020inductiveI}
\label{thm: definition elliptic stab}
Let $\mC$ be a chamber for the action of $A$ on $X$ and let $\Sh{L}_X$ be an attractive line bundle. For every connected component $F\subset X^{A}$, there exists a unique section
\[
\Stab_{\Chamb}(F)\in \Gamma\left((\Sh{L}_{A,F})^{\triangledown}\boxtimes \Sh{L}_X(\infty \Delta)\right)
\]
proper over $X$, such that

\begin{enumerate}
    \item[(i)] The diagonal axiom: the restriction of $\Stab_{\Chamb}(F)$ on $F\times X^{\geq F}$ is equal to $\left[ \Att{C}(F)\right]$.
    \item[(ii)] The support axiom: the restriction of $\Stab_{\Chamb}(F)$ to $F\times X\setminus \Att{C}^\leq(F)$ is zero.
\end{enumerate}
\end{theorem}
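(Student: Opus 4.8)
The plan is to establish existence and uniqueness separately, in each case inducting along the partial order $\leq$ on the fixed components $F'\subseteq X^A$ that the chamber $\Chamb$ determines. For bow varieties the fixed locus $X^{\At}$ is finite (Theorem~\ref{thm:torus fixed points}), so every $F'$ is a point and each restriction $\Stab_{\Chamb}(F)|_{F\times F'}$ is a section of a line bundle on the abelian variety $E_{T\times Z}(\pt)$; the whole argument thus reduces to comparing theta-type sections on products of elliptic curves.

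For \emph{uniqueness}, suppose $s_1,s_2$ both satisfy (i) and (ii) and set $\sigma=s_1-s_2$. The diagonal axiom forces $s_1,s_2$ to agree on $F\times X^{\geq F}$, so $\sigma$ vanishes there, while the support axiom confines $\sigma$ to $\Att{C}^{\leq}(F)$. I would then show $\sigma|_{F\times F'}=0$ by descending induction on $F'\leq F$: the top case $F'=F$ is immediate, and for lower $F'$ the restriction is a holomorphic section (away from $\Delta$) of $(\Sh{L}_{A,F})^{\triangledown}\boxtimes\Sh{L}_X$ on $E_{T\times Z}(F\times F')$. The defining equation~\eqref{defining equation attractive line bundle} of the attractive line bundle tunes the $A$-degree of this bundle to be exactly critical, so that once $\sigma$ is known to vanish on the cells $\Att{C}(F'')$ with $F''<F'$, the remaining forced zeros exceed what the degree permits and only the zero section survives. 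Hence $\sigma\equiv0$.

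For \emph{existence}, I would construct $\Stab_{\Chamb}(F)$ by fixing its restrictions to the strata $F\times F'$ downward in the poset. On the open piece $F\times X^{\geq F}$ the diagonal axiom prescribes the section to be $[\Att{C}(F)]$, the canonical elliptic section cutting out the attracting set; the real content is extending this off $X^{\geq F}$ while keeping it supported on $\Att{C}^{\leq}(F)$ and a section of $(\Sh{L}_{A,F})^{\triangledown}\boxtimes\Sh{L}_X(\infty\Delta)$. Having fixed the restrictions to all $F''>F'$, the residual discrepancy on $F\times F'$ is itself a section of a line bundle with prescribed quasi-periods, and I would cancel it by subtracting a suitable combination of the already-constructed lower envelopes $\Stab_{\Chamb}(F')$, using the basic sections $\prod_i\delta(t_i,z)$ of the $\Sh{U}$-type bundles as interpolating theta functions with matching quasi-periodicity. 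The defining equation~\eqref{defining equation attractive line bundle} again guarantees that these corrections land in the correct line bundle, so the glued section extends holomorphically away from $\Delta$, and its support is proper over $X$ by construction.

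The hard part will be the holomorphy bookkeeping in the existence step. In contrast to singular cohomology, where the obstruction is a numerical degree bound (Theorem~\ref{thm: definition stab}(iii)), or K-theory, where it is a Newton-polytope containment, the elliptic constraint is the global requirement that each corrected restriction be a genuine section of a fixed line bundle on a product of elliptic curves, with poles only along the resonance divisor $\Delta$. The crux is verifying, at every inductive step, that the quasi-periodicity factors of the diagonal term and of the correction terms agree exactly---precisely the balance encoded in~\eqref{defining equation attractive line bundle}---and that the only singularities introduced are the admissible ones coming from the $\th(z)$ denominators of the $\delta$-factors. This compatibility, rather than any single computation, is where the argument is genuinely delicate.
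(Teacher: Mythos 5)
The paper never proves this theorem: it is quoted verbatim from \cite{okounkov2020inductiveI}, so your sketch has to be measured against Okounkov's inductive construction rather than against anything internal to the paper. Measured that way, the serious gap is in your uniqueness mechanism. Write $\sigma=s_1-s_2$ and let $F'$ be maximal in the support of $\sigma$; by the support axiom on the open stratum through $F'$, the restriction $\sigma|_{F\times F'}$ is divisible by the elliptic Euler class $\th(N^-_{F'/X})$, and the defining equation~\eqref{defining equation attractive line bundle} then says that the quotient is a holomorphic section of a line bundle whose $A$-degree is \emph{exactly zero}. At that point your claim that ``the remaining forced zeros exceed what the degree permits'' never materializes: a degree-zero line bundle on an abelian variety admits a nonzero section if and only if it is trivial, so zero-counting in the $a$-variables alone concludes nothing. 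The actual vanishing comes from the K\"ahler direction: nondegeneracy of $\Sh{L}_X$ (Remark~\ref{remark: nondegenerate line bundle}) guarantees that, for $z$ off the resonance locus $\Delta$, this residual degree-zero bundle is a \emph{nontrivial} point of the dual abelian variety and hence has no nonzero holomorphic sections. Your argument never invokes the parameters $z$ in the uniqueness step, and without them the statement is simply false---this is precisely why the torus $Z$ is introduced and why the theorem is vacuous when $\infty\Delta$ fills the whole base.

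The existence half has a related gap. Subtracting ``a suitable combination of the already-constructed lower envelopes'' requires, at each stage, a coefficient that is a meromorphic section of $(\Sh{L}_{A,F})^{\triangledown}\otimes \Sh{L}_{A,F'}$ with poles only along $\Delta$ and with controlled behaviour on the lower strata; producing such sections is the entire content of the construction, and the $\delta$-factors $\prod_i\delta(t_i,z)$ are an ansatz, not a proof, that the quasi-periods match. Okounkov's argument does not glue by theta interpolation: existence is reduced to one-dimensional subtori via the triangle lemma (Lemma~\ref{lma:triangle lemma} in this paper), and for a rank-one torus the envelope is constructed geometrically from the attracting correspondence, with holomorphy and properness coming from the geometry rather than from quasi-period bookkeeping. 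A smaller point: your opening reduction to finitely many fixed \emph{points} uses bow-variety specifics (Theorem~\ref{thm:torus fixed points}), whereas the theorem as stated allows arbitrary components $F\subset X^A$; this is harmless for the paper's applications but silently narrows the statement you set out to prove.
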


Notice that, compared to its cohomological analogue, cf. Theorem \ref{thm: definition stab}, the elliptic stable envelope seems to be uniquely determined by one fewer condition. In fact, the degree condition for the cohomological stable envelope is encoded in the choice of attracting line bundle $\Sh{L}_X$.

Being proper over $X$, the elliptic stable envelope $\Stab_{\Chamb}(F)$ induces a morphism 
\begin{equation}
    \label{elliptic stable enveliopes map}
    \Stab_{\Chamb}(F):(p_A)_{\ast}(\Sh{L}_{A,F})\to p_{\ast}\Sh{L}_X(\infty \Delta)
\end{equation}
which is commonly denoted in the same way.

\begin{remark}
\label{remark: nondegenerate line bundle}
    Clearly, the content of Theorem \ref{thm: definition elliptic stab} is empty if $\infty\Delta=E_{\Tt/A}$. As a consequence, it is essential to bound the resonant locus $\infty\Delta$. The optimal bound is achieved when the attractive line bundle $\Sh{L}_X$ is non-degenerate, i.e. when the complement of its resonant locus $\Delta$ is dense. In most situations, including the case of bow varieties discussed below, a non-degenerate attractive line bundle $\Sh{L}_X$ exists only if the K\"ahler torus $Z$ is non-trivial. Hence, the K\"ahler torus naturally emerges in elliptic cohomology as a key ingredient to guarantee existence and well-behavior of the elliptic stable envelopes.
\end{remark}


\subsection{Eliptic stable envelopes for bow varieties}
\label{sec:Stabs for bows}

We now specialize the theory of elliptic stable envelopes to bow varieties.
Let $X$ be a bow variety with $m$ NS5 branes (ordered from right to left), $n$ D5 branes, and the torus $\Tt=\At\times \Cs_{\h}$ acting on it. We introduce the K\"ahler torus $Z=(\Cs)^{m}$ and let $(z_1,\dots z_m)$ be a set of coordinates spanning $Z$. We also set $T=\Tt\times Z=\At\times \Cs_{\h}\times Z$. It is beneficial to think about the subtorus $\At\times Z$ as the ambient space of two tuples of parameters $(a_1, \dots a_n)$ and $(z_1,\dots z_m)$ that are attached to the similarly named fivebranes.

Set $\Base:=E_{\Tt\times Z}(\pt)$, and recall the definition of the polarization $\alpha=\hbar (TX|_{\h=0})^{\vee}$ from \S~\ref{sec:tautological bundles}.  Define
\begin{equation}
    \label{eq: line bundles stab bow}
    \Sh{U}:=\bigotimes_{k=1}^{m-1} \Sh{U}\left(\xi_{-k},\frac{z_{k+1}}{z_{k}}\h^{m-k-\ch(\Zb_{k+1})}\right),
\qquad
    \Sh{L}_X=\th(\alpha)\otimes \Sh{U}.
\end{equation}

\begin{theorem}\cite{BR23}
    The line bundle $\Sh{L}_X$ is attractive for the action of $\At$ on $X$. Therefore, stable envelopes for bow varieties exist for any subtorus $A\subseteq \At$ and are unique.  Moreover the line bundle $\Sh{L}_X$ is non-degenerate in the sense of Remark \ref{remark: nondegenerate line bundle}. Furthermore, $\Delta$ is contained in the complement of the union of codimension-one abelian varieties of the form $\{z_i/z_j\hbar^{\alpha_{ij}}=1\}$ for certain $\alpha_{ij}\in \Z$.
\end{theorem}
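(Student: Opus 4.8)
The plan is to prove the four assertions in sequence. Since the fixed points are isolated (Theorem~\ref{thm:torus fixed points}), every $F\subseteq X^{\At}$ is a point and $i_F^*\Sh{L}_X$ is a line bundle on $E_{\Tt\times Z}(\pt)$, whose $A$-degree is the Neron--Severi class of its pullback to $E_A(\pt)$. Attractiveness is the assertion $\deg_A(i_F^*\Sh{L}_X)=\deg_A(\th(N^-_{F/X}))$ for all $F$ and all $A\subseteq\At$. As $\deg_A$ is additive along $\Sh{L}_X=\th(\alpha)\otimes\Sh{U}$, I would first show the Kähler factor $\Sh{U}$ is invisible to $\deg_A$. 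Each tensor factor $\Sh{U}(\xi_{-k},z')$ has canonical section $\prod_i\th(t_iz')/(\th(t_i)\th(z'))$, and the shift $z'=z_{k+1}z_k^{-1}\h^{\,m-k-\ch(\Zb_{k+1})}$ lies in the $Z\times\Cs_\h$ directions, hence carries trivial $A$-weight. Since the $A$-degree of $\th(w)$ depends only on the $A$-weight $\bar\mu$ of $w$, through the rank-one quadratic form $\bar\mu^{\otimes2}$, each Chern root $t_i$ of $\xi_{-k}$ contributes $\bar\mu_i^{\otimes2}-\bar\mu_i^{\otimes2}-0=0$; thus $\deg_A(\Sh{U})=0$ and $\deg_A(i_F^*\Sh{L}_X)=\deg_A(i_F^*\th(\alpha))$.

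It then remains to match $\deg_A\th(\alpha)|_F$ with $\deg_A\th(N^-_{F/X})$, for which I would use that $\alpha+\beta$ is a polarization, i.e.\ \eqref{equation separated alpha is a polarization} gives $TX|_F=(\alpha+\beta)|_F+\h(\alpha+\beta)^\vee|_F$. Hence the $\Tt$-weights of $N_{F/X}=TX|_F$ occur in pairs $(\mu,\h\mu^{-1})$ with $\mu$ running over the weights of $(\alpha+\beta)|_F$, and the two members have opposite $\At$-parts $\pm\bar\mu$. For a pair with $\bar\mu\neq0$ the chamber $\mC$ selects exactly one member into $N^-_{F/X}$, and its $A$-weight squares to $\bar\mu^{\otimes2}$ irrespective of sign; pairs with $\bar\mu=0$ are $A$-fixed and drop out. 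Therefore $\deg_A\th(N^-_{F/X})=\sum_{\bar\mu\neq0}\bar\mu^{\otimes2}$, while $\beta$ and the $\bar\mu=0$ weights contribute $0^{\otimes2}=0$ to $\deg_A\th(\alpha)$, so that $\deg_A\th(\alpha)|_F=\sum_{\mu\in\mathrm{wt}(\alpha+\beta)}\bar\mu^{\otimes2}$ agrees with it term by term. This argument is insensitive to the choice of $A\subseteq\At$, so attractiveness holds for every subtorus; existence and uniqueness of the stable envelopes are then immediate from Theorem~\ref{thm: definition elliptic stab}.

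For the remaining two claims I would analyze the resonant locus $\Delta\subset E_{\Tt/A\times Z}(\pt)$, which measures the failure of the interpolation underlying Theorem~\ref{thm: definition elliptic stab} and is carried entirely by the Kähler dependence of the restrictions $i_F^*\Sh{L}_X$. As $\th(\alpha)$ is independent of $z$, all such dependence sits in the $\Sh{U}$ factors, and by Theorem~\ref{thm:RestrictionMaps} (see Figure~\ref{fig:fixed point restriction}) the Chern roots of $\xi_{-k}|_F$ are explicit monomials $a_j\h^{i}$. Comparing, for each pair $F'<F$, the Kähler components of $i_{F'}^*\Sh{L}_X$ and $i_F^*\Sh{L}_X$, the only source of degeneracy are the denominators $\th(z')$ of the $\delta$-sections, which place all resonances along divisors $\{z_{k+1}z_k^{-1}\h^{\alpha}=1\}$, i.e.\ along codimension-one abelian subvarieties of the form $\{z_i/z_j\h^{\alpha_{ij}}=1\}$. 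Since finitely many such proper subvarieties have dense complement, $\Sh{L}_X$ is non-degenerate and $\Delta$ is confined to their union, as claimed.

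I expect the non-degeneracy step to be the main obstacle: attractiveness is essentially formal once the polarization property and the $A$-triviality of the Kähler shifts are in hand, whereas controlling $\Delta$ requires tracking the precise shifts $z_{k+1}z_k^{-1}\h^{\,m-k-\ch(\Zb_{k+1})}$ through the fixed-point restriction combinatorics and the secondary Bruhat order on $\BCT(r,c)$, and verifying that these exponents are tuned so that no resonance swallows the whole base. A convenient parallel route is to transfer the statement from the cotangent bundle of a partial flag variety: by Corollary~\ref{corollary any bow can be embedded in a flag variety} and Lemma~\ref{lemma: D5 res bundles pullback} the bundles $\xi_{-k}$, hence the entire factor $\Sh{U}$, are pulled back along the embedding $X(\DD)\hookrightarrow T^*\Fl(\dd,d_0)$, where non-degeneracy of the corresponding attractive line bundle is already known; this reduces the resonance analysis to the flag-variety case.
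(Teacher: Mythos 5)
Note first that the paper does not actually prove this theorem: it is imported wholesale from \cite{BR23}, so your attempt can only be compared with that source's argument. Your first half (attractiveness, hence existence and uniqueness) is essentially correct and matches the standard argument: the shifts $z_{k+1}z_k^{-1}\h^{m-k-\ch(\Zb_{k+1})}$ carry trivial $\At$-weight, so each factor of $\Sh{U}$ contributes $\bar\mu^{\otimes 2}-\bar\mu^{\otimes 2}-0=0$ to $\deg_A$; and the polarization identity \eqref{equation separated alpha is a polarization} pairs the weights of $N_{F/X}$ as $(\mu,\h\mu^{-1})$ with opposite $\At$-parts, so $\deg_A\th(N^-_{F/X})=\sum_{\bar\mu\neq 0}\bar\mu^{\otimes 2}=\deg_A\th(\alpha)|_F$ independently of chamber and of the subtorus $A$ (your argument extends correctly to positive-dimensional $F$ for $A\subsetneq\At$, since the $A$-degree may be computed at any $\At$-fixed point of $F$).

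The genuine gap is in the non-degeneracy half, on two counts. First, you mischaracterize the resonant locus: $\Delta$ is not located where the canonical sections acquire poles $\th(z')$; it is the locus in $E_{T\times Z/A}$ where certain degree-zero line bundles --- comparing the restrictions $i_f^*\Sh{L}_X$ and $i_g^*\Sh{L}_X$ for pairs $f>g$ after transport by $\th(N^-)$ --- become trivial, so that the interpolation step in the inductive construction of \cite{okounkov2020inductiveI} degenerates. What must actually be computed, via the restrictions $t_{-k,i}\mapsto a_j\h^{s}$ of Theorem~\ref{thm:RestrictionMaps}, is the $z$-character by which these comparison bundles are translated; a priori it has the form $\prod_{k}(z_{k+1}/z_k)^{c_k}\h^{\bullet}$ with integer exponents $c_k$, and such a character equals $z_i/z_j\h^{\alpha_{ij}}$ only when $(c_k)$ is the indicator vector of an interval. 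Proving that only interval vectors occur --- i.e., that the BCT combinatorics forces the exponents into consecutive $0$-$1$ blocks --- is precisely the content of the last assertion, and your proposal skips it. (Incidentally, the paper's phrase ``contained in the complement of the union'' is surely a slip for ``contained in the union,'' which is how you read it.) Second, your fallback reduction to $T^*\Fl(\dd,d_0)$ is circular as stated: the fusion formula (Theorem~\ref{Fusion of D5 branes for separated brane diagrams}) is proved in \cite{BR23} by invoking the axiomatic characterization of the stable envelopes on $X$ itself, so it presupposes exactly the existence and uniqueness --- hence the attractiveness and non-degeneracy of $\Sh{L}_X$ --- that you are trying to establish. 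One could instead \emph{define} a candidate class as $j^{\oast}\varphi^{\oast}\Stab^{\wt X}_{\wt\Chamb}(\tilde f_\sharp)$ and verify the axioms directly, but then the support axiom and the matching of line bundles (including the fixed-point-dependent shift $z\h^{-\gamma(f)}$, which alters the exponents $\alpha_{ij}$) require independent verification rather than following from the pullback Lemma~\ref{lemma: D5 res bundles pullback} alone.
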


The resulting elliptic stable envelopes for bow varieties will be called $\Stab^{E}_{\Chamb}(f)$. We do not indicate the choice of $\alpha$ because we fix that choice for the whole paper. 

\subsubsection{Cotangent bundle of a projective space}
\label{sec:Stab,TPn}

Continuing the example from \S~\ref{sec: coh stab bow} and~\ref{sec: K stab bow}, consider the fixed point $f_k=\One_{2k}\cup \bigcup_{l\not=k}\One_{1l} \in X^{\At}$ in $\Ch=\Ch(\ttt{\fs 1\fs n\bs n-1\bs \ldots \bs 2\bs 1\bs})=\TsP^{n-1}$, and as before, set $t=t_{-11}$. Then for the chamber $\Chamb=\Chamb_{id}$ we have
\begin{equation}\label{eq:Stab TP^n1}
	\Stab^{E}_{\Chamb}(f_k) =\prod_{i=1}^{k-1} \th\left(\frac{a_i}{t}\right)\cdot 
	\frac{\th\left(\frac{t}{a_k}\frac{z_2}{z_1}\h^{k-1}\right)}{\th\left(\frac{z_2}{z_1}\h^{k-2}\right)}
	\cdot 
	\prod_{i=k+1}^n \th\left( \frac{t}{a_i}\h\right).
	\end{equation}

Since elliptic stable envelopes are the most abstract of the three flavors, let us give a detailed verification of this formula. We need to verify that (i) the expression is a section of the correct line bundle, and (ii) that its principal restriction is the expected Euler class.
\begin{itemize}
    \item[(i)] This calculation is done in detail in \cite[Prop.~2.5]{BR23}, with the only change in that paper the roles of $z_1$ and $z_2$ are swapped. 
    \item[(ii)] The restriction map to the fixed point $f_k$ is the substitution $t\mapsto a_k\h^{-1}$, see \S~\ref{sec:fixed point restrictions}. At that substitution the formula above maps to 
    \[ 
    \prod_{i=1}^{k-1}\th\left(\frac{a_i}{a_k}\h\right) \cdot 
    \prod_{i=k+1}^{n}\th\left(\frac{a_k}{a_i}\right).
    \]
    The weights of the normal bundle of $f_k$ in $\TsP^{n-1}$ (written multiplicatively) are $a_k/a_i$ and $a_i/a_k\cdot\h$ for $i\not=k$. Hence what we got is indeed the elliptic Euler class of the $\Chamb$-negative part. 
\end{itemize}

\subsection{D5 resolutions and stable envelopes}

Let $X=X(\DD)$ be an arbitrary bow variety and fix a brane $\Ab\in \DD$. We assume that $c_k:=c(\Ab_k)>1$. Let $\wt X$ be a resolution of $\At$ in the sense of \S~\ref{subsec: D5 res}. In this section, we investigate the relation between the stable envelopes of $X$ and those of a resolution $\widetilde X$. Let $j: X\hookrightarrow \widetilde X$ be the embedding from Theorem \ref{theorem: embedding resolution D5 branes}. It is equivariant along the morphism $\varphi: \Tt\to \widetilde \Tt$ defined in equation \eqref{group homomorphism A-resolution}. We denote by $N_j$ be the associated $\Tt$-equivariant normal bundle. It can be shown that $N_j$ is topologically trivial and hence, given a chamber $\Chamb$, it decomposes in attractive, repelling, and fixed directions:
\[
N_j=N_j^+ + N_j^0 + N_j^-\in K_{\Tt}(\pt).
\]

The following result relates the stable envelopes of $X$ and $\wt X$. 
\begin{theorem}[Fusion of elliptic stable envelopes \cite{BR23}]
\label{Fusion of D5 branes for separated brane diagrams}
Fix some $f\in X^{\At}$ and a chamber $\Chamb$ for the $\At$-action on $X$. Let $F$ be the unique $\At$-fixed component of $\widetilde X$ containing $f$ and $\wt{\Chamb}_{\sigma}$ be any chamber for $\widetilde \At$ restricting to $\Chamb_{\sigma}$ on $\At$. Then 
\[
\varphi^{\oast}\left(\Stab^{*, F}_{\wt \Chamb_{\sigma}/\Chamb_{\sigma}}(\tilde f){(z\h^{- \gamma(f)})}\Big|_{\tilde f_\sharp}\right)e^*(N^-_j)  \Stab^{*, X}_{\Chamb_{\sigma}}(f)= j^{\oast} \varphi^{\oast}\Stab^{*, \wt X}_{\wt \Chamb_{\sigma}}(\tilde f) \qquad \forall \tilde f\in F^{\wt \At}.
\]
Here, $\gamma(f)$ is a multi-index whose entries $\gamma(f)_i$ denote the number of ties of $f$ connecting the $i$-th NS5 brane to the D5 branes left to the resolved D5 brane.
\end{theorem}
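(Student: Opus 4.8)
The plan is to deduce the identity from the factorization (triangle) Lemma~\ref{lma:triangle lemma}, in its elliptic counterpart, together with fixed-point localization, using the axiomatic uniqueness of Theorem~\ref{thm: definition elliptic stab} only to identify the restricted pieces. I would run the argument in elliptic cohomology, where all parameters are visible, and then obtain the $K$-theoretic and cohomological statements by the degenerations $\th\to\hat a$ (as $q\to 0$) and the subsequent cohomological limit; every factor in the identity---$\Stab$, $e^*(N_j^-)$, and the evaluation of an envelope at a fixed point---is assembled from theta functions and is compatible with these limits. The geometric backbone is the pair of tori $\At\subset\wt\At$ induced by $\varphi$ of~\eqref{group homomorphism A-resolution}: the chamber $\wt\Chamb_\sigma$ restricts to $\Chamb_\sigma$ on $\At$, and the complementary one-dimensional direction $\wt\Chamb_\sigma/\Chamb_\sigma$ is exactly the datum performing the D5 resolution.

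First I would apply the factorization lemma on $\wt X$ to the chain $\At\subset\wt\At$, writing $\Stab^{E,\wt X}_{\wt\Chamb_\sigma}=\Stab^{E,\wt X}_{\Chamb_\sigma}\circ\Stab^{E,\wt X}_{\wt\Chamb_\sigma/\Chamb_\sigma}$, where the second factor is the envelope for the residual $\wt\At/\At$-action on the fixed locus $\wt X^{\At}$. By Theorem~\ref{theorem: embedding resolution D5 branes} the component $F\subseteq \wt X^{\At}$ containing $j(f)$ is itself the small bow variety described there, and on $F$ the residual envelope restricts to the intrinsic envelope $\Stab^{E,F}_{\wt\Chamb_\sigma/\Chamb_\sigma}$ (compatibility of stable envelopes with passage to a fixed component). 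Evaluating the composed operator at the $\wt\At$-fixed point $\tilde f_\sharp$ turns the composition into a sum over the fixed components of $\wt X^{\At}$, and the support axiom of Theorem~\ref{thm: definition elliptic stab} forces every term except the one attached to $F$ to vanish along $j(X)$.

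Next I would pull the resulting identity back along $j^{\oast}\varphi^{\oast}$. The crucial geometric input---already exploited in Lemma~\ref{restrction Chern roots in A resolution}---is that the cocharacter $\Cs_\hbar\hookrightarrow\Tt\xrightarrow{\varphi}\wt\Tt$ carries $j(X)$ into the attracting cell of $F$; consequently $j^{\oast}\varphi^{\oast}\Stab^{E,\wt X}_{\Chamb_\sigma}(F)$ recovers the intrinsic envelope $\Stab^{E,X}_{\Chamb_\sigma}(f)$ up to the Euler class of the transverse directions of $j$, which is precisely the factor $e^*(N_j^-)$ (as $N_j$ is topologically trivial, this is a scalar). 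The remaining scalar $\varphi^{\oast}\!\big(\Stab^{E,F}_{\wt\Chamb_\sigma/\Chamb_\sigma}(\tilde f)(z\h^{-\gamma(f)})\big|_{\tilde f_\sharp}\big)$ is the intrinsic envelope on $F$ at $\tilde f_\sharp$; the K\"ahler shift $z\mapsto z\h^{-\gamma(f)}$ and the $\h$-powers arise by transporting the dynamical and equivariant parameters through $\varphi$ of~\eqref{group homomorphism A-resolution} and the restriction rules of Theorem~\ref{thm:RestrictionMaps}, a bookkeeping step governed by Lemma~\ref{restrction Chern roots in A resolution} and Lemma~\ref{lemma: D5 res bundles pullback}. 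Assembling the three contributions yields the stated equality.

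The hard part will be the pullback step: one must show that after restriction along $j^{\oast}\varphi^{\oast}$ the support and holomorphicity conditions (equivalently, membership in the correct line bundle of~\eqref{eq: line bundles stab bow}) are preserved, so that $j^{\oast}\varphi^{\oast}\Stab^{E,\wt X}_{\Chamb_\sigma}(F)/e^*(N_j^-)$ is genuinely the intrinsic $\Stab^{E,X}_{\Chamb_\sigma}(f)$ and not merely a section with the correct principal term. Two points are delicate: matching the attractive line bundle $\Sh L_{\wt X}$ with $\Sh L_X$ under $j^{\oast}\varphi^{\oast}$---this is where the defining degree condition~\eqref{defining equation attractive line bundle} and the non-degeneracy of Remark~\ref{remark: nondegenerate line bundle} must be checked against the shift by $\gamma(f)$---and pinning down $\gamma(f)$ exactly so that the dynamical parameters land in the non-resonant region where uniqueness applies. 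Once these are settled in the elliptic theory, the $K$-theoretic and cohomological identities follow formally by the limits described above.
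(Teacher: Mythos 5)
Your architecture (the triangle lemma on $\wt X$ for the chain $\At\subset\wt\At$, evaluation at the distinguished fixed point, then pullback along $j^{\oast}\varphi^{\oast}$) is a reasonable heuristic, but it has a genuine gap at the step where you claim that ``the support axiom forces every term except the one attached to $F$ to vanish along $j(X)$.'' The support axiom does not give this. The residual envelope $\Stab_{\wt\Chamb_\sigma/\Chamb_\sigma}(\tilde f)$ is supported on the full set $\Att{C}^{\leq}(\tilde f)$ inside $\wt X^{\At}$, which meets components $F'<F$, and the corresponding terms $\Stab^{\wt X}_{\Chamb_\sigma}(F')$ in the factored composition do not vanish on $j(X)$: since $j$ is $\At$-equivariant along $\varphi$, the image $j(X)$ contains the fixed points $j(f')\in F'$ together with pieces of their attracting sets, so these restrictions are generically nonzero. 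For generic equivariant and K\"ahler parameters one therefore gets $j^{\oast}\Stab^{\wt X}_{\wt\Chamb_\sigma}(\tilde f)=\sum_{f'\leq f}c_{f'}\,\Stab^{X}(f')$ with several nonzero coefficients; the entire content of the fusion theorem is that after the \emph{non-generic} specialization enforced by $\varphi^{\oast}$ (i.e.\ $a'=a\hbar^{-c''_k}$) together with the K\"ahler shift $z\mapsto z\hbar^{-\gamma(f)}$, all coefficients with $f'\neq f$ vanish. That vanishing cannot be extracted from the support axiom, nor obtained by continuity from the generic-parameter triangle lemma, because $\varphi^{\oast}$ lands on a resonance-type locus where individual factors of the composition may acquire poles (cf.\ Remark~\ref{remark: nondegenerate line bundle}); one must verify holomorphy and compute the specialized coefficients directly.

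Relatedly, your intermediate assertion that $j^{\oast}\varphi^{\oast}\Stab^{\wt X}_{\Chamb_\sigma}(F)$ recovers $e^*(N_j^-)\Stab^{X}_{\Chamb_\sigma}(f)$ is essentially the theorem itself (for the subtorus) and is exactly as hard as what you set out to prove. What you defer to ``the hard part''---matching the attractive line bundles under $j^{\oast}\varphi^{\oast}$ including the $z\hbar^{-\gamma(f)}$ shift, preservation of the support condition, and the normalization at $f$ via the decomposition of $N_j$---is not a residual check but the whole proof. Indeed, the paper establishes this theorem by citation to \cite[\S~6.3]{BR23}, where precisely this axiomatic verification is carried out: one shows that $j^{\oast}\varphi^{\oast}\Stab^{\wt X}_{\wt\Chamb_\sigma}(\tilde f)$, viewed as a section of the appropriately shifted line bundle on $X$, satisfies the diagonal and support axioms of Theorem~\ref{thm: definition elliptic stab} up to the stated scalar, and then invokes uniqueness. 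Your sketch correctly identifies the key ingredients ($\varphi$-equivariance of $j$, topological triviality of $N_j$, the attracting-cell observation of Lemma~\ref{restrction Chern roots in A resolution}), but replaces the decisive computation with an incorrect appeal to support. Your limiting strategy for the K-theoretic and cohomological statements is fine in spirit (the paper instead remarks that the axiomatic argument transfers verbatim), but it only becomes available once the elliptic identity has actually been established.
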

\begin{proof}
    The proof of the elliptic case is given in \cite[\S~6.3]{BR23} using the axiomatic characterization of the elliptic stable envelopes. The statements in K-theory and cohomology can be proved analogously.
\end{proof}

The appropriate choice chamber $\wt{\Chamb}_{\sigma}$ makes the previous formula becomes especially easy to apply:

\begin{corollary}
\label{corollary nicest one-term D5 resolution}
Assuming that $\wt{\Chamb_{\sigma}}/\Chamb_{\sigma}=\lbrace a'>a''\rbrace$ and choosing $\tilde f=\tilde f_\sharp$, we get 
    \begin{equation*}
    \prod_{s=1}^{c_k''} e(\h^s)e^*(N^-_j)  \Stab^{X}_{\Chamb_{\sigma}}(f)= j^{\oast}\varphi^{\oast}\Stab^{\wt X}_{\wt \Chamb_{\sigma}}(\tilde f_\sharp).
\end{equation*}
\end{corollary}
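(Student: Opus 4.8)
The plan is to obtain the corollary as the specialization of Theorem \ref{Fusion of D5 branes for separated brane diagrams} to the relative chamber $\wt\Chamb_\sigma/\Chamb_\sigma=\{a'>a''\}$ and the fixed point $\tilde f=\tilde f_\sharp$. With these choices the right-hand side of the fusion formula is literally the right-hand side of the corollary, so the whole content reduces to the identity
\[
\varphi^{\oast}\Big(\Stab^{*,F}_{\wt\Chamb_\sigma/\Chamb_\sigma}(\tilde f_\sharp)\,(z\h^{-\gamma(f)})\big|_{\tilde f_\sharp}\Big)=\prod_{s=1}^{c''_k}e^*(\h^s),
\]
which I would establish uniformly for $*\in\{H,K,E\}$.

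First I would rewrite the left-hand prefactor as an Euler class. The expression $\Stab^{*,F}_{\wt\Chamb_\sigma/\Chamb_\sigma}(\tilde f_\sharp)\big|_{\tilde f_\sharp}$ is a diagonal (self-)restriction of a stable envelope on $F$ for the one-dimensional torus $A=\wt\At/\At$ spanned by $a'/a''$. By the diagonal axiom (axiom (i) of Theorems \ref{thm: definition stab} and \ref{thm: definition elliptic stab}, and its K-theoretic analogue) this restriction equals the Euler class $e^*(N^-_{F'/F})\big|_{\tilde f_\sharp}$ of the $\{a'>a''\}$-negative part of the normal bundle of the $A$-fixed component $F'\subseteq F^{A}$ containing $\tilde f_\sharp$. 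Since the Euler class of a normal bundle is a purely (equivariant-)topological quantity, it does not depend on the K\"ahler parameters $z$; hence the shift $z\mapsto z\h^{-\gamma(f)}$ is immaterial and may be dropped.

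Next I would make $F$ and $F'$ explicit. By Theorem \ref{theorem: embedding resolution D5 branes}, $F\cong X(\ttt{\fs $1$\fs $2$\fs $\dots$\fs $c'_k+c''_k$\bs $c''_k$\bs})$, a bow variety with exactly two D5 branes $\Ab'_k,\Ab''_k$ of charges $c'_k,c''_k$ carrying the parameters $a',a''$. By the restriction combinatorics of \S\ref{sec:fixed point restrictions} (Theorem \ref{thm:RestrictionMaps}), the ties emanating from $\Ab''_k$ carry the decorations $a''\h^{-j}$ for $0\le j<c''_k$, while those from $\Ab'_k$ carry $a'\h^{-i}$ for $0\le i<c'_k$. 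Fed into the tangent bundle formula \eqref{eq:tangent bundle}, these decorations give the $\Tt$-weights of $T_{\tilde f_\sharp}F$, and their $A$-degrees are read off as the net power of $a'/a''$. The point $\tilde f_\sharp$ is exactly the ``sharp'' resolution singled out in \S\ref{subsec: D5 res}, and---when $c'_k>1$---it is \emph{not} isolated in $F^{A}$ but lies in a positive-dimensional component $F'$; this is precisely why the final answer has $c''_k$ (rather than $c'_kc''_k$) factors, so one must compute the normal bundle of the component $F'$, not of the point.

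Finally I would compute the weights of $N^-_{F'/F}$. The $\{a'>a''\}$-negative directions are those pairing the fixed $\Ab'_k$-tie with the $c''_k$ ties through $\Ab''_k$, giving the $c''_k$ weights $\tfrac{a''}{a'}\h^{-j}$ with $j=0,\dots,c''_k-1$. Applying $\varphi^{\oast}$ from \eqref{group homomorphism A-resolution}, i.e.\ $a'\mapsto a\h^{-c''_k}$ and $a''\mapsto a$, turns each such weight into $\h^{c''_k-j}$, so that as $j$ ranges over $0,\dots,c''_k-1$ we obtain exactly $\h^1,\dots,\h^{c''_k}$. Hence $\varphi^{\oast}e^*(N^-_{F'/F})\big|_{\tilde f_\sharp}=\prod_{s=1}^{c''_k}e^*(\h^s)$, which is the desired identity. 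I expect the main obstacle to be this last step: correctly tracking the $\h$-twists native to the bow variety (which differ from the naive $T^*\!\Gr$ twists) and verifying that the component $F'$ contributes only the $c''_k$ normal weights claimed. Everything else is a clean specialization of the fusion theorem together with the diagonal axiom.
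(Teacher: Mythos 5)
Your overall route is exactly the one the paper intends: Corollary~\ref{corollary nicest one-term D5 resolution} is stated as a direct specialization of Theorem~\ref{Fusion of D5 branes for separated brane diagrams}, and the whole content is the identity you isolate, namely that the diagonal restriction $\Stab^{*,F}_{\wt\Chamb_\sigma/\Chamb_\sigma}(\tilde f_\sharp)\big|_{\tilde f_\sharp}$ equals $e^*$ of the $\{a'>a''\}$-repelling normal directions (diagonal axiom), that this quantity is independent of the K\"ahler parameters so the shift $z\mapsto z\h^{-\gamma(f)}$ drops out, and that under $\varphi^{\oast}$ (i.e.\ $a'\mapsto a\h^{-c''_k}$, $a''\mapsto a$) the repelling weights $(a''/a')\h^{-j}$, $j=0,\dots,c''_k-1$, become $\h^{c''_k-j}$, producing $\prod_{s=1}^{c''_k}e^*(\h^s)$. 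Your final list of weights is correct: the $\Tt$-equivariant tangent space of $F$ at $\tilde f_\sharp$ is $\bigoplus_{j=0}^{c''_k-1}\bigl[(a''/a')\h^{-j}\oplus(a'/a'')\h^{j+1}\bigr]$, whose repelling half is exactly what you claim.

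There is, however, one concretely wrong step in your justification. You assert that for $c'_k>1$ the point $\tilde f_\sharp$ is \emph{not} isolated in $F^{A}$ (for $A=\wt\At/\At$) but lies in a positive-dimensional component $F'$, and that this is ``precisely why'' only $c''_k$ (rather than $c'_kc''_k$) factors appear. This contradicts the paper: by Definition~\ref{def: W tilde}'s surrounding discussion in \S~\ref{subsec: D5 res}, the elements of $F^{\wt\At/\At}$ are the resolutions of $f$, and there are exactly $\binom{c'_k+c''_k}{c'_k}$ of them --- a finite set. So $F'=\{\tilde f_\sharp\}$ and $N_{F'/F}=T_{\tilde f_\sharp}F$; there is no normal-bundle-of-a-component subtlety at all. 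The correct reason only $c''_k$ repelling weights occur is dimensional: by Theorem~\ref{theorem: embedding resolution D5 branes}, $F$ is the bow variety $\ttt{\fs $1$\fs $2$\fs $\dots$\fs $c'_k+c''_k$\bs $c''_k$\bs}$, and a computation with \eqref{eq:tangent bundle} gives $\dim F=2c''_k$ --- asymmetric in $c'_k,c''_k$ and strictly smaller than the $2c'_kc''_k$ a naive $T^*\Gr(c'_k,c'_k+c''_k)$ picture would suggest. Relatedly, your heuristic ``pairing the fixed $\Ab'_k$-tie with the $c''_k$ ties through $\Ab''_k$'' tacitly treats $\Ab'_k$ as carrying a single tie, which only matches $c'_k=1$; for general $c'_k$ you should instead compute $T_{\tilde f_\sharp}F$ from \eqref{eq:tangent bundle} together with the restriction combinatorics of \S~\ref{sec:fixed point restrictions} (the ties to $\Ab''_k$ at $\tilde f_\sharp$ are the $c''_k$ longest ones, which is what makes the weight list come out as above). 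With that correction, your argument is complete and coincides with the paper's.
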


\
\begin{remark}
    As discussed in \cite[\S~6.6]{BR23}, Theorem \eqref{Fusion of D5 branes for separated brane diagrams} recovers the known fusion formula for R-matrices in cohomology and K-theory and generalizes them to elliptic cohomology.  For instance, if  $X$ has $m$ NS5 branes and two D5 branes with (local) charges $c_1$ and $c_2$, then the cohomology $H^*_{\Tt}(X)$ is a weight subspace of the quantum group representation 
    \begin{equation}
        \label{eq: qg rep}
    \Lambda^{c_1}\C^{m}(a_1)\otimes \Lambda^{c_2}\C^{m}(a_2).
    \end{equation}
    The fixed point basis corresponds to the standard coordinate basis and the actual weight is determined by the NS5 charges of $X$.
    
Now consider the D5 resolution of the first brane of X with charge decomposition $c_1=c'_1+c''_1$. The cohomology $H^*_{\widetilde{\Tt}}(\widetilde X^{\widetilde \At})$ is a weight subspace of  \begin{equation}
    \label{eq: qg rep 2}
    \Lambda^{c'_1}\C^{m}(a'_1)\otimes \Lambda^{c''_1}\C^{m}(a''_1) \otimes \Lambda^{c_2}\C^{m}(a_2).
\end{equation}
For generic $a_1'$ and $a_2''$ this representation is irreducible, but for $a_1'=a_1''\h^{-c''}$ i.e. on the divisor prescribed by $\varphi^{\oast}$, is reducible with irreducible quotient given by \eqref{eq: qg rep}. Moreover, the quotient map is identified with the bullback $j^{\oast}:H^*_{\Tt}(\widetilde X)\to H^*_{\Tt}(X^{\At})$. 
Theorem \ref{Fusion of D5 branes for separated brane diagrams} and \ref{corollary nicest one-term D5 resolution} express exactly this phenomenon from the point of view of stable envelopes, and imply that the (cohomological, K-theoretic, or elliptic) R-matrix for  \eqref{eq: qg rep} can be reconstructed from \eqref{eq: qg rep 2} via an explicit formula, known in the literature as fusion of R-matrices, see. \cite[Prop 6.19]{BR23}
\end{remark}

Iterating Corollary \ref{corollary nicest one-term D5 resolution}, we can relate the stable envelopes of a bow variety $X$ with those of partial flag varieties. Specifically, let $\wt X$ be the maximal resolution of $X$ in the sense of Corollary \ref{corollary any bow can be embedded in a flag variety}. We still denote by $j: X\to \wt X$, $\varphi: \Tt\to \wt\Tt$, and $\tilde f_{\sharp}\in \wt X^A$ the canonical embeddings and the distinguished resolution of a given fixed point $f\in X^A$, respectively. Notice that the $k$-th D5 brane $\Ab_k$ in $X$ is resolved by $c_k$ D5 branes $\Ab^{(1)}_k, \Ab^{(2)}_k, \dots, \Ab^{(c_k)}_k$ for every $k=1\dots,n$. Correspondingly, the equivariant parameter $a_k$ attached to $\Ab_k$ in $X$ is resolved by a tuple of equivariant parameters $(a_k^{(1)}, a_k^{(2)},\dots , a_{k}^{(c_k)})$. The pullback $\varphi^{\oast}$ is the specialization map setting $a_k^{(l)}\mapsto a_k\hbar^{l-1}$ for all $l=1,\dots, n$ and $l=1,\dots, c_k$.

By iteration of the previous lemma we get:

\begin{corollary}
\label{corollary nicest one-term D5 resolution full}
Let $\Chamb_\sigma$ be a chamber for the $\At$-action on $X$ and consider the chamber 
\[
{\Chamb}_{\wt \sigma}= \{a_{\sigma(1)}^{(c_{\sigma(1)})}<a_{\sigma(1)}^{(c_{\sigma(1)}-1)}<\ldots<a_{\sigma(1)}^{(1)}< \ldots<a_{\sigma(n)}^{(c_{\sigma(n)})}< a_{\sigma(n)}^{(c_{\sigma(n)}-1)}<\ldots<a_{\sigma(1)}^{(1)}
\}
\]
for the $\wt \At$-action on $\wt X$. We have
\begin{equation*}
    \label{nicest one-term D5 resolution}
    \prod_{k>0}\prod_{j=1}^{c_k-1}\prod_{i=1}^j e^*(\h^i)e^*(N^-_j)  \Stab^{X}_{\Chamb}(f)= j^{\oast}\varphi^{\oast}\Stab^{\wt X}_{\Chamb_{\wt \sigma}}(\tilde f_\sharp).
\end{equation*}
\end{corollary}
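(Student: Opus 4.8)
The plan is to factor the maximal resolution $j:X\hookrightarrow\wt X$ of Corollary~\ref{corollary any bow can be embedded in a flag variety} into a chain of \emph{single-brane, single-charge} resolutions and to apply Corollary~\ref{corollary nicest one-term D5 resolution} at each link, tracking three ingredients separately: the $\h$-factors, the normal-bundle factors $e^*(N^-)$, and the chambers.

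First I would set up the chain. Each D5 brane $\Ab_k$ of charge $c_k$ is resolved into $c_k$ charge-one branes by peeling them off \emph{from the left} one at a time, i.e. at each step using the decomposition $c=1+(c-1)$ (so $c'=1$, $c''=c-1$) to split a block of charge $c$ into a charge-one brane followed by a block of charge $c-1$. This produces a tower
\[
X = X_0 \xrightarrow{\,j_1\,} X_1 \xrightarrow{\,j_2\,} \cdots \xrightarrow{\,j_R\,} X_R = \wt X,
\]
with $R=\sum_k(c_k-1)$, each $j_t$ a single D5 resolution in the sense of \S\ref{subsec: D5 res}, equivariant along some $\varphi_t$, and $j=j_R\circ\cdots\circ j_1$, $\varphi=\varphi_R\circ\cdots\circ\varphi_1$. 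At stage $t$ I take the distinguished resolution of the propagated fixed point, so that $f=f_0,f_1,\dots,f_R=\tilde f_\sharp$, and I order the newly created variable to the top of its block. That the intermediate quotient chambers then compose to $\Chamb_{\wt\sigma}/\Chamb_\sigma$—so that the iterated fusion computes $\Stab_{\Chamb_{\wt\sigma}}$ and not some other envelope—is exactly the content of the triangle Lemma~\ref{lma:triangle lemma}.

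Next I would iterate. Writing $P_t:=j_t^{\oast}\varphi_t^{\oast}$, Corollary~\ref{corollary nicest one-term D5 resolution} at stage $t$ (with $c''_{(t)}$ the charge of the right block) reads
\[
\Big(\textstyle\prod_{s=1}^{c''_{(t)}} e^*(\h^s)\Big)\, e^*(N^-_{j_t})\, \Stab^{X_{t-1}}_{\Chamb_{t-1}}(f_{t-1}) = P_t\, \Stab^{X_t}_{\Chamb_t}(f_t).
\]
Since each $P_t$ is a ring homomorphism, applying $P_1\cdots P_{t-1}$ and substituting recursively collapses the tower into
\[
\Big(\prod_{t=1}^{R} (P_1\cdots P_{t-1})\big(\textstyle\prod_{s=1}^{c''_{(t)}} e^*(\h^s)\cdot e^*(N^-_{j_t})\big)\Big)\Stab^{X}_{\Chamb}(f) = j^{\oast}\varphi^{\oast}\,\Stab^{\wt X}_{\Chamb_{\wt\sigma}}(\tilde f_\sharp).
\]
It then remains to identify the prefactor. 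Because every $\varphi_t$ fixes $\h$ and the pullbacks do not touch it, the $\h$-factors are transported unchanged; peeling $\Ab_k$ from the left makes the right-block charges run through $c_k-1,c_k-2,\dots,1$, so the $\h$-factors contributed by $\Ab_k$ multiply to $\prod_{j=1}^{c_k-1}\prod_{i=1}^{j} e^*(\h^i)$, and over all branes to $\prod_{k>0}\prod_{j=1}^{c_k-1}\prod_{i=1}^{j} e^*(\h^i)$. For the normal-bundle factors I would use that the embedding composes, so $N_j=\bigoplus_t N_{j_t}$ (suitably pulled back), its $\Chamb$-negative part decomposes compatibly with the nested chambers, and hence $\prod_t (P_1\cdots P_{t-1})e^*(N^-_{j_t})=e^*(N^-_j)$ by multiplicativity of the Euler class. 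This yields the stated formula uniformly in $*\in\{H,K,E\}$, since both Corollary~\ref{corollary nicest one-term D5 resolution} and Lemma~\ref{lma:triangle lemma} hold in all three theories.

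The main obstacle I anticipate is the bookkeeping of the normal-bundle factors: one must check that the $\Chamb$-negative directions of the successive $N_{j_t}$ remain negative (and do not recombine into fixed or positive directions) under the change-of-group maps $\varphi_t$ and under specialization to $\tilde f_\sharp$, so that $e^*(N^-_j)=\prod_t (P_1\cdots P_{t-1})e^*(N^-_{j_t})$ holds on the nose rather than up to sign or up to $\h$-fixed corrections. Here the explicit triviality of $N_j$ (it is a sum of $\Tt$-characters, as recorded in \S\ref{subsec: D5 res}) and the compatibility of $\Chamb_{\wt\sigma}$ with the chosen peeling order are used decisively; the $\h$-factor count, by contrast, is a routine telescoping.
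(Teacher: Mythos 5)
Your proposal is correct and matches the paper's argument: the paper proves this corollary simply ``by iteration of the previous lemma,'' i.e.\ by applying Corollary~\ref{corollary nicest one-term D5 resolution} repeatedly along a chain of single D5 resolutions, which is exactly your left-peeling tower with the telescoping of the $\h$-factors (yielding $\prod_{j=1}^{c_k-1}\prod_{i=1}^{j}e^*(\h^i)$ as the right-block charges run through $c_k-1,\dots,1$) and the additivity of the normal bundles. Your explicit verification that the intermediate quotient chambers compose to $\Chamb_{\wt\sigma}/\Chamb_\sigma$ via Lemma~\ref{lma:triangle lemma} is left implicit in the paper but is the same underlying mechanism.
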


\begin{remark}
\label{rem: specialization maximal resolution}
    Notice that the pullback $\varphi^{\oast}$ simply enforces the change of variables
    \[
    (a^{(1)}_{k}, a^{(2)}_{k},\ldots,a^{(c_k)}_{k})=(a_{k}\h^{-c_k+1}, a_{k}\h^{-c_k+2},\ldots,a_{k})
    \]
    for all $k=1,\dots, n$.
\end{remark}

\subsection{Limit procedures}
Cohomological, K-theoretic, and elliptic stable envelopes form a hierarchy of progressively finer objects. 
Consequently, stable envelopes in different cohomology theories can be related by certain limit procedures. For concreteness, we discuss these limit procedures in the case of a bow variety $X$.

\begin{proposition}[{\cite[\S~4.5]{aganagic2016elliptic}}]

\label{prop: limit ell to K stab}
Let $X$ be a bow variety with $m$ NS5 branes and let $s=(s_{-1},\dots s_{m-1})\in \NN^{m}$ be generic. Then 
    \[
    \lim_{q\to 0} \left(\alpha\right)^{-1/2} \Stab_{\Chamb}^{E} \Big|_{z_{i+1}/z_{i}=q^{-s_i}}(F)\left(\alpha_F\right)^{1/2}=\uStab_{\Chamb}^{K,s} (F).
    \]
Here, $\alpha_F$ is the polarization of the fixed component $F$ induced by $\alpha$. Explicitly, it is given by the $\At$-fixed part of $\alpha|_F$. Therefore, with the normalization \eqref{eq: normalized K theoretic stab} we have
    \[
    \lim_{q\to 0}  \Stab_{\Chamb}^{E} \Big|_{z_i/z_{i+1}=q^{-s_i}}(F)=\Stab_{\Chamb}^{K,s} (F).
    \]

\end{proposition}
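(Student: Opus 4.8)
\emph{Strategy.} The plan is to identify the limit with $\uStab^{K,s}_{\Chamb}(F)$ through the uniqueness clause of the K-theoretic stable envelope theorem, which applies because $s$ is generic. Set
\[
\sigma_F := \lim_{q\to 0} \left(\alpha\right)^{-1/2}\, \Stab^{E}_{\Chamb}\Big|_{z_{i+1}/z_i = q^{-s_i}}(F)\, \left(\alpha_F\right)^{1/2},
\]
so it suffices to prove that this limit exists and that $\sigma_F$ satisfies conditions (i)--(iii) of that theorem. The basic analytic input is the degeneration $\hata(x)=\lim_{q\to 0}\th(x)$ recorded above, which converts each elliptic Euler factor into its K-theoretic $\hat A$-counterpart. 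To see that the limit exists as a class in $K_{\Tt}(F\times X)[\h^{1/2}]$, I would restrict to the fixed loci: there $\Stab^{E}_{\Chamb}(F)$ is an explicit product of theta functions in $a_j,\h,z_k$, so after the substitution $z_{i+1}/z_i = q^{-s_i}$ it becomes a function of $a_j,\h,q$, and the normalizing prefactor $(\alpha)^{-1/2}(\alpha_F)^{1/2}$ is designed to cancel exactly the $\th(\alpha)$ factors of $\Sh{L}_X=\th(\alpha)\otimes\Sh{U}$, rendering the $q\to0$ limit a Laurent polynomial.

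\emph{Support and diagonal axioms.} Because restriction to a fixed locus $F'$ is the substitution of the corresponding equivariant variables, it commutes with $\lim_{q\to 0}$. Hence the elliptic support axiom (ii)---vanishing of $\Stab^{E}_{\Chamb}(F)$ on $F\times(X\setminus\Att{C}^{\le}(F))$---passes verbatim to $\sigma_F$, yielding the K-theoretic support axiom. For the diagonal axiom, the elliptic axiom (i) gives that the restriction of $\Stab^{E}_{\Chamb}(F)$ to the diagonal component is the elliptic Euler class $e^{E}(N^-_{X/F})=\prod\th(\text{repelling weights})$; taking $q\to 0$ with $\hata=\lim_{q\to0}\th$ turns this into $\prod\hata(\text{repelling weights})=e^{K}(N^-_{X/F})$, and the $(\alpha)^{-1/2}(\alpha_F)^{1/2}$ normalization rewrites it precisely as $\big(\det(N^-_{X/F})/\det(\alpha)|_F\big)^{1/2}\otimes\Sh{O}_{\Att{C}(F)}$, matching the K-theoretic axiom (i).

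\emph{The degree axiom, and the main difficulty.} The crux is condition (iii): for every $F'<F$ one must show
\[
\deg_A\big(\sigma_F|_{F\times F'}\big)\subset \deg_A\big(\Lambda^\bullet\alpha^\vee|_{F'}\big)+\weight_A(s|_{F'})-\weight_A(s|_{F}).
\]
The elliptic stable envelope carries \emph{no} degree constraint of its own; its rigidity is stored entirely in the quasi-periodicity (transformation) law of the section of $(\Sh{L}_{A,F})^{\triangledown}\boxtimes\Sh{L}_X$. Thus the Newton-polytope bound in the $a$-variables has to be read off from the $a$-quasi-periodicity, while the slope translate $\weight_A(s|_{F'})-\weight_A(s|_{F})$ must be produced by the $z$-dependence carried by the factor $\Sh{U}$. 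Under the specialization $z_{i+1}/z_i=q^{-s_i}$ combined with $q\to 0$---which is exactly the dictionary $z=q^{-s}$ between K\"ahler and slope parameters---the $z$-quasi-periodicity of the sections $\delta(t,\cdot)$ of $\Sh{U}$ degenerates into a power of $q$ whose exponent, in the limit, selects the monomial realizing the extreme vertex of the Newton polytope and contributes the weight shift by $s$. Making this monomial bookkeeping rigorous---checking that the surviving Laurent monomial lies inside the prescribed polytope with the correct $s$-translate, uniformly in the pairs $F'<F$---is the technical heart of the argument, and the step I expect to be the main obstacle.

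\emph{The normalized identity.} Finally, the second displayed equation follows formally from the first: by \eqref{eq: normalized K theoretic stab} we have $\Stab^{K,s}_{\Chamb}(F)=\det(\alpha)^{1/2}\otimes\uStab^{K,s}_{\Chamb}(F)\otimes\det(\alpha_f)^{-1/2}$, and substituting $\uStab^{K,s}_{\Chamb}(F)=\sigma_F$ while using $\alpha_F=\alpha_f$ cancels the $\alpha$- and $\alpha_F$-normalizations, leaving $\lim_{q\to0}\Stab^{E}_{\Chamb}(F)=\Stab^{K,s}_{\Chamb}(F)$.
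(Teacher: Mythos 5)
Your overall strategy---characterizing $\uStab^{K,s}_{\Chamb}(F)$ by the K-theoretic axioms plus the genericity-uniqueness clause, and checking that the $q\to 0$ specialization of the elliptic envelope satisfies them---is exactly the route of the source this proposition is attributed to (\cite[\S~4.5]{aganagic2016elliptic}); the paper itself offers no proof beyond that citation and the worked $\TsP^{n-1}$ example built on the limits $\lim_{q\to0}\th(x)=\hata(x)$ and $\lim_{q\to0}\th(xq^{-s})/\th(q^{-s})=x^{\lfloor s\rfloor+1/2}$. Your handling of the support axiom, of the diagonal axiom (the $\det^{1/2}$ bookkeeping converting $\prod\hata(w)$ into $\bigl(\det(N^-_{X/F})/\det(\alpha)|_F\bigr)^{1/2}\otimes\Sh{O}_{\Att{C}(F)}$), and the formal deduction of the normalized identity from \eqref{eq: normalized K theoretic stab} are all sound. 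One omission worth a sentence: properness of the support over $X$, which the uniqueness clause also requires, is inherited from the elliptic side since taking the limit does not enlarge supports.

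However, as you yourself flag, the argument is incomplete at its crux, and this is a genuine gap rather than routine bookkeeping: you establish neither that the limit $\sigma_F$ exists nor the degree axiom (iii), and these two points cannot be separated in the way your write-up suggests. After the substitution $z_{i+1}/z_i=q^{-s_i}$, each restriction $\Stab^{E}_{\Chamb}(F)|_{F\times F'}$ becomes a $q$-series whose exponents are controlled by the quasi-periodicity of the section of $(\Sh{L}_{A,F})^{\triangledown}\boxtimes\Sh{L}_X$: in the $a$-directions this quasi-periodicity encodes the Newton polytope $\deg_A(\Lambda^\bullet\alpha^\vee|_{F'})$, and in the $z$-directions (the $\Sh{U}$-factor of \eqref{eq: line bundles stab bow}) it produces the translate $\weight_A(s|_{F'})-\weight_A(s|_{F})$. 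Boundedness below of the $q$-exponents---i.e.\ existence of the limit---is literally equivalent to the degree inclusion, so one cannot first ``take the limit restricting to fixed loci'' and only afterwards verify degrees; the quasi-periodicity computation must be carried out uniformly in the pairs $F'<F$, and genericity of $s$ enters twice: once so the specialization avoids the resonance divisors $\th(z_i/z_j\hbar^{\alpha_{ij}})=1$ where the elliptic section is allowed poles, and once for the uniqueness statement in the K-theoretic theorem. Without this computation the proposal is a correct plan, matching the cited argument, but not a proof. Finally, a small inconsistency you pass over silently: the two displays of the proposition use the mutually inverse specializations $z_{i+1}/z_i=q^{-s_i}$ and $z_i/z_{i+1}=q^{-s_i}$; you should fix one convention (the paper's example uses $z_1/z_2=q^{-s}$) and keep it throughout, since the sign of the slope shift depends on it.
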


\begin{proposition}
\label{prop: Kthy to coh lim}
    The following formula holds
    \[
    \text{smallest cohomological degree of }\Chern\left(\Stab_{\Chamb}^{K,s} (F)\right)  =\Stab_{\Chamb}^{H} (F).
    \]
\end{proposition}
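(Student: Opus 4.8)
The plan is to show that the smallest cohomological degree component of $\Chern\big(\Stab^{K,s}_{\Chamb}(F)\big)$ satisfies the three defining axioms (i)--(iii) of the cohomological stable envelope in Theorem~\ref{thm: definition stab}, and then to invoke uniqueness. Throughout I regard the equivariant Chern character as a ring homomorphism into the completed graded cohomology $\prod_{d}H^{2d}_{\Tt}(-)$, and I write $\ell(-)$ for its first nonzero graded piece. Three elementary facts will drive the argument: (a) $\Chern$ is natural under restriction to fixed loci, and restriction preserves degree, so every graded piece of a class whose restriction vanishes also restricts to zero; (b) for an invertible sheaf $L$ one has $\ell(L)=1$ in degree $0$, while for the structure sheaf of a subvariety $Y$ one has $\ell(\Sh{O}_Y)=[Y]$; (c) $\ell(fg)=\ell(f)\,\ell(g)$ whenever the product of leading terms is nonzero, which holds for the Euler-type factors below.

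The support axiom (ii) is immediate: $\Stab^{K,s}_{\Chamb}(F)$ restricts to $0$ on $F\times\big(X\setminus \Att{C}^{\leq}(F)\big)$, so by fact (a) every graded piece of $\Chern\big(\Stab^{K,s}_{\Chamb}(F)\big)$, in particular $\ell$ of it, restricts to $0$. For the diagonal axiom (i), recall that on $F\times X^{\geq F}$ the normalized K-theoretic stable envelope restricts to $\Sh{O}_{\Att{C}(F)}$ twisted by the fractional line bundles coming from the $\det(\alpha)^{\pm 1/2}$ and $\det(N^{-})^{1/2}$ normalizations together with the slope twist. By facts (b)--(c) each such line bundle contributes $1$ in degree $0$, so the leading term of the Chern character of this restriction is the fundamental class $[\Att{C}(F)]$, matching Theorem~\ref{thm: definition stab}(i) (the bookkeeping that the global lowest degree is exactly $\codim_X(F)$ and is attained on this open locus is routine). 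This step already explains the slope-independence of the cohomological envelope: the slope enters $\Stab^{K,s}_{\Chamb}(F)$ only through monomial (line-bundle) factors---for instance $(t\h/a_k)^{\lfloor s\rfloor+1/2}$ in \eqref{eq: Kstab proj}---whose leading Chern term is $1$ and is therefore invisible to $\ell$.

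The degree axiom (iii) is the main obstacle. I must show that for $F'<F$ the class $\ell\big(\Stab^{K,s}_{\Chamb}(F)|_{F\times F'}\big)$ has $A$-degree strictly less than $\codim(F')/2$, starting from the K-theoretic polytope bound $\deg_A\subset \deg_A(\Lambda^\bullet\alpha^\vee|_{F'})+\weight_A(s|_{F'})-\weight_A(s|_{F})$. First, the slope translation $\weight_A(s|_{F'})-\weight_A(s|_{F})$ shifts the Newton polytope by a character, i.e. multiplies by a monomial; by facts (b)--(c) this leaves $\ell$ unchanged, reducing the estimate to the untranslated polytope $\deg_A(\Lambda^\bullet\alpha^\vee|_{F'})$. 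Since $\alpha$ is a polarization, its $A$-moving part over $F'$ has rank $\codim(F')/2$, so this is a zonotope whose unique top corner is realized by the Euler class $e(\alpha^\vee|_{F'})$ of $A$-degree exactly $\codim(F')/2$; thus polytope containment by itself yields only the non-strict bound $\leq\codim(F')/2$. The entire content of (iii) is therefore the \emph{strict} drop for $F'<F$, and I expect this to be the hard point. I anticipate the strictness to follow from the genericity \eqref{eq: genericity slope} of the slope, which forces the integral exponents actually occurring in $\Stab^{K,s}_{\Chamb}(F)|_{F\times F'}$ to stay off the extreme corner of the shifted polytope, so that the vanishing order at the identity of $A$---equivalently the $A$-degree of $\ell$---is strictly below $\codim(F')/2$.

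An alternative route to (iii), which I would pursue if the direct polytope estimate proves delicate, is triangular. Facts (i)--(ii) already present $\ell\big(\Chern(\Stab^{K,s}_{\Chamb}(F))\big)$ as $[\Att{C}(F)]$ plus a combination of lower attracting classes $[\Att{C}(F'')]$ with $F''<F$; since $\Stab^{H}_{\Chamb}(F)$ exists and is the unique such triangular class obeying the strict degree bound, it suffices to estimate the $A$-degree of the lower coefficients, for which one can localize via Lemma~\ref{lma:triangle lemma} to rank-one subtori. There the stable envelopes are the explicit products of \S\ref{sec: K stab bow}, and $\ell$ of each factor $\hat a(a_i/t)=(a_i/t)^{1/2}-(t/a_i)^{1/2}$ is the linear form $a_i-t$; comparing \eqref{eq: Kstab proj} with \eqref{eq; coh stab proj} then matches leading terms factor by factor and exhibits the degree drop concretely. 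Either way, the axiomatic uniqueness of Theorem~\ref{thm: definition stab} will yield $\ell\big(\Chern(\Stab^{K,s}_{\Chamb}(F))\big)=\Stab^{H}_{\Chamb}(F)$.
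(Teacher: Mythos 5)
Your overall strategy---extract the lowest-degree part of the Chern character, verify the axioms of Theorem~\ref{thm: definition stab}, and invoke uniqueness---is sound, and it is in fact more than the paper supplies: the paper states Proposition~\ref{prop: Kthy to coh lim} without proof and only illustrates the mechanism in the example that follows, whose entire content is that $\ch(\hata(x))=e^{x/2}-e^{-x/2}$ has leading term $x$ while a monomial $\ch(x)=e^{x}$ has leading term $1$. Your support axiom argument is correct, and the diagonal axiom is fine modulo the bookkeeping you flag, which can indeed be closed: the K-class is supported on the half-dimensional set $\Att{C}^{\leq}(F)$, so by Riemann--Roch its Chern character lies in the image of the equivariant Borel--Moore homology of that set and has no pieces in cohomological degree below $2\codim_{\C}\Att{C}(F)$; hence the globally lowest piece is a combination of the classes of the $\overline{\Att{C}(F')}$, $F'\leq F$, and its restriction to $F\times X^{\geq F}$ is $[\Att{C}(F)]$ by your facts (b)--(c).

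The genuine gap is axiom (iii), and you name it yourself: the strict inequality $\deg_A<\codim(F')/2$ is only ``anticipated'' to follow from the genericity condition \eqref{eq: genericity slope}, not proven---yet this is exactly the step that carries the content of the proposition (without it, uniqueness cannot be invoked, and the statement could fail for integral slopes). The missing argument, in outline, is this: the restriction to an isolated fixed point is a Laurent polynomial in $a^{\pm 1/2},\hbar^{\pm 1/2}$ whose Newton polytope in the $a$-variables lies in the zonotope of $\Lambda^{\bullet}\alpha^{\vee}|_{F'}$ translated by the \emph{non-integral} vector $\weight_A(s|_{F'})-\weight_A(s|_{F})$; because the translate is off-lattice, the number of lattice exponents available in each one-parameter direction drops by one relative to the untranslated polytope, and since an exponential sum with $N$ distinct frequencies vanishes at the origin to order at most $N-1$, the order of vanishing of the restriction at the identity of $A$ (at generic $\hbar$)---which bounds the $A$-degree of the leading Taylor term---is strictly below $\codim(F')/2$. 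You gesture at precisely this (``exponents stay off the extreme corner'', ``vanishing order at the identity''), but none of it is carried out, and the multivariable lattice-count and vanishing-order estimate is where all the work lies. Your fallback route does not repair this: Lemma~\ref{lma:triangle lemma} composes stable envelopes, and leading-term extraction does not commute with composition or with sums (your fact (c) fails for matrix products, since leading terms can cancel), while the explicit formulas \eqref{eq: Kstab proj} and \eqref{eq; coh stab proj} you propose to compare factor by factor are available only for $T^*\Pe^{n-1}$; the rank-one wall restrictions of a general bow variety are not governed by them. So the proposal is a correct skeleton with the decisive step---the strict degree drop for $F'<F$---left as an expectation rather than a proof.
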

\begin{example}
Fix $s\in \R\setminus \Z$. Noticing that  
\begin{align}
\begin{split}
\label{eq: limit theta}
    & \lim_{q\to 0} \; \th(x)=\hata(x)=x^{1/2}-x^{-1/2},\\
        & \lim_{q\to 0} \; \frac{\th(xq^{-s})}{\th(q^{-s})}=x^{\lfloor s\rfloor+1/2},
\end{split}
\end{align}
we obtain
\begin{align*}
    \lim_{q\to 0}\Stab^{E}_{\Chamb}(f_k)\Big|_{z_1/z_2=q^{-s}}
    & = \lim_{q\to 0}\left(\prod_{i=1}^{k-1} \th\left(\frac{a_i}{t}\right)\cdot 
	\frac{\th\left(\frac{t}{a_k}\frac{z_1}{z_2}\h^{k-1}\right)}{\th\left(\frac{z_1}{z_2}\h^{k-2}\right)}
	\cdot 
	\prod_{i=k+1}^n \th\left( \frac{t}{a_i}\h\right)\right)
    \\
    &=\prod_{i=1}^k\hata\left(\frac{a_i}{t}\right)\cdot 
	\left(\frac{t}{a_k}\hbar\right)^{\lfloor s\rfloor+1/2}
	\cdot 
	\prod_{i=k+1}^n \hata\left( \frac{t}{a_i}\h\right)
    \\
    &=\Stab^{K,s}_{\Chamb}(f_k)
\end{align*}
where in the last line we used \eqref{eq: Kstab proj}. For the K-theoretic to cohomological limit as in Proposition~\ref{prop: Kthy to coh lim}, notice that
\begin{itemize}
    \item applying the Chern character amounts to replacing a given Chern root $x$ with $\exp(x)$;
    \item picking the smallest cohomological degree amounts to picking the leading term in the Taylor expansion of $\exp$.
\end{itemize} 
Hence, the factors of the form $\ch(\hata(x))=(e^{x/2}-e^{-x/2})$ contribute to $x$ while the factors of the form $\ch(x)=e^{x}$ contribute to the unit $1$.
Therefore, we obtain that 
\begin{align*}
    &\text{smallest cohomological degree of }\Chern\left(\Stab_{\Chamb}^{K,s} (F)\right) 
    \\
    &=\prod_{i=1}^{k-1} (a_i- t)\cdot 
	\prod_{i=k+1}^n  (t-a_i+\h)
    \\
    &=\Stab^H_{\Chamb}(f_k).
\end{align*}
    In the last line we used \eqref{eq; coh stab proj}.
\end{example}

\section{The shuffle algebra}
\label{sec:shuffle algebra}


Consider a left dimension vector $d=(\ldots, d_{-2},d_{-1}, d_0) \in \N^{\Z_{\leq 0}}$, cf. \S~\ref{sec:brane diagrams}. We still assume that $d_{-i}=0$ for $i\gg 0$.
For any $d\in \N^{\Z_{\leq 0}}$, we define vectors spaces
\[
\Ht_d^{H}, \qquad \Ht_d^{K}, \qquad \Ht_d^{E}
\]
of functions in the variables 
\[
\begin{array}{ll}
t_{ki}  \text{ for } k\leq 0 \text{ and } 1\leq i \leq d_{-k}, & \text{(topological variables)}
\\
\h & \text{(deformation variable)}
\end{array}
\]
that are symmetric in the $t_{ki}$ variables for the same $k<0$ value. Note that we do not require symmetry for the $t_{0i}$ variables. 
In the elliptic version $\Ht^E_d$ the functions also depend on 
\[
\begin{array}{ll}
z_1, z_2, z_3, \ldots \phantom{\hskip 3.1 true cm} & \text{(dynamical variables).}
\end{array}
\]
A star $*$ in the superscript instead of $H, K, E$ indicates that we talk about all three versions at the same time.
To define a multiplication on $\oplus_d \Ht_d^*$  we need some definitions.

\begin{definition} \ \newline
$\bullet$ For left dimension vectors $d', d''$ with $d=d'+d''$ consider the variables  
\begin{equation}\label{tptpp}
t'_k=\{t'_{k1},t'_{k2},\ldots, t'_{k,d'_k}\},
\ \ 
t''_k=\{t''_{k1},t''_{k2},\ldots, t''_{k,d''_k}\},
\ \ \text{for } k\leq 0.
\end{equation}
A {\em shuffle} of these variables is a choice of dividing the variables $\{t_{k1},$ $\ldots,$ $t_{k,d_k}\}$ into a $d'_k$ and a $d''_{k}$ element set, and identifying these sets with $t'_k$ and $t''_k$ respectively, {\em for every $k<0$}. For $k=0$ we identify  
\[
\begin{array}{llll}
    t'_{0i} & \text{ with } & t_{01}, t_{02}, \ldots, t_{0d'_0} & \text{in this oder, and }
\\
    t''_{0i} & \text{ with } & t_{0d'_{0}+1}, t_{0d'_{0}+2}, \ldots, t_{0d_0} & \text{in this order.}
\end{array}
\]
\newline
$\bullet$ 
Define the ``kernel'' function
\[
\phi^{*}_{d',d''}= 
\prod_{k<0} 
\frac{ e_{\h}^*(t'_k,t''_k) e_{\h}^*(t''_{k+1},t'_k) e^*(t''_k,t'_{k+1})}{e^*(t''_k,t'_k)},
\]
where 
\[
\begin{array}{ll}
    e^E_{\h}(A,B)=\prod_{ \substack{a\in A \\ b\in B}} \th\left(\frac{\h b}{a}\right)
    & e^E(A,B)=e^E_1(A,B) 
    \\[8pt]
    e^K_{\h}(A,B)=\prod_{ \substack{a\in A \\ b\in B}}\ag{\h b}{a} & e^K(A,B)=e^K_1(A,B)
    \\ [8pt] 
    e^H_{\h}(A,B)=\prod_{\substack{a\in A \\ b\in B}} (b-a+\h) 
    & e^H(A,B)=e^H_0(A,B).
\end{array}
\]
\newline
$\bullet$
Define the $z$-shift operator 
\[
Z_d[f]=f|_{z_k=\h^{-c_k} z_k} \qquad \text{(recall that $c_k=d_{-k+1}-d_{-k}$ is the charge of $\Zb_{k}$).}
\]
\end{definition}

\begin{definition}
\label{def: shuffle algebra}
Define the bilinear multiplication $\star: \Ht_{d'}^* \otimes \Ht_{d''}^* \to 
\Ht_{d'+d''}^*$ as follows. 
For $f'\in \Ht_{d'}^*, f''\in\Ht_{d''}^*$ the product $f' \star  f''\in \Ht_d^*$ ($d=d'+d''$) is 
\[
\Shuffle_{k<0}
\Bigl(
f'(t')  \cdot
Z_{d'}\left[ f''(t'')\right]\cdot
\phi^{*}_{d',d''}
\Bigl),
\]
where $\Shuffle_{k<0}$ means taking the $\prod_{k<0} \binom{d_k}{d'_k}$-term sum for all shuffles. The z-shift operator $Z_{d'}[-]$ only plays a role in the elliptic flavor.
\end{definition}

The obtained $d$-graded algebra 
\begin{equation}
    \label{eq: copmbinatorial shuffle algebra}
    \left(\Ht^*,\star\right)=\left(\bigoplus_{d} \Ht^*_d,\star\right)
\end{equation}
while not commutative, is associative. This can be either seen via a direct computation or by geometric means, as reviewed in the next section.  

\begin{example}
Let $d'=d''=(\ldots,0,1,d_0=1)$. For the elliptic multiplication we have
\begin{multline*}
f'(t_{-11}, t_{01}) 
\star
f''(t_{-11}, t_{01})
= 
f'(t_{-11},t_{01})
Z[f''(t_{-12},t_{02})]
\frac{\thh\left(\frac{t_{-12}}{t_{-11}}\h\right)\thh\left(\frac{t_{-11}}{t_{02}} \h\right)\thh\left(\frac{t_{01}}{t_{-12}}\right)}{\thh\left(\frac{t_{-11}}{t_{-12}}\right)}
\\
+[t_{-11}\leftrightarrow t_{-12}],
\end{multline*}
where $Z[-]$ denotes the substitution $z_2\mapsto z_2h^{-1}$.    
\end{example}

\section{Geometry of the shuffle algebra} 

In this section we recall the geometric nature of the shuffle algebra, that is, we discuss how it emerges as the cohomological (resp. K-theoretic and elliptic) Hall algebra of the quiver~$A_\infty$.
\subsection{Parabolic induction}
Fix a quiver $Q=(Q_0,Q_1)$ with set of vertices $Q_0$ and source and target maps $s,t: Q_1\to Q_0$. The double of $Q$ is by definition the quiver $\ol Q=(\ol Q_0, Q_1)$ such that $\ol Q_0=Q_0$ and $\ol Q_1=Q_1\sqcup  (Q_1)^*$, where $(Q_1)^*$ is a copy of the set $Q_1$ but with arrows of the opposite orientation. On other words, for every $a\in Q_1$, we have $a^*\in (Q_1)^*$ such that $s(a^*)=t(a)$ and $t(a^*)=s(a)$. 

Fix dimension and framing vectors $(\dd, \ww)\in \NN^I\times \NN^{Q_0}$. Let 
\[
\Rep_Q(\dd,\ww)=\bigoplus_{a\in Q_1} \Hom(\C^{\dd_{s(a)}}\C^{\dd_{t(a)}})\bigoplus_{i\in Q_0}\Hom(\C^{\ww_{i}}, C^{\dd_i})
\]
be the space space of $(\dd, \ww)$-dimensional framed representations. It admits an actions of $\GL(\ww)=\prod_i \GL(\ww_i)$ and $\GL(\dd)=\prod_i \GL(\dd_i)$. Notice that $\Rep_{\ol Q}(\dd,\ww)=T^*\Rep_Q(\dd,\ww)$. Hence, if the quiver is doubled or tripled, we can consider the additional action of a one dimensional torus $\Cs_{\h}$ rescaling the cotangent directions. We denote by $T_{\ww}$ the standard maximal torus of $\GL(\ww)$ and set $\Tt_{\ww}=T_{\ww}\times \Cs_{\h}$.

Consider a pair of decompositions $\dd=\dd'+\dd''$ and $\ww=\ww'+\ww''$ and $I$-graded subspaces $\C^{\dd'}\subset \C^{\dd}$  and $\C^{\ww'}\subset \C^{\ww}$. Let $ Z\subset \Rep_{Q}(\dd, \ww)$ be the subspace of $\Rep_{\ol Q}(\dd, \ww)$ consisting of those representations of $Q$ that preserve the subspaces $\C^{\dd'}\subset \C^{\dd}$ and  $\C^{\ww'}\subset \C^{\ww}$.  Alternatively, $Z$ can be characterized as follows. Consider the (unique) cocharacter $\lambda: \Cs\to \GL(\dd)\times \GL(\ww)$ acting trivially on the subspaces $\C^{\dd'}\subset \C^{\dd}$ and $\C^{\ww'}\subset \C^{\ww}$ with weight one on their complements. Consider the action of $\Cs$ on $ \Rep_{\ol Q}(\dd, \ww)$ induced by $\lambda$. Then $Z$ is the subspace of $\Rep_{\ol Q}(\dd, \ww)$ with non-negative weights.

Consider the composition $\lambda': \Cs\to \GL(\dd)\times \GL(\ww)\to\GL(\dd)$. Let $L\cong \GL(\dd')\times \GL(\dd'')$ be the Levi subgroup centralizing $\lambda'$, and let $P\subset \GL(\dd)$ be the parabolic subgroup containing $\lambda'$ such that $\lambda'$ acts on $\text{Lie}(P)$ with non-negative weights. 
By construction, there are canonical maps 
\begin{equation*}
    \begin{tikzcd}
    \Rep_{\ol Q}(\dd',\ww=')\times \Rep_{\ol Q}(\dd'',\ww'') &  Z\arrow[l] \arrow[r, hook]  & \Rep_{\ol Q}(\dd,\ww)
    \end{tikzcd}
\end{equation*}
Here, the two leftward pointing maps are obtained by restricting a representation to the subspaces $\C^{\dd'},\; \C^{\ww'}$ and by taking quotients, respectively. Set $\FM_{\ol Q}(\dd,\ww):=[\Rep_{\ol Q}(\dd,\ww)/\GL(\dd)]$. Passing to quotients by $L\cong \GL(\dd')\times \GL(\dd'')$, $P$ and $\GL(\dd)$, we get a correspondence
\begin{equation}
\label{eq: basic correspondence coha}
    {\FM_{\ol Q}(\dd',\ww')}\times {\FM_{\ol Q}(\dd'',\ww'')} \xleftarrow{q} [Z/P] \xrightarrow{p}\FM_{\ol Q}(\dd,\ww),
\end{equation}
which is often referred to as parabolic induction. 
It is standard to check that $q$ is smooth and $p$ is proper. In addition, the following composition is also proper:
\begin{equation}
    \label{fundamental correspondence stacks}
    [Z/P] \xrightarrow{q\times p} {\FM_{\ol Q}(\dd',\ww')}\times {\FM_{\ol Q}(\dd'',\ww'')} \times \FM_{\ol Q}(\dd,\ww).
\end{equation}
\subsection{Cohomology}

The proper morphism \eqref{fundamental correspondence stacks} defines a class in the Borel-Moore homology
\begin{equation}
    \label{eq: fund class corr cohomology}
    \Corr_{\dd',\dd''}^{\ww',\ww''} \in H^{\BM}_{\Tt_{\ww}}({\FM_{\ol Q}(\dd',\ww')}\times {\FM_{\ol Q}(\dd'',\ww'')} \times \FM_{\ol Q}(\dd,\ww))
\end{equation}
It is a standard fact (see eg. \cite[\S~2]{kontsevich2011cohomological}) that these correspondences are associative with respect to the convolution product in Borel-Moore homology, i.e. we have 
\begin{equation}
\label{eq: convolution shufle algebra}
    \Corr_{\dd'+\dd'',\dd'''}^{\ww'+\ww'',\ww'''}\circ \left(\Corr_{\dd',\dd''}^{\ww',\ww''}\boxtimes 1^{\dd'''}_{\ww'''}\right)=\Corr_{\dd',\dd''+\dd'''}^{\ww',\ww''+\ww'''}\circ \left(1^{\dd'}_{\ww'}\boxtimes \Corr_{\dd'',\dd'''}^{\ww'',\ww'''} \right)
\end{equation}
as classes in 
\[
H^{\BM}_{\Tt_{\ww}}({\FM_{\ol Q}(\dd',\ww')}\times {\FM_{\ol Q}(\dd'',\ww'')}\times {\FM_{\ol Q}(\dd''',\ww''')}  \times \FM_{\ol Q}(\dd,\ww)).
\]
Here, $1^{\dd}_{\ww}\in H^{\BM}_{T_{\ww}}({\FM_{\ol Q}(\dd,\ww)} \times \FM_{\ol Q}(\dd,\ww)) $ denotes the fundamental class of the diagonal.

From now on, we denote the $\C^\times_{\hbar}$-equivariant cohomology of the point by $\Bbbk^H$. Hence, $\Bbbk^H$ is the polynomial ring $\C[{\hbar}]$. Let now $(Q, I)$ be a quiver. We now define a $\N^I\times \N^I$-graded $\Bbbk^H$-module
\[
\Ht^H_Q=\bigoplus_{\dd,\ww\in \N^I} \Ht^H(\dd,\ww) \qquad \Ht^H(\dd,\ww):=H_{T_{\ww}}(\FM_{\ol Q}(\dd,\ww))=H(\FM_{\ol Q}(\dd,\ww)/\Tt_{\ww}).
\]
As a $\Bbbk^H$-module, $\Ht(\dd,\ww)$ is given by
\[
\Bbbk^H[\liet_{\ww}\times \liet_{\dd}]^{W_{G_{\dd}}},
\]
where $\liet_{\ww}$ and $\liet_{\dd}$ are the Lie algebras of the maximal tori of $\GL(\ww)$ and $\GL(\dd)$, respectively. We will often drop the reference to the quiver and write $\Ht^H$ in place of $\Ht^H_Q$
. 
We equip $\Ht^H$ with a multiplication $\N^I\times \N^I$-graded $\Bbbk^H$-algebra structure by defining multiplication maps on the $\N^I\times \N^I$-graded components graded component by setting
\begin{equation}
    \label{eq: mult coha}
    \begin{tikzcd}
\vmult:\Ht^H(\dd',\ww')\otimes_{\Bbbk^H}\Ht^H(\dd'',\ww'')\arrow[rr, "{p_{\oast}q^{\oast}}\cdot e^H(\hbar \lien)"] && \Ht^H(\dd'+\dd'',\ww'+\ww'').
\end{tikzcd}
\end{equation}
Here, the maps $q^{\oast}$ and $p_{\oast}$ are induced by the correspondence \eqref{fundamental correspondence stacks} and $\lien$ is the tautological bundle associated to the Lie algebra of nilpotent matrices in the parabolic group $P$. Equivalently, we set 
\begin{equation}
    \label{eq: mult coha v2}
    {\vmult}(\alpha\otimes \beta)=  (p_3)_{\oast} \left(\Corr_{\dd',\dd''}^{\ww',\ww''}\cup p_{12}^{\oast}(  e(\hbar \lien)\cup\alpha\otimes \beta)\right)
\end{equation}
where 
\[
\begin{tikzcd}
    & {\FM_{\ol Q}(\dd',\ww')}\times {\FM_{\ol Q}(\dd'',\ww'')} \times \FM_{\ol Q}(\dd,\ww) \arrow[dl, swap, "p_{12}"]\arrow[dr, "p_{3}"]& \\
    {\FM_{\ol Q}(\dd',\ww')}\times {\FM_{\ol Q}(\dd'',\ww'')} & & \FM_{\ol Q}(\dd,\ww) 
\end{tikzcd}
\]
are the canonical projections. We call the associative unital $\N^I\times \N^I$-graded algebra $(\Ht^H,\mathsf{m})$ the framed shuffle algebra of quiver $(Q, I)$. The adjective ``framed'' serves to stress that we are only quotienting the framing vertices by the torus $A_{\ww}$ rather than $\GL(\ww)$. This affects $\Ht^H_Q$ both as a vector space and as an algebra.

\begin{remark}
    By dropping the class $e(\hbar \lien)$ in the definition of $\vmult$, one gets another well defined algebra structure on the vector space $\Ht^H_Q$. However,the twist by the Euler class $e(\hbar \lien)$ emerges naturally in connection with he geometry of the moment map $\mu_{\dd,\ww}: \Rep_Q(\dd,\ww)=T^*\Rep_Q(\dd,\ww)\to \lieg_{\dd}$ associated to the $\GL(\dd)$-action. Indeed, replacing the stacks of the form $\FM_{\ol Q}(\dd,\ww)=(T^*\Rep_Q(\dd,\ww))/\GL(\dd)$ with the cotangent bundles $T^*(\Rep_{Q}(\dd,\ww)/\GL(\dd)) \cong \mu_{\dd,\ww}^{-1}(0)/G(\dd)$ in \eqref{eq: basic correspondence coha}, one can construct \cite{SV, YZ} an algebra structure on
    \[
    \Ht_{\Pi_Q} = \bigoplus_{\dd,\ww\in \N^{Q_0}} H^{\BM}(\mu_{\dd,\ww}^{-1}(0)/(G(\dd)\times \Tt_{\ww}))
    \]
    which is known as the (framed) preprojective CoHA. Notice that there is a canonical morphism 
    \[
    \iota_{\oast}: H^{\BM}(\mu_{\dd,\ww}^{-1}(0)/G(\dd)\times \Tt_{\ww})\to H(\FM_{\ol Q}(\dd,\ww)/\Tt_{\ww}) = H(T^*\Rep_{Q}(\dd,\ww)/(\GL(\dd)\times \Tt_{\ww})).
    \]
    induced by the closed embedding $\mu_{\dd,\ww}^{-1}(0)\hookrightarrow T^*\Rep_{Q}(\dd,\ww)$. However, the direct sum over $\dd,\ww\in \N^{Q_0}$ induces a morphism of algebras $\Ht_{\Pi_Q}\to \Ht_{Q}$ only if the multiplication map $\vmult$ on the target is given by \eqref{eq: mult coha}, and hence with the twist by $e(\hbar \lien)$. 

\end{remark}

\subsection{K-theory}

In K-theory the construction is similar. Namely, we define an $\Bbbk^K=\C[\hbar, \hbar^{-1}]$-algebra structure on 
\[
\Ht^K_Q=\bigoplus_{\dd,\ww\in \N^I} \Ht^K(\dd,\ww) \qquad \Ht^K(\dd,\ww):=K_{T_{\ww}}(\FM_{\ol Q}(\dd,\ww))=K(\FM_{\ol Q}(\dd,\ww)/\Tt_{\ww}).
\]
by replacing the multiplication map in cohomology \eqref{eq: mult coha} with its K-theoretic analog. We call the resulting associative unital $\N^I\times \N^I$-graded algebra $(\Ht^H, \vmult)$ the K-theoretic framed shuffle algebra of quiver $(Q, I)$.

\begin{remark}
    Restricting to the subalgebra such that $\ww=0$, one recovers the famous K-theoretic shuffle algebra studied by Negut and his collaborators \cite{negutshufflerevisited, negut2022shuffle, negut2022shufflermatrices, Negut_shufflequantum}. 
\end{remark}

\subsection{Elliptic cohomology}

In this section we define an elliptic version of the framed shuffle algebras from the previous section.
The $T_{\ww}$-equivariant elliptic cohomology of $\FM(\dd,\ww)$ is the abelian variety
\begin{equation}
\label{eq: unextended eliptic base}
    \Ell_{T_{\ww}}(\FM(\dd,\ww))=\Ell_{T_{\ww}\times G_{\dd}}(\Rep_{\ol Q}(\dd,\ww))=E_{\hbar}\times \prod_{i\in I} E^{\ww_i}\times \prod_{j\in I} E^{(\dd_j)}.
\end{equation}
Consider decompositions $\dd=\dd'+\dd''$ and $\ww'=\ww'+\ww''$. They induce a canonical map $ \FM(\dd',\ww')\times \FM(\dd'',\ww'')\to \FM(\dd,\ww)$ and hence a morphism in cohomology
\begin{equation}
\label{eq: iota}
    \iota:\Ell_{T_{\ww'}}(\FM(\dd',\ww'))\times_{E_{\hbar}} \Ell_{T_{\ww''}}(\FM(\dd'',\ww''))\to \Ell_{T_{\ww}}(\FM(\dd,\ww))
\end{equation}
Let $\Sh{L}$ be a line bundle on $\Ell_{T_{\ww}}(\FM(\dd,\ww))$. Let $\FN$ be the virtual normal bundle of  \eqref{fundamental correspondence stacks}. Pushing forward along the latter morphism, we get a morphism 
\begin{equation}
    \label{eq: elliptic mult untwisted}
    (p\times q)_*\Sh{O}_{\Ell_{T_{\ww}}([Z/P])}\xrightarrow{(p\times q)_{\oast}} \left(\iota^* \Sh{L}^{-1}\otimes \th(\FN)\right)\boxtimes \Sh{L},
\end{equation}
We denote by $\prescript{E}{}{\Corr_{\dd',\dd''}^{\ww',\ww''}}$  the image of the global section $1$ in the domain. It is the elliptic analog of the fudamental class \eqref{eq: fund class corr cohomology}. Notice that it depends on the choice of $\Sh{L}$ and induces, via convolution, a morphism
\begin{equation}
    \label{eq: Eha mult sheaf}
    \iota_*\left(\iota^* \Sh{L}\otimes \th(-\FN)\right)\to  \Sh{L},
\end{equation}
In analogy with \eqref{eq: convolution shufle algebra}, we have
\begin{equation}
\label{eq: convolution shufle algebra ell}
    \prescript{E}{}{\Corr_{\dd'+\dd'',\dd'''}^{\ww'+\ww'',\ww'''}}\circ \left(\prescript{E}{}{\Corr_{\dd',\dd''}^{\ww',\ww''}}\boxtimes 1^{\dd'''}_{\ww'''}\right)=\prescript{E}{}{\Corr_{\dd',\dd''+\dd'''}^{\ww',\ww''+\ww'''}}\circ \left(1^{\dd'}_{\ww'}\boxtimes \prescript{E}{}{\Corr_{\dd'',\dd'''}^{\ww'',\ww'''}} \right)
\end{equation}
To construct the shuffle algebra, we proceed as follows. Set
\[
\Ht^{E,0}_Q=\bigoplus_{\dd,\ww\in \N^I} \Ht^{E,0}_Q(\dd,\ww)\qquad \Ht^{E,0}_Q(\dd,\ww)=\Gamma(\Sh{O}_{\Kthy_{T_{\ww}}(\FM(\dd,\ww))})
\]
Pulling back \eqref{eq: Eha mult sheaf} by the Chern character morphism
\[
\ch : \Kthy_{T_{\ww}}(\FM(\dd,\ww))\to  \Ell_{T_{\ww}}(\FM(\dd,\ww))
\]
and taking global sections, we get a map 
\[
\ol \vmult^0: \Ht^{E,0}_Q(\dd'',\ww'')\otimes \Ht^{E,0}_Q(\dd'',\ww'')\to \Ht^{E,0}_Q(\dd,\ww)
\]
In analogy with \eqref{eq: mult coha}, we set $\vmult^0:=\ol {\vmult}^0\circ \th(\hbar {\lien})$. 
The associativity constrain \eqref{eq: convolution shufle algebra ell} implies that the pair $(\Ht^{E,0},\mathsf{m}^0)$ is an associative unital $\N^I\times \N^I$-graded algebra over $\Bbbk=\Gamma(\Sh{O}_{\Kthy_{\Cs_{\hbar}}})$. Notice that, by now, the algebra $(\Ht^{E,0},\mathsf{m})$ does not depend on the dynamical parameters.

\subsection{Introducing the dynamical parameters}

As in the theory of elliptic stable envelopes, the dynamical parameters can be formally introduced considering the  K\"ahler torus $Z=(\Cs)^{I}$, with coordinates $z=\{z_i\}_{i\in I}$, and replacing the space \eqref{eq: unextended eliptic base} with 
\begin{equation*}
        \Ell_{T_{\ww}\times Z}(\FM(\dd,\ww))=E_{\hbar}\times E^{I}\times \prod_{i\in I} E^{\ww_i}\times \prod_{j\in I} E^{(\dd_j)}.
\end{equation*}
Notice that although the action of $Z$ on $\FM(\dd,\ww)$ is trivial, we can twist the algebra structure introduced in the previous section by automorphisms of $E^I$. Specifically, we set 
\[
\Ht^{E}_Q=\bigoplus_{\dd,\ww\in \N^I} \Ht^{E}_Q(\dd,\ww)\qquad \Ht^{E,0}_Q(\dd,\ww)=\Gamma(\Sh{O}_{\Kthy_{T_{\ww}\times Z}(\FM(\dd,\ww))})
\]
and define a map
\[
\vmult: \Ht^{E}_Q(\dd'',\ww'')\otimes \Ht^{E}_Q(\dd'',\ww'')\to \Ht^{E}_Q(\dd,\ww)
\]
as the composition of $\vmult^0\circ (\id \times \tau')$, where tau is the shift operator 
\begin{equation}
\label{eq: dynamical shift operator}
    \tau' (f(z))= f(z-\hbar \mu(\dd',\ww')).
\end{equation}
Here, $\mu(\dd',\ww')=\ww'-C_Q\dd'$ is the weight associated with the Cartan matrix of the qiver $C_Q$. It is straightforward that  $(\Ht^{E},\mathsf{m})$ is still an associative unital $\N^I\times \N^I$-graded algebra. 
We remark that, although the introduction of the dynamical parameter may seem arbitrary at first, it will crucial to relate the elliptic shuffle algebra to the elliptic stable envelopes (cf. Prop. \ref{prop: compatibility elliptic stab coha}).

\subsection{The case \texorpdfstring{$Q=A_{\infty}$}{Am}} 
Assume now that $Q$ is the linear quiver $A_{\infty}$. We label the vertices by negative integers. Therefore, a dimension vector $\dd$ for $A_{\infty}$ is given by a tuple of non-negative integers $\dd=(\dots, d_{-2}, d_{-1})$. We require all dimension vectors to have finite support, i.e. that $\dd_{-i}=0$ for $i\gg 0$. For a given non-negative integer $d_0$, we denote by $k\delta_{-1}$ the framing vector $(0,\dots, 0, d_0)$.

Consider the sub-algebra of $(\Ht_{A_\infty} ,\; \vmult)$ given by
\begin{equation}
\label{eq: geometric shuffle algebra}
    \left(\bigoplus_{\dd, d_0 } \Ht^*_{A_\infty}(\dd, d_0\delta_{-1}),\; \vmult\right)
\end{equation}

Notice that $\Ht^*_{A_\infty}(\dd, d_0\delta_{-1})$ is the cohomology (resp. K-theory, elliptic cohomology) of the stack $T^*\Rep_Q(\dd,d_0\delta_{-1})/(\GL(\dd)\times \Tt_{k\delta_{-1}})$, which is homotopic to $B\GL(\dd)\times BT_{k\delta_{-1}}\times B\Cs_{\hbar}$. Therefore, we can identify $\Ht^*_{A_\infty}(\dd, d_0\delta_{-1})$ with the space of functions $\Ht_{d}$ defined in \S~\ref{sec:shuffle algebra}. In particular, the variables $t_{ki}$ for $k<0$ and are identified with the Chern roots of $B\GL(\dd)$, the variables $t_{0i}$ with the Chern roots of $ BT_{k\delta_{-1}}$ and $\hbar$ with the Chern root of $B\Cs_{\hbar}$. Applying the localization formula in cohomology (resp. K-theory, elliptic cohomology) to compute the multiplication map $\vmult$, we get

\begin{proposition}
    Under this identification, the cohomological and elliptic versions of the algebra \eqref{eq: geometric shuffle algebra} coincide with \eqref{eq: copmbinatorial shuffle algebra}. The K-theory versions coincide up to replacing the $\hat A$-genus $\hat a(x)=(x^{1/2}-x^{-1/2})$ entering in the definition of \eqref{eq: copmbinatorial shuffle algebra} with $(1-x^{-1})$.
\end{proposition}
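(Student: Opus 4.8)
The plan is to evaluate the geometric multiplication $\vmult$ of \eqref{eq: mult coha v2} (and its K-theoretic and elliptic analogues) by the localization / flag-bundle pushforward formulas and to check, term by term, that the outcome is the combinatorial product of Definition~\ref{def: shuffle algebra}. As recalled just above the statement, the stack $\FM_{\ol Q}(\dd,d_0\delta_{-1})$ is homotopy equivalent to $B\GL(\dd)\times BT_{d_0\delta_{-1}}\times B\Cs_{\hbar}$, so its cohomology (resp. K-theory, elliptic cohomology) is precisely the function space $\Ht_d$, the $t_{ki}$ with $k<0$ being the Chern roots of the gauge group $\GL(\dd)$, the $t_{0i}$ the Chern roots of the framing torus, and $\hbar$ the root of $\Cs_{\hbar}$. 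Under this identification the smooth map $q$ of \eqref{eq: basic correspondence coha} splits, at each gauge vertex, the Chern roots into the two groups $t'_k\sqcup t''_k$ attached to $L\cong\GL(\dd')\times\GL(\dd'')$, so that $q^{\oast}(\alpha\boxtimes\beta)=\alpha(t')\,\beta(t'')$, with the framing roots distributed in the fixed order prescribed in Definition~\ref{def: shuffle algebra} (there is no symmetrization at the framing, which is quotiented only by a torus).

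The core of the argument is the proper pushforward $p_{\oast}$, which I would compute by factoring $p$ of \eqref{eq: basic correspondence coha} as the closed embedding $\iota\colon[Z/P]\hookrightarrow[\Rep_{\ol Q}(\dd,\ww)/P]$ followed by the $\GL(\dd)/P$-flag bundle $\pi\colon[\Rep_{\ol Q}(\dd,\ww)/P]\to\FM_{\ol Q}(\dd,\ww)$, so that $p_{\oast}=\pi_{\oast}\iota_{\oast}$; since $\Rep_{\ol Q}$ is a vector space, every class in sight is pulled back from $BP\simeq BL$ and is an honest function of $t',t''$. I then collect the Euler-class factors and match them one-to-one with the four groups of the kernel $\phi^{*}_{d',d''}$. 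The twist $e^{*}(\hbar\lien)$ built into $\vmult$ produces the same-vertex numerator $e^{*}_{\hbar}(t'_k,t''_k)$, because $P$ is chosen so that $\lambda'$ acts with non-negative weights on its Lie algebra, forcing $\lien=\bigoplus_{k<0}\Hom(\C^{d'_k},\C^{d''_k})$, of torus weights $t''_{ki}/t'_{kj}$. The Gysin map $\iota_{\oast}$ multiplies by the Euler class of the normal bundle $N_{Z/\Rep_{\ol Q}}$, i.e. of the strictly negative $\lambda$-weight directions; these are exactly the adjacent cross blocks $\Hom(\C^{d''_k},\C^{d'_{k+1}})$ of the arrows (weight $t'_{k+1}/t''_k$) and $\Hom(\C^{d''_{k+1}},\C^{d'_k})$ of the dual arrows carrying $\hbar$ (weight $\hbar t'_k/t''_{k+1}$)---the framing maps supplying the $k=-1$ terms---and they give the two adjacent numerator factors $e^{*}(t''_k,t'_{k+1})$ and $e^{*}_{\hbar}(t''_{k+1},t'_k)$. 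Finally $\pi_{\oast}$ is the Weyl symmetrization over $\prod_{k<0}S_{d_k}/(S_{d'_k}\times S_{d''_k})$, which is the operator $\Shuffle_{k<0}$, with denominator $e^{*}(t''_k,t'_k)$ coming from the roots of the fibre $\GL(\dd)/P$. The remaining positive-weight cross blocks lie in $Z$ and form the fibres of $q$; being neither normal to $\iota$ nor tangent to $\pi$ they contribute nothing, which is exactly why $\phi^{*}$ carries only these four groups. Assembling the pieces gives $\Shuffle_{k<0}(\alpha(t')\,\beta(t'')\,\phi^{*}_{d',d''})$, i.e. the product $\star$.

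Two points remain. In the elliptic flavour one must additionally identify the dynamical shift $\tau'$ of \eqref{eq: dynamical shift operator}, namely $f(z)\mapsto f(z-\hbar\mu(\dd',\ww'))$ with $\mu=\ww'-C_{A_\infty}\dd'$, with the operator $Z_{d'}$ of Definition~\ref{def: shuffle algebra}; this is a direct bookkeeping comparison, using that the charge $c_k=d_{-k+1}-d_{-k}$ records precisely the relevant component of $C_{A_\infty}\dd'-\ww'$. The three theories are then handled by the identical computation, reading $e^{*}$ in $H$, $K$, or $E$; the only discrepancy is that the geometric K-theoretic Gysin and flag pushforwards naturally yield K-theoretic Euler classes of the form $\prod(1-w^{-1})$ rather than the symmetric normalization $\hata(w)=w^{1/2}-w^{-1/2}$ used in \eqref{eq: copmbinatorial shuffle algebra}, the two differing by the monomial $\prod(\pm w^{1/2})$---this accounts for the caveat in the statement. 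I expect the main obstacle to be precisely this matching of Euler factors: verifying that the half-dimensionality of $Z\subset T^*\Rep_{A_\infty}(\dd,\ww)$ (it is the non-negative weight part of a cotangent space) selects exactly one cross block from each dual pair of edges, and that the placements of $\hbar$, the signs in the cohomological symmetrization, and the half-integer powers of $\hbar$ in the normalization conspire to reproduce $\phi^{*}$ on the nose.
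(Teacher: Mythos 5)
Your proposal is correct and is essentially the paper's own argument: the paper proves the proposition by ``applying the localization formula'' to compute $\vmult$, and your factorization $p_{\oast}=\pi_{\oast}\iota_{\oast}$ through the flag bundle, with the Gysin factor giving $e^*(t''_k,t'_{k+1})\,e^*_{\hbar}(t''_{k+1},t'_k)$, the twist giving $e^*_{\hbar}(t'_k,t''_k)$, and the Weyl symmetrization giving $\Shuffle_{k<0}$ with denominator $e^*(t''_k,t'_k)$, is exactly the standard explicit form of that localization computation. Your identifications of the dynamical shift $\tau'$ with $Z_{d'}$ (via $\mu_{-k}=c_k-c_{k+1}$) and of the K-theoretic $\hat a$-versus-$(1-x^{-1})$ normalization also match the paper's statements.
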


\begin{remark}
    The discrepancy in the two K-theory version is just a matter of convenience. In fact, despite not being defined in integral K-theory, the K-theory version of the algebra \eqref{eq: copmbinatorial shuffle algebra} can be directly recovered from its lift in elliptic cohomology by taking the limit $\lim_{q\to 0}\th(x)=\hata(x)$. On the stable envelope side, this choice corresponds to the normalization \eqref{eq: normalized K theoretic stab}.
\end{remark}

\section{Shuffle formula for stable envelopes of bow varieties}
\label{sec:shuffle_formula_for Stab}

\subsection{The \texorpdfstring{$\Wt$}{W-tilde} function}
\label{sec:Wt}
First we associate a function $\Wt_{\Chamb}(f)(t_{ki},z_i,\h)$ to a fixed point $f\in \Ch(\DD)$. It will depend on the $t_{ki}$ variables for $k\leq 0$ only, and it will be symmetric in $t_{ki}$ for the same $k$ {\em only for} $k<0$. It will not be symmetric in the $t_{0i}$ variables, so the $t_{0i}$ variables require extra attention. In the upper index we will put $H$, $K$, or $Ell$ according to which flavor we consider.

\subsubsection{One tie functions}
\label{sec:one tie functions}

For a tie $\One_{kl}$ we define the basic functions 
\begin{multline*}
\Wt^E(\One_{kl})=
\prod_{i=1}^{k-1} \frac{\th({t_{-i1}}/{t_{-i+1,1}} \cdot {z_k}/{z_i}){\th(\h)}}{\th( {z_k}/{z_i} \cdot \h^{-1})},
\\
\Wt^{K,s}K(\One_{kl})=
\prod_{i=1}^{k-1} \hata(\h)
\left( \frac{t_{-i1}\h}{t_{-i+1,1}}\right)^{m_{ik}+1/2},
\qquad
\Wt^H(\One_{kl})={\h^{k-1}}.
\end{multline*}
Here, $m_{ik}=\sum_{j=i}^{k-1} s_i$ and the slope parameters $s_i\in \R$ are generic.
Notice that these functions do not depend on $l$, only on $k$.

\subsubsection{The recursion}
For a chamber $\Chamb$ we need a preferred order of the possible ties $\One_{kl}$.

\begin{definition}
    For a permutation $\sigma\in S_n$ we define a total order on the set $\{\One_{kl}\}$ of possible ties of $\DD$ by
$\One_{kl}<_{\sigma}\One_{k'l'}$ if either $\sigma^{-1}(l)<\sigma^{-1}(l')$, or $l=l'$ and $k>k'$.
\end{definition}

For the ties in Figure~\ref{fig:fixed point restriction} and $\sigma=(2,3,1)$ the order is
$\One_{42} <_{\sigma} \One_{32} <_{\sigma}\One_{23} <_{\sigma}\One_{31} <_{\sigma}\One_{11}$.

\begin{definition}
\label{def: W tilde}
    For the fixed point $f=\cup_{kl}\One_{kl}$ on a bow variety, and chamber $\Chamb_{\sigma}$ define
    \[
    \Wt_{\Chamb_{\sigma}}^*(f)=\prod_{<_\sigma} \Wt^*(\One_{kl}),
    \]
    where the product means $\star$-multiplication, in increasing $<_\sigma$-order.  
\end{definition}

Notably, the dependence of the right-hand side on the chamber $\Chamb=\Chamb_{\sigma}$ arises solely through the ordering $<_{\sigma}$---the individual factors themselves remain unchanged. This phenomenon has been observed in various contexts where geometric or algebraic objects, such as quiver polynomials, Chern-Schwartz-MacPherson classes, and stable envelopes for quivers, are expressed as ordered products of elementary building blocks within different shuffle or cohomological Hall algebras \cite{RRcoha,RRcsmcoha,botta2021shuffle}.

\subsubsection{Examples for \texorpdfstring{$\Wt$}{W-tilde} functions}
\begin{example}\label{ex:TP1_Wtilde}
Consider $r=c=(1,1)$ and the corresponding $X=\TsP^1$. Let $\Chamb_1=\Chamb_{id}=(a_1<a_2)$, and $\Chamb_2=(a_2<a_1)$. The two fixed points are $f_1=\One_{12}\cup \One_{21}$ (ties intersect) and $f_2=\One_{11} \cup \One_{22}$ (ties do not intersect).
We have

\begin{align*}
\Wt^E_{\mC_1}(f_1)=
\Wt^E_{\mC_2}(f_2) & 
= \frac{\th(t_{-11}/t_{01} \cdot z_2/z_1){\th(\h)}}{\th(z_2/z_1\cdot \h^{-1})} 
\cdot 1
\cdot
\th(t_{-11}/t_{02}\h),
\\
\Wt^E_{\mC_2}(f_1) 
= \Wt^E_{\mC_1}(f_2)
& = 
1 \cdot Z\left[
\frac{\th(t_{-11}/t_{02} \cdot z_2/z_1){\th(\h)}}{\th(z_2/z_1\cdot \h^{-1})} \right]
\cdot
\th(t_{01}/t_{-11})
\\
& = 
1 \cdot 
\frac{\th(t_{-11}/t_{02} \cdot z_2/z_1 \h){\th(\h)}}{\th(z_2/z_1)}
\cdot
\th(t_{01}/t_{-11})
\end{align*}
where $Z$ is the operation $z_1\mapsto z_1\h^{-1}$.
In cohomology we have 
\begin{align*}
\Wt^H_{\mC_1}(f_1)=
\Wt^H_{\mC_2}(f_2) & 
= \h \cdot 1
\cdot
(t_{-11}-t_{02}+\h),
\\
\Wt^H_{\mC_2}(f_1) 
= \Wt^H_{\mC_1}(f_2)
& = 
1 \cdot \h 
\cdot
(t_{01}-t_{-11}).
\end{align*}
\end{example}

\begin{example}
\label{ex:0101,11} Consider $X(r=(0101),c=(11))$. Denote
\begin{align*}
A:= & \frac{
\th({t_{-31}}/{t_{-21}}\cdot {z_4}/{z_3})
\th({t_{-21}}/{t_{-11}}\cdot {z_4}/{z_2})
\th({t_{-11}}/{t_{01}}\cdot {z_4}/{z_1})
}
{
\th({z_4}/{z_3}\cdot h^{-1})
\th({z_4}/{z_2}\cdot h^{-1})
\th({z_4}/{z_1}\cdot h^{-1})
}\th(\h)^3
\\
B:= & \frac{
\th(t_{-11}/t_{01}\cdot z_2/z_1)
}{
\th(z_2/z_1\cdot \h^{-1})
}\th(\h),
\end{align*}
and ---only for this example---let the shuffle operation be $\Shuffle(F)=F+F(t_{-11}\leftrightarrow t
_{-12})$. 
Then we have
\[
\Wt^E_{\mC}(\One_{42})=A, \qquad\qquad \Wt^E_{\mC}(\One_{21})=B
\]
for arbitrary $\mC$. For $\mC_1=(a_1<a_2)$  we have 
\[
\Wt^E_{\mC_1}(\One_{42}\cup \One_{21})=
\Shuffle\left( B \cdot Z[T[A]] 
\frac{
\th(t_{-12}/t_{-11}\h)
\th(t_{-11}/t_{-21})\th(t_{01}/t_{-12})\th(t_{-11}/t_{02}\h)}
{
\th(t_{-11}/t_{-12})}
\right).
\]
Here the Z-shift operator $Z[\ ]$ is the substitution $z_2\mapsto z_2\h^{-1}$, and T-shift operator $T[\ ]$ is the substitutions $t_{-11}\mapsto t_{-12} $, $t_{01}\mapsto t_{02}$. 
For $\mC_2=(a_2<a_1)$ we have 
\[
\Wt^E_{\mC_2}(\One_{42}\cup \One_{21})=\\
\Shuffle
\left( A \cdot Z[T[B]] 
\frac{
\th(t_{-12}/t_{-11}\h)
\th(t_{-21}/t_{-12}\h)\th(t_{-11}/t_{02}\h)\th(t_{01}/t_{-12})}
{\th(t_{-11}/t_{-12})}
\right).
\]
Here $Z=\id$ and $T[\ ]$ is the substitutions $t_{-11}\mapsto t_{-12}$, $t_{01}\mapsto t_{02}$. In cohomology we obtain
\begin{align*}
\Wt^H_{\mC_1}(\One_{42}\cup \One_{21})= &
\Shuffle\left( 
\frac{(t_{-12}-t_{-11}+\h)(t_{-11}-t_{-21})(t_{01}-t_{-12})(t_{-11}-t_{02}+\h)}{(t_{-11}-t_{-12})}
\right),
\\
\Wt^H_{\mC_2}(\One_{42}\cup \One_{21})= &
\Shuffle\left( 
\frac{(t_{-12}-t_{-11}+\h)(t_{-21}-t_{-12}+\h)(t_{-11}-t_{02}+\h)(t_{01}-t_{-12})}{(t_{-11}-t_{-12})}
\right).
\end{align*}
\end{example}

\begin{example}
\label{ex:0101,2}
  Consider $\Ch(r=(0101),c=(2))$ and we keep the notation of Example~\ref{ex:0101,11}. Since $n=1$, the chamber is irrelevant, and for the only fixed point $f=\One_{21}\cup\One_{41}$ we obtain
  \[ 
  \Wt^*(f)=\Wt^*_{\Chamb_2}(\One_{42} \cup \One_{21})
  \quad
  \text{ from Example \ref{ex:0101,11}}.
  \]    
\end{example}

\subsection{The \texorpdfstring{$\W$}{W} function}
\label{subsec: the W function}
In this section we define the $\W_{\Chamb}(f)(t_{ki},a_i,z_i,\h)$ function that is obtained from the  
$\Wt_{\Chamb}(f)(t_{ki},z_i,\h)$ function by a substitution and a multiplication by certain factors all depending only on the charge vectors $c, r$ and the chamber $\Chamb$. In particular these slight `modifications' are formally the same for all fixed points on the same $\Ch(\DD)$ variety. 

\begin{definition}
    For D5 charge vector $c$ define 
    \[
    \varepsilon^*_c= e^*\left(\bigoplus_{k>0}\bigoplus_{j=1}^{c_k-1}\bigoplus_{i=1}^j \h^i\right)^{-1}.
    \]
    Explicitly, we have
    \[ \varepsilon_c^E=\prod_{k>0}\prod_{j=1}^{c_k-1}\prod_{i=1}^j \th(\h^i)^{-1}, 
    \qquad
    \varepsilon^K_c=\prod_{k>0}\prod_{j=1}^{c_k-1}\prod_{i=1}^j {\hata(\h^i)^{-1}},
    \qquad
    \varepsilon^H_c=\prod_{k>0}\prod_{j=1}^{c_k-1}\prod_{i=1}^j (i\h)^{-1}. 
    \]
\end{definition}

Recall the $\Tt$-equivariant $K$-theory class $T_{\D5}(d^+)$ from \S~\ref{sec:tautological bundles}. When expressed in terms of the variables $a_i$ and $\h$, using  
\begin{equation}\label{eq:tplus subs again}
\xi_k|_{f} = \bigoplus_{j>k} \bigoplus_{i=0}^{c_j-1} a_j\h^{-i},
\end{equation}  
cf. \eqref{eq:tplus subs}, we denote this bundle by $T'_{\D5}(c)$.  Each monomial in $T'_{\D5}(c)$ takes the form $a_i/a_j \cdot \h^s$. The $\Chamb_{\sigma}$-negative part of this Laurent polynomial consists of the terms satisfying $\sigma(i) < \sigma(j)$.  

\begin{definition}  
For a D5 charge vector $c$ and a chamber $\Chamb$, define $\eu^*_{c,\sigma}$ as the Euler class of the $\Chamb$-negative part of $T'_{\D5}(c)$, i.e.,  
\[
\eu^*_{c,\sigma} = e_-(T'_{\D5}(c)).
\]  
\end{definition}  

\begin{example}
 Let $c=(2,1,3)$, that is, $d^+=(6,4,3,0)$. The substitution~\eqref{eq:tplus subs again} is 
 \begin{multline*}
 t_{0,1\ldots 6}=\{a_1,a_1\h^{-1},a_2,a_3,a_3\h^{-1},a_3\h^{-2}\},\ 
 t_{1,1\ldots 4}=\{a_2,a_3,a_3\h^{-1},a_3\h^{-2}\},
 \\
 t_{2,1\ldots3}=\{a_3,a_3\h^{-1},a_3\h^{-2}\}.
 \end{multline*}
Performing this substitution in $T_{\D5}(d^+)$ yields
\[ 
T'_{\D5}(c)=
-\frac{a_1}{a_2}-\frac{a_2}{a_1}\h 
-\frac{a_1}{a_3}-\frac{a_3}{a_1}\h
-\frac{a_3}{a_1}-\frac{a_1}{a_3}\h
-\frac{a_3}{a_1}\h-\frac{a_1}{a_3}\h^2
-h^2-3\h-3-\h^{-1},
\]
and hence, for example, for $\sigma=id$ we obtain
\[
\eu^E_{c,id}=\frac{1}
{
\th\left( \frac{a_1}{a_2} \right) 
\th\left( \frac{a_1}{a_3} \right) 
\th\left( \frac{a_1}{a_3}\h \right)
\th\left( \frac{a_1}{a_3}\h^2 \right)
}. 
\]
\end{example}

\begin{example}
    Another example we leave to the reader to verify is
    \[
    \eu^H_{(3,3),id}=\frac{1}{(a_1-a_2-\h)(a_1-a_2+2\h)}.
    \]
\end{example}

Recall that the bow variety $X$ is a quotient of the torus $G=\prod_{\Xb} \GL(W_{\Xb})$. Let $G_{\NS5}\subset G$ be the subgroup given by 
\[
G_{\NS5}=\prod_{\Zb} \GL(W_{\Zb^+}).
\]
In other words, we only consider the general linear groups $\GL(W_{\Xb})$ associated with a D3 brane, which is left to a NS5 brane. Let $\fg_{\NS5}$ be its Lie algebra and consider 
\[
\tau_{r}^*=e^*(\hbar \fg_{\NS5})^{-1}.
\]
Notice that the set of dimensions $d_k$ for $k<0$ is determined by the charge vector $r$, hence the notation.
Explicitly, we have 
\[
\tau_{r}^E=\prod_{k<0}\prod_{i,j=1}^{d_k}\th\left(\frac{\hbar t_{ki}}{t_{kj}}\right)^{-1}
\quad 
\tau_{r}^K=\prod_{k<0}\prod_{i,j=1}^{d_k}{\hata\left(\frac{\hbar t_{ki}}{t_{kj}}\right)^{-1}}
\quad 
\tau_{r}^H=\prod_{k<0}\prod_{i,j=1}^{d_k}\left(\hbar + t_{ki}-t_{kj}\right)^{-1}.
\]

Formula \eqref{eq:tplus subs} names the set of $a_j\h^s$ expressions to which the set $\{t_{0i}\}$ specializes at every fixed point of $\Ch(\DD)$. That was sufficient for us when we substituted in $T_{\D5}(d^+)$ which is symmetric in the $t_{0i}$ variables. However, $\Wt$ functions are not, and hence a specific order of those expressions is necessary. 
\begin{definition} \label{def:610}
Let $c$ be a D5 charge vector, and $\sigma\in S_n$ a permutation. Consider the ordered list of monomials
\[
\underbrace{a_{\sigma(1)}, a_{\sigma(1)}\h^{-1},\ldots,a_{\sigma(1)}\h^{-c_\sigma(1)+1}}_{c_{\sigma(1)}}, \ldots,
\underbrace{a_{\sigma(n)}, a_{\sigma(n)}\h^{-1},\ldots,a_{\sigma(n)}\h^{-c_\sigma(n)+1}}_{c_{\sigma(n)}}.
\]
Substituting these expression in the $t_{0i}$ variables {\em in this order} will be denoted by $|(c,\sigma)$.
\end{definition}

\begin{definition}
    For a torus fixed point $f\in \Ch(r,c)^{\At}$ and a chamber $\Chamb=\Chamb_{\sigma}$ we define the function $\W$ as
    \[
    \W_{\Chamb}^*(f)=\varepsilon^*_{c} \cdot \tau^*_{r}  \cdot \eu^*_{c,\sigma} \cdot \Wt^*_{\Chamb}(f)|_{(c,\sigma)}.
    \]
\end{definition}

\begin{lemma}
   The classes $\varepsilon^*_{c}$ and $\eu^*_{c,\sigma}$ are trivial (i.e. equal to one) iff $c=\ul 1$.
\end{lemma}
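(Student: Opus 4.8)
The plan is to read the statement as the conjunction ``$\varepsilon^*_c=1$ \emph{and} $\eu^*_{c,\sigma}=1$'' $\iff c=\ul 1$, and to prove the two implications separately. The forward direction ($c=\ul 1\Rightarrow$ both classes trivial) will be handled factor-by-factor for each class, whereas the reverse direction will be carried \emph{entirely} by $\varepsilon^*_c$; as I explain below, $\eu^*_{c,\sigma}$ cannot detect $c=\ul 1$ on its own, which is precisely why the two classes must be packaged together.

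First I would dispose of $\varepsilon^*_c$, for which I claim the sharp equivalence $\varepsilon^*_c=1\iff c=\ul 1$. The index set of the defining product is $\{(k,j,i):k>0,\ 1\le j\le c_k-1,\ 1\le i\le j\}$, which is empty exactly when $c_k\le 1$ for every $k$; since the D5 charges are positive (Assumption~1), this occurs precisely when $c=\ul 1$, in which case the empty product is $1$. Conversely, if $c\neq\ul 1$ then some $c_k\ge 2$, the product is nonempty, and it contains at least the factor $e^*(\h)^{-1}$. Each factor $e^*(\h^i)^{-1}$ with $i\ge 1$ is a nonconstant function of $\h$ in all three theories: $\th(\h^i)^{-1}$ has poles along $\h^i=1$ in elliptic cohomology (cf. \eqref{eq: theta function}), $(\h^{i/2}-\h^{-i/2})^{-1}$ is nonconstant in K-theory, and $(i\h)^{-1}$ is a nonzero negative power of $\h$ in cohomology. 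Since every factor sits in the denominator, no cancellation is possible, so a nonempty product is a genuinely nonconstant function, in particular $\neq 1$.

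For $\eu^*_{c,\sigma}$ I only need the forward implication. If $c=\ul 1$, then Lemma~\ref{lma: D5 part tangent is trivial for flags} gives $T_{\D5}X(d^+)=0$; because $T'_{\D5}(c)$ is obtained from $T_{\D5}X(d^+)$ by the fixed-point substitution \eqref{eq:tplus subs again}, it follows that $T'_{\D5}(c)=0$, hence its $\Chamb_\sigma$-negative part is empty and $\eu^*_{c,\sigma}=e_-(0)=1$. Combining the two paragraphs: $c=\ul 1$ forces both classes to equal $1$, and conversely if both classes are $1$ then in particular $\varepsilon^*_c=1$, whence $c=\ul 1$ by the previous step. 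This completes the proof.

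The main subtlety — and the reason the statement is a conjunction rather than two separate equivalences — is that $\eu^*_{c,\sigma}=1$ does \emph{not} by itself characterize $c=\ul 1$, so the reverse implication cannot be read off from $\eu$. For a single D5 brane of charge two, $c=(2)$, there are no weights $a_i/a_j$ with $i\neq j$ at all; a direct computation gives $T'_{\D5}((2))=-1-\h$, which is purely $\At$-fixed, so its negative part is empty and $\eu^*_{(2),\sigma}=1$ even though $c\neq\ul 1$. Hence the reverse implication genuinely rests on $\varepsilon^*_c$, and the only care needed in the final write-up is the routine verification that each inverse factor ($\th^{-1}$, $\hata^{-1}$, or inverse-linear) is nonconstant, so that the nonempty product defining $\varepsilon^*_c$ cannot collapse to $1$.
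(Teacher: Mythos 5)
Your proof is correct, and it takes a genuinely different---and in fact more careful---route than the paper's. The paper's proof treats the two classes separately: it notes that $\varepsilon^*_c=1$ iff $c=\ul 1$ ``by definition'' (this is your first paragraph, plus your added check that a nonempty product of inverse factors cannot collapse to $1$, which the paper leaves implicit), and then asserts that ``the analogous statement'' holds for $\eu^*_{c,\sigma}$ via a straightforward computation with \eqref{eq: D5 part tagent} and \eqref{eq:tplus subs again}. You instead prove the conjunction, obtain the forward implication for $\eu^*_{c,\sigma}$ from Lemma~\ref{lma: D5 part tangent is trivial for flags} rather than by direct computation, and let $\varepsilon^*_c$ alone carry the converse.

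The substantive point is that your counterexample is right, so the paper's stronger claim for $\eu^*_{c,\sigma}$ is actually false: for $n=1$ and $c=(2)$ one gets $T'_{\D5}((2))=-1-\h$ exactly as you computed, all of whose monomials have $i=j$, so the $\Chamb_{\sigma}$-negative part is empty and $\eu^*_{(2),\sigma}=1$ although $c\neq\ul 1$. Carrying out the ``straightforward computation'' in general confirms and extends this: for D5 branes $\Ab_p$, $\Ab_q$ with $p<q$, the cross terms of $T'_{\D5}(c)$ in the direction $a_p/a_q$ come out to
\[
-\,\frac{a_p}{a_q}\,\bigl(1+\h+\dots+\h^{\,c_q-1}\bigr)\bigl(1+\h^{-1}+\dots+\h^{-(c_p-2)}\bigr),
\]
which vanishes iff $c_p=1$; hence $\eu^*_{c,\sigma}=1$ iff $c_k=1$ for all $k<n$, with $c_n$ unconstrained, so $c=(1,2)$ and more generally $c=(1,\dots,1,c_n)$ give further counterexamples beyond your $n=1$ one. (This formula reproduces the paper's $(2,1,3)$ example, but it also indicates that the displayed $\eu^H_{(3,3),id}$ appears to be missing the factors $(a_1-a_2)^{-2}(a_1-a_2+\h)^{-2}$.) So the lemma is true precisely under the conjunction reading you adopt, the reverse implication must indeed rest entirely on $\varepsilon^*_c$ as you argue, and your write-up needs no repair; note also that only the forward direction is ever used in the paper (Corollary~\ref{cor: main thm partial flag}), so the main results are unaffected.
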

\begin{proof}
    By definition $\varepsilon^*_{c}=1$ iff $c=\ul 1$. The analogous statement for $\eu^*_{c,\sigma}$ follows from a straightforward computation with \eqref{eq: D5 part tagent} and \eqref{eq:tplus subs again}.
\end{proof}
\begin{corollary}
\label{cor: main thm partial flag}
    If $X$ is the cotangent bundle of a partial flag variety, i.e. if $c=\ul 1$, then 
    \[
    \W_{\Chamb}^*(f)= \tau^*_{r}\cdot \Wt^*_{\Chamb}(f)|_{(c,\sigma)}.
    \]
\end{corollary}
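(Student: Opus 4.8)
The plan is to obtain the Corollary as an immediate specialization of the definition of $\W^*_{\Chamb}(f)$, using the Lemma that immediately precedes it. Recall that
\[
\W_{\Chamb}^*(f)=\varepsilon^*_{c} \cdot \tau^*_{r}  \cdot \eu^*_{c,\sigma} \cdot \Wt^*_{\Chamb}(f)|_{(c,\sigma)},
\]
so the entire content of the Corollary is the assertion that the two correction factors $\varepsilon^*_c$ and $\eu^*_{c,\sigma}$ collapse to $1$ under the hypothesis $c=\ul 1$, while the remaining factors $\tau^*_r$ and $\Wt^*_{\Chamb}(f)|_{(c,\sigma)}$ are left untouched.

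First I would simply invoke the preceding Lemma, which states precisely that $\varepsilon^*_c=1$ and $\eu^*_{c,\sigma}=1$ whenever $c=\ul 1$. Substituting these two identities into the displayed definition yields $\W^*_{\Chamb}(f)=\tau^*_r\cdot \Wt^*_{\Chamb}(f)|_{(c,\sigma)}$, which is the claim. If one prefers a self-contained argument rather than citing the Lemma as a black box, the two vanishings are checked directly: for $\varepsilon^*_c$, the defining product $\prod_{k>0}\prod_{j=1}^{c_k-1}\prod_{i=1}^j e^*(\h^i)^{-1}$ has empty inner range $1\le j\le c_k-1=0$ in every factor once all $c_k=1$, so it is the empty product $1$; and for $\eu^*_{c,\sigma}=e_-(T'_{\D5}(c))$, I would note that $T'_{\D5}(c)$ is the specialization of $T_{\D5}X(d^+)$ along \eqref{eq:tplus subs again}, which by Lemma~\ref{lma: D5 part tangent is trivial for flags} vanishes identically when $c=\ul 1$, so its $\Chamb$-negative part is empty and its Euler class is $1$.

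Since the nontrivial content is entirely absorbed into the already-established Lemma (and, through it, into Lemma~\ref{lma: D5 part tangent is trivial for flags}), there is no genuine obstacle to overcome at the level of the Corollary itself: it is a one-line specialization. The only point deserving a moment's care is purely bookkeeping, namely confirming that setting $c=\ul 1$ does not alter the surviving factors $\tau^*_r$ and $\Wt^*_{\Chamb}(f)|_{(c,\sigma)}$. This is clear, since $\tau^*_r$ depends only on the NS5 data $r$ and on the variables $t_{ki}$ with $k<0$, and the substitution $|(c,\sigma)$ is performed identically regardless of whether the trivial factors are present.
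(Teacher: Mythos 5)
Your proof is correct and takes essentially the same route as the paper: there the corollary is precisely the specialization of the definition $\W_{\Chamb}^*(f)=\varepsilon^*_{c}\cdot\tau^*_{r}\cdot\eu^*_{c,\sigma}\cdot\Wt^*_{\Chamb}(f)|_{(c,\sigma)}$ via the immediately preceding lemma asserting $\varepsilon^*_c=\eu^*_{c,\sigma}=1$ when $c=\ul 1$. Your optional self-contained check also mirrors the paper's one-line proof of that lemma (empty product for $\varepsilon^*_c$; vanishing of $T'_{\D5}(c)$, which you justify via Lemma~\ref{lma: D5 part tangent is trivial for flags} where the paper cites the computation with \eqref{eq: D5 part tagent} and \eqref{eq:tplus subs again} underlying that same lemma), so there is no substantive difference.
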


\subsubsection{Examples for $\W$ functions}

\begin{example}
    Continuing Example~\ref{ex:TP1_Wtilde} for the fixed points of $\TsP^1$ we obtain
\begin{align*}
\W^E_{\mC_1}(f_1)
 & 
= \frac{\th(t_{-11}/a_1 \cdot z_2/z_1)}{\th(z_2/z_1\cdot \h^{-1})} 
\cdot \th(t_{-11}/a_{2} \cdot \h),
\\
\W^E_{\mC_1}(f_2) 
& 
=  \frac{\th(t_{-11}/a_2 \cdot z_2/z_1 \cdot \h)}{\th(z_2/z_1)}
\cdot
\th(a_1/t_{-11}),
\\
\W^E_{\mC_2}(f_1)
 & 
= \frac{\th(t_{-11}/a_2 \cdot z_2/z_1 \cdot \h)}{\th(z_2/z_1)} 
\cdot \th(a_2/t_{-11}),
\\
\W^E_{\mC_2}(f_2) 
& 
=  \frac{\th(t_{-11}/a_2 \cdot z_2/z_1)}{\th(z_2/z_1\cdot \h^{-1})}
\cdot
\th(t_{-11}/a_1\cdot \h).
\end{align*}
The cohomological versions are
\[
\W^H_{\Chamb_1}(f_1)=t_{-11}-a_2+\h,
\quad
\W^H_{\Chamb_1}(f_2)=a_1-t_{-11},
\quad
\W^H_{\Chamb_2}(f_1)=a_2-t_{-11},
\quad
\W^H_{\Chamb_2}(f_2)=t_{-11}-a_1+\h.
\]
\end{example}

More generally, for the bow variety representing $\TsP^{n-1}$ in \S~\ref{sec:Stab,TPn} we obtain that the $\W^E_{\Chamb_{id}}(f_k)$ function is exactly the right hand side of~\eqref{eq:Stab TP^n1}, thereby suggesting the validity of Theorem~\ref{thm:main} stated below.

\begin{example} \label{ex:cannotthinkofagoodname}
Continuing Examples~\ref{ex:0101,11} and~\ref{ex:0101,2} we obtain
\[
\W^*_{\Chamb_1}(\One_{42}\cup \One_{21})=\frac{1}{\th(\h)^4}\Wt^*_{\Chamb_1}(\One_{42}\cup \One_{21})|_{t_{01}=a_1,t_{02}=a_2},
\]
\[
\W^*_{\Chamb_2}(\One_{42}\cup \One_{21})=\frac{1}{\th(\h)^4}\Wt^*_{\Chamb_2}(\One_{42}\cup \One_{21})|_{t_{01}=a_2,t_{02}=a_1},
\]
\[
\W^*(\One_{41}\cup \One_{21})=\frac{1}{\th(\h)^5}\cdot \Wt^*(\One_{41}\cup \One_{21})|_{t_{01}=a_1,t_{02}=a_1/\h}.
\]
(In the last line, in case of cohomology, $t_{02}=a_1/\h$ should be replaced with $t_{02}=a_1-\h$.)
\end{example}

\subsection{\texorpdfstring{$\W$}{W} functions express stable envelopes}
\label{sec: main section}
We are ready to state our main theorem. 
\begin{theorem} \label{thm:main}
    For a torus fixed point $f \in \Ch(\DD)^{\At}$ and chamber $\Chamb$, the function $\W^*_{\Chamb}(f)$ represents the stable envelope $\Stab^*_{\Chamb}(f)$.
\end{theorem}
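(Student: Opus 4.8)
The plan is to reduce first to the elliptic flavor, then to cotangent bundles of partial flag varieties, and finally to bootstrap to arbitrary bow varieties through the fusion of D5 branes. By Propositions~\ref{prop: limit ell to K stab} and~\ref{prop: Kthy to coh lim}, the $K$-theoretic and cohomological stable envelopes arise from the elliptic one by the limit $q\to 0$ and by extracting the leading term of the Chern character; the one-tie functions and the correction factors $\varepsilon^*_c,\tau^*_r,\eu^*_{c,\sigma}$ degenerate under exactly these operations, as in~\eqref{eq: limit theta}. Hence it suffices to prove the elliptic identity $\W^E_{\Chamb}(f)=\Stab^E_{\Chamb}(f)$, and I would work in elliptic cohomology throughout.

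I would treat the partial flag case ($c=\ul 1$) first, where Corollary~\ref{cor: main thm partial flag} reduces the claim to $\tau^E_r\cdot\Wt^E_{\Chamb}(f)|_{(c,\sigma)}=\Stab^E_{\Chamb}(f)$. The key input is the geometric realization of the combinatorial shuffle algebra $(\Ht^E,\star)$ as the elliptic CoHA of $A_\infty$, together with the compatibility of CoHA multiplication with elliptic stable envelopes of type~$A$ quiver varieties (Proposition~\ref{prop: compatibility elliptic stab coha}, following \cite{botta2021shuffle, botta2023framed, BD}). The base case is a single tie: it is the $\TsP^{n-1}$ computation of \S\ref{sec:Stab,TPn}, where the one-tie function was checked against~\eqref{eq:Stab TP^n1}. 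For the inductive step I would peel off the $<_\sigma$-largest tie; by the triangle Lemma~\ref{lma:triangle lemma} this corresponds to degenerating $\Chamb_\sigma$ to a face, under which $\Stab_{\Chamb_\sigma}$ factors as a composition $\Stab_{\Chamb'}\circ\Stab_{\Chamb/\Chamb'}$. This composition is exactly the $\star$-product of the smaller stable envelope with the one-tie function, the dynamical shift being the geometric shift~\eqref{eq: dynamical shift operator} and $\tau^E_r=e^E(\h\fg_{\NS5})^{-1}$ recording the passage from the stack-theoretic CoHA class to the genuine envelope on the variety.

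For an arbitrary bow variety I would use its maximal resolution $\wt X$, a cotangent bundle of a partial flag variety (Corollary~\ref{corollary any bow can be embedded in a flag variety}). Corollary~\ref{corollary nicest one-term D5 resolution full} gives
\[
\Stab^E_{\Chamb}(f)=\varepsilon^E_c\cdot\frac{1}{e^E(N^-_j)}\,j^{\oast}\varphi^{\oast}\Stab^E_{\Chamb_{\wt\sigma}}(\tilde f_\sharp),
\]
since $\prod_{k>0}\prod_{j=1}^{c_k-1}\prod_{i=1}^j e^E(\h^i)=(\varepsilon^E_c)^{-1}$. Substituting the partial flag case for $\wt X$, two checks remain. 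First, the specialization $\varphi^{\oast}$, which by Remark~\ref{rem: specialization maximal resolution} sends $a^{(l)}_k\mapsto a_k\h^{-c_k+l}$, composed with the ordered substitution on $\wt X$, reproduces precisely the ordered substitution $|_{(c,\sigma)}$ of Definition~\ref{def:610}; moreover $\varphi^{\oast}$ is trivial on the $t_{ki}$ with $k<0$, so $\varphi^{\oast}\tau^E_{\tilde r}=\tau^E_r$. Second, the residual normal-bundle factor must match the D5 twist, i.e.\ $e^E(N^-_j)^{-1}$ times the resolved shuffle kernel equals $\eu^E_{c,\sigma}$ times the kernel of $X$; this is a direct comparison of $N^-_j$ with the $\Chamb$-negative part of $T'_{\D5}(c)$ through~\eqref{eq: D5 part tagent} and~\eqref{eq:tplus subs again}. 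Assembling these identifications yields $\Stab^E_{\Chamb}(f)=\varepsilon^E_c\,\tau^E_r\,\eu^E_{c,\sigma}\,\Wt^E_{\Chamb}(f)|_{(c,\sigma)}=\W^E_{\Chamb}(f)$.

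The hard part is the partial flag case, and within it the precise identification of the ordered $\star$-product with the geometric stable envelope: the triangle lemma only yields a factorization into a composition of envelopes, and matching this composition with $\star$—with the correct dynamical $z$-shifts, the $e^E(\h\fg_{\NS5})$ normalization, and the non-symmetry in the $t_{0i}$ variables that forces the ordered substitution $|_{(c,\sigma)}$—is where the genuine content lies. A fully self-contained alternative for this step is to verify directly, using the decoration rules of Theorem~\ref{thm:RestrictionMaps}, that $\W^E_{\Chamb}(f)$ is a section of the line bundle~\eqref{sheaf definition of stable envelopes} and satisfies the support and diagonal axioms of Theorem~\ref{thm: definition elliptic stab}, whence uniqueness closes the argument; I expect the support condition, namely the vanishing of $\W^E_{\Chamb}(f)|_g$ for $g\not\geq f$, to be the most delicate combinatorial check.
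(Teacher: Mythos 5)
Your proposal is correct and follows essentially the same route as the paper: the partial flag case via the compatibility of nonabelian stable envelopes with the (elliptic) CoHA multiplication (Propositions~\ref{prop: NS stab and CoHA} and~\ref{prop: compatibility elliptic stab coha}) combined with the triangle lemma and associativity, then the general case via maximal D5 resolution through Corollary~\ref{corollary nicest one-term D5 resolution full}, with exactly the identifications of $\varepsilon^*_c$, $\tau^*_r$, the ordered substitution $|_{(c,\sigma)}$, and $(e^*(N_j^-))^{-1}=\eu^*_{c,\sigma}$ that you list. The only minor deviations are cosmetic: the paper proves the three flavors in parallel rather than degenerating everything from the elliptic case (it uses the elliptic-to-K limit only for one sub-step), and its induction base is the direct computation $\Psi_{\dd^{(k)},\delta_m}(1)=\hbar^{i_k-1}=\Wt(\One_{i_k,k})$ for the zero-dimensional quiver varieties $X(\dd^{(k)},\delta_m)$, rather than the $\TsP^{n-1}$ example you cite (whose fixed points carry $n$ ties, not one).
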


There are two subtleties to discuss about what the word `represents' means in the theorem.

Recall that $\Stab^*_{\Chamb}(f)$ is a cohomology/K theory/elliptic cohomology class that satisfies normalization and support, as well as an extra condition. The extra condition in cohomology and K theory is about the degree of fixed point restrictions, and in elliptic cohomology it is about being a section of the correct line bundle over a product of elliptic curves.

The first subtlety to discuss is how these functions represent classes at all. For example, in cohomology, a {\em polynomial} in $t_{ki}, a_i, \h$ clearly represents a cohomology class. However, $\W$ is a {\em rational function}, even after simplification. The fact is that all of the torus fixed point restrictions of $\W$ are polynomials, and we mean this tuple of fixed point restrictions to be the cohomology class. Here is the smallest illustration: consider $\Ch(r=(1,1),c=(2))$ which is in fact a one-point space $\{f\}$. Our name for $f$ is $\One_{11}\cup\One_{21}$. We obtain $W^H_{\Chamb}(f)=(t_{-11}-t_{02}+\h)/\h$, and the restriction map to $f$ is given by $t_{-11}\mapsto a_1-\h$, $t_{02}\mapsto a_1-\h$. Thus $W^H_{\Chamb}(f)$ restricted to $f$ is 1, which is the cohomological stable envelope of a one-point space. In more complicated examples the denominators also contain $t_{ki}$ variables, but the phenomenon is that same, cf. \cite[\S5.2]{RTV}, \cite[\S7.7]{rimanyi2020bow}.

The second subtlety concerns the restriction maps in general. Let us illustrate it with the example
\begin{multline*}
\W^H(\One_{41}\cup \One_{21})=
\frac{(a_1-t_{-12})(t_{-11}-a_1+2\h)(t_{-21}-t_{-12}+\h)}{(t_{-11}-t_{-12}+\h)(t_{-11}-t_{-12})\h}+\\
\frac{(a_1-t_{-11})(t_{-12}-a_1+2\h)(t_{-21}-t_{-11}+\h)}{(t_{-12}-t_{-11}+\h)(t_{-12}-t_{-11})\h},
\end{multline*}
cf. Example~\ref{ex:cannotthinkofagoodname} (the ambient bow variety is a one-point space, so we expect $\Stab^H=1$). The restriction map is given by 
\[
t_{-21}=a_1-2\h, \qquad t_{-11}=a_1-\h, \qquad t_{-12}=a_1-2\h.
\]
However, this substitution is undefined, because even after bringing the two fractions to common denominator and canceling all the common factors, the substitution gives $0/0$. However, term-by-term substitution makes sense: the substitution in the first term results 1. The substitution in the second term, while undefined, is of the type $0^2/0$. Our definition of substitution is term-by-term, and any time a term has higher order vanishing in the numerator than in the denominator, that term's contribution is 0. Hence, the substitution of the example above results $1+0=1$, as expected.

\subsection{Wheel conditions for W functions}
For a dimension vector $d$, let $\Yt_d^{H}\subseteq \Ht_d^{H}$ be the subspace of functions $f\in \Ht_d^{H}$ satisfying the following ``wheel conditions'':
\begin{align*}
    & f|_{t_{-k,a}=t_{-k+1,b}=t_{-k,c}+\h} =0\\
    & f|_{t_{-k,a}=t_{-k-1,b}= t_{-k,c}-\hbar} =0
\end{align*}
for all $k<0$ and $a,b,c$ with $a\neq c$. Notice that the wheel conditions are trivial if $d_i\leq 1$ for all $i< 0$. 

Similarly, we let $\Yt_d^{K}\subseteq \Ht_d^{K}$ (resp. $\Yt_d^{E}\subseteq \Ht_d^{E}$) be the subspace of functions $f\in \Ht_d^{K}$ (resp. $f\in \Ht_d^{E}$) satisfying the following of wheel conditions:
\begin{align*}
    & f|_{t_{-k,a}=t_{-k+1,b}=\hbar t_{-k,c}} =0\\
    & f|_{t_{-k,a}=t_{-k-1,b}= \hbar^{-1}t_{-k,c}} =0
\end{align*}
for all $k<0$ and $a,b,c$ with $a\neq c$.
Notice that the K-theoretic and elliptic wheel conditions are simply the multiplicative version of the cohomological wheel conditions.

The next proposition theorem is due to Negut \cite[Prop. 2.10]{negut2022shuffle}. Although the statement and its proof in \emph{loc.cit} are given in K-theory, they directly transfer to singular and elliptic cohomology.
\begin{proposition}
\label{prop: wheel conditions ans shuffle product}
    Set $\Yt^{\ast}:=\oplus_{d} \Yt_d^{\ast}$. The shuffle product $\star: \Ht_{d'}^* \otimes \Ht_{d''}^* \to 
\Ht_{d'+d''}^*$ preserves the wheel conditions. Hence, $\Yt^{\ast}$ is a subalgebra of $\Ht^{\ast}$.
\end{proposition}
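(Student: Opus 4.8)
The plan is to follow Negut's argument \cite[Prop.~2.10]{negut2022shuffle} and verify that each ingredient is insensitive to the cohomology theory. Since the three kernels $\phi^{*}_{d',d''}$ and the three sets of wheel relations differ only through the substitution $\th(x)\rightsquigarrow\hata(x)\rightsquigarrow(\text{linear factor})$, it suffices to run the argument once; I will phrase it multiplicatively (the elliptic case) and record that the only special value used is $\th(1)=0$, whose $\hat A$- and additive analogues are $\hata(1)=0$ and the vanishing of the corresponding linear factor. Thus the cohomological and K-theoretic statements follow by the same case analysis.

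Fix $f'\in\Yt^{*}_{d'}$, $f''\in\Yt^{*}_{d''}$ and one wheel relation, say the first, which specializes three distinguished variables: two variables at one level $\ell$ (the ones indexed $a\neq c$) and one variable at the adjacent level $\ell+1$, subject to $t_{\ell,a}=t_{\ell+1,b}=\h\,t_{\ell,c}$, so that in particular $t_{\ell,a}=t_{\ell+1,b}$. Each summand in the definition of $f'\star f''$ is indexed by a shuffle, that is, by declaring each variable to be primed or doubly primed. I would group the summands by how the three distinguished variables are distributed across the primed/doubly-primed partition and show that each summand vanishes at the specialization; summing then gives the claim. When all three land on the same side, the specialization is exactly a wheel relation for $f'$ (all primed, which forces $d'_{\ell}\geq 2$) or for $f''$ (all doubly primed), so the factor $f'$ or $Z_{d'}[f'']$ vanishes by hypothesis; here the shift $Z_{d'}$ is harmless since it acts only on the dynamical variables $z_i$, not on the $t$'s or on $\h$.

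The essential work is the six ways the three variables can straddle the partition, where the vanishing must be supplied by the kernel. Its numerator carries precisely three families of factors: $e^{*}_{\h}(t'_{k},t''_{k})$ vanishes when a primed variable equals $\h$ times a doubly-primed variable at the same level; $e^{*}_{\h}(t''_{k+1},t'_{k})$ vanishes when a doubly-primed variable at level $k+1$ equals $\h$ times a primed variable at level $k$; and $e^{*}(t''_{k},t'_{k+1})$ vanishes when a primed variable at level $k+1$ equals a doubly-primed variable at level $k$. Taking $k=\ell$ and matching each straddling pattern to one of these using the relations $t_{\ell,a}=\h\,t_{\ell,c}$, $t_{\ell+1,b}=\h\,t_{\ell,c}$, $t_{\ell,a}=t_{\ell+1,b}$, one checks: the patterns with a primed level-$\ell$ variable equal to $\h$ times a doubly-primed level-$\ell$ variable hit the first factor; those in which the level-$(\ell+1)$ variable is doubly primed and related by $\h$ to a primed level-$\ell$ variable hit the second; and those in which a primed level-$(\ell+1)$ variable coincides with a doubly-primed level-$\ell$ variable hit the third. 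In every case the responsible numerator factor specializes to $\th(1)=0$. The second wheel relation is identical, now applying these factors to the pair of levels $(\ell-1,\ell)$, i.e.\ with $k=\ell-1$.

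I expect the only genuine obstacle to be the finite bookkeeping of the six straddling patterns together with the verification that the vanishing numerator factor is not cancelled by a coincident zero of the denominator $e^{*}(t''_{k},t'_{k})$. This denominator degenerates only when a primed and a doubly-primed variable at the same level become equal, whereas on each straddling locus the relevant same-level ratio specializes to $\h$ or $\h^{-1}$, so $e^{*}(t''_{k},t'_{k})$ takes the value $\th(\h^{\pm1})\neq 0$ there; hence the summand is a genuine zero rather than an indeterminate $0/0$. Once this table of cases is confirmed, each summand vanishes, $f'\star f''$ satisfies both wheel relations, and therefore $\Yt^{*}=\bigoplus_{d}\Yt^{*}_{d}$ is closed under $\star$, i.e.\ a subalgebra of $\Ht^{*}$.
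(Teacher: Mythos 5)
Your proposal is correct and is essentially the paper's own argument: the paper proves this proposition simply by citing Negut's K-theoretic case analysis \cite[Prop.~2.10]{negut2022shuffle} and remarking that it transfers verbatim to cohomology and elliptic cohomology, which is exactly the argument you spell out (same-side patterns handled by the wheel conditions on $f'$ or $Z_{d'}[f'']$, the six straddling patterns killed by the three numerator families of the kernel, plus the check that the denominator $e^*(t''_k,t'_k)$ stays nonzero on the wheel locus). Your explicit observation that the only inputs are $\th(1)=0$ and $\th(\h^{\pm 1})\neq 0$, with $\hata$ and linear-factor analogues, is precisely the justification the paper leaves implicit for the transfer between the three theories.
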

The $\W$ weight functions for bow varieties are inductively constructed with the shuffle product starting from one-tie weight functions (cf. \ref{sec:one tie functions}). Since the latter trivially satisfy the wheel conditions, Proposition \ref{prop: wheel conditions ans shuffle product} implies the following theorem.

\begin{theorem}
    Cohomological, K-theoretic and elliptic weight functions of bow varieties satisfy the wheel conditions.
\end{theorem}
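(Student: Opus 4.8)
The plan is to deduce the statement directly from Proposition~\ref{prop: wheel conditions ans shuffle product} together with the inductive structure of the weight functions. Recall from Definition~\ref{def: W tilde} that for any fixed point $f=\cup_{kl}\One_{kl}$ and any chamber $\Chamb_\sigma$ the weight function $\Wt^*_{\Chamb_\sigma}(f)$ is the ordered $\star$-product $\prod_{<_\sigma}\Wt^*(\One_{kl})$ of the one-tie functions. Since Proposition~\ref{prop: wheel conditions ans shuffle product} asserts that $\Yt^*=\bigoplus_d \Yt^*_d$ is a $\star$-subalgebra of $\Ht^*$, it suffices to check that each building block $\Wt^*(\One_{kl})$ lies in $\Yt^*$, i.e. satisfies the wheel conditions; closure of $\Yt^*$ under $\star$ then propagates the property to the entire product. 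This works uniformly in all three flavors and independently of the chamber, since changing $\Chamb_\sigma$ only permutes the factors, not the factors themselves.

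First I would verify the base case. A single tie $\One_{kl}$ joins one NS5 brane to one D5 brane and therefore covers each D3 brane at most once; the associated dimension vector $d$ consequently satisfies $d_{-i}\in\{0,1\}$ for every $i\geq 0$. Thus at each vertex level there is at most one Chern-root variable $t_{k,1}$, so no pair of \emph{distinct} indices $a\neq c$ at a common level can occur. As already observed immediately after the definition of the wheel conditions, the conditions are then vacuous, and $\Wt^*(\One_{kl})\in\Yt^*$ holds trivially. This verification is insensitive to the cohomology theory, because the three one-tie functions of \S~\ref{sec:one tie functions} differ only by replacing $\th$ with $\hata$ or with the corresponding additive expressions, none of which introduces additional variables at any level.

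With the base case in hand the conclusion is immediate: every factor of $\Wt^*_{\Chamb_\sigma}(f)$ lies in the subalgebra $\Yt^*$, hence so does their $\star$-product, which is precisely the assertion that $\Wt^*_{\Chamb_\sigma}(f)$ satisfies the wheel conditions. I do not expect a genuine obstacle here: the substantive analytic content---that the shuffle product preserves the wheel relations---is already packaged in Proposition~\ref{prop: wheel conditions ans shuffle product}, and the only point requiring care is the elementary dimension-vector bound $d_{-i}\leq 1$ for a single tie, which is what makes the base case vacuous. The one item worth flagging is bookkeeping rather than mathematics: one should confirm that the wheel conditions genuinely constrain only the variables $t_{ki}$ with $k<0$, so that the statement is correctly phrased at the level of the combinatorial functions $\Wt^*_{\Chamb_\sigma}(f)\in\Ht^*_d$ before any specialization of the $t_{0i}$ variables is performed.
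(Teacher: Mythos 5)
Your proposal is correct and is essentially identical to the paper's own argument: the paper likewise observes that the weight functions are built as iterated $\star$-products of one-tie functions, that the wheel conditions are vacuous for a single tie since its dimension vector satisfies $d_{-i}\leq 1$, and then invokes Proposition~\ref{prop: wheel conditions ans shuffle product} (Negut's closure of $\Yt^{\ast}$ under the shuffle product) to conclude. Your additional remarks---uniformity across the three flavors, chamber-independence, and the fact that the wheel conditions only constrain the $t_{ki}$ with $k<0$ before specializing the $t_{0i}$---are consistent with the paper and require no changes.
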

\begin{remark}
    From the geometric perspective, the wheel conditions encode the fact that a bow variety is a symplectic quotient, i.e. it arises as the quotient of the zero locus of 
    \[
    \mu_<: \MM\to \bigoplus_{i<0} \NN_{\Xb_i}.
    \]
    which is the moment map for the action of the group $\prod_{i<0} \GL(d_i)$ on $\MM$, cf. \S\ref{sec:def of bow variety}.
    In fact, each wheel condition corresponds to a potentially non-zero entry of the moment map $\mu_<$, see  \cite[Prop. 2.10]{negut2022shuffle} for more details.
\end{remark}

\section{Proof of the main theorem}
\label{sec:proof}

\subsection{The case of a partial flag variety}

Let $X=X(\DD)$ be a bow variety with $n$ D5 branes and $m$ NS5 branes. Assume that all D5 charges of $\DD$ are equal to one, that is, $c_i=1$ for all $i=1,\dots, n$. By Proposition \ref{prop: charge one bows}, it follows that $X(\DD)$ is isomorphic to the cotangent bundle of a partial flag variety, i.e. to a type $A_m$ quiver variety. In this section, we prove the theorem in this specific setting.

\begin{proposition}
\label{prop: main theorem partial flag}
    Theorem \ref{thm:main} holds whenever $c_i=1$ for all $i=1,\dots, n$.
\end{proposition}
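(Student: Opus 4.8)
The plan is to reduce to the known theory of stable envelopes for type $A$ Nakajima quiver varieties. Since $c=\ul 1$, Proposition~\ref{prop: charge one bows} identifies $X(\DD)$ with the cotangent bundle $T^*\Fl(\dd,d_0)$ of a partial flag variety, i.e.\ with the type $A_m$ quiver variety $X_{A_m}(\dd,\ww)$ for $\ww=(0,\dots,0,d_0)$; here $n=d_0$ and each D5 brane carries a single tie. By the Lemma preceding Corollary~\ref{cor: main thm partial flag} the correction classes $\varepsilon^*_c$ and $\eu^*_{c,\sigma}$ are trivial, so the object to analyze collapses to
\[
\W^*_{\Chamb_\sigma}(f)=\tau^*_r\cdot\Wt^*_{\Chamb_\sigma}(f)\big|_{(c,\sigma)},
\qquad
\Wt^*_{\Chamb_\sigma}(f)=\prod_{<_\sigma}\Wt^*(\One_{kl}),
\]
a single ordered $\star$-product of one-tie functions, specialized by $t_{0i}=a_{\sigma(i)}$ and normalized by $\tau^*_r=e^*(\h\fg_{\NS5})^{-1}$.

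First I would record the single-strand base case. The bow variety $\TsP^{n-1}$ of \S~\ref{sec:Stab,TPn} already exhibits Theorem~\ref{thm:main} for $c=\ul 1$ with $d_{-1}=1$: the computation there verifies, by direct fixed-point restriction (Theorem~\ref{thm:RestrictionMaps}), that the relevant ordered product of one-tie functions is the stable envelope. The core of the argument is then to lift this from $\TsP^{n-1}$ to an arbitrary partial flag variety by the compatibility between the shuffle (framed Hall-algebra) multiplication $\star$ and stable envelopes of type $A$ quiver varieties established in \cite{botta2021shuffle,botta2023framed,BD} (cf.\ Proposition~\ref{prop: compatibility elliptic stab coha}): the geometric correspondence \eqref{eq: basic correspondence coha} realizing $\star$ acts on stable envelopes as parabolic induction, so an ordered $\star$-product of stable-envelope generators is again a stable envelope. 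Associativity of $\star$ then assembles $\Stab^*_{\Chamb_\sigma}(f)$ one tie at a time, the order $<_\sigma$ recording the chamber $\Chamb_\sigma$.

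A more self-contained route would instead verify the defining axioms (Theorems~\ref{thm: definition stab} and \ref{thm: definition elliptic stab}, and the K-theoretic analogue) directly for $\W^*_{\Chamb_\sigma}(f)$. The support and diagonal (normalization) axioms reduce to a fixed-point computation governed by Theorem~\ref{thm:RestrictionMaps}, while the degree/line-bundle axiom follows from the wheel conditions of Proposition~\ref{prop: wheel conditions ans shuffle product} together with a Newton-polytope (respectively N\'eron--Severi) count that is already visible on the one-tie factors and is preserved by $\star$. In this route $\tau^*_r$ reconciles the framed-shuffle-algebra normalization---where the principal $A_m$ quiver is quotiented by the full $\prod_{k<0}\GL(d_k)$---with the Euler-class normalization built into the stable envelope, and the substitution $|_{(c,\sigma)}$ fixes a consistent ordering of the specialization \eqref{eq:tplus subs} of the non-symmetric $t_{0i}$ variables.

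The main obstacle I expect is bookkeeping of conventions rather than geometry: aligning the non-symmetric treatment of the $t_{0i}$ variables and the ordered substitution $|_{(c,\sigma)}$ with the framing of the $A_m$ quiver, matching the dynamical shift operators $Z_d$ and $\tau'$ internal to $\star$ with the K\"ahler parameters $z_i$ of the stable envelope, and checking that the $<_\sigma$-ordered product really corresponds to the chamber $\Chamb_\sigma$ for \emph{every} $\sigma$ and not only $\sigma=\mathrm{id}$. Once these identifications are fixed, the three flavors $*\in\{H,K,E\}$ follow uniformly, the cohomological and K-theoretic cases descending from the elliptic one via the limits \eqref{eq: limit theta}.
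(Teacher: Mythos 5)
Your main route coincides with the paper's proof: reduce via Proposition~\ref{prop: charge one bows} to the type $A_m$ quiver variety $X(\dd,\ww)$, invoke the compatibility of the framed Hall multiplication with stable envelopes (Proposition~\ref{prop: NS stab and CoHA} in cohomology, Proposition~\ref{prop: compatibility elliptic stab coha} elliptically) together with the triangle Lemma~\ref{lma:triangle lemma}, assemble the stable envelope of a fixed point as the $<_\sigma$-ordered $\star$-product of one-tie classes, and obtain K-theory as a $q\to 0$ degeneration of the elliptic case.

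One step is misidentified, however: the base case. The atomic inputs of the induction are not $\TsP^{n-1}$ --- whose fixed points $f_k$ are already $n$-fold $\star$-products of one-tie pieces --- but the zero-dimensional bow varieties $X(\dd^{(k)},\delta_m)$ with one-dimensional framing, one per D5 brane. The mechanism of Proposition~\ref{prop: NS stab and CoHA} multiplies classes on the stacks $\FM(\dd,\ww)$, i.e.\ images under the non-abelian stable envelope $\Psi$, so what the induction actually requires is the identity $\Psi_{\dd^{(k)},\delta_m}(1)=\hbar^{i_k-1}=\Wt(\One_{i_k,k})$, which the paper extracts from the definition of $\Psi$ (and, in K-theory, from the elliptic case by degeneration). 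Knowing $\Stab$ for $\TsP^{n-1}$ by direct fixed-point restriction is an \emph{output} of the assembly, not an input: it does not determine the $\Psi$-images of the atomic factors, and there is no Hall-algebra operation building a general partial flag variety out of $\TsP^{n-1}$'s, since the product combines framings and the framing $(0,\dots,0,d_0)$ decomposes into $d_0$ copies of $\delta_m$. Relatedly, the appearance of $\tau^*_r=e^*(\hbar\fg_{\NS5})^{-1}$, which you attribute to convention bookkeeping, is precisely the normalization $\iota^{\oast}\circ\Psi_{\dd,\ww}=e(\hbar\gl(\dd))\cdot$ of the non-abelian stable envelope. Finally, your secondary ``self-contained'' route is not substantiated as stated: the wheel conditions encode the moment-map equations and do not by themselves yield the degree/line-bundle axiom, whose preservation under $\star$ is exactly the nontrivial point that the $\Psi$-intertwining argument (and, elliptically, the line-bundle matching of Proposition~\ref{prop: compatibility elliptic stab coha}) is designed to circumvent; since you offer it only as an alternative, this does not affect your main plan once the base case is repaired.
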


In fact, the theorem in this special case is already known \cite{Rimanyi_2019full, botta2021shuffle, botta2023framed}. Yet, we briefly outline a conceptual proof here. For simplicity, we first focus on the cohomological case and then discuss what changes in K-theory and elliptic cohomology.

\begin{proof}[Proof in cohomology]
    Consider the quiver variety description of $X$. Then $X=X(\dd,\ww)$ can be written as 
\[
X=\mu_{\dd, \ww}^{-1}(0)^{ss}/\GL(\dd),
\]
where $Q=A_m$, $\ww=(0,\dots, 0, n)$, and $\mu_{\dd, \ww}^{-1}: \Rep_{\overline Q}(\dd, \ww)\to \gl(\dd)$ is the moment map. As a consequence, we have open and closed embeddings
\begin{equation}
\label{eq: quiver rep stable vs unstable}
    X(\dd, \ww)=\mu_{\dd, \ww}^{-1}(0)^{ss}/\GL(\dd)\hookrightarrow [\mu_{\dd, \ww}^{-1}(0)/\GL(\dd)]\hookrightarrow [\Rep_{\overline Q}(\dd, \ww)/\GL(\dd)]=: \FM(\dd,\ww)
\end{equation}
To produce an explicit tautological representative of the elliptic stable envelope, one can exploit the \emph{non-abelian stable envelope} map \cite{Aganagic:2017gsx, okounkov2020inductiveII, botta2023framed}. This map provides a canonical extension of a cohomology class on $X(\dd,\ww)=\mu_{\dd, \ww}^{-1}(0)^{ss}/\GL(\dd)$ to a class on $ \FM(\dd, \ww)$ (with support on $\mu_{\dd, \ww}^{-1}(0)/\GL(\dd)$). Formally, the nonabelian stable envelope is a canonical morphism 
\[
\Psi_{\dd,\ww}: H_{T_{\ww}}(X(\dd,\ww))\to  H_{T_{\ww}}(\FM(\dd,\ww))
\]
satisfying $\iota^{\oast}\circ \Psi_{\dd,\ww}=e(\hbar \gl(\dd))\cdot$, where the map $i$ is the composition \eqref{eq: quiver rep stable vs unstable}. Notice that, in the current notation, the class $e(\hbar \gl(\dd))$ is nothing but the class $\varepsilon_r^H$ introduced in \S~\ref{subsec: the W function}.

For us, the relevance of the non-abelian stable envelope lies in the following proposition.
\begin{proposition}[\cite{botta2023framed, BD}]
\label{prop: NS stab and CoHA}
 Consider a decomposition $\ww=\ww'+\ww''$ and the corresponding two dimensional torus $\Cs_{a_1}\times \Cs_{a_2}\subset A_{\ww}$ acting with $a_1$ on $\C^{\ww'}$ and $a_2$ on $\C^{\ww''}$. This torus admits two chambers $\{a'<a''\}$ and $\{a''<a'\}$.
    Then the diagram 
    \[
    \begin{tikzcd}
        H_{T_{\ww'}}(X(\dd',\ww'))\otimes_{\Bbbk^H} H_{T_{\ww''}}(X(\dd'',\ww''))\arrow[rr, "\Stab_{\{a_1<a_2\}}"]\arrow[d, swap, "\Psi_{\dd',\ww'}\otimes \Psi_{\dd',\ww'}"] & &H_{T_{\ww}}(X(\dd,\ww))\arrow[d, "\Psi_{\dd,\ww}"] \\
         H_{T_{\ww'}}(\FM(\dd',\ww'))\otimes_{\Bbbk^H} H_{T_{\ww''}}(\FM(\dd'',\ww''))\arrow[rr, "\vmult"] & & H_{T_{\ww}}(\FM(\dd,\ww))
    \end{tikzcd}
    \]
    is commutative. The analog statement for $\Stab_{\{a>a''\}}$ is obtained by replacing $\vmult$ with $\vmult^{\text{op}}$.
\end{proposition}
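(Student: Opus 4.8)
The plan is to prove the square after applying the restriction $\iota^{\oast}$, exploiting the defining property $\iota^{\oast}\circ\Psi_{\dd,\ww}=e(\hbar\gl(\dd))\cdot$. Since $e(\hbar\gl(\dd))$ is a product of classes of the form $\hbar+(\text{root})$, it is a nonzerodivisor after inverting $\hbar$ and the equivariant parameters, so multiplication by it is injective and it suffices to check commutativity of the localized square obtained by post-composing both routes with $\iota^{\oast}$. I work in cohomology, as in the statement; the K-theoretic and elliptic analogues, needed for the other flavors of Theorem~\ref{thm:main}, follow verbatim upon replacing $e$ by $\hata$ and by $\th$ respectively, together with the dynamical shift. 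Along the left–bottom route this post-composition is immediate: $\iota^{\oast}\circ\Psi_{\dd,\ww}\circ\Stab_{\{a_1<a_2\}}=e(\hbar\gl(\dd))\cdot\Stab_{\{a_1<a_2\}}$.

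The real work is to compute $\iota^{\oast}\circ\vmult$ along the top–right route. Recall $\vmult=p_{\oast}q^{\oast}\cdot e(\hbar\lien)$ built from the parabolic induction correspondence \eqref{eq: basic correspondence coha}. I would apply base change along the open–closed embedding \eqref{eq: quiver rep stable vs unstable}: writing $p^{ss}\colon [Z^{ss}/P]\to X(\dd,\ww)$ and $q^{ss}$ for the restrictions of $p,q$ over the semistable locus, one has $\iota^{\oast}p_{\oast}=(p^{ss})_{\oast}\,\iota_Z^{\oast}$, while the product normalization of $\Psi_{\dd',\ww'}\otimes\Psi_{\dd'',\ww''}$ gives $\iota_Z^{\oast}q^{\oast}(\Psi\alpha\otimes\Psi\beta)=e(\hbar\gl(\dd'))\,e(\hbar\gl(\dd''))\cdot(q^{ss})^{\oast}(\alpha\otimes\beta)$. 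Thus the top–right route restricts to the attracting-type convolution $(p^{ss})_{\oast}(q^{ss})^{\oast}$ on $X(\dd,\ww)$, twisted by $e(\hbar\gl(\dd'))\,e(\hbar\gl(\dd''))\,e(\hbar\lien)$ and the contribution of $\iota_Z^{\oast}e(\hbar\lien)$.

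The geometric heart is the identification of the two correspondences. The attracting set for $\{a_1<a_2\}$ inside $X(\dd,\ww)=\mu_{\dd,\ww}^{-1}(0)^{ss}/\GL(\dd)$ is precisely the locus of semistable representations admitting a subrepresentation on $\C^{\dd'}\subset\C^{\dd}$ compatible with $\C^{\ww'}\subset\C^{\ww}$, i.e. the non-negative-weight locus $Z^{ss}$ modulo $P$; hence $[Z^{ss}/P]\cong\Attr_{\{a_1<a_2\}}\bigl(X(\dd',\ww')\times X(\dd'',\ww'')\bigr)$, so that $(p^{ss})_{\oast}(q^{ss})^{\oast}$ is exactly the convolution by the stable envelope cycle. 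It then remains to reorganize the Euler factors using the vector-space decomposition $\gl(\dd)=\gl(\dd')\oplus\gl(\dd'')\oplus\lien\oplus\lien^{-}$, whence $e(\hbar\gl(\dd))=e(\hbar\gl(\dd'))\,e(\hbar\gl(\dd''))\,e(\hbar\lien)\,e(\hbar\lien^{-})$: the factor $e(\hbar\lien)$ is the built-in twist of \eqref{eq: mult coha}, and the complementary $e(\hbar\lien^{-})$ is the Euler class of the repelling normal directions carried by the diagonal axiom of Theorem~\ref{thm: definition stab}. Matching these against the stable-envelope normalization shows both routes equal $e(\hbar\gl(\dd))\cdot\Stab_{\{a_1<a_2\}}$, and injectivity closes the argument; the $\{a''<a'\}$ case follows by exchanging $\lien\leftrightarrow\lien^{-}$, i.e. $\vmult\leftrightarrow\vmult^{\op}$.

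The step I expect to be the main obstacle is the interplay between GIT stability and the attracting condition: one must verify that restricting $[Z/P]$ to the semistable locus yields exactly the attracting Lagrangian with no spurious components, that the base-change/properness statements hold (the map $p$ becomes proper over $X$ only after this restriction), and that the polarization half-densities and the residual $e(\hbar\lien^{-})$ reassemble on the nose rather than up to an ambiguous scalar. This is where the input of \cite{botta2023framed, BD} is genuinely used; granting it, associativity of convolution and the elementary Euler-class identity above give the proposition.
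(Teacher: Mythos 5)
Your reduction step is invalid, and it fails exactly where the proposition has its content. The map $\iota^{\oast}:H_{T_{\ww}}(\FM(\dd,\ww))\to H_{T_{\ww}}(X(\dd,\ww))$ is a Kirwan-type restriction from the quotient stack to the GIT quotient; it has a large kernel (any class supported on the unstable locus restricts to zero), so verifying the square after post-composing with $\iota^{\oast}$ only proves that the two routes agree \emph{after restriction to} $X(\dd,\ww)$, not as classes in $H_{T_{\ww}}(\FM(\dd,\ww))$, which is what the proposition asserts. The injectivity you invoke---of multiplication by $e(\hbar\gl(\dd))$ after localization---concerns a different map and does not repair this. Relatedly, the single identity $\iota^{\oast}\circ\Psi_{\dd,\ww}=e(\hbar\gl(\dd))\cdot$ does \emph{not} characterize the nonabelian stable envelope: $\Psi$ is pinned down only by additional conditions (support on $[\mu_{\dd,\ww}^{-1}(0)/\GL(\dd)]$ together with degree bounds in the equivariant parameters), and any argument along your lines must verify that $\vmult\circ(\Psi_{\dd',\ww'}\otimes\Psi_{\dd'',\ww''})$ satisfies those characterizing conditions. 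That verification is precisely what is missing.

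The geometric identification you assert in the middle step is also not a formality. The stable envelope is not the fundamental class of the attracting cycle---it carries correction terms on lower strata forced by the degree axiom (iii) of Theorem~\ref{thm: definition stab}---and on the correspondence side, limits of flag-preserving semistable points may leave the semistable locus while $q$ need not send semistable $(\dd,\ww)$-representations to semistable pairs, so base change over the open embedding \eqref{eq: quiver rep stable vs unstable} does not directly yield the attracting correspondence with the Euler factors reassembling ``on the nose.'' You flag this as the main obstacle and defer it to \cite{botta2023framed, BD}, but that step \emph{is} the theorem: granting it, nothing substantive remains. The actual proof in the cited work (and its elliptic adaptation sketched in the paper's proof of Proposition~\ref{prop: compatibility elliptic stab coha}) takes a different route entirely: it first reduces to the abelian case via abelianization of stable envelopes \cite{Shenfeld}, then establishes the identity for hypertoric varieties by explicit computation; it is this reduction that supplies the degree control and support control your direct approach lacks. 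As written, your proposal simultaneously proves a strictly weaker statement (equality after $\iota^{\oast}$) and assumes the crux.
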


We are now ready to produce tautological presentations of the stable envelopes $\Stab_{\Chamb}(f)$ for the maximal torus $T_{\ww}$ acting on $X(\dd,\ww)$. With respect to the torus action, fixed points of the fixed points are of the form
\[
f=X(\dd^{(1)}, \delta_m)\times \dots \times X(\dd^{(n)}, \delta_m),
\]
where\footnote{Notice that all $X(\dd^{(k)}, \delta_n$ are zero dimensional.}
\[
\delta_m=(0, \dots, 0, 1)
\qquad 
\dd^{(k)}=(0,\dots, 0, 1, \dots, 1).
\]
In the language of \S~\ref{sec:fixed points}, we can equivalently write $f=\One_{i_11}\cup \One_{i_2 2}\cup\dots \cup \One_{i_n n}$ where $i_k-1$ is the number of $1$s in $\dd^{(k)}=(0,\dots, 0, 1, \dots, 1)$.

Consider the composition 
\[
H_{T_{\ww}}(X(\dd,\ww)^{T_{\ww}})\xrightarrow{\Stab_{\Chamb}} H_{T_{\ww}}(X(\dd,\ww)^{T_{\ww}})\xrightarrow{\Psi_{\dd,\ww}} H_{T_{\ww}}(\FM(\dd,\ww)).
\]
and let $Y_{\Chamb}(f)\in H_{T_{\ww}}(\FM(\dd,\ww))$ be the image of the the fundamental class of $f$. Combining Proposition \ref{prop: NS stab and CoHA}, associativity of stable envelopes \ref{lma:triangle lemma}, and associativity of the CoHA multiplication, it follows that 
\[
Y_{\Chamb}(f)= \Psi_{\dd^{(\sigma(1))}, \delta_m}(1)\star \dots \star\Psi_{\dd^{(\sigma(n))}, \delta_m}(1) .
\]
Therefore, to deduce Proposition \ref{prop: main theorem partial flag}, it suffices to show that 
\[
\Psi_{\dd^{(k)}, \delta_m}(1)=\hbar^{i_k-1}=\Wt(\One_{i_k, k}).
\]
But this easily follows from the definition of $\Psi$, cf. \cite{okounkov2020inductiveII, botta2023framed}.
\end{proof}

\begin{proof}[Proof in K-theory]
    In K-theory, the proof is essentially same. In particular, the K-theoretic analog of Proposition \ref{prop: NS stab and CoHA} replaces cohomological stable envelopes with K-theoretic stable envelopes and the cohomological shuffle algebra with the K-theoretic shuffle algebra. Notice that although Proposition \ref{prop: NS stab and CoHA} is proved in \cite{botta2023framed, BD} in cohomology, the argument of \cite{botta2023framed} transfers to K-theory without modifications. The only difference in K-theory is that the K-theoretic weight functions $\Wt^K(\One_{i_k, k})$ are non-constant rational functions, cf. \ref{sec:Wt}. Rather than showing that 
    \[
    \Psi^K_{\dd^{(k)}, \delta_m}(1)=\Wt^K(\One_{i_k, k}),
    \]
    we directly move to the elliptic setting and show the analog statement there. The equation above will then follow by degenerating from elliptic cohomology to K-theory.
\end{proof}

\noindent\emph{Proof in elliptic cohomology.}
The elliptic proof is also similar but deserves further attention because of the shifts of the dynamical parameters. We first review the definition of elliptic nonabelian stable envelope following \cite{okounkov2020inductiveII}. Consider the morphisms
\[
\begin{tikzcd}
    \Ell_{T_{\ww}\times Z}(X(\dd,\ww))\arrow[rr, "\hbox{\j}"]\arrow[dr, swap, "p_1"] & & \Ell_{T_{\ww}\times Z}(\FM(\dd,\ww))\arrow[dl, "p_2"]\\
     & \Ell_{T_{\ww}\times Z}(\pt)&
\end{tikzcd}
\]
induced by \eqref{eq: quiver rep stable vs unstable}. The line bundle $\Sh{L}_{X(\dd,\ww)}$ on $\Ell_{T_{\ww}}(X(\dd,\ww))$—see \eqref{eq: line bundles stab bow} for the definition— naturally extends to $\Ell_{T_{\ww}}(\FM(\dd,\ww))$, in the sense that there exists a line bundle $\Sh{L}^0_{\FM(\dd,\ww)}$ such that $\hbox{\j}^*\Sh{L}_{\FM(\dd,\ww)}=\Sh{L}_{X(\dd,\ww)}$. In fact, this extension can be explicitly described since $\Sh{L}_{X(\dd,\ww)}$ is defined in terms of tautological line bundles on $X$ that are pulled back from $\FM(\dd,\ww)$ along \eqref{eq: quiver rep stable vs unstable}. 

Set $\Sh{L}_{\FM(\dd,\ww)}=\Sh{L}^0_{\FM(\dd,\ww)}\otimes \th(\hbar\gl(\dd))$. The elliptic non-abelian stable envelope is a canonical morphism 
\[
\Psi_{\dd,\ww}: (p_1)_*\Sh{L}^{\infty}_{X(\dd,\ww)}\xrightarrow{\Psi_{\dd,\ww}} (p_2)_*(\Sh{L}^{\infty}_{\FM(\dd,\ww)})
\]
satisfying $i^{\oast} \circ \Psi_{\dd,\ww}=\th(\hbar \gl(\dd))$. The notation $\Sh{L}^{\infty}$ is a shortcut for $\Sh{L}(\infty\Delta)$, cf. \S~\ref{sec: E stab}. 

To state the analog of Proposition \ref{prop: NS stab and CoHA}, it is convenient to define the following twisted tensor product: given two quasicoherent sheaves $\Sh{F'}$ (resp. $\Sh{F''}$) on $\Ell_{T_{\ww'}}$ (resp $\Ell_{T_{\ww';}}$), we define 
\[
\Sh{F'}\odot \Sh{F''}:= \left(\Sh{F'}\boxtimes (\tau')^* \Sh{F''}\right)\otimes \Sh{G}_{\dd',\dd'', \ww',\ww''}
\]
where
\begin{itemize}
    \item the exterior tensor product is on 
    \begin{equation*}
        \Ell_{T_{\ww'}\times Z}\times_{\Ell_{{\Cs_{\hbar}}\times Z}}\Ell_{T_{\ww''}\times Z}= \Ell_{T_{\ww}\times Z}
    \end{equation*}
    \item $\Sh{G}_{\dd,\dd'',\ww'\ww''}$ is a line bundle on $\Ell_{T_{\ww}\times Z}$, uniquely determined by Prop. \ref{prop: compatibility elliptic stab coha} below. 
    \item $\tau'$ is the shift operator introduced in \eqref{eq: dynamical shift operator}.
\end{itemize}

With this notation, the analog of Proposition \ref{prop: NS stab and CoHA} is the following:
\begin{proposition}
\label{prop: compatibility elliptic stab coha}
Consider the same set-up of Proposition \ref{prop: NS stab and CoHA}. Let $\vmult^0$ be the morphism from equation \eqref{eq: Eha mult sheaf}. Then there exists a unique line line bundle $\Sh{G}_{\dd,\dd'',\ww'\ww''}$ making the maps in the diagram
    \[
    \begin{tikzcd}
    (p'_1)_*\Sh{L}^{\infty}_{X(\dd',\ww')}\odot (p''_1)_*\Sh{L}^{\infty}_{X(\dd'',\ww'')} \arrow[d, swap, "\Psi_{\dd',\ww'}\otimes\Psi_{\dd'',\ww''}"] \arrow[rr, "\Stab_{\{a_1<a_2\}}"]& & (p_1)_*\Sh{L}^{\infty}_{X(\dd',\ww')}\arrow[d, "\Phi_{\dd,\ww}"]
    \\
    (p'_2)_*\Sh{L}^{\infty}_{\FM(\dd',\ww')}\odot (p''_2)_*\Sh{L}^{\infty}_{\FM(\dd'',\ww'')}\arrow[rr, "\vmult^0"] &  &(p_2)_*\Sh{L}^{\infty}_{\FM(\dd,\ww)}
    \end{tikzcd}
    \]
    well defined. Moreover, for this choice of $\Sh{G}_{\dd,\dd'',\ww'\ww''}$ the diagram is commutative.  The analog statement for $\Stab_{\{a>a''\}}$ is obtained by replacing $\vmult^0$ with $(\vmult^0)^{\text{op}}$.
\end{proposition}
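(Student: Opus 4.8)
The plan is to transport the cohomological argument behind Proposition~\ref{prop: NS stab and CoHA} (proved in \cite{botta2023framed, BD}) to the elliptic setting, upgrading fundamental classes to morphisms of line bundles and propagating the dynamical shift $\tau'$ through the whole construction. The statement packages two logically separate claims, and I would treat them in turn: first, pin down the twisting line bundle $\Sh{G}_{\dd',\dd'',\ww',\ww''}$ that turns all four arrows of the square into morphisms of coherent sheaves with matching sources and targets (the \emph{well-definedness} assertion); second, prove that with this choice the square \emph{commutes}.

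First I would establish well-definedness by bookkeeping the line bundles underlying the four arrows, each a morphism between explicit sheaves on the base $\Ell_{T_\ww\times Z}(\pt)$. The vertical nonabelian envelopes $\Psi$ carry the twist $\th(\hbar\gl)$ dictated by the normalization $i^{\oast}\circ\Psi_{\dd,\ww}=\th(\hbar\gl(\dd))$; the bottom map $\vmult^0$ carries the factor $\th(\FN)$ coming from the virtual normal bundle of the parabolic-induction correspondence \eqref{fundamental correspondence stacks} together with $\th(\hbar\lien)$; and the top map $\Stab_{\{a_1<a_2\}}$ carries the twist assembled from $\Sh{L}_X$ and $\th(-N^-)$ as in \S\ref{sec: E stab}. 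Comparing the two routes around the square, their underlying line bundles differ by a single line bundle on the base that is independent of the tautological classes, and I would simply \emph{define} $\Sh{G}_{\dd',\dd'',\ww',\ww''}$ to be this difference; its uniqueness is then automatic. The delicate, genuinely new feature relative to cohomology and K-theory is the dynamical shift: the twisted tensor product $\odot$ already inserts $(\tau')^{\ast}$ with $\tau'(f(z))=f(z-\hbar\mu(\dd',\ww'))$ and $\mu(\dd',\ww')=\ww'-C_Q\dd'$ (cf. \eqref{eq: dynamical shift operator}), and I must check that this shift exactly reproduces the $z$-dependence that $\Sh{L}_X$ acquires under the splitting $\dd=\dd'+\dd''$, using the explicit form \eqref{eq: line bundles stab bow}.

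For commutativity I would reduce everything to the honest stable envelope on $X(\dd,\ww)$ via restriction to the stable locus. Write $A$ and $B$ for the top-right and bottom-left compositions; both are morphisms into $(p_2)_\ast\Sh{L}^\infty_{\FM(\dd,\ww)}$. Restricting along $i^{\oast}$ (the open embedding $X(\dd,\ww)\hookrightarrow\FM(\dd,\ww)$ from \eqref{eq: quiver rep stable vs unstable}) and using $i^{\oast}\circ\Psi_{\dd,\ww}=\th(\hbar\gl(\dd))$, the composition $i^{\oast}\circ A$ equals $\th(\hbar\gl(\dd))\cdot\Stab_{\{a_1<a_2\}}$. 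On the other route, restricting $\vmult^0$ to the stable locus degenerates the parabolic-induction correspondence $[Z/P]$ into the attracting-set correspondence for the chamber $\{a_1<a_2\}$, so $i^{\oast}\circ B$ computes the multiplication of the two restricted envelopes, which by the elliptic analogue of the triangle Lemma~\ref{lma:triangle lemma} again equals $\th(\hbar\gl(\dd))\cdot\Stab_{\{a_1<a_2\}}$. Hence $i^{\oast}\circ A=i^{\oast}\circ B$.

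Finally I would upgrade this equality of restrictions to equality of the full morphisms using the rigidity of the elliptic nonabelian stable envelope: by \cite{okounkov2020inductiveII} a morphism lifting a class from $X(\dd,\ww)$ whose image is supported on $\mu_{\dd,\ww}^{-1}(0)/\GL(\dd)$ is determined by its restriction to the stable locus. It therefore suffices to verify that both $A$ and $B$ land in this supported subsheaf: for $A$ this is built into the right-hand arrow $\Psi_{\dd,\ww}$, while for $B$ it follows because $\Psi_{\dd',\ww'}$ and $\Psi_{\dd'',\ww''}$ have the correct support and parabolic induction $\vmult^0$ preserves it (the correspondence $[Z/P]\to\FM(\dd,\ww)$ maps into the locus where the preserved-subspace relations defining $\mu^{-1}(0)$ hold). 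Matching restriction together with matching support then forces $A=B$. I expect the main obstacle to lie in the first task rather than in this formal conclusion: pinning down $\Sh{G}_{\dd',\dd'',\ww',\ww''}$ and confirming that the dynamical shift $\tau'$ is compensated \emph{on the nose} is precisely the place where the elliptic theory departs from its cohomological and K-theoretic shadows.
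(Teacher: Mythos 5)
Your treatment of the well-definedness half is close in spirit to what the paper actually does: the paper also isolates the line-bundle bookkeeping (including the dynamical shift $\tau'$) as the genuinely new elliptic feature, and delegates the verification that the twists of $\Stab$, $\Psi$, $\th(\FN)$, $\th(\hbar\lien)$ and the $\Sh{U}$-factors differ by a line bundle pulled back from the base to \cite[\S~5.12]{BR23} and the appendix of \cite{botta2021shuffle}. Note, though, that ``define $\Sh{G}$ to be the difference'' is only legitimate once one has checked that this difference is constant in the tautological (Chern-root) directions; that check is the content of the cited computations, not a formality.

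The commutativity half has a genuine gap, in two places. First, your key step --- that restricting $\vmult^0$ along $i^{\oast}$ ``degenerates the parabolic-induction correspondence $[Z/P]$ into the attracting-set correspondence'' --- is not a formal degeneration: semistability is not inherited between $Z$ and its sub/quotient representations, the pushforward through \eqref{fundamental correspondence stacks} picks up contributions from the unstable locus, and controlling these is precisely the content of Proposition \ref{prop: NS stab and CoHA}. In \cite{botta2023framed} this is proved not by degeneration but by reduction to the abelian case via abelianization \cite{Shenfeld} and an explicit hypertoric computation, and the paper's elliptic proof consists exactly of rerunning that argument with elliptic abelianization \cite{aganagic2016elliptic}; Lemma \ref{lma:triangle lemma} composes stable envelopes for nested tori and cannot substitute for this identification. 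Second, your final rigidity step --- ``matching restriction together with matching support forces $A=B$'' --- is unjustified as stated: the kernel of $i^{\oast}$ contains nonzero sections supported on the unstable locus, so a morphism into $(p_2)_*\Sh{L}^{\infty}_{\FM(\dd,\ww)}$ is not determined by its restriction to the stable locus plus support alone. The uniqueness of the elliptic nonabelian envelope in \cite{okounkov2020inductiveII} rests on its full characterization (in particular the precise line-bundle data), and verifying those conditions for the composite $B$ would send you back into the abelianization analysis you were trying to avoid. So your outline is an attractive alternative strategy, but as written it assumes the theorem at its crux rather than proving it.
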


\begin{proof}
    The proof of this proposition is a straightforward adaptation of the cohomological argument in \cite{botta2023framed}. In \emph{loc. cit.} the proof is first reduced to the abelian case via abelianization of cohomological stable envelopes \cite{Shenfeld}, and then proved for hypertoric varieties via an explicit computation. Carrying out the same argument via elliptic abelianization \cite{aganagic2016elliptic}, the proposition follows. The only extra piece of complexity in the elliptic setting is due to the well-defniteness of the horizontal morphism in the diagrams. For instance, one needs to show that the line bundles in the top horizontal morphism match those in \eqref{elliptic stable enveliopes map}. For partial flag varieties (and more generally for bow varieties), this is checked in \cite[\S~5.12]{BR23}. For general quiver varieties, this is checked in the appendix of \cite{botta2021shuffle}.
\end{proof}

\subsection{The general case}

In this section we deduce the proof of Theorem \ref{thm:main} from the special case of the cotangent bundle of a partial flag variety. In view of Proposition \ref{prop: main theorem partial flag}, Lemma \ref{lemma: D5 res bundles pullback}, and Theorem \ref{thm:main} is reduced to the following:
\begin{proposition}
    Assume that Theorem \ref{thm:main} holds whenever $c_i=1$ for all $i=1,\dots, m$. Then it holds for an arbitrary bow variety $X$. 
\end{proposition}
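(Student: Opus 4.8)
The plan is to reduce the general case to the assumed partial-flag case by realizing $X$ inside its maximal D5 resolution and transporting the formula through the fusion theorem. Let $\wt X$ be the maximal resolution of $X=\Ch(\DD)$ from Corollary \ref{corollary any bow can be embedded in a flag variety}, so that all D5 charges of $\wt\DD$ equal one and $\wt X$ is the cotangent bundle of a partial flag variety. Write $j\colon X\hookrightarrow \wt X$, $\varphi\colon \Tt\to\wt\Tt$, and let $\tilde f_\sharp$ be the distinguished resolution of the fixed point $f$, together with the chamber $\Chamb_{\wt\sigma}$ of Corollary \ref{corollary nicest one-term D5 resolution full}. Applying the standing assumption to $\wt X$ (where $c=\ul 1$) and invoking Corollary \ref{cor: main thm partial flag}, the stable envelope $\Stab^{\wt X}_{\Chamb_{\wt\sigma}}(\tilde f_\sharp)$ is represented by $\W^{\wt X}_{\Chamb_{\wt\sigma}}(\tilde f_\sharp)=\tau^{\wt X}_r\cdot \Wt^{\wt X}_{\Chamb_{\wt\sigma}}(\tilde f_\sharp)|_{(\ul 1,\wt\sigma)}$. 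I would then feed this into the fusion identity of Corollary \ref{corollary nicest one-term D5 resolution full}.

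The next step is to simplify the two sides. The prefactor $\prod_{k>0}\prod_{j=1}^{c_k-1}\prod_{i=1}^{j} e^*(\h^i)$ appearing in the fusion formula is exactly $(\varepsilon^*_c)^{-1}$, so after rearranging the fusion identity reads $\Stab^X_{\Chamb}(f)=\varepsilon^*_c\, e^*(N^-_j)^{-1}\, j^{\oast}\varphi^{\oast}\W^{\wt X}_{\Chamb_{\wt\sigma}}(\tilde f_\sharp)$. Comparing with the definition $\W^X_{\Chamb}(f)=\varepsilon^*_c\cdot\tau^X_r\cdot\eu^*_{c,\sigma}\cdot\Wt^X_{\Chamb}(f)|_{(c,\sigma)}$, the claim $\Stab^X_{\Chamb}(f)=\W^X_{\Chamb}(f)$ becomes the purely algebraic identity
\[
j^{\oast}\varphi^{\oast}\!\left(\Wt^{\wt X}_{\Chamb_{\wt\sigma}}(\tilde f_\sharp)\big|_{(\ul 1,\wt\sigma)}\right)=e^*(N^-_j)\cdot \eu^*_{c,\sigma}\cdot \Wt^X_{\Chamb}(f)\big|_{(c,\sigma)}.
\]
Here I would use two routine facts. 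First, $\tau_r$ involves only the Chern roots $t_{ki}$ with $k<0$ and the class $\fg_{\NS5}$, none of which is altered by a D5 resolution, so by Lemma \ref{lemma: D5 res bundles pullback} ($j^*\tilde\xi_i=\xi_i$) the pullback $j^{\oast}$ acts as the identity on these surviving roots and $j^{\oast}\varphi^{\oast}\tau^{\wt X}_r=\tau^X_r$. Second, by Remark \ref{rem: specialization maximal resolution} the specialization $\varphi^{\oast}$ collapses the resolving parameters $(a^{(1)}_k,\dots,a^{(c_k)}_k)$ to $(a_k\h^{-c_k+1},\dots,a_k)$, so that $\varphi^{\oast}\circ(\,\cdot\,|_{(\ul 1,\wt\sigma)})$ and $(\,\cdot\,|_{(c,\sigma)})$ agree on the $t_{0i}$-substitution.

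It remains to prove the displayed identity, which is the heart of the argument. Both $\Wt^{\wt X}_{\Chamb_{\wt\sigma}}(\tilde f_\sharp)$ and $\Wt^X_{\Chamb}(f)$ are $\star$-products, in the same shuffle algebra $\Ht^*$, of the \emph{same} one-tie functions $\Wt^*(\One_{k\bullet})$ --- which depend only on the NS5-index $k$ and not on the D5-brane --- the only difference being the total order ($<_{\wt\sigma}$ versus $<_\sigma$) in which the factors are multiplied. The choice of $\Chamb_{\wt\sigma}$ guarantees that ties attached to different original D5 branes occur in the same relative order, so the two products differ only through the regrouping and internal reordering of the factors inside each block of ties attached to a single original brane $\Ab_k$. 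I would analyze one such block at a time, tracking how the shuffle kernels $\phi^*$ and the $z$-shift operators $Z_d$ between its factors behave under $\varphi^{\oast}$: as the resolving parameters of $\Ab_k$ collide to $\h$-multiples of a single $a_k$, these kernel factors degenerate into products of Euler classes $e^*(\h^i)$ together with the Euler classes of the $\Chamb$-negative directions among the ratios $a_i/a_j\,\h^s$.

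The main obstacle is precisely this bookkeeping: showing that the total residual factor produced by all blocks assembles exactly into $e^*(N^-_j)\cdot\eu^*_{c,\sigma}$, with nothing left over. I expect this to follow by identifying the collided kernel with the $\Chamb$-negative part of $T'_{\D5}(c)$ (producing $\eu^*_{c,\sigma}$, cf. the explicit computation of $T'_{\D5}(c)$ in \S\ref{subsec: the W function}) and the $e^*(\h^i)$-contributions with the normal and fixed directions of $j$ (producing $e^*(N^-_j)$, the $\varepsilon^*_c$ part having already been accounted for). This verification is most safely organized as an induction on the excess charge $\sum_k(c_k-1)$ carried out via the single-brane fusion of Corollary \ref{corollary nicest one-term D5 resolution}, where exactly one block is resolved at each step and the degeneration of the relevant kernel factors can be made completely explicit.
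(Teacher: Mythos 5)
Your overall route is the paper's: maximal resolution, the fusion identity of Corollary~\ref{corollary nicest one-term D5 resolution full}, the compatibility $j^{\oast}\varphi^{\oast}\tau^{\wt X}_r=\tau^X_r$, and the $t_{0i}$-specialization compatibility of Remark~\ref{rem: specialization maximal resolution}; your displayed reduction is even a true identity. But the step you call the heart of the argument rests on a misconception about where the content lies. The two shuffle products do not merely agree up to ``regrouping and internal reordering'' of blocks that must then be matched through a degeneration: they are \emph{identically equal as functions}, $\Wt_{\Chamb_{\wt\sigma}}(\tilde f_\sharp)=\Wt_{\Chamb_\sigma}(f)$, before any specialization --- this is equation~\eqref{eq: proof main thm2} of the paper. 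The one-tie factors depend only on the NS5 index; a D5 resolution leaves the left dimension vector, hence the kernels $\phi^*$ and the $z$-shifts, untouched; and the chamber $\Chamb_{\wt\sigma}$ is engineered precisely so that the $<_{\wt\sigma}$-order on the ties of $\tilde f_\sharp$ reproduces the $<_\sigma$-order on the ties of $f$, \emph{including} the internal order within each block. Moreover, the $\Wt$-functions contain no $a$-variables at all: the resolving parameters $a_k^{(i)}$ enter only through the substitution $|_{(\ul 1,\wt\sigma)}$ into the $t_{0i}$'s, and $\varphi^{\oast}\bigl(\,\cdot\,|_{(\ul 1,\wt\sigma)}\bigr)=\,\cdot\,|_{(c,\sigma)}$ exactly (equation~\eqref{eq: proof main thm3}). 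So under $\varphi^{\oast}$ nothing in the kernels ``collides'' or ``degenerates into Euler classes''; there are no residual factors for your block-by-block bookkeeping to assemble.

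Consequently the left-hand side of your key identity equals $\Wt^X_{\Chamb}(f)|_{(c,\sigma)}$ on the nose, and your identity is true only because the factor $e^*(N_j^-)\cdot \eu^*_{c,\sigma}$ is identically $1$. Proving that, i.e. $(e^*(N_j^-))^{-1}=\eu^*_{c,\sigma}$, is the one ingredient your proposal never supplies, and it is geometric rather than combinatorial: the paper computes, using \eqref{eq:tangent bundle}, Lemma~\ref{lma: D5 part tangent is trivial for flags} (the $\D5$-part of the tangent bundle vanishes for the charge-one resolution) and $j^*\bigl(T_{\NS5}\wt X(d^-)\bigr)=T_{\NS5}X(d^-)$, that $N_j=-T_{\D5}X(d^+)$, whereupon taking repelling parts gives $(e^*(N_j^-))^{-1}=e_-(T'_{\D5}(c))=\eu^*_{c,\sigma}$. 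If you carried out your proposed induction on the excess charge via Corollary~\ref{corollary nicest one-term D5 resolution} honestly, you would find all kernel contributions canceling exactly and would then stall at precisely this normal-bundle identity; as planned, the bookkeeping searches for spurious residual factors that do not exist.
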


\begin{proof}

Let $\wt X$ be the maximal resolution of $X$ in the sense of Corollary \ref{corollary any bow can be embedded in a flag variety}.  Notice that $X$ and $\wt X$ share the same NS5 charge vector $r$. On the other hand, the charge vector of $\wt X$ is the unit vector $\ul 1=(1,1,\dots)$. Then $\wt X$ is the cotangent bundle of the full flag variety and the proposition holds by the assumption for $\wt X$. Fix an arbitrary chamber $\Chamb_\sigma$ for $X$ and a fixed point $f=\cup_{kl} \One_{kl}$. Let $\tilde f_{\sharp}\in \wt X^{\wt \At}$ be the distinguished resolution of $f$. It $\tilde f_{\sharp}$ is of the form
\begin{multline*}
    \underbrace{\One_{k_1 1}\cup\dots\cup \One_{k_{c_1},c_1}}_{c_1-\text{ties}}\cup \underbrace{\One_{k_{c_1+1} c_1+1}\cup\dots\cup \One_{k_{c_1+c_2},c_1+c_2}}_{c_2-\text{ties}}\cup \dots
    \\
    \cup\underbrace{\One_{k_{c_1+\dots c_{n-1}+1}, c_1+\dots+c_{n-1}+1}\cup\dots\cup \One_{k_{c_1+\dots+c_n},c_1+\dots+c_n}}_{c_n-\text{ties}},
\end{multline*}
where each bracket corresponds to those ties of $\tilde f_{\sharp}$ resolving the ties $f$ connected to a given D5 brane. By Corollary \ref{corollary nicest one-term D5 resolution full} and Proposition \ref{prop: main theorem partial flag}, we can write 
\begin{equation}
   \label{eq: proof main thm0} \underbrace{\prod_{k>0}\prod_{j=1}^{c_k-1}\prod_{i=1}^j e^*(\h^i)}_{=(\varepsilon_c^*)^{-1}} e^*(N^-_j) \Stab^{X}_{\Chamb}(f)
    = j^{\oast}\varphi^{\oast}\Stab^{\wt X}_{\Chamb_{\wt \sigma}}(\tilde f_\sharp).
\end{equation}
On the other hand, by the inductive hypothesis and Corollary \ref{cor: main thm partial flag}, we have 
\begin{equation}
    \label{eq: proof main thm1}
    \Stab^{\wt X}_{\wt \Chamb_{\sigma}}(\tilde f_\sharp)\simeq W_{\wt \Chamb_{\sigma}}(\tilde f_\sharp)= \tau_{r}^* \cdot \Wt_{\Chamb_{\wt \sigma}}(\tilde f_\sharp)|_{(\ul 1, \wt \sigma)}
\end{equation}
where the symbol $\simeq$ indicates that the left hand side is represented by the right hand side in the sense of \S~\ref{sec: main section}.
Moreover, because of the specific choice of chamber $ \Chamb_{\wt \sigma}$ prescribed by Corollary~\ref{corollary nicest one-term D5 resolution full} we have 
\begin{multline*}
    \Wt_{\Chamb_{\wt \sigma}}(\tilde f_\sharp)=\prod_{i=0}^{c_{\sigma(1)}-1} \Wt(\One_{k_{c_{\sigma(1)}-i},c_{\sigma_1}-i})\ast \prod_{i=0}^{c_{\sigma(2)}-1} 
    \Wt(\One_{k_{c_{\sigma(1)}+c_{\sigma(2)}-i},c_{\sigma(1)}+c_{\sigma(2)}-i})\ast\dots
    \\
    \ast \prod_{i=0}^{c_n-1} \Wt(\One_{k_{c_{\sigma(1)}+\dots+ c_{\sigma(n)}-i}, c_{\sigma(1)}+\dots+c_{\sigma(n)}-i}).
\end{multline*}
But from Definition \ref{def: W tilde} and the equation above it follows that  
\begin{equation}
    \label{eq: proof main thm2}
    \Wt_{\Chamb_{\wt \sigma}}(\tilde f_\sharp)=\Wt_{\Chamb_{\sigma}}(f).
\end{equation}
Moreover, from Definition \ref{def:610} and Remark \ref{rem: specialization maximal resolution}, it follows that for any function $f(t_{0i})$ in the variables $\{t_{0i}\}_{i=1,\dots, d_0}$ we have 
\begin{equation}
    \label{eq: proof main thm3}
    \varphi^{\oast}\left(f(t_{0i})|_{(\ul 1, \tilde \sigma)}\right)=f(t_{0i})|_{(c,  \sigma)}.
\end{equation}
Therefore, combining \eqref{eq: proof main thm0}, \eqref{eq: proof main thm1}, \eqref{eq: proof main thm2} , and \eqref{eq: proof main thm3}, we get 
\[
\Stab^{X}_{\Chamb}(f)\simeq \varepsilon^*_c \cdot \tau^*_r \cdot (e^*(N_j^-))^{-1} \cdot \Wt_{\Chamb_{\sigma}}(f)|_{(c, \sigma)}.
\]
Thus, to complete the proof, it suffices to show that $(e^*(N_j^-))^{-1} =\eu^*_{c,\sigma}$. By \eqref{eq:tangent bundle}, Lemma \ref{lma: D5 part tangent is trivial for flags} and the fact that $j^*(T_{\NS5} \wt X(d^-))=T_{\NS5} X(d^-)$ we get 
\[
N_j= (T_{\NS5} X(d^-)+0)-(T_{\NS5} X(d^-)+T_{\D5} X(d^+))=-T_{\D5} X(d^+),
\]
so the sought after equality follows by taking the repelling part of the equation above. The proof follows.

\end{proof}

\section{Application: \texorpdfstring{$\th$}{theta} function identities from 3d mirror symmetry}
\label{sec:mirror}

Recall that torus fixed points on bow varieties can be encoded by tie diagrams or BCTs. 

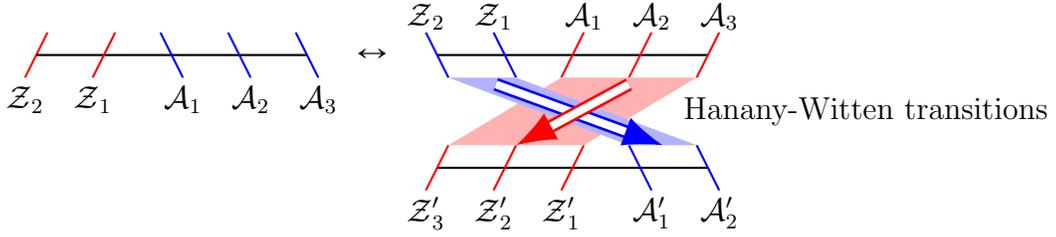
\begin{figure}
\[
\begin{tikzpicture}[scale=.3,baseline=6]
\draw [thick,red] (0.5,0) --(1.5,2); 
\draw[thick] (1,1)-- (13,1);
\draw [thick,red](3.5,0) --(4.5,2);  
\draw [thick,blue](7.5,0) -- (6.5,2);  
\draw[thick,blue] (10.5,0) -- (9.5,2);  
\draw [thick,blue](13.5,0) -- (12.5,2);   
\node at (.5,-1) {$\Zb_2$};
\node at (3.5,-1) {$\Zb_1$};
\node at (7.5,-1) {$\Ab_1$};
\node at (10.5,-1) {$\Ab_2$};
\node at (13.5,-1) {$\Ab_3$};
\end{tikzpicture}
\leftrightarrow
\begin{tikzpicture}[scale=.3,baseline=6]
\draw [thick,blue] (1.5,0) --(0.5,2); 
\draw[thick] (1,1)-- (13,1);
\draw [thick,blue](4.5,0) --(3.5,2);  
\draw [thick,red](6.5,0) -- (7.5,2);  
\draw[thick,red] (9.5,0) -- (10.5,2);  
\draw [thick,red](12.5,0) -- (13.5,2);   
\node at (.5,2.7) {$\Zb_2$};
\node at (3.5,2.7) {$\Zb_1$};
\node at (7.5,2.7) {$\Ab_1$};
\node at (10.5,2.7) {$\Ab_2$};
\node at (13.5,2.7) {$\Ab_3$};

\draw[thick] (1,-4)-- (13,-4);
\draw [thick,red] (0.5,-5) --(1.5,-3); 
\draw [thick,red](3.5,-5) --(4.5,-3);  
\draw [thick,red](6.5,-5) -- (7.5,-3);  
\draw[thick,blue] (10.5,-5) -- (9.5,-3);  
\draw [thick,blue](13.5,-5) -- (12.5,-3);   
\node at (.5,-6) {$\Zb'_3$};
\node at (3.5,-6) {$\Zb'_2$};
\node at (6.5,-6) {$\Zb'_1$};
\node at (10.5,-6) {$\Ab'_1$};
\node at (13.5,-6) {$\Ab'_2$};

\fill [fill=red!30]
     (6.5,0) -- (12.5,0) -- (7.5,-3) -- (1.5,-3) -- (6.5,0) ;
\fill [fill=blue!30]
     (1.5,0) -- (4.5,0) -- (12.5,-3) -- (9.5,-3) -- (1.5,0) ;
\draw [blue,line width=1pt, double distance=3pt,
            arrows = {-Latex[length=0pt 3 0]}] (3.6,-.3) -- (11,-3);
\draw [red,line width=1pt, double distance=3pt,
             arrows = {-Latex[length=0pt 3 0]}] (9.5,-.3) -- (4.5,-3);
\node at (20,-1.5) {Hanany-Witten transitions};
\end{tikzpicture}
\]
\caption{The Hanany-Witten interpretation of Definition~\ref{def:mirror fixed point}.}
\label{fig:HW}
\end{figure} 

\begin{definition} \label{def:mirror fixed point}
    Let $f$ be a fixed point on a bow variety whose associated BCT is the matrix $T\in \{0,1\}^{m\times n}$. Define the mirror matrix $T^!\in \{0,1\}^{n\times m}$ by 
    \[
    T^!_{i,j}=1-T_{n+1-j,m+1-i}.
    \] 
    That is, $T^!$ is obtained by reflecting $T$ across the NE-SW diagonal and negating (0 $\leftrightarrow$ 1) the entries. Let the associated torus fixed point (on a different bow variety) be called $f^!$. 
\end{definition}

The involution $f\leftrightarrow f^!$ is a bijection between the torus fixed points of $\Ch(r,c)$ and $\Ch(r^!,c^!)$, where
\[
r^!_i=m-c_{n+1-i}, \qquad c^!_i=n-r_{m+1-i}.
\]
These bow varieties are also called mirror duals of each other: $\Ch(r,c)^!=\Ch(r^!,c^!)$. Mirror dual bow varieties are, in general, of different dimensions, they have different rank torus actions, but there is a combinatorial bijection between their torus fixed points. 

The convention on the indices in the definition above is best explained by tie diagrams and Hanany-Witten transitions \cite{HW}, see Figure~\ref{fig:HW}. The mirror of a fixed point encoded by a tie diagram should be obtained by reflecting the diagram, both fivebranes and ties, across a horizontal axis. In this way NS5 branes turn to D5 branes, and vice versa. According to our convention, the NS5 branes need to be on the left, so we need to pass all the NS5 branes through all the D5 branes. Moving different kinds of fivebranes through each other is called {\em Hanany-Witten transition} (HW). Its main feature is that a pair of fivebranes is connected by a tie before a HW transition if and only if they are {\em not} connected after it. As \cite{HW} puts it: \texttt{``When an NS fivebrane passes through a D fivebrane a threebrane connecting them is created''}. This explains the 0-1 swap in the definition. For more details see \cite{Nakajima_Takayama, rimanyi2020bow, BR23}.

\begin{example}
The mirror dual of the 6-dimensional $\Ch(r=(1,1,2,1),c=(2,2,1))$ is the 12-dimensional $\Ch(r=(3,2,2),c=(2,1,2,2))$. A pair of corresponding fixed points is:
\[
\left(
\begin{tikzpicture}[baseline=7,scale=.2]
\begin{scope}[yshift=0cm]
\draw [thick,red] (0.5,0) --(1.5,2); 
\draw[thick] (1,1)--(2.5,1) -- (19,1);
\draw [thick,red](3.5,0) --(4.5,2);  
\draw [thick](4.5,1)--(5.5,1) -- (6.5,1);
\draw [thick,red](6.5,0) -- (7.5,2);  
\draw [thick](7.5,1) --(8.5,1) -- (9.5,1); 
\draw[thick,red] (9.5,0) -- (10.5,2);  
\draw[thick] (10.5,1) --(11.5,1) -- (12.5,1); 
\draw [thick,blue](13.5,0) -- (12.5,2);   
\draw [thick](13.5,1) --(14.5,1) -- (15.5,1);
\draw[thick,blue] (16.5,0) -- (15.5,2);  
\draw [thick](16.5,1) --(17.5,1)  -- (18.5,1);  
\draw [thick,blue](19.5,0) -- (18.5,2);  
\draw [dashed, black](7.5,2.2) to [out=25,in=155] (18.5,2.2);
\draw [dashed, black](10.5,2.2) to [out=25,in=155] (15.5,2.2);
\draw [dashed, black](4.5,2.2) to [out=35,in=145] (12.5,2.2);
\draw [dashed, black](4.5,2.2) to [out=45,in=135] (15.5,2.2);
\draw [dashed, black](1.5,2.2) to [out=45,in=135] (12.5,2.2);
\end{scope}
\end{tikzpicture}
\right)^!=
\begin{pmatrix}
    0 & 1 & 0 \\
    0 & 0 & 1 \\
    1 & 1 & 0 \\
    1 & 0 & 0 
\end{pmatrix}^!=
\begin{pmatrix}
    1 & 1 & 0 & 1 \\
    1 & 0 & 1 & 0\\
    0 & 0 & 1 & 1 
\end{pmatrix}
=
\begin{tikzpicture}[baseline=7,scale=.2]
\begin{scope}[yshift=0cm]
\draw [thick,red] (0.5,0) --(1.5,2); 
\draw[thick] (1,1)--(2.5,1) -- (19,1);
\draw [thick,red](3.5,0) --(4.5,2);  
\draw [thick](4.5,1)--(5.5,1) -- (6.5,1);
\draw [thick,red](6.5,0) -- (7.5,2);  
\draw [thick](7.5,1) --(8.5,1) -- (9.5,1); 
\draw[thick,blue] (10.5,0) -- (9.5,2);  
\draw[thick] (10.5,1) --(11.5,1) -- (12.5,1); 
\draw [thick,blue](13.5,0) -- (12.5,2);   
\draw [thick](13.5,1) --(14.5,1) -- (15.5,1);
\draw[thick,blue] (16.5,0) -- (15.5,2);  
\draw [thick](16.5,1) --(17.5,1) -- (18.5,1);  
\draw [thick,blue](19.5,0) -- (18.5,2);  
\draw [dashed, black](7.5,2.2) to [out=39,in=155] (18.5,2.2);
\draw [dashed, black](7.5,2.2) to [out=32,in=155] (12.5,2.2);
\draw [dashed, black](7.5,2.2) to [out=25,in=155] (9.5,2.2);
\draw [dashed, black](1.5,2.2) to [out=35,in=135] (15.5,2.2);
\draw [dashed, black](1.5,2.2) to [out=45,in=145] (18.5,2.2);
\draw [dashed, black](4.5,2.2) to [out=35,in=145] (9.5,2.2);
\draw [dashed, black](4.5,2.2) to [out=45,in=145] (15.5,2.2);
\end{scope}
\end{tikzpicture}.
\]
\end{example}

\begin{theorem}[3d mirror symmetry for elliptic stable envelopes \cite{BR23}] 
\label{thm:mirror for Wt}
Let $f,g$ be torus fixed points on $\Ch(\DD)$ with $m$ NS5 branes and $n$ D5 branes. Then we have \begin{equation}\label{eq:mirror symmetry for Wt}
    \frac{\Wt_{\id}(f)|_{g}}{\Wt_{\id}(g)|_{g}} \xlongleftrightarrow[\text{  $a$-$z$ swap  }]{\text{}} 
    (-1)^{\#f + \#g} \cdot
    \frac{\Wt_{\id}(g^!)|_{f^!}}{\Wt_{\id}(f^!)|_{f^!}},
  \end{equation}    
  where `$a$-$z$ swap' means that the two sides are equal after the identification of variables
  \[
  a_i\leftrightarrow z_{n+1-i}, \qquad\qquad z_i \leftrightarrow a_{m+1-i}.
  \]  
We wrote $(-)|_g$ for the restriction map $\phi_g$ to the torus fixed point $g$, and $\#f$ is the number of crossings of ties in the tie diagram of $f$.
\end{theorem}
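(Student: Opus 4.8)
The plan is to obtain this combinatorial theta-function identity as the explicit-formula shadow of the geometric 3d mirror symmetry \eqref{eq:mirror_in_intro} of \cite{BR23}, using Theorem~\ref{thm:main} as the dictionary between $\Wt$-functions and stable envelopes. First I would convert the left-hand ratio of $\Wt$-restrictions into a ratio of stable-envelope restrictions. By Theorem~\ref{thm:main} the function $\W_{\id}(f)=\varepsilon^*_c\cdot\tau^*_r\cdot\eu^*_{c,\id}\cdot\Wt_{\id}(f)|_{(c,\id)}$ represents $\Stab_{\id}(f)$, so $\Stab_{\id}(f)|_g=\W_{\id}(f)|_g$. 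The key observation is that all three prefactors drop out of the normalized ratio: $\varepsilon^*_c$ and $\eu^*_{c,\id}$ depend only on the charge vectors and the chamber, hence are identical for $f$ and $g$; and $\tau^*_r$ is a function of the $t_{ki}$ only, so its restriction $\tau^*_r|_g$ is the same factor in numerator and denominator. Therefore
\[
\frac{\Stab_{\id}(f)|_g}{\Stab_{\id}(g)|_g}=\frac{\W_{\id}(f)|_g}{\W_{\id}(g)|_g}=\frac{\Wt_{\id}(f)|_g}{\Wt_{\id}(g)|_g},
\]
where on the right the substitution of the $t_{0i}$ is governed by the ordering convention $|_{(c,\id)}$, which is exactly what $\phi_g$ produces on the topologically trivial part $\xi_{\geq 0}$.

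Applying the same reduction on the mirror bow variety $\Ch(r^!,c^!)$ to the pair $(g^!,f^!)$ gives $\Stab(g^!)|_{f^!}/\Stab(f^!)|_{f^!}=\Wt(g^!)|_{f^!}/\Wt(f^!)|_{f^!}$. Feeding both reductions into \eqref{eq:mirror_in_intro} yields the identity \eqref{eq:mirror symmetry for Wt} up to determining the sign, provided one checks that the variable exchange of \cite{BR23} restricts to precisely the index-reversing swap $a_i\leftrightarrow z_{n+1-i}$, $z_i\leftrightarrow a_{m+1-i}$, and that the chamber $\Chamb_{\id}$ on $X$ is matched with $\Chamb_{\id}$ on $X^!$ under the mirror correspondence (so that the $\id$-ordered $\Wt$ on $X$ pairs with the $\id$-ordered $\Wt$ on $X^!$). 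I would also verify here that the $\h$-dependence transforms as in the intro's $\h\leftrightarrow\h^{-1}$ and cancels in the normalized ratio, so that no $\h$-swap survives in the final statement.

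The main obstacle will be pinning down the sign $(-1)^{\#f+\#g}$, which refines the bare $\pm$ of \eqref{eq:mirror_in_intro}. The mechanism is the antisymmetry $\th(x^{-1})=-\th(x)$ of the Jacobi theta function: under the index-reversing $a$-$z$ swap a theta factor $\th(\tfrac{a_i}{z_j}\h^{\ast})$ appearing in $\Wt_{\id}(f)|_g$ has its argument inverted and thus contributes a sign. I would track these inverted factors through the one-tie building blocks of \S\ref{sec:one tie functions} and the shuffle recursion of Definition~\ref{def: W tilde}, and show that the net number of sign-producing inversions carried by $\Wt_{\id}(f)|_g\cdot\Wt_{\id}(g)|_g$ equals the total crossing number $\#f+\#g$. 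The cleanest way to organize this is by induction on the number of ties, comparing the sign incurred when a new tie is $\star$-multiplied in with the number of crossings it creates, so that each pair of crossing ties accounts for exactly one inverted theta factor upon restriction. This bookkeeping, together with the verification that the two reductions are compatible with the normalizations of \cite{BR23}, completes the proof.
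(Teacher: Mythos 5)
Your first reduction is exactly what the paper itself does: Theorem~\ref{thm:mirror for Wt} is imported from \cite{BR23}, and the paper's only added justification is the remark, immediately after the statement, that the difference between $\W$ and $\Wt$ ``cancels out in the fraction.'' Your verification of this is correct and complete: $\varepsilon^*_c$ and $\eu^*_{c,\id}$ depend only on $(a,\h)$ and the charge/chamber data, hence are identical for all fixed points, and $\tau^*_r$ depends only on the $t_{ki}$, so its restriction at $g$ appears identically in numerator and denominator (both restrictions being taken at the same point $g$). Together with Theorem~\ref{thm:main} and the $t_{0i}$-ordering convention of Definition~\ref{def:610}, this correctly identifies the $\Wt$-ratio with the normalized stable envelope restriction ratio, which is all the reduction the paper performs.

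The second half of your plan, however, has a genuine gap: you take as input only the schematic relation \eqref{eq:mirror_in_intro}, which carries an unspecified sign $\pm$ and the exchange $\h\leftrightarrow\h^{-1}$, and propose to upgrade it yourself to the precise statement. Two steps would not go through as described. First, your assertion that the $\h$-inversion ``cancels in the normalized ratio'' is unjustified: $\h$ enters $\Wt_{\id}(f)|_g$ and $\Wt_{\id}(g)|_g$ in genuinely different ways (in the resulting identities, e.g.\ \eqref{eq:f12f9}, both $\h$ and $\h^{-1}$ occur asymmetrically), so the absence of an $\h$-swap in \eqref{eq:mirror symmetry for Wt} is a nontrivial feature of the precise conventions --- matching of chambers, normalizations, and the index-reversing correspondence $a_i\leftrightarrow z_{n+1-i}$, $z_i\leftrightarrow a_{m+1-i}$ --- fixed in the theorem of \cite{BR23}, not something that drops out of forming a ratio. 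Second, your mechanism for the sign $(-1)^{\#f+\#g}$ (one inverted theta factor per crossing, proved by induction through the shuffle recursion) is an unproven combinatorial claim; nothing in the present paper supplies that bookkeeping, and your outline only gestures at it. In the paper, both the exact sign and the exact variable identification are part of the quoted result of \cite{BR23}; a correct write-up must either cite that precise statement (as the paper does) or genuinely carry out the sign and $\h$ analysis, which your proposal leaves open.
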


It is an equivalent statement that the same equality holds for the $W$ functions, that is, for the stable envelopes. The difference between $\W$ and $\Wt$ cancels out in the fraction. 

Both sides of~\eqref{eq:mirror symmetry for Wt} are explicit formulas that can be programmed to a computer---the $\Wt$ functions are explicit in \S~\ref{sec:Wt} and the restriction maps in Theorem~\ref{thm:RestrictionMaps}. Hence Theorem~\ref{thm:mirror for Wt} is an equality between two explicit elliptic functions for any pair of 01-matrices of the same size and the same row and column sums.

\begin{remark}
   If $g$ is not below $f$ in the poset of fixed points for the chamber $\Chamb_{id}$, then the same holds for $f^!$ and $g^!$, and both sides of~\eqref{eq:mirror symmetry for Wt} are trivially 0. This shows that the identity is trivial $0=0$ either for the pair $(f,g)$ or for the pair $(g,f)$. 
\end{remark}

Consider $\Ch(r,c)$ with $r=(1,1,2,1)$, $c=(2,2,1)$. In Example~\ref{ex:poset} we presented the poset of the 12 torus fixed points for $\Chamb_{id}$. Using the notation of that poset graph here are the BCTs of three of the fixed points:
\[
f_3=
\begin{pmatrix}
    0 & 1 & 0 \\
    0 & 0 & 1 \\
    1 & 1 & 0 \\
    1 & 0 & 0 
\end{pmatrix}
\quad
f_9=
\begin{pmatrix}
    1 & 0 & 0 \\
    0 & 1 & 0 \\
    0 & 1 & 1 \\
    1 & 0 & 0 
\end{pmatrix}
\quad
f_{12}=
\begin{pmatrix}
    1 & 0 & 0 \\
    1 & 0 & 0 \\
    0 & 1 & 1 \\
    0 & 1 & 0 
\end{pmatrix}.
\]
Applying Theorem~\ref{thm:mirror for Wt} for $f=f_{12}$ and $g=f_{9}$ the identity we obtain (arranged to one side) is 
\begin{multline} \label{eq:f12f9}
\th(a_3/a_1)\th(\h^{-1})\th(a_1/a_2\ z_3/z_2)\th(a_2/a_3\ z_3/z_2\ \h^{-1})+
\\
\th(a_1/a_2)\th(z_2/z_3\ \h)\th(a_2/a_3\ \h^{-1})\th(a_3/a_1\ z_2/z_3) +
\\
\th(a_2/a_3)\th(z_3/z_2)\th(a_3/a_1\ z_2/z_3\ \h)\th(a_1/a_2\ \h)=0.
\end{multline}
This is, in fact, a very well know identity for $\th$ functions: the so-called {\em Fay's trisecant identity} \cite{Fay}. Usually it is phrased this way: if $x_1x_2x_3=y_1y_2y_3=1$ then 
\begin{equation}\label{eq:fay}
\delta(x_1,y_2)\delta(x_2,y_1^{-1})+\delta(x_2,y_3)\delta(x_3,y_2^{-1})+\delta(x_3,y_1)\delta(x_1,y_2^{-1})=0.
\end{equation}
where $\delta(x,y)=\th(xy)/(\th(x)\th(y))$. After multiplying with the denominators we have
\[
\th(x_3)\th(y_3)\th(x_1y_2)\th(x_2y_1^{-1})+
\th(x_1)\th(y_1)\th(x_2y_3)\th(x_3y_2^{-1})+
\th(x_2)\th(y_2)\th(x_3y_1)\th(x_1y_3^{-1})=0,
\]
which is the same as~\eqref{eq:f12f9} with
\[
\begin{array}{lcccl}
    x_1=a_1/a_2 & &&& y_1=z_2/z_3 \h \\
    x_2=a_2/a_3 & &&& y_2=z_3/z_2 \\
    x_3=a_3/a_1 & &&& y_3=\h^{-1}.
\end{array}
\]
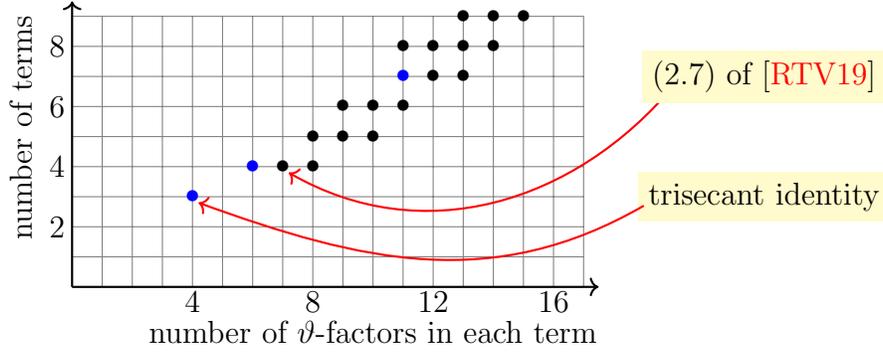
\begin{figure}
\[
\begin{tikzpicture}[scale=.4]
   \tkzInit[xmax=17,ymax=9,xmin=0,ymin=0]
   \tkzGrid
   \draw[thick,->] (0,0) to (17.5,0);
      \node at (10,-1.5)  {number of $\th$-factors in each term};
   \draw[thick,->] (0,0) to (0,9.5);
      \node[rotate=90] at (-1.7,5.3) {number of terms};
   \node[blue] at (4,3) {$\bullet$};
   \node[blue] at (6,4) {$\bullet$};
   \node at (7,4) {$\bullet$};
   \node at (8,4) {$\bullet$};
   \node at (8,5) {$\bullet$};
   \node at (9,5) {$\bullet$};
   \node at (9,6) {$\bullet$};
   \node at (10,5) {$\bullet$};
   \node at (10,6) {$\bullet$};
   \node at (11,6) {$\bullet$};
   \node[blue] at (11,7) {$\bullet$};
   \node at (11,8) {$\bullet$};
   \node at (12,7) {$\bullet$};
   \node at (12,8) {$\bullet$};
   \node at (13,7) {$\bullet$};
   \node at (13,8) {$\bullet$};
   \node at (14,8) {$\bullet$};
      \node at (13,9) {$\bullet$};
      \node at (14,9) {$\bullet$};
      \node at (15,9) {$\bullet$};
   \node at( 4,-.5) {$4$}; \node at( 8,-.5) {$8$}; \node at(12,-.5) {$12$}; \node at(16,-.5) {$16$};
   \node at (-.5,2) {$2$}; \node at (-.5,4) {$4$}; \node at (-.5,6) {$6$}; \node at (-.5,8) {$8$};
\draw (23,3) node [fill=yellow!25] {trisecant identity};
\draw[thick, red,->] (19,2.7) to [out=-150,in=-20] (4.2,2.8);
\draw[thick, red,->] (20,6.7) to [out=-130,in=-30] (7.2,3.8);
\draw (23,7) node [fill=yellow!25] {(2.7) of \cite{RTV}};
   \end{tikzpicture}
\]
\caption{The sizes of irreducible $\th$ function identities Theorem~\ref{thm:mirror for Wt} provides for small $r$ and $c$. Examples for those in blue are given below.} 
\label{fig:coordinates}
\end{figure}

The trisecant identity is the fundamental identity governing $\th$ functions, see eg. \cite{tatatheta}, \cite{raina}, \cite[\S5]{rimanyi2019hbardeformed} for various roles it plays in mathematics and physics. When expressed as a sum of products of $\th$ functions, it consists of three terms, each containing four factors. Beyond this well known 3-term-4-factor identity, Theorem~\ref{thm:mirror for Wt} generates numerous additional identities. In Figure~\ref{fig:coordinates}, we indicate the sizes of the identities discovered by our computer program for small values of $r$ and $c$. We display the {\em irreducible} identities only, omitting those that contain a proper subset of terms summing to zero.

For example, Theorem~\ref{thm:mirror for Wt} for $f=f_9$ and $g=f_3$ gives 
\begin{multline*} 
-\th(a_2/a_3)\th(a_4/a_3 \h)\th(z_2/z_3)\th(z_3/z_1 \h)\th(a_2/a_4 \ z_3/z_2\ \h )\th(a_2/a_3 \ z_2/z_1 \h)
\\
+\th(a_2/a_3)\th(a_2/a_4)\th(z_2/z_3)\th(z_1/z_2)\th(a_3/a_4\ z_3/z_2 )\th(a_2/a_3\  z_3/z_1\  \h^2)
\\
-\th(a_2/a_4)\th(a_2/a_3\ \h)\th(z_2/z_1\ \h)\th(z_3/z_2\ \h)\th(a_3/a_4\ z_3/z_2)\th(a_2/a_3\ z_3/z_1 \ \h)
\\
+\th(a_3/a_4)\th(a_2/a_3\ \h)\th(z_3/z_1\ \h)\th(z_3/z_2\ \h)\th(a_2/a_4\ z_3/z_2 )\th(a_2/a_3\ z_2/z_1 \ \h)
=0,
\end{multline*}
a 4-term-6-factor identity. For $f=f_{12}$ and $g=f_3$ Theorem~\ref{thm:mirror for Wt} gives a 7-term-11-factor identity:
\begin{multline*} 
-\no(\h)\no(a_{12})\no(a_{23})\no(a_{31}\h)\no(a_{42}\h)\no(z_{23})^2\no(z_{31}\h) \no(a_{23}z_{32}\h)\no(a_{14}z_{32}\h)\no(a_{23}z_{21}\h)\\
+\no(\h)\no(a_{12})\no(a_{23})^2\no(a_{14})\no(z_{31}\h)\no(z_{23})\no(z_{32}\h^2) \no(a_{13}z_{32})\no(a_{24}z_{32})\no(a_{23}z_{21}\h)\\
-\no(a_{12})\no(a_{23})^2\no(a_{42}\h)\no(a_{31}\h)\no(z_{12})\no(z_{23})^3\no(a_{23}z_{31}\h^2)\no(a_{14}z_{32}\h)\\
-\no(\h)^2\no(a_{23})\no(a_{14})\no(a_{12})\no(z_{23})\no(z_{12})\no(a_{23}z_{31}\h^2)\no(a_{32}z_{32}\h)\no(a_{24}z_{32})\no(a_{13}z_{32})\\
+\no(\h)\no(a_{23}\h)\no(a_{13})\no(a_{14})\no(a_{32}\h)\no(z_{32}\h)^2\no(z_{21}\h)\no(a_{23}z_{31}\h)\no(a_{12}z_{32})\no(a_{24}z_{32})\\
-\no(\h)\no(a_{12})\no(a_{23}\h)\no(a_{14})\no(a_{32}\h)\no(z_{31}\h)\no(z_{32}\h)^2\no(a_{13}z_{32})\no(a_{23}z_{21}\h)\no(a_{24}z_{32})\\
+\no(a_{31}\h)\no(a_{12}\h)\no(a_{23}\h)\no(a_{24})\no(a_{23})
\no(z_{21}\h)\no(z_{32}\h)^2\no(z_{23})
\no(a_{14}z_{32})\no(a_{23}z_{31}\h)=0,
\end{multline*}
where we used the shorthand notations $a_{ij}=a_i/a_j$, $z_{ij}=z_i/z_j$, as well as $(x)=\th(x)$.

\bibliographystyle{alpha}
\bibliography{sample}

\begin{thebibliography}{CFKM11}

\bibitem[AO17]{Aganagic:2017gsx}
Mina Aganagic and Andrei Okounkov.
\newblock {Quasimap counts and Bethe eigenfunctions}.
\newblock {\em Moscow~Math.~J.}, 17(4):565--600, 2017.

\bibitem[AO21]{aganagic2016elliptic}
Mina Aganagic and Andrei Okounkov.
\newblock Elliptic stable envelopes.
\newblock {\em J. Amer. Math. Soc.}, 34:79--133, 2021.

\bibitem[BD24]{BD}
Tommaso~Maria Botta and Ben Davison.
\newblock Okounkov's conjecture via {BPS} {L}ie algebras, 2024.
\newblock arXiv:2312.14008.

\bibitem[BD25]{BDBowVertex}
Tommaso~Maria Botta and Hunter Dinkins.
\newblock Mirror symmetry of vertex functions for bow varieties.
\newblock In preparation, 2025.

\bibitem[BFR24]{BFR23}
Tommaso~Maria Botta, Alexander~O. Foster, and Rich\'ard Rim\'anyi.
\newblock Geometric {B}ruhat order on (0,1)-matrices, 2024.
\newblock math.CO:2311.05531.

\bibitem[Bot21]{botta2021shuffle}
Tommaso~Maria Botta.
\newblock Shuffle products for elliptic stable envelopes of {N}akajima varieties, 2021.
\newblock arXiv:2104.00976.

\bibitem[Bot23]{botta2023framed}
Tommaso~Maria Botta.
\newblock {Framed cohomological Hall algebras and cohomological stable envelopes}.
\newblock {\em Lett. Math. Phys.}, 113(5):95, 2023.

\bibitem[BR24]{BR23}
Tommaso~Maria Botta and Rich\'ard Rim\'anyi.
\newblock Bow varieties: stable envelopes and their 3d mirror symmetry.
\newblock arXiv:2308.07300, to appear in Duke Math. J., 2024.

\bibitem[CFKM11]{ciocan_quasimaps}
Ionut Ciocan-Fontanine, Bumsig Kim, and Davesh Maulik.
\newblock Stable quasimaps to git quotients, 2011.

\bibitem[CG97]{Chriss_book}
Neil Chriss and Victor Ginzburg.
\newblock {\em Representation Theory and Complex Geometry}.
\newblock Birkhauser, 1997.

\bibitem[Din23]{Dinkins}
Hunter Dinkins.
\newblock {Elliptic Stable Envelopes of Affine Type A Quiver Varieties}.
\newblock {\em Int. Math. Res. Not.}, 2023(15):13426--13476, 2023.

\bibitem[Fay73]{Fay}
J.~D. Fay.
\newblock {\em Theta functions on {R}iemann surfaces}.
\newblock Springer, 1973.
\newblock LNM 352.

\bibitem[FRV18]{Felder2018}
Giovanni Felder, Richárd Rimányi, and Alexander Varchenko.
\newblock Elliptic dynamical quantum groups and equivariant elliptic cohomology.
\newblock {\em SIGMA}, 12 2018.

\bibitem[Gan12]{Ganter_2014}
Nora Ganter.
\newblock The elliptic {W}eyl character formula.
\newblock {\em Compositio Mathematica}, 150, 06 2012.

\bibitem[GKV95]{ginzburg1995elliptic}
Victor Ginzburg, Mikhail Kapranov, and Eric Vasserot.
\newblock Elliptic algebras and equivariant elliptic cohomology, 1995.

\bibitem[HW97]{HW}
Amihay Hanany and Edward Witten.
\newblock Type {IIB} superstrings, {BPS} monopoles, and three-dimensional gauge dynamics.
\newblock {\em Nuclear Physics B}, 492:152--190, 05 1997.

\bibitem[Ji24]{ji2024bow}
Yibo Ji.
\newblock Bow varieties as symplectic reductions of {$T^*(G/P)$}, 2024.
\newblock arXiv:2312.04696.

\bibitem[KS11]{kontsevich2011cohomological}
Maxim Kontsevich and Yan Soibelman.
\newblock Cohomological {H}all algebra, exponential {H}odge structures and motivic {D}onaldson-{T}homas invariants.
\newblock {\em Communications in number theory and physics}, 5(2):231--352, 2011.

\bibitem[MO19]{maulik2012quantum}
Davesh Maulik and Andrei Okounkov.
\newblock {\em Quantum Groups and Quantum Cohomology}, volume 408 of {\em Ast{\'e}risque}.
\newblock Soci{\'e}t{\'e} Math\'ematique de France, 2019.

\bibitem[Mum83]{tatatheta}
D.~Mumford.
\newblock {\em Tata lectures on theta I}.
\newblock Birkhauser, 1983.
\newblock Progress in Mathematics.

\bibitem[Nak98]{Nakajimaquiver}
Hiraku Nakajima.
\newblock Quiver varieties and {K}ac-{M}oody algebras.
\newblock {\em Duke Mathematical Journal}, 91, 02 1998.

\bibitem[Neg13]{negutshufflerevisited}
Andrei Neguț.
\newblock The shuffle algebra revisited.
\newblock {\em International Mathematics Research Notices}, 2014(22):6242--6275, 2013.

\bibitem[Neg23a]{negut2022shufflermatrices}
Andrei Neguț.
\newblock Shuffle algebras for quivers and {R}-matrices.
\newblock {\em Journal of the Institute of Mathematics of Jussieu}, 22(6):2583--2618, 2023.

\bibitem[Neg23b]{negut2022shuffle}
Andrei Neguț.
\newblock Shuffle algebras for quivers and wheel conditions.
\newblock {\em Journal für die reine und angewandte Mathematik (Crelles Journal)}, 2023(795):139--182, 2023.

\bibitem[NSS24]{Negut_shufflequantum}
Andrei Neguț, Francesco Sala, and Olivier Schiffmann.
\newblock Shuffle algebras for quivers as quantum groups.
\newblock {\em Mathematische Annalen}, 391(2):2981–3021, September 2024.

\bibitem[NT17]{Nakajima_Takayama}
Hiraku Nakajima and Yuuya Takayama.
\newblock Cherkis bow varieties and {C}oulomb branches of quiver gauge theories of affine type {$A$}.
\newblock {\em Selecta Mathematica}, 23:2553--2633, 2017.

\bibitem[Oko17]{Okounkov_lectures}
Andrei Okounkov.
\newblock {\em Lectures on K-theoretic computations in enumerative geometry, Geometry of Moduli Spaces and Representation Theory}.
\newblock AMS, 2017.

\bibitem[Oko20a]{okounkov2020inductiveI}
Andrei Okounkov.
\newblock Inductive construction of stable envelopes and applications, {I}. {A}ctions of tori. {E}lliptic cohomology and {K}-theory, 2020.
\newblock 2007.09094.

\bibitem[Oko20b]{okounkov2020inductiveII}
Andrei Okounkov.
\newblock Inductive construction of stable envelopes and applications, {II}. {N}onabelian actions. {I}ntegral solutions and monodromy of quantum difference equations, 2020.

\bibitem[Rai89]{raina}
A.~K. Raina.
\newblock Fay's trisecant identity and conformal field theory.
\newblock {\em Commun.Math. Phys.}, 122:625--641, 1989.

\bibitem[Rim]{RRcoha}
R.~Rim\'anyi.
\newblock On the cohomological {H}all algebra of {D}ynkin quivers.
\newblock Unpublished.

\bibitem[Rim20]{RRcsmcoha}
R.~Rim\'anyi.
\newblock Motivic characteristic classes in cohomological {H}all algebras.
\newblock {\em Adv. Math.}, 360, 2020.

\bibitem[Rim21]{rimanyi2019hbardeformed}
Rich\'ard Rim\'anyi.
\newblock $\hbar$-deformed {S}chubert calculus in equivariant cohomology, {K}-theory, and elliptic cohomology.
\newblock In A.~Stipsicz J.~Fernández~de Bobadilla, T.~Laszlo, editor, {\em Singularities and Their Interaction with Geometry and Low Dimensional Topology. In Honor of Andr{\'a}s N{\'e}methi}. Birkhauser, 2021.

\bibitem[RS24]{rimanyi2020bow}
Rich{\'a}rd Rim{\'a}nyi and Yiyan Shou.
\newblock Bow varieties---geometry, combinatorics, characteristic classes.
\newblock {\em Comm. in Analysis and Geometry}, 32(2), 2024.

\bibitem[RSVZ19]{Rimanyi_2019full}
Richárd Rimányi, Andrey Smirnov, Alexander Varchenko, and Zijun Zhou.
\newblock Three-dimensional mirror self-symmetry of the cotangent bundle of the full flag variety.
\newblock {\em Symmetry, Integrability and Geometry: Methods and Applications}, 2019.

\bibitem[RSZV21]{RSVZ_Gr3d}
Richárd Rimányi, Andrey Smirnov, Zijun Zhou, and Alexander Varchenko.
\newblock Three-dimensional mirror symmetry and elliptic stable envelopes.
\newblock {\em International Mathematics Research Notices}, 2022, 02 2021.

\bibitem[RTV19]{RTV}
R.~Rimányi, V.~Tarasov, and A.~Varchenko.
\newblock Elliptic and {K}-theoretic stable envelopes and {N}ewton polytopes.
\newblock {\em Selecta Math.}, 25(16), 2019.

\bibitem[RW20]{RW_elliptic}
Richárd Rimányi and Andrzej Weber.
\newblock Elliptic classes of {S}chubert varieties via {B}ott–{S}amel\-son resolution.
\newblock {\em Journal of Topology}, 13:1139--1182, 09 2020.

\bibitem[RW21]{RRAW_Langlands}
Richárd Rimányi and Andrzej Weber.
\newblock Elliptic classes on {L}anglands dual flag varieties.
\newblock {\em Communications in Contemporary Mathematics}, 24:2150014, 02 2021.

\bibitem[She13]{Shenfeld}
Daniel Shenfeld.
\newblock {\em Abelianization of Stable Envelopes in Symplectic Resolutions}.
\newblock PhD thesis, Princeton University, 2013.

\bibitem[Sho21]{Shou}
Yiyan Shou.
\newblock Bow varieties—geometry, combinatorics, characteristic classes.
\newblock Ph.D. Thesis, UNC Chapel Hill, 2021.

\bibitem[Smi19]{smirnov2018elliptic}
Andrey Smirnov.
\newblock Elliptic stable envelope for {H}ilbert scheme of points in the plane.
\newblock {\em Selecta Mathematica}, 26, 2019.

\bibitem[SV13]{SV}
Oliver Schiffmann and Eric Vasserot.
\newblock {Cherednik algebras, W-algebras and the equivariant cohomology of the moduli space of instantons on $\mathbb{A}^2$}.
\newblock {\em Pub. Math. IH{\'E}S}, 118(1):213--342, 2013.

\bibitem[Tak16]{takayama_2016}
Yuuya Takayama.
\newblock Nahm's equations, quiver varieties and parabolic sheaves.
\newblock {\em Publ. Res. Inst. Math. Sci.}, 52(1):1--41, 2016.

\bibitem[WY23]{WebsterYoo}
Ben Webster and Philsang Yoo.
\newblock 3-dimensional mirror symmetry.
\newblock {\em Notices of the AMS}, 70(9):1395--1406, 2023.

\bibitem[YZ18]{YZ}
Yaping Yang and Gufang Zhao.
\newblock The cohomological {H}all algebra of a preprojective algebra.
\newblock {\em Proceedings of the London Mathematical Society}, 116(5):1029--1074, 1 2018.

\end{thebibliography}

\end{document}